 \newtheorem{thm}{Theorem}[section]
 \newtheorem*{nthm}{Theorem}
 \newtheorem{lem}[thm]{Lemma}
 \theoremstyle{definition}
 \newtheorem{defn}[thm]{Definition}
 \theoremstyle{remark}
 \newtheorem{rem}[thm]{Remark}
\DeclareSymbolFont{rsfs}{U}{rsfs}{m}{n}
\DeclareSymbolFontAlphabet{\mathscrn}{rsfs}
\DeclareMathOperator{\E}{\mathit{E}}
\DeclareMathOperator{\Mm}{\text{\raisebox{0.3ex}{\(\scriptscriptstyle\circ\)}}}
\DeclareMathOperator{\dist}{\textup{dist}}
\DeclareMathOperator{\Psii}{\Psi}
\renewcommand{\theequation}{\thesection.\arabic{equation}}
 \numberwithin{equation}{section}
\begin{document}
\title{On  H.\,Weyl and J.\,Steiner
Polynomials}
\author{Victor Katsnelson}
\address{Department of Mathematics\\
 the Weizmann Institute\\
  Rehovot 76100\\
   Israel}
\email{victor.katsnelson@weizmann.ac.il,
victorkatsnelson@gmail.com}
\date{}

\begin{abstract}
The paper deals with root location problems for two classes
of univariate polynomials both of geometric origin.
The first class  discussed, the class of Steiner polynomial,
consists of polynomials, each associated with a compact convex set \(V\subset\mathbb{R}^n\).
A polynomial of this class describes the volume of the set \(V+tB^n\) as a function of \(t\),
where \(t\) is a positive number and \(B^n\) denotes the unit ball in \(\mathbb{R}^n\).
The second class, the class of Weyl polynomials, consists of polynomials, each associated
with a Riemannian manifold \(\mathscr{M}\), where \(\mathscr{M}\) is isometrically
 embedded with positive codimension in \(\mathbb{R}^n\).
A Weyl polynomial describes the volume of a tubular neighborhood of its associated
\(\mathscr{M}\) as a function of the tube's radius.
These polynomials are calculated explicitly in a number of natural examples
such as balls, cubes, squeezed cylinders. Furthermore, we examine how the above mentioned
polynomials are related to one another and how they depend on the standard embedding of
\(\mathbb{R}^n\) into \(\mathbb{R}^m\) for \(m>n\). We find that in some cases the real part
of any Steiner polynomial root will be negative. In certain other cases, a Steiner
polynomial will have only real negative roots. In all of this cases, it can be shown
that all of a Weyl polynomial's roots are simple and, furthermore, that they lie on the
imaginary axis. At the same time, in certain cases the above pattern does not hold.
\end{abstract}
\subjclass{Primary 53C99, 52A39, 30C10;\\
          Secondary 52A22, 60D05}
 \keywords{Weyl tubes formula, mixed volumes, Steiner polynomials, Hurwitz polynomials,
 hyperbolic polynomials,
 P\'olya-Schur multipliers, entire functions of the Laguerre-P\'olya class.}
\maketitle{}

\hfill
\begin{minipage}[l]{38.0ex}
 \footnotesize{ Erasmus Darwin, the nephew of the great scientist Charles Darwin, believed
 that sometimes one should perform the most unusual experiments.
 They usually yield no results but when they do~...\,.
 So once he played trumpet in front of tulips for the whole day. The experiment yielded
 no results.}
\end{minipage}
\newpage
\centerline{\Large\textbf{Table of contents.}}
\begin{enumerate}
\item
H.\,Weyl and J.\,Steiner polynomials.
\item
Formulation of main results.
\item
The explicit expressions for the Steiner and Weyl
polynomials associated with the `regular' convex sets.
\item
Weyl and Steiner polynomials of `regular' convex
sets as renormalized Jensen polynomials.
\item
Entire functions of the Hurwitz
and of the Laguerre-P\'olya class.
Multipliers preserving location of roots.
\item
Properties of entire functions
generating Steiner and Weyl polynomials
of `regular' convex sets and their surfaces.
\item
The Hermite-Biehler Theorem and its application.
\item
Properties of Steiner polynomials
\item
The Routh-Hurwitz Criterion.
\item
The case of low dimension:
proofs of Theorems \ref{LDC} and \ref{LDCW}.
\item
Extending the ambient
space.
\item
The Steiner polynomial of the Cartesian
 product\newline of convex sets.
\item
Properties of entire functions generating
  the Steiner and Weyl
Polynomials for the degenerate convex sets
\(\boldsymbol{B^{n+1}\times{}0^q}\).
\item
Concluding remarks.\\[-2.0ex]
\item[]
References.
\end{enumerate}

%%%%%%%%%%%%%%%%%%%%%%%%%%%%%%%%%%%%%%%%%%%%%%%%%%%%%%
\section{H.\,Weyl and J.\,Steiner polynomials.}
%%%%%%%%%%%%%%%%%%%%%%%%%%%%%%%%%%%%%%%%%%%%%%%%%%%%%%%%
Let \(\mathscr{M}\) be a smooth manifold,
\[\dim \mathscr{M}=n,\]
which is embedded injectively into the Euclidean space of a higher
dimension, say \(n+p\), \(p>0\). We identify \(\mathscr{M}\) with
the image of this embedding, so
\[\mathscr{M}\subset\mathbb{R}^{n+p}.\]
For \(x\in\mathscr{M}\), let \(\mathscr{N}_x\) be the normal
subspace to \(\mathscr{M}\) at the point \(x\). \(\mathscr{N}_x\)
is an affine subspace of the ambient space \(\mathbb{R}^{n+p}\),
\[\dim\mathscr{N}_x=p.\]
For \(t>0\), let
\begin{equation}%
\label{Ndisk}%
 D_x(t)=\{y\in\mathscr{N}_x:\,\dist(y,x)\leq{}t\},
\end{equation}%
 where
\(\dist(y,x)\) is the Euclidean distance between \(x\) and \(y\).
If the manifold \(\mathscr{M}\) is compact, and \(t>0\) is small
enough, then
\begin{equation}
\label{NInt}
D_{x_1}(t)\cap{}D_{x_2}(t)=\emptyset\quad\textup{for}\ \
x_1\in\mathscr{M}, x_2\in\mathscr{M}, x_1\not=x_2.
\end{equation}
\begin{defn}
\label{DTN} The set
\begin{equation}
\label{deftub}
\mathfrak{T}_{\,\,\mathscr{M}}^{\mathbb{R}^{n+p}}(t)
\stackrel{\textup{\tiny{}def}}{=}\bigcup_{x\in\mathscr{M}}D_x(t)
\end{equation}
is said to be \textsf{the tube neighborhood} of the manifold
\(\mathscr{M}\), or \textsf{the tube} around \(\mathscr{M}\). The
number \(t\) is said to be \textsf{the radius} of this tube.
\vspace{2.0ex}
\end{defn}%
\noindent
Is it clear that for manifolds \(\mathscr{M}\) \textit{without
boundary},
\begin{equation}
\label{DeTu}%
\mathfrak{T}_{\,\,\mathscr{M}}^{\mathbb{R}^{n+p}}(t)
=\{x\in\mathbb{R}^{n+p}:\,\textup{dist}(x,\mathscr{M})\leq t\},
\end{equation}
where \(\dist(x,\mathscr{M})\) is the Euclidean distance from
\(x\) to \(\mathscr{M}\). Thus, for manifolds without boundary,
the equality  \eqref{DeTu} could also be taken as a definition of
the tube \(\mathfrak{T}_\mathscr{M}(t)\). However, for manifolds
\(\mathscr{M}\) \textit{with boundary} the sets
\(\mathfrak{T}_{\,\,\,\mathscr{M}}^{\mathbb{R}^{n+p}}(t)\) defined
by \eqref{deftub} and \eqref{DeTu}
 do not coincide. In this, more general,
case the tube around \(\mathscr{M}\) should be defined by
\eqref{deftub}, but not by \eqref{DeTu}. Hermann Weyl,
\cite{Wey1}, obtained the following result, which is the starting
point of our work:\\

\begin{nthm}[H.Weyl] \ %
 \textit{Let \(\mathscr{M}\), \(\dim{}\mathscr{M}=n \), be a smooth compact manifold, with or without boundary, which is embedded in the Euclidean space} \(\mathbb{R}^{n+p},\
p\geq{}1\).
\begin{enumerate}
\item[I.]%
\begin{itshape}If \(t>0\) is small enough%
\footnote{\label{Smt}If the condition \eqref{NInt} is satisfied.}, %
then the \((n+p)\)\,-\,dimensional volume %
%\(\textup{Vol}_{n+p}(\mathfrak{T}_{\,\,\,\mathscr{M}}^{\mathbb{R}^{n+p}}(t))\)
of the tube
\(\mathfrak{T}_{\,\,\,\mathscr{M}}^{\mathbb{R}^{n+p}}(t)\) around
\(\mathscr{M}\), considered as a function of the radius \(t\) of
this tube%
 , is a polynomial of the form
\begin{equation}
\label{TuVo}
\textup{Vol}_{n+p}(\mathfrak{T}_{\,\,\,\mathscr{M}}^{\mathbb{R}^{n+p}}(t))=
\omega_p\,t^{p}\Big(
\sum\limits_{l=0}^{[\frac{n}{2}]}u_{2l,p}(\mathscr{M})\cdot{}t^{2l}\Big),
\end{equation}
where
\begin{equation}%
\label{Vpdb}%
 \omega_p=\frac{\pi^{p/2}}{\Gamma(\frac{p}{2}+1)}
\end{equation}%
is the \(p\)-dimensional volume of the unit
\(p\)\,-\,dimensional ball.
\end{itshape}\\
\item[II.]
\begin{itshape}
 The coefficients \(u_{2l,p}(\mathscr{M})\) depend on \(p\) as follows:
 \begin{equation}
 \label{DeOnDi}
u_{2l,p}(\mathscr{M})=%
\frac{2^{-l}\,\Gamma(\frac{p}{2}+1)}{\Gamma(\frac{p}{2}+l+1)}%
\cdot{}w_{2l}(\mathscr{M})\,,
 \quad{} 0\leq{}l\leq{}\left[\textstyle\frac{n}{2}\right]\,,
 \end{equation}
where \emph{\textsf{the values
\(w_{2l}(\mathscr{M}),\,0\leq{}l\leq[\frac{n}{2}]\), may be
expressed only in terms of the intrinsic metric%
\footnote{That is, the metric which is induced on
manifold \ \(\mathscr{M}\) from the ambient space \(R^{n+p}\).}
 of the manifold \ \(\mathscr{M}\).
}} In particular, the constant term \(
u_{0,p}(\mathscr{M})=w_{0}(\mathscr{M})\) is the \(n\)-dimensional
volume of \(\mathscr{M}\):
 \begin{equation}%
 \label{CTV}%
 w_0(\mathscr{M})=\textup{Vol}_n\,(\mathscr{M}).
 \end{equation}%
 \end{itshape}
\end{enumerate}
\end{nthm}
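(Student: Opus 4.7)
The plan is to parametrize the tube by its normal bundle, reduce the $(n+p)$-dimensional volume to an iterated integral whose integrand is polynomial in the normal coordinates, and then read off both the polynomial structure \eqref{TuVo} and the explicit $p$-dependence \eqref{DeOnDi} from standard moment identities on the sphere in $\mathbb{R}^{p}$. Let $\pi\colon N\mathscr{M}\to\mathscr{M}$ denote the normal bundle and consider the map
\[
\Phi\colon N\mathscr{M}\to\mathbb{R}^{n+p},\qquad \Phi(x,v)=x+v.
\]
Condition \eqref{NInt} (the smallness of $t$) makes $\Phi$ a diffeomorphism from $\{(x,v):|v|\leq t\}$ onto $\mathfrak{T}_{\,\,\mathscr{M}}^{\mathbb{R}^{n+p}}(t)$, so the change-of-variables formula gives
\[
\Vol_{n+p}\bigl(\mathfrak{T}_{\,\,\mathscr{M}}^{\mathbb{R}^{n+p}}(t)\bigr)=\int_{\mathscr{M}}\Bigl(\int_{D_{x}(t)}J(x,v)\,dv\Bigr)\,d\mathrm{vol}_{\mathscr{M}}(x),\qquad J(x,v)=|\det d\Phi_{(x,v)}|.
\]

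The next step is to compute $J(x,v)$ in an adapted frame. Choose an orthonormal tangent frame $(e_{1},\dots,e_{n})$ of $T_{x}\mathscr{M}$ and an orthonormal normal frame $(\nu_{1},\dots,\nu_{p})$ of $\mathscr{N}_{x}$, parallel-transported along the normal fibre. Writing $v=\sum_{\alpha}v^{\alpha}\nu_{\alpha}$, a standard computation with the shape operators $A^{\nu_{\alpha}}$ of $\mathscr{M}$ yields
\[
J(x,v)=\det\Bigl(I_{n}-\sum_{\alpha=1}^{p}v^{\alpha}A^{\nu_{\alpha}}\Bigr),
\]
a polynomial of total degree $\leq n$ in the components $v^{\alpha}$. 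Decompose it by homogeneity, $J(x,v)=\sum_{k=0}^{n}P_{k}(x,v)$ with $P_{k}(x,\cdot)$ homogeneous of degree $k$. The symmetry $v\mapsto -v$ of the disk $D_{x}(t)\subset\mathscr{N}_{x}\cong\mathbb{R}^{p}$ annihilates every odd-degree $P_{2l+1}$, so only the terms with $k=2l$, $0\leq l\leq[\tfrac{n}{2}]$, survive; this alone accounts for the polynomial shape asserted in \eqref{TuVo}.

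For each even $k=2l$, polar coordinates on $\mathscr{N}_x$ combined with the sphere-moment identity
\[
\int_{S^{p-1}}v^{i_{1}}\cdots v^{i_{2l}}\,d\sigma=\frac{|S^{p-1}|}{2^{l}(p/2)(p/2+1)\cdots(p/2+l-1)}\sum_{\text{pairings}}\delta_{i_{a}i_{b}}
\]
and the elementary simplification $(p+2l)\Gamma(\tfrac{p}{2}+l)=2\Gamma(\tfrac{p}{2}+l+1)$ yield
\[
\int_{D_{x}(t)}P_{2l}(x,v)\,dv=\omega_{p}\,t^{p+2l}\,\frac{2^{-l}\,\Gamma(\tfrac{p}{2}+1)}{\Gamma(\tfrac{p}{2}+l+1)}\,w_{2l}(x),
\]
where $w_{2l}(x)$ is the full contraction of the coefficients of $P_{2l}(x,\cdot)$ against the pairing sum of Kronecker deltas. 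Integrating over $x\in\mathscr{M}$ and summing over $l$ produces \eqref{TuVo} with precisely the factorisation \eqref{DeOnDi}; the term $l=0$, for which $P_{0}\equiv 1$, reproduces \eqref{CTV}.

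The main obstacle is the intrinsic claim in Part II: by construction the integrand $w_{2l}(x)$ is a polynomial in the components of the second fundamental form, which are \emph{extrinsic} data. The plan for this final step is to observe that the very specific contractions dictated by the pairing sum of Kronecker deltas collapse, via the Gauss equation
\[
R^{\mathscr{M}}_{ijkl}=\sum_{\alpha=1}^{p}\bigl(h^{\alpha}_{ik}h^{\alpha}_{jl}-h^{\alpha}_{il}h^{\alpha}_{jk}\bigr),
\]
into contractions of the intrinsic Riemann tensor $R^{\mathscr{M}}$ alone; in contemporary language, the $w_{2l}$ are the Lipschitz--Killing curvature integrals of $\mathscr{M}$. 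Verifying that the shape-operator monomials produced by expanding $\det\bigl(I-\sum_{\alpha}v^{\alpha}A^{\nu_{\alpha}}\bigr)$ and then averaging over the normal sphere reorganise themselves precisely into intrinsic curvature invariants is the technical heart of Weyl's original argument and is the step that cannot be shortened.
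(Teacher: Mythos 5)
The paper does not prove this theorem: it is quoted as Weyl's classical result and the reader is referred to \cite{Wey1} and to \cite{Gra}, \cite{BeGo} for the theory of tubes, so there is no in-paper argument to compare against. Your reconstruction is the classical Weyl route, and the quantitative parts are correct. In particular the Jacobian of $\Phi(x,v)=x+v$ really is $\det\bigl(I_n-\sum_\alpha v^\alpha A^{\nu_\alpha}\bigr)$ because in an adapted frame $d\Phi$ is block lower-triangular (tangent block $I_n-\sum v^\alpha A^{\nu_\alpha}$, normal block $I_p$, off-diagonal block the normal connection), so the normal-connection term drops out of the determinant; the $v\mapsto-v$ symmetry of $D_x(t)$ kills the odd $P_{2l+1}$; and your sphere-moment normalization works out, since
\[
\int_{D_x(t)}P_{2l}(x,v)\,dv
=\frac{t^{p+2l}}{p+2l}\cdot\frac{|S^{p-1}|\,\Gamma(\tfrac p2)}{2^{l}\,\Gamma(\tfrac p2+l)}\,w_{2l}(x)
=\omega_p\,t^{p+2l}\,\frac{2^{-l}\Gamma(\tfrac p2+1)}{\Gamma(\tfrac p2+l+1)}\,w_{2l}(x),
\]
which is exactly \eqref{DeOnDi}, and $l=0$ with $P_0\equiv1$ gives \eqref{CTV}.

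The one genuine gap is the one you flag yourself: Part II asserts that the $w_{2l}$ are intrinsic, and your construction produces them a priori as polynomials in the second fundamental form. Naming the Gauss equation identifies the right mechanism, and the remark that the $w_{2l}$ are Lipschitz--Killing curvature integrals is the right modern gloss, but the combinatorial verification that the specific pairing-sum contractions of shape-operator monomials reorganize into contractions of the intrinsic Riemann tensor alone (Weyl's Pfaffian-type bookkeeping) is precisely the nontrivial content of Part II, and you do not carry it out. As written, your proposal proves Part I and the explicit $p$-dependence in \eqref{DeOnDi}, but leaves the intrinsic-ness claim of Part II as a stated, unproved step.
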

\noindent
H.\,Weyl, \textup{\cite{Wey1}}, expressed the coefficients
\(w_{2l}(\mathscr{M})\) as integrals of certain rather complicated
curvature functions of the manifold \(\mathscr{M}\).
\begin{rem}%
\label{EuC} In the case when \(\mathscr{M}\) is compact without
boundary and even dimensional, say \(n=2m\), the highest coefficient
\(w_{2m}(\mathscr{M})\) is especially interesting:
\begin{equation}
\label{ToCo}%
 w_{2m}(\mathscr{M})=(2\pi)^{m}\chi(\mathscr{M}),
\end{equation}
where \(\chi(\mathscr{M})\) is the Euler characteristic of
\(\mathscr{M}\). (See \cite[Section  1.1]{Gra}.)
\end{rem}%

\begin{defn} %
\label{DEfWP}%
 Let \(\mathscr{M}\), \(\dim{}\mathscr{M}=n\), be a smooth manifold,
 with or without boundary, \(\mathscr{M}\subset\mathbb{R}^{n+p},\,p\geq{}1\).
 Let \(\mathfrak{T}_{\,\,\,\mathscr{M}}^{\mathbb{R}^{n+p}}(t)\)
be the tube of radius \(t\) around \(\mathscr{M}\), see \eqref{DeTu}.\\[1.0ex]
\hspace*{2.0ex} The polynomial \(W_{\,\mathscr{M}}^{\,p}(t)\),
which appears in the expression \eqref{TuVo} :
\begin{equation}%
\label{Soo}%
\mathrm{Vol}_{n+p}\,(\mathfrak{T}_{\,\,\,\mathscr{M}}^{\mathbb{R}^{n+p}}(t))=
\omega_pt^p\cdot{}W_{\,\mathscr{M}}^{\,p}(t)\quad\textup{ for
small positive \(t\),}
\end{equation} %
is said to be \textsf{the H. Weyl
polynomial with the index \(p\) for the manifold \(\mathscr{M}\).}
\end{defn}
\vspace{1.0ex}
\begin{rem}
The radius \(t\) of the tube is a positive number, so the formula
\eqref{Soo} is meaningful for positive \(t\) only. However the
polynomial \(W_{\mathscr{M}}^{\,p}\) is determined uniquely by  its
restriction on any fixed interval \([0,\varepsilon]\),
\(\varepsilon>0\), and we may and will consider this polynomial
for \textsf{every complex} \(t\).
\end{rem}

\begin{defn}
\label{DeWC}%
 Let \(\mathscr{M}\) be a smooth manifold,
 \(\dim{}\mathscr{M}=n \),
which is embedded in the Euclidean space \(\mathbb{R}^{n+p},\
p\geq{}1\), and let \(W_{\mathscr{M}}^{\,p}\) be the Weyl polynomial
of \(\mathscr{M}\) \textup{(}defined by \eqref{NInt},\,\eqref{Soo}\,\textup{)}.
 The coefficients
\(w_{2l}(\mathscr{M}),\,0\leq{}l\leq{}[n/2]\),  which are
\textsf{defined} in terms of  the Weyl polynomial
\(W_{\,\mathscr{M}}^{\,p}\) by the equality
\begin{equation}%
\label{DefWC}%
W_{\,\mathscr{M}}^{\,p}(t)\stackrel{\textup{\tiny
def}}{=}\sum\limits_{l=0}^{[\frac{n}{2}]}%
\frac{2^{-l}\,\Gamma(\frac{p}{2}+1)}{\Gamma(\frac{p}{2}+l+1)}\,%
w_{2l}(\mathscr{M}) \cdot{}t^{2l}\,,
\end{equation}%
are said to be the Weyl coefficients of the manifold
\(\mathscr{M}\).
\end{defn}
\begin{rem}
\label{DecFo}%
 Often, the factor in \eqref{DefWC} appears in a
expanded form:
\begin{equation}
\label{DecFor}
\frac{2^{-l}\,\Gamma(\frac{p}{2}+1)}{\Gamma(\frac{p}{2}+l+1)}=
\frac{1}{(p+2)(p+4)\,\cdots\,\,(p+2l)}\,.
\end{equation}
\end{rem}
\begin{rem}
\label{RieMe} In defining the Weyl polynomials
\(W_{\,\mathscr{M}}^{\,p}\) of the manifold \(\mathscr{M}\) by
\eqref{Soo}, we assumed that \(\mathscr{M}\) is already embedded
into \(\mathbb{R}^{n+p}\). The tube around \(\mathscr{M}\) and its
volume are of primary importance in this definition, so that we, in fact,
define the notion of the Weyl polynomial not for the manifold \(\mathscr{M}\)
\textsf{itself} but for manifold \(\mathscr{M}\), which is already
\textsf{embedded} in an ambient space. Moreover, we assume
implicitly that from the outset the manifold
\(\mathscr{M}\) carries a `natural' Riemannian metric and that
this `original' Riemannian metric coincides with the metric on
\(\mathscr{M}\) induced  by the ambient space
\(\mathbb{R}^{n+p}.\) (In other words, we assume that the
imbedding is isometrical.) However, in this approach the
`original' metric  does not play
 an `explicit' role in the definition \eqref{DTN}-\eqref{Soo}-\eqref{DefWC}
of the Weyl polynomial \(W_{\,\mathscr{M}}^{\,p}\) and the Weyl
coefficients \(w_{2l}(\mathscr{M})\).

 There is another approach to defining the Weyl coefficients
 and the Weyl polynomials which does not require an
actual embedding \(\mathscr{M}\) into the ambient space.
 Starting from the given Riemannian metric on
\(\mathscr{M}\), the Weyl coefficients \(w_{2l}(\mathscr{M})\) can
be introduced  formally, by means of the Hermann Weyl expressions
for \(w_{2l}(\mathscr{M})\) in terms of the given metric on
\(\mathscr{M}\). Then the Weyl polynomials
\(W_{\,\mathscr{M}}^{\,p}(t)\) can be defined by means of the
expression \eqref{DefWC}. In this approach, the intrinsic metric
of \(\mathscr{M}\) is of primary importance, but not the tubes around
\(\mathscr{M}\) and their volumes.
\end{rem}
If the codimension \(p\) of
\(\mathscr{M}\)
 equals one\footnote{In other words,
\(\mathscr{M}\) is a hypersurface in \(\mathbb{R}^{n+1}\).} and
\,\,\(\dim{}\mathscr{M}=n,\)  the
Weyl polynomial is of the form:
\begin{equation}
\label{PW}
\textup{Vol}_{n+1}(\mathfrak{T}_{\,\,\,\mathscr{M}}^{\mathbb{R}^{n+1}}(t))=
2t\cdot{}W_{\mathscr{M}}^{1}(t)\,,\quad
W_{\mathscr{M}}^{1}(t)=
\sum\limits_{l=0}^{[\frac{n}{2}]}u_{2l}(\mathscr{M})\cdot{}t^{2l},
\end{equation}
where
\begin{equation}%
\label{WKRel}%
u _{2l}(\mathscr{M})=
\frac{2^{-l}\Gamma(\frac{1}{2})}{\Gamma(\frac{1}{2}+l+1)}\,w_{2l}(\mathscr{M})\,,\quad
 {}0\leq{}l\leq{}[\textstyle\frac{n}{2}]\,.
\end{equation}%
In \eqref{PW} the `shortened' notation is used:
\(u_{2l}(\mathscr{M})\) instead of \(u_{2l,1}(\mathscr{M})\). The
factor \(2t\) is the one-dimensional volume of the one-dimensional
ball of radius \(t\), that is, the length of the interval
\([-t,t]\).

If the \textit{hypersurface} \(\mathscr{M}\) is
orientable\,\footnote{The orientation of the hypersurface
\(\mathscr{M}\) can be specified by means of the continuous vector
field of unit normals on \(\mathscr{M}\). The half-tubes
\(\mathfrak{T}_\mathscr{M}^{+}(t)\) and
\(\mathfrak{T}_\mathscr{M}^{-}(t)\) are the parts of the tube
\(\mathfrak{T}_\mathscr{M}(t)\) corresponding to the `positive' and
`negative', respectively, directions of these normals.}, then  the tube
\(\mathfrak{T}_\mathscr{M}(t)\) can be decomposed into the union
of two \textit{half-tubes}, say,
\(\mathfrak{T}_\mathscr{M}^{+}(t)\) and
\(\mathfrak{T}_\mathscr{M}^{-}(t)\). The half-tubes
\(\mathfrak{T}_\mathscr{M}^{+}(t)\) and
\(\mathfrak{T}_\mathscr{M}^{-}(t)\) are the parts of the tube
\(\mathfrak{T}_\mathscr{M}(t)\) which are situated  on the
distinct sides of \(\mathscr{M}\).
 In particular, if the hypersurface \(\mathscr{M}\) is the boundary of a set
\(V:\,\mathscr{M}=\partial{}V\), then
\begin{equation}
\mathfrak{T}_\mathscr{M}^{+}(t)=\mathfrak{T}_\mathscr{M}(t)\setminus
V,\quad{}\mathfrak{T}_\mathscr{M}^{-}(t)=\mathfrak{T}_\mathscr{M}(t)\cap{}V\,.
\end{equation}
The \((n+1)\)-\,dimensional volumes
\(\textup{Vol}_{n+1}(\mathfrak{T}_\mathscr{M}^{\,+}(t))\) and
\(\textup{Vol}_{n+1}(\mathfrak{T}_\mathscr{M}^{\,-}(t))\) of the
half-tubes  are also polynomials of \(t\). These polynomials are of
the form\,%
\footnote{The equalities \eqref{WhT}, \eqref{WPST} are some of the
results of the theory of tubes around manifolds. See
\cite{Gra},\,\cite{BeGo},\cite{AdTa}}\,:
\begin{equation}
\label{WhT}
\textup{Vol}_{n+1}(\mathfrak{T}_\mathscr{M}^{\,+}(t))=t\,W_\mathscr{M}^{\,+}(t),\quad
\textup{Vol}_{n+1}(\mathfrak{T}_\mathscr{M}^{\,-}(t))=t\,W_\mathscr{M}^{\,-}(t)\,,
\end{equation}
where:
\begin{subequations}%
 \label{WPST}%
\begin{alignat}{2}
W_\mathscr{M}^{\,+}(t)&=\sum\limits_{l=0}^{[\frac{n}{2}]}u_{2l}(\mathscr{M})\cdot{}t^{2l}&&+
t\,\sum\limits_{l=0}^{[\frac{n+1}{2}]-1}u_{2l+1}(\mathscr{M})\cdot{}t^{2l},\label{WPSTa}\\
W_\mathscr{M}^{\,-}(t)&=\sum\limits_{l=0}^{[\frac{n}{2}]}u_{2l}(\mathscr{M})\cdot{}t^{2l}&&-
t\,\sum\limits_{l=0}^{[\frac{n+1}{2}]-1}u_{2l+1}(\mathscr{M})\cdot{}t^{2l},\label{WPSTb}
\end{alignat}
\end{subequations}
and the coefficients \(u_{2l}(\mathscr{M})\) are the same as those
in \eqref{PW}-\eqref{WKRel}. Unlike the coefficients \(u_{2l}(\mathscr{M})\),
the coefficients \(u_{2l+1}(\mathscr{M})\) depend not only on the
`intrinsic' metric of the manifold \(\mathscr{M}\), but also on
how \(\mathscr{M}\) is embedded into \(\mathbb{R}^{n+1}\). It is
remarkable that when the volumes of the half-tubes are summed:
\begin{equation*}
\label{IEPp}
2\,W_\mathscr{M}(t)=W_\mathscr{M}^{\,+}(t)+W_\mathscr{M}^{\,-}(t),
\end{equation*}
the dependence on how \(\mathscr{M}\) is embedded disappears.
As it is seen from \eqref{WPST},
\(W_\mathscr{M}^{\,-}(t)=W_\mathscr{M}^{\,+}(-t)\), hence
\begin{equation}
\label{IEP}
2\,W_\mathscr{M}(t)=W_\mathscr{M}^{\,+}(t)+W_\mathscr{M}^{\,+}(-t).
\end{equation}
We remark also that the volumes of the half-tubes can be expressed only in the terms
of the polynomial \(W_\mathscr{M}^{\,+}\):
\begin{subequations}
\label{SeSa}
\begin{align}
\label{SeSa+}
\textup{Vol}_{n+1}(\mathfrak{T}_\mathscr{M}^{\,+}(t))&=t\,W_\mathscr{M}^{\,+}(\,\,t\,)
\ \ \textup{\,for small positive \(t\)\,.}\\[1.0ex]
\label{SeSa-}
\textup{Vol}_{n+1}(\mathfrak{T}_\mathscr{M}^{\,-}(t))&=t\,W_\mathscr{M}^{\,+}(-t)
\ \ \textup{for small positive \(t\)\,.}
\end{align}
\end{subequations}
The theory of the tubes around manifolds is presented in
\cite{Gra}, and to some extent in \cite{BeGo}, Chapter 6, and in
\cite{AdTa},\,Chapter 10. The comments of V.Arnold  \cite{Arn} to
the Russian translations of the paper \cite{Wey1} by H.Weyl are
very rich in content.

In the event that the hypersurface \(\mathscr{M}\) is the boundary
of a convex set \(V\): \(\mathscr{M}=\partial V\), the Weyl
polynomial \(W_{\mathscr{M}}^{1}\) can be expressed in terms of
polynomials considered in the theory of convex sets.

In the theory of convex sets the following fact, which was
discovered by Hermann Minkowski, \cite{Min1, Min2}, is of
principal importance: \textit{Let \(V_1\) and \(V_2\) be compact
convex sets in \(\mathbb{R}^n\). For positive numbers \(t_1,t_2\),
let us form the `linear combination' \(t_1V_1+t_2V_2\) of the sets
\(V_1\) and \(V_2\). \textup{(}That is,
\(t_1V_1+t_2V_2=\{t_1x_1+t_2x_2:
x_1\in{}V_1,\,x_2\in{}V_2\}\).\textup{)}
 Then  the \(n\)-dimensional Euclidean volume
\(\textup{Vol}_n(t_1V_1+t_2V_2)\) of this linear combination,
considered as a function of the variables \(t_1,t_2\), is a
homogeneous polynomial of  degree~\(n.\) \textup{(}It may vanish
identically.\textup{)}}\\

Choosing \(V\) as \(V_1\) and the unit ball \(B^n\)
of \(\mathbb{R}^n\) as \(V_2\), we obtain the following:\\[1.5ex]
\textit{Let \(V\) be a compact convex set in \(\mathbb{R}^n\),
\(B^n\) be the unit ball in \(\mathbb{R}^n\). Then the
\(n\)-dimensional volume \(\textup{Vol}_n(V+tB^n)\), considered as
a function of the variable \(t\in[0,\infty)\), is a polynomial of
degree \(n\).}
\begin{defn}
\label{DeMiPo}
 Let \(V\subset{}\mathbb{R}^n\) be a
compact convex set. The polynomial which expresses the
\(n\)-dimensional volume of the linear combination \(V+tB^n\) as a
function of the variable \(t\in[0,\infty)\) is said to be
\textsf{the Steiner polynomial of the set \(V\)} and is denoted
by \(S_{\,V}^{\mathbb{R}{n}}(t)\):\\[-1.5ex]
\begin{equation}
\label{DMP} S_{V}^{\mathbb{R}^n}(t)=%
\textup{Vol}_n(V+tB^n)\,,\quad
(t\in[0,\infty)).
\end{equation}
The coefficients of a Steiner polynomial are denoted by
\(s_{\,k}^{\mathbb{R}^n}(V)\):
\begin{equation}
\label{MiP} S_{\,V}^{\mathbb{R}^{n}}(t)=\sum\limits_{0\leq k\leq
n}s_{\,k}^{\mathbb{R}^n}(V)t^k.
\end{equation}
If there is no need to emphasize that the ambient space is
\(\mathbb{R}^n\), then the shortened notation \(S_V(t)\), \(s_k(V)\)
for the Steiner polynomial and its coefficients,respectively, will be used.
\end{defn}
\vspace{1.5ex}
\noindent
Of course,
\[S_{\,V}^{\mathbb{R}^{n}}(t)=\textup{Vol}_n(\mathfrak{V}_{\,V}^{\,\mathbb{R}^{n}}(t)),\]
where \(\mathfrak{V}_{\,V}^{\mathbb{\,R}^{n}}(t)\) is the
\(t\)-neighborhood of the set \(V\) with respect to
\(\mathbb{R}^n\):
\begin{equation}
\label{DeNeCo}%
\mathfrak{V}_{\,V}^{\,\mathbb{R}^{n}}(t)=\{x\in\mathbb{R}^n:\,\textup{dist}(x,{V})\leq
t\}.
\end{equation}
It is evident that
\begin{equation}
\label{EvEq}%
 s_0(V)=\text{Vol}_n(V), \ \ \text{and}\ \
s_n(V)=\text{Vol}_n(B^n).
\end{equation}
 If the boundary \(\partial V\) of a convex set \(V\) is smooth,
 then the \((n-1)\)-dimensional volume (`the area') of the boundary
\(\partial V\) can be expressed as
\begin{equation}
\label{Cau} s_{1}(V)=\text{Vol}_{\,n-1}(\partial V)\,.
\end{equation}
For a convex set \(V\), whose boundary \(\partial V\) may be
non-smooth, the formula \eqref{Cau} serves as a
\textit{definition} of the \textit{`area'} of \(\partial V\). (See
\cite[\textbf{31}]{BoFe}, \cite[\S\,24]{Min1},~ %
\cite[\textbf{6.4}]{Web}.)
Let us emphasize that the Steiner polynomial is defined for an
 \textit{arbitrary} compact convex set \(V\), without any extra
 assumptions. The boundary of \(V\) may be non-smooth, and the
 interior of \(V\) may be empty. In particular, the Steiner
 polynomial is defined for any convex polytope.
\begin{defn}
\label{solid} Let \(\,V\subset\mathbb{R}^n\) be a convex set.
\(V\) is said to be \textsf{solid} if the interior of \(V\) is not
empty, and \textsf{non-solid} if the interior of \(V\) is
empty.
\end{defn}
\begin{defn}
A set \(\mathscr{M}\subset\mathbb{R}^{n+1}\) is called an
\textsf{\(n\)-dimensional closed convex surface} if there exists a
solid compact convex set \(V\subset\mathbb{R}^{n+1}\), such that
\begin{equation}
\label{DeCoS}%
\mathscr{M}=\partial{}V\,.
\end{equation}
The set \(V\) is said to be the \textsf{generating set for the
surface \(\mathscr{M}\)}.
\end{defn}
 \begin{lem}
\label{MPWP}
  \textit{If the closed \(n\)\,-\,dimensional convex surface \(\mathscr{M}\)
  is also a smooth manifold,
    then the Weyl polynomial \(W_{\mathscr{M}}^{\,1}\)
 of the surface \(\mathscr{M}\) and the Steiner polynomial
  \(S_{\,\,V}^{\mathbb{R}^{n+1}}\)
 of its generating set \(V\) are related in the following way}:
 \begin{equation}
 \label{WMP}
 2t\,W_{\mathscr{M}}^{\,1}(t)=
 S_{\,\,V}^{\mathbb{R}^{n+1}}(t)-S_{\,\,V}^{\mathbb{R}^{n+1}}(-t).
\end{equation}
\end{lem}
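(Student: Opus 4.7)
The strategy is to reduce the identity to two inputs already present in the excerpt: the outer half-tube formula \eqref{SeSa+} and the symmetry \eqref{IEP}. The only new ingredient needed is a single geometric observation identifying the outer half-tube around a smooth convex surface with the part of the parallel body that lies outside \(V\).

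First I would establish that, for small \(t>0\),
\begin{equation*}
\mathfrak{T}_{\,\mathscr{M}}^{\,+}(t)=\mathfrak{V}_{\,V}^{\,\mathbb{R}^{n+1}}(t)\setminus V
\end{equation*}
up to a set of \((n+1)\)-dimensional measure zero. Convexity of \(V\) is the crux: for every \(x\in(V+tB^{n+1})\setminus V\) the metric projection of \(x\) onto \(V\) is unique and lies on \(\mathscr{M}=\partial V\), so \(\dist(x,\mathscr{M})=\dist(x,V)\leq t\); conversely every outer-side point of \(\mathfrak{T}_{\,\mathscr{M}}(t)\) belongs to \((V+tB^{n+1})\setminus V\). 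Taking \((n+1)\)-volumes and invoking \eqref{DMP}, \eqref{EvEq}, and \eqref{SeSa+} produces the polynomial identity
\begin{equation*}
S_{\,V}^{\mathbb{R}^{n+1}}(t)=s_{0}(V)+t\,W_{\mathscr{M}}^{\,+}(t).
\end{equation*}

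The rest is polynomial algebra. Substituting \(-t\) for \(t\) in the displayed identity and rearranging gives
\(t\,W_{\mathscr{M}}^{\,+}(-t)=s_{0}(V)-S_{\,V}^{\mathbb{R}^{n+1}}(-t)\).
Recalling that the \(W_{\mathscr{M}}\) appearing in \eqref{IEP} is exactly \(W_{\mathscr{M}}^{\,1}\) of the hypersurface case, I multiply \eqref{IEP} by \(t\) and insert the two expressions:
\begin{equation*}
2t\,W_{\mathscr{M}}^{\,1}(t)
=t\,W_{\mathscr{M}}^{\,+}(t)+t\,W_{\mathscr{M}}^{\,+}(-t)
=\bigl(S_{\,V}^{\mathbb{R}^{n+1}}(t)-s_{0}(V)\bigr)+\bigl(s_{0}(V)-S_{\,V}^{\mathbb{R}^{n+1}}(-t)\bigr),
\end{equation*}
which telescopes to the claim. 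The sole obstacle is the geometric identification of \(\mathfrak{T}_{\,\mathscr{M}}^{\,+}(t)\) with \(\mathfrak{V}_{\,V}^{\,\mathbb{R}^{n+1}}(t)\setminus V\); once that is in place the assertion follows from a one-line manipulation of polynomial identities already recorded in the excerpt, and no curvature integral or Steiner-type formula for the inner parallel body needs to be invoked.
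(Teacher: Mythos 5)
Your proof is correct and follows essentially the same route as the paper: the paper likewise asserts the decomposition \(V+tB^{n+1}=V\sqcup\mathfrak{T}_{\partial V}^{+}(t)\) (which you verify via the uniqueness of the nearest-point projection onto a convex set), passes to volumes to obtain \(S_V(t)=S_V(0)+t\,W_{\mathscr{M}}^{+}(t)\), and then combines the \(t\mapsto -t\) substitution with the identity \eqref{IEP}. The only difference is that you spell out the geometric identification that the paper treats as immediate.
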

\noindent
 \begin{proof}[Proof of Lemma \ref{MPWP}] We assign the positive orientation to the vector field
 of exterior normals on \(\partial V\). Let
 \(\mathfrak{T}_{\partial{}V}^{+}(t)\) be
 the `exterior' half-tube around \(\partial V\).
 For  positive \(t\),
\begin{equation*}
V+tB^{n+1}=V\cup{}\mathfrak{T}_{\partial{}V}^{+}(t),
\end{equation*}
Moreover the set \(V\) and \(\mathfrak{T}_{\partial{}V}^{+}(t)\)
do not  intersect. Therefore,
\[\textup{Vol}_{n+1}(V+tB^{n+1})=
\textup{Vol}_{n+1}(V)+\textup{Vol}_{n+1}(\mathfrak{T}_{\partial{}V}^{+}(t)).\]
Hence,
\begin{equation*}
S_V(t)=S_V(0)+t\,W_\mathscr{M}^{\,+}(t),\ \ \ %
{\mathscr{M}=\partial V}, %
\end{equation*}
where \(W_\mathscr{M}^{+}\) is a polynomial of the form
\eqref{WhT} (with \(n\) replaced by \(n+1\): now \(\dim
V=n+1\)). It follows, furthermore, that:
\begin{equation*}
S_V(-t)=S_V(0)-t\,W_\mathscr{M}^{\,+}(-t).
\end{equation*}
The equality \eqref{WMP} follows from the latter equality and from
\eqref{IEP}.
\end{proof}

 Since the Steiner polynomial is defined for an
arbitrary compact convex set, the formula \eqref{WMP} can serve as
a \textit{definition} for the Weyl polynomial of an
\textit{arbitrary} closed convex surface, smooth or non-smooth.
Furthermore, we can define the Weyl polynomial for the `improper
convex surface \(\partial{}V\)', where \(V\) is a non-solid
compact convex set.
\begin{defn}
\label{Impr} Let \(V,\,V\subset\mathbb{R}^{n+1},\) be a compact
convex set. The boundary \(\partial{}V\) of the set \(V\) is said
to be the \textsf{boundary surface} of \(V\). The boundary surface
of \(V\) is said to be \textsf{proper} if \(V\) is solid, and
\textsf{improper} if \(V\) is non-solid.
\end{defn}

The following improper closed convex surface plays a role in what
follow:
\begin{defn}
\label{SqCyl} Let  \(V\subset\mathbb{R}^{n}\) be a compact
convex set, which is solid \emph{with respect to
\(\mathbb{R}^{n}\).} We identify \(\mathbb{R}^{n}\) with its image
\(\mathbb{R}^{n}\times{}0\) using the `canonical' embedding%
\footnote{The point \(x\in\mathbb{R}^{n}\) is identified with the
point \((x,0)\in\mathbb{R}^{n+1}\).} %
 \(\mathbb{R}^{n}\) into \(\mathbb{R}^{n+1}\) and the set \(V\)
 with the set \(V\times{}0\), considered as a subset of
 \(\mathbb{R}^{n+1}\): \(V\times{}0\subset{}\mathbb{R}^{n+1}\).
 The set \(V\times{}0\),  considered as a subset of
 \(\mathbb{R}^{n+1}\), is said to be \textsf{the squeezed cylinder with
 the base \(V\).}
\end{defn}
\vspace{0.5ex}
\begin{rem}
\label{IntSqCy}%
The set \(V\times{}0\) can be interpreted as a ` cylinder of
height' zero, whose `lateral surface' is the Cartesian product
\(\partial{}V\times[0,0]\) and whose bases, lower and upper, are given
the sets \(V\times{}(-0)\) and  \(V\times{}(+0)\), respectively\textup{:}
\begin{equation}
\label{SuArIn}
\partial(V\times{}0)=\big((\partial{}V)\times{}[0,0]\big)\cup\big({V\times{}(-0)}\big)
\cup\big({V\times{}(+0)}\big)\,.
\end{equation}
In other words, the boundary surface \(\partial{}(V\times{}0)\)
 can be considered as `the doubly covered' set \(V\). In particular,
\begin{equation}%
\label{}%
\dim\partial{}(V\times{}0)=n\,.
\end{equation}%
and the number
\(\textup{Vol}_n(V\times(-0))+\textup{Vol}_n(V\times(+0))=2\,\textup{Vol}_n(V)\)
can be naturally interpreted as the `\(n\)-\,dimensional area' of
the \(n\)-\,dimensional convex surface (improper)
\(\partial(V\times{}0)\):
\begin{equation}
\label{SuAr}
\textup{Vol}_n(\partial(V\times{}0))=2\,\textup{Vol}_n(V)\,.
\end{equation}
\end{rem}

On the other hand, the equality \eqref{Cau}, in which the squeezed
cylinder \(V\times{}0\subset\mathbb{R}^{n+1}\) plays the role of
the set \(V\subset{}\mathbb{R}^{n}\), takes the form
\begin{equation}
\label{OOH}
\textup{Vol}_n(\partial(V\times{}0))=s_{\,1}^{\mathbb{R}^{n+1}}(V\times{}0)\,,
\end{equation}
where
\(s_{\,k}^{\mathbb{R}^{n+1}}(V\times{}0),\,\,k=0,1,\,\ldots\,,\,n+1,\)
are the coefficients of the Steiner polynomial
\(S_{\,V\times{}0}^{\mathbb{R}^{n+1}}(t)\) of the squeezed
cylinder \(V\times{}0\) with respect to the ambient space
\(\mathbb{R}^{n+1}\). (See \eqref{MiP}.)

 In section \ref{ExInSp} we prove the following statement, which there
 appears as Lemma \ref{IPRom}:
\begin{lem}
\label{IPR}%
Let \(V\) be a compact convex set in \(\mathbb{R}^n\), and
\begin{equation}%
\label{IRP1}%
 S_{\,V}^{\mathbb{R}^n}(t)=\sum\limits_{0\leq{}k\leq{}n}s_{\,k}^{\mathbb{R}^n}(V)t^k
\end{equation}%
be the Steiner polynomial with respect to the ambient space
\(\mathbb{R}^n\). Then the Steiner polynomial
\(S_{\,V\times{}0}^{\mathbb{R}^{n+1}}(t)\) with respect to the
ambient space \(\mathbb{R}^{n+1}\) is equal to:
\begin{equation}%
\label{MPn1}%
S_{\,V\times{}0}^{\mathbb{R}^{n+1}}(t)=%
t\!\!\sum\limits_{0\leq{}k\leq{}n}\frac{\Gamma(\frac{1}{2})
\Gamma(\frac{k}{2}+1)}{\Gamma(\frac{k+1}{2}+1)}\,s_{\,k}^{\mathbb{R}^n}(V)%
\, t^k\,.
\end{equation}%
\end{lem}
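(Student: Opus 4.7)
The plan is to compute $S_{V\times 0}^{\mathbb{R}^{n+1}}(t) = \Vol_{n+1}((V\times 0) + tB^{n+1})$ by slicing orthogonally to the extra axis and expressing each slice as an $n$-dimensional parallel set of $V$, whose volume is already known to be the Steiner polynomial $S_V^{\mathbb{R}^n}$.

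First I would observe that for a point $(x,s)\in\mathbb{R}^n\times\mathbb{R}=\mathbb{R}^{n+1}$, membership in $(V\times 0)+tB^{n+1}$ is equivalent to the existence of $v\in V$ with $|x-v|^2+s^2\leq t^2$, i.e., $|s|\leq t$ and $\dist(x,V)\leq\sqrt{t^2-s^2}$. Hence the horizontal slice at height $s$ is exactly the $n$-dimensional neighborhood $\mathfrak{V}_{\,V}^{\,\mathbb{R}^{n}}(\sqrt{t^2-s^2})$, whose volume equals $S_{\,V}^{\mathbb{R}^n}(\sqrt{t^2-s^2})$ by \eqref{DMP}. Fubini's theorem then yields
\begin{equation*}
S_{\,V\times 0}^{\mathbb{R}^{n+1}}(t)=\int_{-t}^{t} S_{\,V}^{\mathbb{R}^n}\!\bigl(\sqrt{t^2-s^2}\bigr)\,ds=\sum_{k=0}^{n}s_{\,k}^{\mathbb{R}^n}(V)\int_{-t}^{t}(t^2-s^2)^{k/2}\,ds.
\end{equation*}

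Next, I would evaluate each integral by the substitution $s=t\sin\theta$, which turns it into $t^{k+1}\!\int_{-\pi/2}^{\pi/2}\cos^{k+1}\theta\,d\theta$. Recognising the latter as a Beta integral, one has
\begin{equation*}
\int_{-\pi/2}^{\pi/2}\cos^{k+1}\theta\,d\theta=B\!\left(\tfrac{1}{2},\tfrac{k}{2}+1\right)=\frac{\Gamma(\frac{1}{2})\,\Gamma(\frac{k}{2}+1)}{\Gamma(\frac{k+1}{2}+1)}.
\end{equation*}
Substituting this back, term by term, and factoring out a $t$ from the sum gives exactly \eqref{MPn1}.

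The only real obstacle is a mild one: justifying that the identity $S_{\,V}^{\mathbb{R}^n}(r)=\Vol_n(\mathfrak{V}_{\,V}^{\,\mathbb{R}^n}(r))$ may be applied for \emph{every} $r\in[0,t]$ (rather than just small $r$), so that the slicing argument is globally valid. This is however immediate from Definition \ref{DeMiPo}, where the Steiner polynomial is defined by \eqref{DMP} as the volume of $V+rB^n$ on the whole half-line $r\in[0,\infty)$; the polynomial identity in $r$ therefore transfers to the integral in $s$ with no restriction on $t>0$. Everything else is a direct Fubini-plus-Beta-integral computation.
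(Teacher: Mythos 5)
Your proof is correct and follows essentially the same route as the paper's: slice the tube orthogonally to the added axis, apply Fubini to reduce the volume to $\int_{-t}^{t} S_V^{\mathbb{R}^n}(\sqrt{t^2-s^2})\,ds$, and evaluate the resulting integrals as Beta functions. The only superficial difference is your trigonometric substitution $s=t\sin\theta$ in place of the paper's algebraic one $s\to ts^{1/2}$; both lead directly to $B(\tfrac12,\tfrac{k}{2}+1)$.
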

\vspace{2.5ex}
\noindent
So,%
 \[s_{\,0}^{\mathbb{R}^{n+1}}(V\times{}0)=0,\quad{}
s_{k+1}^{\mathbb{R}^{n+1}}(V\times{}0)=
\frac{\Gamma(\frac{1}{2})\Gamma(\frac{k}{2}+1)}{\Gamma(\frac{k+1}{2}+1)}\,
s_{\,k}^{\mathbb{R}^n}(V),\,k=0,\,\ldots,\,n\,.\] %
In particular, \(s_{\,1}^{\mathbb{R}^{n+1}}(V\times{}0)
=2s_{\,0}^{\mathbb{R}^n}(V).\) Since
\(s_{\,0}^{\mathbb{R}^n}(V)=\textup{Vol}_n(V)\), see \eqref{EvEq},
\begin{equation}
\label{AgMP}
s_{\,1}^{\mathbb{R}^{n+1}}(V\times{}0)=2\,\textup{Vol}_n(V)\,.
\end{equation}
The equalities \eqref{SuAr}, \eqref{OOH}  and \eqref{AgMP} agree.
%\hfill\framebox[0.45em]{ } %

\begin{rem}%
\label{AprC1} Any non-solid compact convex set \(V\) can be
presented as the limit \textup{(}in the Hausdorff
\,metric\textup{)} of a monotonic\,%
%%%%%%%%%%%%%%%%%%%%%%%%%%%%%%%%%%%%%%%%%%%%%%%%%
\footnote{\label{monot}The monotonicity means that
\(V_{\varepsilon^\prime}\supset{}V_{\varepsilon^{\prime\prime}}\supset{}V\)
for \(\varepsilon^{\prime}>\varepsilon^{\prime\prime}>0\).}
%%%%%%%%%%%%%%%%%%%%%%%%%%%%%%%%%%%%%%%%%%%%%%%%%
 family \(\{V_{\varepsilon}\}_{\varepsilon>0}\)
of solid convex sets \(V_{\varepsilon}\)~\textup{:}
\[V=\lim_{\varepsilon\to+0}V_{\varepsilon}.\]
Moreover, the approximating family
\(\{V_{\varepsilon}\}_{\varepsilon>0}\) of convex sets
 can be chosen  so that the boundary
\(\partial{}(V_{\varepsilon})\) of each set \(V_{\varepsilon}\) is
a smooth surface. Thus, the improper convex surface
\(\partial{}V\) may be presented as the limit of proper convex
smooth surfaces \(\partial{}(V_{\varepsilon})\) which shrink to
\(\partial{}V\):
\[\partial{}V=\lim_{\varepsilon\to+0}\partial{}(V_{\varepsilon}).\]
\end{rem}%
\begin{defn}
\label{DeWPDC} Let \(V,\,V\subset\mathbb{R}^{n+1},\) be an
arbitrary compact convex set. The \textsf{Weyl polynomial
\(W_{\partial{}V}^{\,1}(t)\)} of the convex surface
\(\mathscr{M}=\partial{}V\),
 proper or improper, is \textsf{defined} by the formula \eqref{WMP}.
 In other words, the Weyl polynomial \(t\,W_{\partial{}V}^{\,1}\)
 is defined as the odd part of the Steiner polynomial
  \(S_{\,\,V}^{\mathbb{R}^{n+1}}\):
  \begin{equation}%
  \label{MPOP}
  t\cdot{}W_{\partial{}V}^{\,1}(t)={}^{\mathscr{O}}\!S_{\,\,V}^{\mathbb{R}^{n+1}}(t),
  \end{equation}%
  where the even part \({}^{\mathscr{E}}\!P\) and
  the odd part \({}^{\mathscr{O}}\!P\) of an arbitrary polynomial \(P\) are
  defined as  \({}^{\mathscr{E}}\!P(t)=\frac{1}{2}(P(t)+P(-t))\),
  \({}^{\mathscr{O}}\!P(t)=\frac{1}{2}(P(t)-P(-t))\), respectively. (See
  \textup{Definition \ref{DefRIPa}}.)
\end{defn}
\begin{rem}
\label{agDe}%
 In the case when the set \(V\) is solid and its
boundary \(\partial{}V\) is smooth, both definitions,
\textup{Definition~\ref{DeWPDC}} \,and
\textup{Definition~\ref{DEfWP}} \,of the Weyl polynomial
\(W_{\,\partial{}V}^{\,1}\), are applicable to \(\partial{}V\). In
this case both definitions agree.
\end{rem}

\begin{rem}%
\label{why}%
\textup{Why  would it  be useful to consider improper convex
surfaces and
their Weyl polynomials?}\\
As it was mentioned earlier, \textup{(Remark \ref{AprC1})}, every improper
convex surface \(\partial{}V\) is a limiting object for a family
of proper smooth convex surfaces \(\partial{}(V_{\varepsilon})\).
It turns out that the Weyl polynomial for this improper surface is
the limit of the Weyl polynomials for this `approximating' family
\(\{V_{\varepsilon}\}_{\varepsilon>0}\) of smooth proper
surfaces.\\
The Weyl polynomials  for the improper surface \(\partial{}V\)
may, therefore, be useful for studying  the limiting behavior of the family
of the Weyl polynomials \(\) for the proper surfaces
\(\partial{}(V_{\varepsilon})\) shrinking to the improper surface
\(\partial{}V\). In particular, see \textup{Theorem \ref{NMR}}
formulated at the end of \textup{Section \ref{FMR}}, and its proof,
presented at the end of \textup{Section \ref{ExInSp}.}
\end{rem}%
\vspace{1.0ex}
Let \(\mathscr{M}\) be an \(n\)\,-\,dimensional closed convex
surface, which is not assumed to be smooth, and \(V\)  the
generating convex set for \(\mathscr{M}\):
\(\mathscr{M}=\partial{}V\). Let \(S_{V}^{\mathbb{R}^{n+1}}\) be
the Steiner polynomial for \(V\), defined by Definition
\ref{DeMiPo}. According to Definition \ref{DeWPDC}, the Weyl
polynomial \(W_{\mathscr{M}}^{\,1}\) is equal to
\begin{equation}%
\label{WPHC}%
W_{\mathscr{M}}^{\,1}(t)=
\sum\limits_{0\leq{}l\leq{}\left[\frac{n}{2}\right]}s_{2l+1}(V)t^{2l},
\end{equation}%
or, alternatively,
\begin{equation}%
\label{IOW}%
u_{2l}(\mathscr{M})=s_{2l+1}(V),\quad
0\leq{}l\leq{}[\textstyle\frac{n}{2}],
\end{equation}%
 where \(u_{2l}(\mathscr{M})\) are the coefficients of the Weyl polynomial
\(W_{\mathscr{M}}^{1}\), \eqref{PW}, of the \(n\)-\,dimensional
surface \(\mathscr{M}\) with respect to the ambient space
\(\mathbb{R}^{n+1}\) and
 \(s_k(V),\,k=2l+1,\) are the coefficients of
the Steiner polynomial~~\(S_{\,\,\,V}^{\mathbb{R}^{n+1}}\):
\begin{equation}
\label{MPGs}
S_{\,\,\,V}^{\mathbb{R}^{n+1}}(t)=\textup{Vol}_{n+1}(V+tB^{n+1}),\quad
S_{\,\,\,V}^{\mathbb{R}^{n+1}}(t)=
\sum\limits_{0\leq{}k\leq{}n+1}s_{k}(V)t^{k}\,.
\end{equation}
\begin{defn}
\label{DNWP}%
 Given a closed \(n\)-\,dimensional convex
surface \(\mathscr{M}\), proper or not,
\(\mathscr{M}=\partial{}V\), the numbers
 \(w_{2l}(\mathscr{M}),\,\,0\leq{}l\leq{}[\frac{n}{2}]\), are \textsf{defined} as
\begin{equation}%
\label{NWP}%
w_{2l}(\mathscr{M})=2^l\frac{\Gamma(l+\frac{1}{2}+1)}{\Gamma(\frac{1}{2}+1)}
\,s_{\,2l+1}^{\mathbb{R}^{n+1}}(V),
 \end{equation}
 where \( s_{\,k}^{\mathbb{R}^{n+1}}(V), \,k=2l+1,\,\) are the coefficients of the
 Steiner polynomial \(S_{\,\,V}^{\mathbb{R}^{n+1}}\) for the
  generating set \(V\), \eqref{MPGs}.
 The numbers
 \(w_{2l}(\partial{}V),\,\,0\leq{}l\leq{}\left[\frac{n}{2}\right]\),
 are said to be \textsf{the Weyl coefficients for the surface
 \(\mathscr{M}\)}.
\end{defn}%
\begin{rem}
According to \textup{Lemma \ref{IPR}}, in the event that the
(improper) convex surface \(\mathscr{M}, \dim \mathscr{M}=n,\) is
the boundary of the squeezed cylinder (see Definition
\ref{SqCyl}), that is, if \(\mathscr{M}=\partial{}(V\times{}0),\)
where \(V\subset\mathbb{R}^n\), then the Weyl coefficients
\(w_{2l}(\mathscr{M}),\,\,0\leq{}l\leq{}[\frac{n}{2}]\), are:
\begin{equation}
\label{WCSqC}
w_{2l}(\mathscr{M})=2^{l+1}\,\Gamma(l+1)\,s_{2l}^{\mathbb{R}^{n}}(V)\,,
\end{equation}
where \( s_{\,k}^{{\mathbb{R}^{n}}}(V), \,k=2l,\,\) are the
coefficients of the
 Steiner polynomial \(S_{\,\,V}^{\mathbb{R}^{n}}\) for
  the base   \(V\) of the squeezed cylinder \(\partial{}(V\times{}0)\).
\end{rem}
\begin{rem}
In the case when the convex surface \(\mathscr{M}\),
\(\mathscr{M}=\partial{}V\), is smooth and `proper', that is, the
set \(V\) generating the surface \(\mathscr{M}\) is solid, both
definitions, Definition~\ref{DNWP} \,and Definition~\ref{DeWC}
\,of the Weyl coefficients \(w_{2l}(\mathscr{M})\) are applicable.
In this case, accordingly to \ \eqref{PW}-\eqref{WKRel} and \eqref{IOW}-\eqref{NWP},%
\footnote{Actually, the equalities \eqref{WKRel}, \eqref{IOW}
served as a motivation for Definition \ref{DNWP}.}
 both definitions agree.
\end{rem}
Note, that according to \eqref{Cau}, (see also Remark
\ref{IntSqCy}),
\begin{equation}
\label{WCA} w_0(\mathscr{M})=\textup{Vol}_{n}(\mathscr{M})
\end{equation}
for every \(n\)\,-\,dimensional closed convex surface
\(\mathscr{M}\).

\begin{lem}
\label{Posi}%
{\ }\textup{\textsf{I}}. Let \(V\), \(V\subset\mathbb{R}^n\), be a
 solid (with respect to \(\mathbb{R}^n\)) compact convex set. Then
the coefficients \(s_k^{{\mathbb{R}}^n}(V),\,0\leq{}k\leq{}n,\) of
its Steiner
polynomials%
\footnote{See \eqref{DMP}, \eqref{MiP}.} %
are strictly positive:
 \(s_k^{{\mathbb{R}}^n}(V)>0,\,\,\,0\leq{}k\leq{}n\,.\)\\
\hspace*{2.0ex} \textup{\textsf{II}}. Let \(\mathscr{M}\) be a
proper compact convex surface, \(\dim \mathscr{M}=n.\) Then all
its Weyl  coefficients \(w_{2l}(\mathscr{M})\) are strictly
positive\,\textup{:}\,\,\,
\(w_{2l}(\mathscr{M})>0,\,\,0\leq{}l\leq{}[\frac{n}{2}]\)\,.\\
\hspace*{1.5ex} \textup{\textsf{III}}. Let \(\mathscr{M}\) be the
boundary surface%
\footnote{See Definition \ref{SqCyl} and Remark \ref{IntSqCy}.}%
 of a squeezed cylinder, whose base \(V\), \(\dim V=n,\)
is a compact convex set which is solid with respect to
\(\mathbb{R}^n\). Then all its Weyl coefficients
\(w_{2l}(\mathscr{M})\) are strictly positive\,\textup{:}\,\,\,
\(w_{2l}(\mathscr{M})>0,\,\,0\leq{}l\leq{}[\frac{n}{2}]\)\,.
\end{lem}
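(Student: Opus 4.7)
My plan is to reduce all three parts of Lemma~\ref{Posi} to Part~I, which is the substantive claim; Parts~II and III will then follow by applying Part~I in the appropriate ambient space and inspecting the defining relations.

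For Part~I, I would appeal to the mixed-volume interpretation of the Steiner polynomial. Minkowski's theorem, recalled in the text, applied to the pair \((V, B^n)\) yields
\[
\textup{Vol}_n(t_1 V + t_2 B^n) = \sum_{k=0}^n \binom{n}{k}\, V(V[n-k],\, B^n[k])\, t_1^{n-k}\, t_2^k,
\]
where \(V(V[n-k], B^n[k])\) denotes the mixed volume with \(n-k\) copies of \(V\) and \(k\) copies of \(B^n\). Setting \(t_1=1\), \(t_2=t\) and comparing with \eqref{MiP} identifies \(s_k^{\mathbb{R}^n}(V) = \binom{n}{k}\, V(V[n-k], B^n[k])\). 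Since \(V\) is solid, after a translation it contains some ball \(\varepsilon B^n\) with \(\varepsilon>0\); by translation invariance in each argument and the classical monotonicity of mixed volumes under set-theoretic inclusion,
\[
V(V[n-k],\, B^n[k]) \geq V((\varepsilon B^n)[n-k],\, B^n[k]) = \varepsilon^{n-k}\,\textup{Vol}_n(B^n) > 0,
\]
so \(s_k^{\mathbb{R}^n}(V) > 0\) for every \(0 \leq k \leq n\).

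Part~II requires no new ideas. The hypothesis that \(\mathscr{M}\) is a proper convex surface means \(\mathscr{M}=\partial V\) with \(V\subset\mathbb{R}^{n+1}\) solid, so Part~I applied in ambient dimension \(n+1\) yields \(s_{2l+1}^{\mathbb{R}^{n+1}}(V)>0\) for every \(0\leq l \leq [n/2]\); since Definition~\ref{DNWP} writes \(w_{2l}(\mathscr{M})\) as a strictly positive numerical multiple of \(s_{2l+1}^{\mathbb{R}^{n+1}}(V)\), we obtain \(w_{2l}(\mathscr{M})>0\). Part~III is analogous: the formula \eqref{WCSqC}, recorded in the remark just before the lemma as a consequence of Lemma~\ref{IPR}, reads \(w_{2l}(\mathscr{M}) = 2^{l+1}\Gamma(l+1)\, s_{2l}^{\mathbb{R}^n}(V)\), and Part~I applied to the solid base \(V\) in \(\mathbb{R}^n\) supplies the required positivity.

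The main obstacle is the monotonicity step for mixed volumes in Part~I, since mixed volumes are only implicit in the excerpt through Minkowski's theorem; the rest of the argument is a bookkeeping exercise with the definitions. Monotonicity is a classical fact whose standard proof uses the polynomial identification of mixed volumes together with non-negativity of Euclidean volume, and citing it from a reference on convex geometry is the most efficient way to close the argument. An alternative, more self-contained route is to observe that \(V + tB^n \supseteq \varepsilon B^n + tB^n = (\varepsilon + t)B^n\) for \(t \geq 0\) and to combine this with the \textit{a~priori} non-negativity of the mixed-volume coefficients \(V(V[n-k], B^n[k])\); one must still invoke non-negativity, which is exactly the content being avoided, so the mixed-volume reference is unavoidable.
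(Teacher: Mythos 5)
Your proof is correct and follows essentially the same route as the paper: the paper defers Part~I to Lemma~\ref{PCMP} in Section~\ref{ChPrMiPo}, where it likewise uses monotonicity of mixed volumes (via the Steiner-coefficient monotonicity~\eqref{IMiP}) together with the fact that a solid convex set contains a ball $x_0+\rho B^n$, and then derives Parts~II and~III from Part~I via~\eqref{NWP} and~\eqref{WCSqC}, exactly as you do.
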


Statement \textsf{I} of Lemma \ref{Posi} is a consequence of a
more general statement related to the monotonicity properties of
the mixed volumes. This will be discussed later, in Section
\ref{ChPrMiPo}. Statements \textsf{II} and \textsf{III} of
Lemma \ref{Posi} are consequences of the statement \textsf{I} and
\eqref{NWP}, \eqref{WCSqC}.

\begin{defn}
\label{DGWP} Given a closed \(n\)\,-\,dimensional convex surface
\(\mathscr{M}\),
 the \textsf{Weyl polynomial \(W_{\mathscr{M}}^{\,p}\) for \(\mathscr{M}\)}
  with the index \(p,\,\,p=1,\,2,\,3,\,\dots\,\,,\)
is \textsf{defined} as
\begin{equation}
\label{UWP} %
W_{\mathscr{M}}^{\,p}(t)=
\sum\limits_{l=0}^{[\frac{n}{2}]}%
\frac{2^{-l}\,\Gamma(\frac{p}{2}+1)}{\Gamma(\frac{p}{2}+l+1)}%
w_{2l}(\mathscr{M}) \cdot{}t^{2l}\,,
\end{equation}
where the Weyl coefficients \(w_{2l}(\mathscr{M})\) are introduced
in \textup{Definition \ref{DNWP}}.
\end{defn}
Let us emphasize that in Definition \ref{DGWP} no assumption
concerning the smo\-othness of the surface  \(\mathscr{M}\) are
made. We already mentioned that the definitions of the Weyl
coefficients \(w_{2l}\) for smooth manifolds and for convex
surfaces agree. Therefore, if the convex surface \(\mathscr{M}\) is also a smooth manifold,
then the definitions \ref{DEfWP} and \ref{DGWP}
 of the Weyl polynomial agree as well.\\

We also define the infinite index Weyl polynomial
\(W_{\mathscr{M}}^{\,\infty}\).
\begin{defn}%
\label{deLWP}%
Let \(\mathscr{M},\,\dim{}\mathscr{M}=n\) be either a smooth
manifold, or a closed compact convex surface, and let
\(w_{2l}(\mathscr{M}),\,l=0,\,1,\,\ldots\,,\,[\frac{n}{2}]\),  be
the Weyl coefficients of \(\mathscr{M}\), defined by
\textup{Definition \ref{DeWC}} in the smooth case, and by
\textup{Definition \ref{DNWP}} in the convex case. The infinite index  Weyl
polynomial \(W_{\mathscr{M}}^{\,\infty}\) is
defined as
\begin{equation}%
\label{DeWPInfInd}%
W_{\,\mathscr{M}}^{\infty}(t)=\sum\limits_{l=0}^{[\frac{n}{2}]}%
w_{2l}(\mathscr{M})\cdot{}t^{2l}.
\end{equation}%
\end{defn}%
\begin{rem}
\label{InfLimC}%
In view of \eqref{DecFor},
\[W_{\mathscr{M}}^{\,p}(\sqrt{p}t)=w_{0}(\mathscr{M})+\sum\limits_{l=1}^{[\frac{n}{2}]}%
\frac{p^l}{(p+2)(p+4)\,\cdots\,\,(p+2l)}\,
w_{2l}(\mathscr{M})\cdot{}t^{2l}\,.\] Therefore, the polynomial
\(W_{\,\mathscr{M}}^{\infty}(t)\) can be considered as a limiting
object for the family
\(\big\{W_{\mathscr{M}}^{\,p}(\sqrt{p}t)\big\}_{p=1,\,2,\,3,\,\dots\,}\)
of the (renormalized) Weyl polynomials of the index \(p\):
\begin{equation}
\label{LiRe}
W_{\,\mathscr{M}}^{\infty}(t)=\lim_{p\to\infty}W_{\,\mathscr{M}}^{p}(\sqrt{p}t)\,.
\end{equation}
\end{rem}

\begin{center}
\begin{minipage}{0.93\linewidth} \textsl{Thus, the sequence
\begin{math}
\label{SeWePo}\big\{W_{\mathscr{M}}^{\,p}\big\}_{p=1,\,2,\,3,\,\dots\,}
\end{math}
of the Weyl polynomials,
\begin{math}%
%\label{DegDeg1}
\deg{}W_{\mathscr{M}}^{p}= 2{\textstyle\left[\frac{n}{2}\right]}
\end{math}, %
as well as the `limiting' polynomial
\(W_{\,\mathscr{M}}^{\infty}\), are associated  with any closed
\(n\)\,-\,dimensional convex surface \(\mathscr{M}\).}
\end{minipage}
\end{center}

\vspace{2.0ex}%
\noindent%
 \textsf{ Weyl polynomials (and Steiner polynomials
 in the convex case) somehow  describe intrinsic properties of the
 appropriate manifolds. On the other hand, there are remarkable geometrical
 objects such as regular
 polytopes, compact matrix groups, spaces of constant
 curvatures, etc. Our belief is that the Weyl polynomials associated
 with these geometric objects are of fundamental importance and
 possess interesting properties. These
 polynomials should be carefully studied. In particular, the
 following question is natural:\\
 \centerline{\textit{What can we say about the roots of such polynomials?}}}\\

\begin{rem}
\label{EhrPol}
In the theory of lattice polytopes, the \textit{Ehrhart polynomials} are
a  counterpart to the Steiner polynomials. For more on the Ehrhart
polynomials we refer to \cite{BeRo}. See also \cite{Gru}.
Location of the roots of Ehrhart polynomials was studied in
\cite{BLD}, \cite{BHW}.
\end{rem}
%%%%%%%%%%%%%%%%%%%%%%%%%%%%%%%%%%%%%
%%%%%%%%%%%%%%%%%%%%%%%%%%%%%%%%%%%%%%%%%%%%%%%%%%
\section{Formulation of main results.\label{FMR}}
In this section we formulate the main results
on the locations of roots belongings to Steiner and Weyl polynomials
related to convex sets and surfaces.\\

\paragraph{Dissipative and conservative polynomials.\label{DCPol}}
%%%%%%%%%%%%%%%%%%%%%%%%%%%%%%%%%%%%%%%%%%
 We introduce two classes of polynomials: dissipative polynomials
 and conservative polynomials.
In many cases the Steiner polynomials related to convex sets
 are dissipative and the Weyl polynomials are conservative.

\begin{defn}
\label{DeHuPo}
The polynomial \(M\) is said to be \textsf{dissipative} if all roots
of \(M\) are situated in the open left half plane \(\{z:\textup{Re}\,z<0\}.\)
The dissipative polynomials are also called the \textsf{Hurwitz polynomials}
or the \textsf{stable polynomials}.
\end{defn}
\begin{defn}
\label{DeCoPo} The polynomial \(W\) is said to be
\textsf{conservative} if all roots of \(W\) are purely imaginary and
simple, in other words,if all roots of \(W\) are contained in the
imaginary axis \(\{z:\textup{Re}\,z=0\}\) and each of them is of
multiplicity one.
\end{defn}

\begin{thm} %
\label{H10H0}%
Given a closed compact convex surface \(\mathscr{M}\), \(\dim
\mathscr{M}=n\), \(\mathscr{M}=\partial{}V\),
 let \(W_{\mathscr{M}}^{\,1}\) be the Weyl polynomial of index
 \(1\) associated with \(\mathscr{M}\), and let \(S_{\,\,V}^{\mathbb{R}^{n+1}}\) be the Steiner
 polynomial associated with the set \(V\).\\[0.8ex]
\hspace*{2.0ex}If the polynomial \(S_{\,\,V}^{\mathbb{R}^{n+1}}\)
 is dissipative, then the polynomial
\(W_{\mathscr{M}}^{1}\) is conservative.
\end{thm}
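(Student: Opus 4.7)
The plan is to reduce the statement to a direct application of the Hermite–Biehler theorem (referenced in the paper's Section~7) via the identity in Lemma~\ref{MPWP}, which realizes $tW_{\mathscr{M}}^{1}(t)$ as the odd part of $S_{\,V}^{\mathbb{R}^{n+1}}(t)$.

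First, I would split the Steiner polynomial into its even and odd parts. Since $S := S_{\,V}^{\mathbb{R}^{n+1}}$ has real coefficients, we can write
\begin{equation*}
S(t) = A(t^2) + t\,B(t^2),
\end{equation*}
for uniquely determined real polynomials $A$ and $B$. By Lemma~\ref{MPWP} (or equivalently Definition~\ref{DeWPDC}, equation~\eqref{MPOP}),
\begin{equation*}
2t\,W_{\mathscr{M}}^{1}(t) = S(t)-S(-t) = 2t\,B(t^2),
\end{equation*}
so $W_{\mathscr{M}}^{1}(t)=B(t^2)$. Thus the claim reduces to showing that $B$ has only simple, strictly negative real roots.

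Next I would invoke the Hermite–Biehler theorem. In its standard formulation, a real polynomial $S(t)=A(t^2)+tB(t^2)$ is Hurwitz precisely when the two ``reduced'' polynomials $A(u)$ and $B(u)$ have only simple, strictly negative real roots that strictly interlace (and the appropriate positivity of leading coefficients holds). Since by hypothesis $S$ is dissipative, this criterion applies and, in particular, yields that every root of $B$ is real, simple, and negative. The key work here is just citing/unpacking Hermite–Biehler; no further geometric input is needed.

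Finally, I would translate the conclusion back to $W_{\mathscr{M}}^{1}$. If $B(u)=c\prod_j(u-r_j)$ with $r_j<0$ distinct, then
\begin{equation*}
W_{\mathscr{M}}^{1}(t)=B(t^2)=c\prod_j\bigl(t-i\sqrt{|r_j|}\bigr)\bigl(t+i\sqrt{|r_j|}\bigr),
\end{equation*}
so all roots of $W_{\mathscr{M}}^{1}$ lie on the imaginary axis and are simple, which is exactly the assertion that $W_{\mathscr{M}}^{1}$ is conservative in the sense of Definition~\ref{DeCoPo}. The only nontrivial step is the Hermite–Biehler input; everything else is bookkeeping around the even/odd decomposition of $S$.
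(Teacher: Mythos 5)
Your proof is correct and takes essentially the same route as the paper: realize $tW_{\mathscr{M}}^{1}(t)$ as the odd part of $S_V$ and apply the Hermite--Biehler theorem. The only cosmetic difference is that you invoke the classical Hermite--Biehler statement directly in terms of the reduced polynomials $A(u)$, $B(u)$ in $u=t^2$, whereas the paper works with its reformulated Lemma~\ref{VHBT}, which has \emph{positive coefficients} as an explicit hypothesis; consequently the paper must first argue that dissipativity of $S_V$ forces $s_0(V)\neq 0$, hence $V$ solid, hence (via Lemma~\ref{PCMP}) all $s_k(V)>0$, before Lemma~\ref{VHBT} can be used. Your version avoids that detour because the form of Hermite--Biehler you cite already delivers strictly negative simple roots of $B$ (using implicitly the standard fact that a Hurwitz polynomial with positive leading coefficient has all coefficients positive, so $B(0)=s_1\neq0$); it is worth being aware that this is where the paper's solidity argument goes in, even though you do not need to reproduce it.
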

The proof of Theorem \ref{H10H0} is based on the relation
\eqref{WMP}. Theorem \ref{H10H0} is derived from \eqref{WMP} using
the Hermite-Biehler Theorem. We do this in Section \ref{HBieT}.

From \eqref{LiRe} it follows that if for every \(p\) the
polynomial \(W_{\mathscr{M}}^{\,p}\) has only purely imaginary
roots, then all the roots of the polynomial
\(W_{\mathscr{M}}^{\,\infty}\) are purely imaginary as well. In
particular, \textit{all the roots of the polynomial
\(W_{\mathscr{M}}^{\,\infty}\) are purely imaginary if for every
\(p\) the polynomial \(W_{\mathscr{M}}^{\,p}\) is conservative}.

However, what is important for us is that, the converse statement:
\begin{lem}
\label{LWPo} If the polynomial \(W_{\mathscr{M}}^{\,\infty}\) is
conservative, then all the polynomials \(W_{\mathscr{M}}^{\,p}\),
\(p=1,\,2,\,3,\,\dots\,\,,\) are conservative as well.
\end{lem}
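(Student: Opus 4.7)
My plan is to reduce the statement about purely imaginary simple roots to one about real negative simple roots, and then propagate it from $p=\infty$ down to every positive integer~$p$. Both $W_{\mathscr{M}}^{\,\infty}$ and $W_{\mathscr{M}}^{\,p}$ are even polynomials in $t$ with strictly positive constant term $w_0$, so the substitution $y=t^{2}$ turns them into
\[
R^{\infty}(y)=\sum_{l=0}^{N}w_{2l}\,y^{l},\qquad
R^{p}(y)=\sum_{l=0}^{N}c_{l}^{\,p}\,y^{l},\quad
c_{l}^{\,p}=\frac{w_{2l}}{(p+2)(p+4)\cdots(p+2l)},
\]
with $N=\bigl[\tfrac{n}{2}\bigr]$, and conservativeness of $W^{(\cdot)}$ translates into the property that $R^{(\cdot)}$ has all $N$ roots distinct, real, and strictly negative. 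From the identity $c_{l}^{\,p}=\tfrac{p+2l+2}{p+2}\,c_{l}^{\,p+2}$ one reads off the two-term recursion
\begin{equation*}
R^{p}(y)=R^{p+2}(y)+\frac{2y}{p+2}\bigl(R^{p+2}\bigr)'(y),
\end{equation*}
so $R^{p}=T_{c}\,R^{p+2}$, where $T_{c}P:=P+(y/c)P'$ and $c=(p+2)/2>0$.

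The crucial step is the operator lemma: for every $c>0$, $T_{c}$ sends any polynomial $P$ of degree~$N$ with $N$ distinct negative real roots to a polynomial of the same kind. Writing $P(y)=a\prod_{k=1}^{N}(y+\alpha_{k})$ with $0<\alpha_{1}<\cdots<\alpha_{N}$, the rational function $(T_{c}P)/P=1+\tfrac{y}{c}\sum_{k=1}^{N}(y+\alpha_{k})^{-1}$ has simple poles at $-\alpha_{k}$, equals $1$ at $y=0$, and is strictly greater than $1$ on $(0,\infty)$ and on $(-\infty,-\alpha_{N})$. A short sign analysis at each pole shows that the function jumps from $-\infty$ to $+\infty$ as $y$ crosses each interval $(-\alpha_{k+1},-\alpha_{k})$, $k=1,\dots,N-1$, and runs from $-\infty$ to $1$ on $(-\alpha_{1},0)$. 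This produces at least one zero in each of these $N$ disjoint intervals; since $\deg T_{c}P=N$ (its leading coefficient is $a(c+N)/c\neq 0$), these account for all the zeros of $T_{c}P$, and they are all real, negative, and simple.

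To finish, I would secure a base case at large $p$. Coefficient-wise, $p^{l}/\!\prod_{k=1}^{l}(p+2k)\to 1$ as $p\to\infty$ (which is essentially the content of~\eqref{LiRe}), so $R^{p}(py)\to R^{\infty}(y)$ with leading coefficient converging to $w_{2N}\neq 0$. Since $R^{\infty}$ has $N$ distinct negative simple roots, the continuous dependence of polynomial roots on coefficients ensures that $R^{p}(py)$—and therefore $R^{p}(y)$ itself, after the rescaling $y\mapsto py$ by $p>0$—has the same property for all sufficiently large $p$, of either parity. Fixing one such base~$p_{0}$ of each parity and applying the operator step $R^{p+2}\mapsto R^{p}$ iteratively downward then yields the conclusion for every positive integer $p\geq 1$. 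The main obstacle is the operator lemma: simplicity is not automatic from mere preservation of reality and must be extracted by matching the number of sign changes to the degree, so that no coalesced root can hide.
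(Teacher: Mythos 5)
Your proof is correct and takes a genuinely different route from the paper. The paper's proof applies Laguerre's theorem on multipliers (Lemma~\ref{PLaM}, obtained via the Laguerre--P\'olya class and the function $\psi_p(t)=2^{-t/2}\Gamma(\tfrac{p}{2}+1)/\Gamma(\tfrac{t}{2}+\tfrac{p}{2}+1)$) to get reality of roots, and then a separate perturbation argument ($E\mapsto E+\varepsilon$) to extract simplicity. You avoid the multiplier machinery altogether: the identity $c_l^p=\tfrac{p+2l+2}{p+2}\,c_l^{p+2}$ turns the passage $p+2\mapsto p$ into the first-order operator $T_cP=P+(y/c)P'$ with $c=(p+2)/2>0$, and the interlacing/sign-change count shows $T_c$ preserves distinct negative real roots with simplicity built in -- you don't need a separate perturbation step. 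The base case is then supplied by root continuity plus conjugate symmetry of real polynomials, and you go downward. What you lose is generality: the paper's Lemma~\ref{PLaM} holds for arbitrary even entire functions in $\mathscr{L}\text{-}\mathscr{P}$ and is reused elsewhere (Theorems~\ref{BLPC}, \ref{ImCo}), whereas your argument is tailored to polynomials. What you gain is self-containment and a cleaner handle on simplicity. One small point worth patching: you set $N=[n/2]$ and assume $w_{2N}\neq 0$; if the top Weyl coefficient vanished, $R^\infty$ would have smaller degree. The fix is cosmetic -- take $N$ to be $\deg R^\infty$; since $c_l^p\neq 0$ exactly when $w_{2l}\neq 0$, every $R^p$ has that same degree, so the operator lemma and the degree count still apply verbatim. (In the paper's applications $w_{2l}>0$ for all $l$ by Lemma~\ref{Posi}, so this never bites, but the lemma as stated does not carry that hypothesis.)
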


Lemma \ref{LWPo} is the consequence of a result of Laguerre  about
the multiplier sequences. Proof of Lemma \ref{LWPo} appears at
the end of Section \ref{PEFG}.

Keeping  Lemma \ref{LWPo} in mind,  we will concentrate our efforts
on the study of the location of the roots of the Weyl polynomial
\(W_{\mathscr{M}}^{\,\infty}\) with infinite index.\\

\paragraph{The case of low dimension.}
In this section we discuss the Steiner polynomials of convex
sets \(V,\,V\subset{}\mathbb{R}^n,\) and the Weyl polynomials of
closed convex surfaces \(\mathscr{M}\), \(\dim \mathscr{M}=n,\) for `small'
\(n\): \(n=2,\,3,\,4,\,5\).

\begin{thm}
\label{LDC}%
 Let \(n\) be one of the numbers \(2,\,3,\,4\) or
\(5\), and let \(V,\,V\subset{}\mathbb{R}^n,\) be a solid compact
convex set. Then the Steiner polynomial \(S_{V}^{\mathbb{R}^n}\)
is dissipative.
\end{thm}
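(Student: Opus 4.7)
The plan is to invoke the Routh-Hurwitz criterion (to be developed in the paper's section on that criterion) applied to $S_{V}^{\mathbb{R}^{n}}$. That criterion characterizes dissipative polynomials with positive leading coefficient as those whose coefficients are positive and whose Hurwitz determinants $\Delta_{1},\ldots,\Delta_{n-1}$ built from the coefficients are all positive. Positivity of the coefficients $s_{k}(V)$ is already provided by Statement \textsf{I} of Lemma~\ref{Posi}, so the work reduces to checking positivity of the Hurwitz determinants.

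The essential supplementary input is the Aleksandrov-Fenchel inequality for mixed volumes. Writing $s_{k}(V)=\binom{n}{k}\,V(\underbrace{V,\ldots,V}_{n-k},\underbrace{B^{n},\ldots,B^{n}}_{k})$, the Aleksandrov-Fenchel inequality, strict when $V$ is solid, gives
\[
\frac{s_{k}(V)^{2}}{s_{k-1}(V)\,s_{k+1}(V)} \;\geq\; \frac{(k+1)(n-k+1)}{k(n-k)} \;>\; 1.
\]
Thus $\{s_{k}(V)\}$ is strictly log-concave with a quantitative gap given by binomial factors; equivalently, the ratios $r_{k}:=s_{k+1}(V)/s_{k}(V)$ strictly decrease.

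For $n=2$ there is nothing to prove beyond positivity, since every quadratic with positive coefficients is dissipative. For $n=3$ the only nontrivial Hurwitz inequality $\Delta_{2}=s_{1}s_{2}-s_{0}s_{3}>0$ follows at once by multiplying the two log-concavity relations $s_{1}^{2}>s_{0}s_{2}$ and $s_{2}^{2}>s_{1}s_{3}$ and extracting a square root. For $n=4$ and $n=5$ the Hurwitz determinants are polynomial expressions of higher degree in the coefficients, and the \emph{quantitative} form of the Aleksandrov-Fenchel inequality is genuinely needed: for example, in dimension four the condition $\Delta_{3}>0$ can be reduced to a bound of the form $s_{2}^{2}>2\,s_{1}s_{3}$, which is supplied by the binomial factor $\binom{4}{2}^{2}/\bigl(\binom{4}{1}\binom{4}{3}\bigr)=9/4>2$, while pure log-concavity would leave a slack of exactly the wrong sign.

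The main obstacle will be the case $n=5$, in which the inequality $\Delta_{4}>0$ expands into a homogeneous polynomial condition of noticeably higher degree in $s_{0},\ldots,s_{5}$. My plan here is to pass to the four ratios $r_{0}>r_{1}>r_{2}>r_{3}>r_{4}$, rewrite each required Hurwitz inequality as an inequality among the $r_{k}$, and then exploit the quantitative Aleksandrov-Fenchel bounds (in which the smallest binomial factor is $2$, attained for $k=2,3$) systematically. This dimension-counting indicates why the argument is tight at $n=5$ and why the statement is not asserted for $n\geq 6$.
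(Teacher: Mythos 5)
Your proposal is correct and takes essentially the same route as the paper: the Routh--Hurwitz criterion supplies the algebraic side, the Alexandrov--Fenchel inequalities supply the geometric input, and the binomial structure of the coefficients $s_k(V)=\binom{n}{k}v_{n-k}(V)$ provides the margin that turns the qualitative log-concavity of $\{v_k\}$ into the quantitative inequalities that make the Hurwitz determinants positive. The only difference is bookkeeping: the paper substitutes $a_j=\binom{n}{j}v_j$ into the Hurwitz determinants and then manipulates products of the plain AF relations $v_q v_r \geq v_p v_s$ ($p+s=q+r$, $p\leq q\leq r\leq s$), whereas you fold the binomial factors into a quantitative log-concavity of the $s_k$ themselves; these are the same computation written in different coordinates, and your observation that the minimal factor $\tfrac{(k+1)(n-k+1)}{k(n-k)}$ drops to $2$ at $n=5$ correctly diagnoses why the argument becomes tight there (and why it is not asserted for $n\geq 6$).

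Two small cautions. First, your parenthetical ``strict when $V$ is solid'' applied to the Alexandrov--Fenchel inequality for the $v_k$ is false: for the ball $v_k\equiv\textup{Vol}_n(B^n)$, so $v_k^2=v_{k-1}v_{k+1}$ with equality throughout, and the Steiner polynomial is the dissipative $(1+t)^n$ with all roots at $-1$; the strict inequality $s_k^2>s_{k-1}s_{k+1}$ that you actually use comes entirely from the binomial factor exceeding $1$, not from strictness of AF. Second, your $n=5$ case is a plan rather than a verification; the paper carries it out by expanding $\Delta_2,\Delta_3,\Delta_4$ in the $v_k$ and bounding each negative term by a positive one through pairwise relations such as $v_1v_2\geq v_0v_3$, $v_2v_3\geq v_1v_4$, $v_3v_4\geq v_2v_5$, $v_2v_3\geq v_0v_5$, together with $v_1v_4\geq v_0v_5$. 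Your ratio formulation should reproduce the same chain, but it needs to be written out before the case $n=5$ is closed.
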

\begin{thm}
\label{LDCW}%
 Let \(n\) be one of the numbers \(2,\,3,\,4\) or \(5\), and let
\(\mathscr{M}\) be a closed proper\,%
\footnote{That is, the generating set \(V\) is solid.} %
 convex surface of dimension
\(n\).\\[1.0ex]
\hspace*{1.0ex}Then the following hold: \\[-3.0ex]
\begin{enumerate}%
\item%
The Weyl polynomial \(W_{\mathscr{M}}^{\infty}\) with  infinite
index is conservative.
\item
For every \(p=1,\,2,\,3,\,\dots\,\,,\,\)  the Weyl polynomial
\(W_{\mathscr{M}}^{\,p}\)\,\ with index \(p\) %
 is conservative.
\end{enumerate}
\end{thm}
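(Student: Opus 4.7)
The plan is to establish part 1 (conservativity of \(W_{\mathscr{M}}^{\infty}\)) directly from positivity and an Alexandrov--Fenchel-type inequality, then invoke Lemma \ref{LWPo} for part 2. The generating set \(V\) is a solid compact convex subset of \(\mathbb{R}^{n+1}\), so Lemma \ref{Posi} yields \(w_{2l}(\mathscr{M})>0\) for all \(0\leq l\leq [n/2]\). The crucial low-dimensional feature to exploit is that \([n/2]\in\{1,2\}\), which makes \(W_{\mathscr{M}}^{\infty}\) of degree at most \(4\) in \(t\).

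For \(n\in\{2,3\}\), I would just observe that \(W_{\mathscr{M}}^{\infty}(t)=w_0+w_2 t^2\) has roots \(\pm i\sqrt{w_0/w_2}\), simple and purely imaginary by positivity, so conservativity is immediate. For \(n\in\{4,5\}\), \(W_{\mathscr{M}}^{\infty}(t)=w_0+w_2 t^2+w_4 t^4\), and after the substitution \(u=t^2\) conservativity reduces to the quadratic \(w_0+w_2 u+w_4 u^2\) having two distinct negative real roots. Positivity of all coefficients already forces any real root to be negative, so the only remaining requirement is the strict discriminant inequality \(w_2^2>4w_0 w_4\). Using the explicit values \(w_0=s_1\), \(w_2=3s_3\), \(w_4=15 s_5\) from Definition \ref{DNWP}, the problem reduces to showing \(3s_3^2>20 s_1 s_5\).

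This Steiner-coefficient inequality is where I expect the real work to lie. I would invoke the Alexandrov--Fenchel inequality for mixed volumes: writing \(s_k=\binom{n+1}{k}\nu_k\) with \(\nu_k=V(V[n+1-k],B^{n+1}[k])\), log-concavity \(\nu_k^2\geq\nu_{k-1}\nu_{k+1}\) makes \(\log\nu_k\) a concave sequence, which by iteration yields \(\nu_3^2\geq\nu_1\nu_5\). Translated back, this becomes \(s_3^2\geq 20\,s_1 s_5\) in ambient \(\mathbb{R}^5\) and \(s_3^2\geq (100/9)\,s_1 s_5\) in ambient \(\mathbb{R}^6\); both strictly exceed the required \(s_3^2>(20/3)\,s_1 s_5\), with the strict inequality coming from \(s_1 s_5>0\) (again Lemma \ref{Posi}). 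Once \(W_{\mathscr{M}}^{\infty}\) is known to be conservative, part 2 follows at once from Lemma \ref{LWPo}.

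The main obstacle is precisely this Steiner-coefficient inequality: its margin relative to Alexandrov--Fenchel is tight, and the argument succeeds because the double factorials \((2l+1)!!\) implicit in the Weyl coefficients happen to be well matched against the binomials \(\binom{n+1}{2l+1}\) entering the AF inequality. I expect this matching to fail at \(n\geq 6\), where \(W^{\infty}\) becomes a cubic in \(t^2\) and conservativity requires more than a single discriminant condition --- which accounts for the \(n\leq 5\) restriction in the statement.
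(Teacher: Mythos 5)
Your argument is correct and takes essentially the same route as the paper: positivity of the \(w_{2l}\) (Lemma~\ref{Posi}), reduction of conservativity to a single discriminant inequality for the quadratic in \(u=t^2\) when \(n\in\{4,5\}\) (with the cases \(n\in\{2,3\}\) immediate), Alexandrov--Fenchel, and Lemma~\ref{LWPo} for part~2. You reach \(w_2^2>4w_0w_4\) directly via the substitution \(u=t^2\) instead of through the paper's Routh--Hurwitz-type conservativeness criterion, and you phrase the resulting inequality in the Steiner coefficients \(s_{2l+1}\) rather than in the cross-sectional measures \(v_k\), but after the binomial-coefficient conversion your inequality \(3s_3^2>20\,s_1s_5\) is literally the paper's \(3v_2^2>v_0v_4\) for \(n=4\) and \(5v_3^2>3v_1v_5\) for \(n=5\).
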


\begin{rem}
\label{Prio}
After this work was completed, Martin Henk called our attention
to  Remark 4.4 of \cite{Tei}, which
appears on the last page of this paper. In this remark,
the statement is formulated which is very close to our Theorem \ref{LDC}.
Some negative results are stated there as well.
Detailed proofs are lacking.
\end{rem}

Theorem \ref{LDC} and \ref{LDCW} are proved in Section \ref{LoDi}.
In proving these theorems, we combine the Routh-Hurwitz Criterion,
which expresses the property of a polynomial to be dissipative in
terms of its coefficients, and the Alexandrov-Fenchel
inequalities, which express the logarithmic convexity property for
the sequence of the cross-sectional measures of a convex set.\\
%%%%%%%%%%%%%%%%%%
\paragraph{Selected 'regular' convex sets: balls, cubes, squeezed cylinders.}
For large \(n\), the statements analogous to Theorems \ref{LDC}
and \ref{LDCW} do not hold. If \(n\) is large enough, then there
exists solid compact convex sets%
\footnote{Very flat ellipsoids  can be taken as such V. See Theorem \ref{NMR}.} %
 \(V\), \(\dim V=n\), such that the
Min\-kow\-ski polynomials \(S_{\,V}^{\mathbb{R}^{n+1}}\) are not
dissipative and the Weyl polynomials \(W^{p}_{\partial{}V}\) are
not conservative. However, for some `regular' convex sets \(V\),
like balls and cubes, the Weyl polynomials \(W^{p}_{\partial{}V}\)
are conservative and the Steiner polynomials are dissipative in
any dimension.

Let us present the collection of `regular' convex sets and their
boundary surfaces, which we will be dealing with further on. Such sets and
surfaces will be considered for every \(n\), so that we, in fact,
consider families of sets (indexed by their dimensions) and not single sets.

\begin{itemize}%
\item[\(\Diamond\)]
The unit ball \(B^n\):
\begin{gather}
\label{Dub}%
 B^n=\{x=(x_1,\,\ldots\,,\,x_n)\in\mathbb{R}^{n}:
\sum\limits_{1\leq{}k\leq{}n}|x_k|^2\leq{}1\,\},\\
\textup{Vol}_n(B^n)=\frac{\pi^{n/2}}{\Gamma(\frac{n}{2}+1)}\,.
\end{gather}
\item[\(\Diamond\)]
The squeezed spherical cylinder \(B^{n}\times{}0\),
\(B^{n}\times{}0\subset\mathbb{R}^{n+1}\).
\item[\(\Diamond\)]
The unit sphere, \[
S^n=\{x=(x_1,\,\ldots\,,\,x_n,\,x_{n+1})\in\mathbb{R}^{n+1}:
\sum\limits_{1\leq{}k\leq{}n+1}|x_k|^2=1\,\},\]
 in other words, the boundary surface of the
unit ball: \(S^n=\partial{}B^{n+1}\,,\)
\begin{gather}
\label{Dus}%
\textup{Vol}_n(S^n)=(n+1)\,\textup{Vol}_{n+1}(B^{n+1})\,.
\end{gather}
\item[\(\Diamond\)]
 The boundary surface of the squeezed spherical cylinder
\(\partial{}(B^{n}\times{}0)\) :
\begin{equation}
\label{SCub}%
\textup{Vol}_n(\partial{}(B^{n}\times{}0))=2\,\textup{Vol}_n(B^n)\,.
\end{equation}
\item[\(\Diamond\)]
The unit cube \(Q^n\):
\begin{gather}
\label{Duc}%
 Q^n=\{x=(x_1,\,\ldots\,,\,x_n)\in\mathbb{R}^{n}:
\max\limits_{1\leq{}k\leq{}n}|x_k|\leq{}1\,\},\\
 \textup{Vol}_n(Q^n)=2^n\,.
\end{gather}
\item[\(\Diamond\)]
The squeezed cubic cylinder \(Q^{n}\times{}0\),
\(Q^{n}\times{}0\subset\mathbb{R}^{n+1}\).
\item[\(\Diamond\)]
The boundary surface \(\partial{}Q^{n+1}\) of the unit cube:
\begin{equation}
\label{Bsuc}%
\textup{Vol}_n(\partial{}Q^{n+1})=(n+1)\,\textup{Vol}_{n+1}(Q^{n+1}).
\end{equation}
\item[\(\Diamond\)]
The boundary surface of the squeezed cubic cylinder
\(\partial{}(Q^{n}\times{}0)\):
\begin{equation}
\label{SqCub}%
\textup{Vol}_n(\partial{}(Q^{n}\times{}0))=2\,\textup{Vol}_{n}(Q^{n})\,.
\end{equation}
\end{itemize}

\paragraph{Locating roots of the Steiner and Weyl
polynomials\\
related to `regular' convex sets.\\[1.0ex]} %
Let us state the main results about locating roots of the
Steiner polynomials and the Weyl polynomials related to the
above mentioned `regular' convex sets and their surfaces.
\begin{thm}%
\label{LoMP}%
For every \(n=1,\,2,\,3,\,\,\ldots\)\,\textup{:}
\begin{enumerate}%
\item%
The Steiner polynomial \(S_{B^n}^{\mathbb{R}^n}\) associated with the
ball \(B^n\) is dissipative, moreover all its roots are
negative\,%
\footnote{This part of the Theorem is trivial: \(S_{B^n}^{\mathbb{R}^n}(t)=(1+t)^n\)}. %
\item%
The Steiner polynomial
\,\(S_{B^{n}\times{}0}^{\mathbb{R}^{n+1}}\) \,associated with the
squeezed spherical cylinder \(B^{n}\times{}0\) is
of the form%
\footnote{\,\label{FaAp}The factors \(t\) appears
because the set \(B^{n}\times{}0\)
is not solid in \(\mathbb{R}^{n+1}\).} %
 \(S_{B^{n}\times{}0}^{\mathbb{R}^{n+1}}(t)=
t\cdot{}D_{B^{n}\times{}0}^{\mathbb{R}^{n+1}}(t)\), where the
polynomial \(D_{B^{n}\times{}0}^{\mathbb{R}^{n+1}}\) is
dissipative. If \(n\) is large enough, then the polynomial
\(S_{B^{n}\times{}0}^{\mathbb{R}^{n+1}}\) has non-real roots.
\item%
The Steiner polynomial \(S_{Q^n}^{\mathbb{R}^n}\) associated with
the cube \(Q^n\) is dissipative, moreover all its roots are
negative.
\item%
The Steiner polynomial \(S_{Q^{n}\times{}0}^{\mathbb{R}^{n+1}}\)
associated with the squeezed cubical cylinder \(Q^{n}\times{}0\) is of
the form\({}^{\ref{FaAp}}\)
\(S_{Q^{n}\times{}0}^{\mathbb{R}^{n+1}}(t)=
t\cdot{}D_{Q^{n}\times{}0}^{\mathbb{R}^{n+1}}(t)\), where the
polynomial \(D_{Q^{n}\times{}0}^{\mathbb{R}^{n+1}}\) is
dissipative and all roots of the polynomial
\(D_{Q^{n}\times{}0}^{\mathbb{R}^{n+1}}\) are negative.
\end{enumerate}%
\end{thm}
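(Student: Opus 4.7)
The plan is to prove the four assertions in the order (1), (3), (4), (2), using Lemma~\ref{IPR} to reduce the two squeezed-cylinder statements to their non-cylinder counterparts. Part~(1) is immediate: since $B^n+tB^n=(1+t)B^n$, one has $S_{B^n}^{\mathbb{R}^n}(t)=\omega_n(1+t)^n$, so all $n$ roots are located at $t=-1$.

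For Part~(3), the first step is to derive an explicit Steiner polynomial for the cube. Decomposing $Q^n+tB^n$ according to which face of $Q^n=[-1,1]^n$ realises the nearest point (equivalently, using the product structure $Q^n=Q^1\times\cdots\times Q^1$ and the known coefficients of the Steiner polynomial of an interval), one obtains
\[
S_{Q^n}^{\mathbb{R}^n}(t)=\sum_{k=0}^n\binom{n}{k}\,2^{n-k}\,\omega_k\,t^k.
\]
This polynomial can be rewritten as $2^n J_n^F(t/2)$, where $J_n^F$ is the Jensen polynomial of the entire function
\[
F(z)=\sum_{k\ge 0}\omega_k\,\frac{z^k}{k!}.
\]
The substance of the argument lies in showing, via the theory of entire functions and multiplier sequences developed in Sections~5--6, that $F$ belongs to the Laguerre--P\'olya class and has only negative real zeros. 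Once this is available, the classical Jensen--Laguerre theorem ensures every $J_n^F$, and hence $S_{Q^n}^{\mathbb{R}^n}$, has only negative real roots.

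For Part~(4), Lemma~\ref{IPR} gives $S_{Q^n\times 0}^{\mathbb{R}^{n+1}}(t)=t\cdot D(t)$, where the multiplier appearing in \eqref{MPn1} is $c_k=\Gamma(\tfrac{1}{2})\Gamma(\tfrac{k}{2}+1)/\Gamma(\tfrac{k+1}{2}+1)$. The crucial identity $c_k=\omega_{k+1}/\omega_k$, which is immediate from $\omega_j=\pi^{j/2}/\Gamma(\tfrac{j}{2}+1)$, collapses $c_k\,\omega_k=\omega_{k+1}$ and gives
\[
D(t)=\sum_{k=0}^n\binom{n}{k}\,2^{n-k}\,\omega_{k+1}\,t^k=\frac{1}{n+1}\,\frac{d}{dt}\,S_{Q^{n+1}}^{\mathbb{R}^{n+1}}(t).
\]
By Part~(3) the polynomial $S_{Q^{n+1}}^{\mathbb{R}^{n+1}}$ has $n+1$ real negative roots, so Rolle's theorem produces $n$ real negative roots for its derivative, and hence for $D$. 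This settles Part~(4).

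The same reduction applies to Part~(2), yielding
\[
D_{B^n\times 0}^{\mathbb{R}^{n+1}}(t)=\omega_n\sum_{k=0}^n\binom{n}{k}\,\frac{\omega_{k+1}}{\omega_k}\,t^k,
\]
which is the Jensen polynomial of $G(z)=\sum_k(\omega_{k+1}/\omega_k)\,z^k/k!$. The plan is to show that $G$ is of Hurwitz type (so every Jensen polynomial $D_{B^n\times 0}^{\mathbb{R}^{n+1}}$ is dissipative) but not of Laguerre--P\'olya type, which forces non-real root pairs to appear for sufficiently large $n$; a direct discriminant computation will pinpoint a concrete small $n$ (already $n=2$, where the resulting quadratic has negative discriminant) for which non-real roots occur. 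The main obstacle throughout is the classification of the generating entire functions $F$ and $G$: their coefficients are ratios of gamma values with no elementary closed form, so the Laguerre--P\'olya / Hurwitz class memberships must be extracted from the multiplier-sequence and integral-representation results of Sections~5--6 rather than by direct inspection. Once those facts are in hand, Lemma~\ref{IPR}, Rolle's theorem, and the Jensen--Laguerre / P\'olya--Schur framework combine cleanly to deliver all four assertions.
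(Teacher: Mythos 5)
Your proposal is correct, and Parts 1, 2, 3 follow essentially the paper's own route: Part 1 is the trivial computation; Part 3 reduces to showing the generating entire function $F(z)=\sum_k\omega_k z^k/k!$ (which is $\mathcal{M}_{Q^\infty}(2z)$ up to a trivial rescaling) lies in $\mathscr{L}\text{-}\mathscr{P}\text{-}\mathrm{I}$, which the paper proves via Laguerre's multiplier theorem with $\psi(t)=1/\Gamma(t/2+1)$ (Lemma~\ref{PrGa}); Part 2 reduces to $\mathcal{M}_{B^\infty\times 0}$ being Hurwitz but not Laguerre--P\'olya, handled in the paper via the integral representation \eqref{IntRep} and P\'olya's Lemma~\ref{NZRHP} plus the asymptotic analysis of Lemma~\ref{NonHq}. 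Your direct discriminant check showing $D_{B^2\times 0}^{\mathbb{R}^3}(t)=\pi(2+\pi t+\tfrac{4}{3}t^2)$ already has non-real roots (discriminant $\pi^2-\tfrac{32}{3}<0$) is a correct and useful concrete confirmation beyond the ``sufficiently large $n$'' statement.

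Your treatment of Part~4 is a genuine and attractive simplification over the paper. The paper obtains the dissipativity of $D_{Q^n\times 0}^{\mathbb{R}^{n+1}}$ by proving that $\mathcal{M}_{Q^\infty\times 0}$ is also in $\mathscr{L}\text{-}\mathscr{P}\text{-}\mathrm{I}$ (another application of Laguerre's theorem with $\psi(t)=\Gamma(\tfrac12)/\Gamma(\tfrac{t+1}{2}+1)$) and then invoking the Jensen--Craven--Csordas--Williamson machinery once more. Your observation that the squeezing multiplier $c_k=\omega_{k+1}/\omega_k$ collapses $c_k\omega_k=\omega_{k+1}$, giving the identity
\[
D_{Q^n\times 0}^{\mathbb{R}^{n+1}}(t)=\frac{1}{n+1}\,\frac{d}{dt}\,S_{Q^{n+1}}^{\mathbb{R}^{n+1}}(t),
\]
together with Rolle's theorem and Part~3, derives Part~4 from Part~3 by pure elementary algebra, avoiding the second application of Laguerre entirely. (The identity checks: $(j{+}1)\binom{n+1}{j+1}=(n{+}1)\binom{n}{j}$ and the index shift $\omega_k\mapsto\omega_{k+1}$ match the right-hand side exactly.) It is worth noting, as you implicitly do, that the same trick fails for Part~2 because the ball's Steiner coefficients carry no $\omega_k$ factor to absorb the shift, so the Hurwitz-class route remains necessary there. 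The one place your argument is still schematic is the Laguerre--P\'olya membership of $F$ and the Hurwitz membership of $G$; these need the specific $\psi$-multiplier and integral-representation arguments the paper provides, so they are deferred rather than eliminated.
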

\begin{thm}%
\label{LoWP}%
For every \(n=1,\,2,\,3,\,\,\ldots\)\,\textup{:}
\begin{enumerate}%
\item%
The Weyl polynomials \(W_{\partial{}B^{n+1}}^{\,\infty}(t)\) of
infinite index, as well as the Weyl polynomials
\(W_{\partial{}B^{n+1}}^{\,p}(t)\) of arbitrary finite index
\(p,\,p=1,\,2,\,\ldots\,\),
 associated with
the boundary surface of the ball \(B^{n+1}\) are conservative.
\item%
The Weyl polynomials \(W_{\partial{}(B^{n}\times{}0)}^{\,p}\) of
order\,%
\footnote{\,The case \(p=3\) remains open.} %
 \(p=1,\,p=2\) and \(p=4\) associated with the boundary surface
of the squeezed spherical cylinder \(B^{n}\times{}0\) are
conservative.
\item%
The Weyl polynomials \(W_{\partial{}Q^{n+1}}^{\,\infty}(t)\) of
infinite index, as well as the Weyl polynomials
\(W_{\partial{}Q^{n+1}}^{\,p}(t)\) of arbitrary finite index
\(p,\,p=1,\,2,\,\ldots\,\),
 associated with
the boundary surface of the cube \(Q^{n+1}\) are conservative.
\item%
The Weyl polynomials
\(W_{\partial{}(Q^{n}\times{}0)}^{\,\infty}(t)\) of infinite
index, as well as the Weyl polynomials
\(W_{\partial{}(Q^{n}\times{}0)}^{\,p}(t)\) of arbitrary finite
index \(p,\,p=1,\,2,\,\ldots\,\),
 associated with
the boundary surface of the squeezed cubic cylinder
\(Q^{n}\times{}0\) are conservative.
\end{enumerate}%
\end{thm}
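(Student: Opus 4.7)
The plan is to treat the three families in~(1), (3), and~(4) uniformly by first establishing that the infinite-index Weyl polynomial $W_{\mathscr{M}}^{\infty}$ is conservative, and then invoking Lemma~\ref{LWPo} to transfer the conclusion to every finite index $p$.  The squeezed spherical cylinder of~(2) is genuinely different: the double factorials $(2l-1)!!$ appearing in its Weyl coefficients prevent $W^{\infty}$ from summing to an elementary hyperbolic sine/cosine, so the uniform route via Lemma~\ref{LWPo} is unavailable and each of $p=1,2,4$ has to be handled individually.

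For parts~(1), (3), and~(4) I compute the Weyl coefficients explicitly using \eqref{NWP} for the proper surfaces $\partial B^{n+1}$ and $\partial Q^{n+1}$ and \eqref{WCSqC} for $\partial(Q^n\times 0)$, together with the elementary identities $s_k^{\mathbb{R}^n}(B^n)=\omega_n\binom{n}{k}$ and $s_k^{\mathbb{R}^n}(Q^n)=\binom{n}{k}\,2^{n-k}\,\omega_k$.  After simplification the defining sums collapse to closed form:
\[
W_{\partial B^{n+1}}^{\infty}(t)=\omega_{n+1}\sum_{l=0}^{[n/2]}\frac{(n+1)!}{2^{l}\,l!\,(n-2l)!}\,t^{2l},
\]
\[
W_{\partial Q^{n+1}}^{\infty}(t)=\frac{2^{n+1}}{t\sqrt{2\pi}}\Bigl[(1+t\sqrt{\pi/2})^{n+1}-(1-t\sqrt{\pi/2})^{n+1}\Bigr],
\]
\[
W_{\partial(Q^n\times 0)}^{\infty}(t)=2^{n}\Bigl[(1+t\sqrt{\pi/2})^{n}+(1-t\sqrt{\pi/2})^{n}\Bigr].
\]
For the sphere, the substitution $t\mapsto it$ turns the first polynomial into a constant multiple of $t^{n}\,\mathrm{He}_n(1/t)$, where $\mathrm{He}_n$ is the probabilists' Hermite polynomial; since $\mathrm{He}_n$ has $n$ simple real zeros, the zeros of $W^{\infty}_{\partial B^{n+1}}$ are simple and purely imaginary.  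For the cube and the squeezed cubic cylinder, the zeros are read off from $((1+u)/(1-u))^{m}=\pm 1$ with $u=t\sqrt{\pi/2}$, giving respectively $t=i\sqrt{2/\pi}\tan(\pi k/(n+1))$ and $t=i\sqrt{2/\pi}\tan(\pi(2k+1)/(2n))$ — again simple and purely imaginary.  Lemma~\ref{LWPo} then completes (1), (3), and (4).

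For part~(2) with $p=1$ I invoke \eqref{WMP}: writing $S_{B^n\times 0}^{\mathbb{R}^{n+1}}(t)=t\,D(t)$ as in Theorem~\ref{LoMP}, the odd part of $S$ equals $t\cdot{}^{\mathscr{E}}\!D$, so that $W_{\partial(B^n\times 0)}^{\,1}={}^{\mathscr{E}}\!D$.  Since Theorem~\ref{LoMP} already asserts that $D$ is dissipative for every $n$, the Hermite--Biehler theorem — applied exactly as in the proof of Theorem~\ref{H10H0} — forces ${}^{\mathscr{E}}\!D$ to have simple purely imaginary zeros, so $W^{\,1}$ is conservative.  For $p=2$ and $p=4$ the scaling factor in \eqref{UWP} collapses to $1/(l+1)$ and $2/((l+1)(l+2))$ respectively, and my plan is to recognize the resulting polynomials
\[
2\omega_n\sum_{l=0}^{[n/2]}\frac{\binom{n}{2l}}{l+1}\,t^{2l}\qquad\text{and}\qquad 4\omega_n\sum_{l=0}^{[n/2]}\frac{\binom{n}{2l}}{(l+1)(l+2)}\,t^{2l}
\]
after $t\mapsto it$ as Jensen polynomials of entire functions belonging to the Laguerre--P\'olya class, or equivalently as specializations of a classical Jacobi/Gegenbauer family whose real-rootedness is standard.

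The main obstacle lies precisely in the $p=2,4$ cases of part~(2).  The double-factorial arithmetic of the squeezed spherical cylinder admits no closed form of the hyperbolic-sine/cosine type enjoyed by the other three surfaces, so the uniform infinite-index approach breaks down — which is also why only selected indices $p$ survive, with $p=3$ explicitly left open in the footnote.  Locating and justifying the correct Laguerre--P\'olya representative, or the classical orthogonal-polynomial identification, that works for exactly $p=2$ and $p=4$ is the delicate technical point of the proof.
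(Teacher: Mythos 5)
Your proposal is correct and self-contained for parts (1), (3), and (4), and takes a genuinely more concrete route than the paper. The paper proves Theorem~\ref{LoWP} by showing that the generating entire functions $\mathcal{W}^p_{\mathscr{M}^\infty}$ lie in the Laguerre--P\'olya class for every finite $p$ (Theorems~\ref{BLPC}, \ref{ImCo}), then invoking the Jensen--Csordas--Williamson Theorem~\ref{JSW} to conclude that the associated Jensen polynomials have simple real roots, and finally undoing the renormalization~\eqref{WRemOr}. You instead compute $W^\infty_{\mathscr{M}}$ in closed binomial form (I checked your three identities and they are correct once one recalls $\mathrm{Vol}_n(\partial B^{n+1})=(n+1)\omega_{n+1}$ and $\mathrm{Vol}_n(\partial Q^{n+1})=(n+1)2^{n+1}$, $\mathrm{Vol}_n(\partial(Q^n\times 0))=2^{n+1}$), read the roots off directly --- for the cube and squeezed cube via $((1+u)/(1-u))^m=\pm 1$, for the sphere via the reciprocal Hermite polynomial --- and then appeal to Lemma~\ref{LWPo} to pass from $p=\infty$ to finite $p$. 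Both routes ultimately rest on the Laguerre multiplier theorem (Lemma~\ref{PLaM}), but yours trades an asymptotic limiting argument for an explicit factorization. Your treatment of $p=1$ in part (2), via the factorization $S_{B^n\times 0}=tD$, the dissipativity of $D$ from Theorem~\ref{LoMP}, and Lemma~\ref{VHBT}, is also sound; note that Theorem~\ref{H10H0} cannot be quoted as stated since $s_0=0$ there, so your ad hoc modification is needed, and it is fine.

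However, for the cases $p=2$ and $p=4$ of part (2) you have a genuine gap, and it is the crux of the entire statement. You write that your ``plan is to recognize the resulting polynomials \dots\ as Jensen polynomials of entire functions belonging to the Laguerre--P\'olya class, or equivalently as specializations of a classical Jacobi/Gegenbauer family whose real-rootedness is standard,'' but you carry out neither identification. This is precisely where the paper does the substantial work: Lemma~\ref{CondL} reduces the real-rootedness of $\mathcal{W}^p_{\partial(B^\infty\times 0)}$ to showing $\mathcal{M}_{B^\infty\times 0^p}$ lies in the Hurwitz class $\mathscr{H}$ via the Hermite--Biehler theorem for entire functions; Lemma~\ref{HurCl} then handles $p\le 2$ through P\'olya's result on integrals $\int_0^1\varphi(\xi)e^{\xi z}\,d\xi$ with $\varphi$ nonnegative increasing (Lemma~\ref{NZRHP}); and the case $p=4$ (Lemma~\ref{QEqFour}) requires an explicit evaluation of the kernel integral plus a careful Rouch\'e comparison $f=g+h$ on large right half-disks. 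None of this is a ``standard'' Jacobi/Gegenbauer fact --- the kernel $(1-\xi^2)^{p/2-1}\xi\cos(t\xi)$ is a Lommel-type integral, not a Poisson--Bessel one, and the real-rootedness genuinely fails for $p\ge 5$ (Theorem~\ref{str} statement 2), so whatever argument you supply must break down exactly at $p=5$. Until you locate and justify the specific Laguerre--P\'olya representative (or an equivalent Hurwitz-class statement for $\mathcal{M}_{B^\infty\times 0^q}$ with $q=2,4$) and verify the needed monotonicity/Rouch\'e estimates, the proof of part (2) for $p=2,4$ is incomplete.
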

\begin{rem}
\label{EERW} The roots of the Weyl polynomial
\(W^1_{\partial{}B^{n+1}}\) can be found explicitly. Indeed
\[W^1_{\partial{}B^{n+1}}(it)=\textup{Vol}_{n+1}(B^{n+1})\frac{1}{2it}\big((1+it)^{n+1}-(1-it)^{n+1}\big)\,.\]
Changing variable \[t\to\varphi:\,1+it=|1+it|e^{i\varphi},
t=\tg{}\varphi\,,\ \ -\frac{\pi}{2}<\varphi<\frac{\pi}{2}\,,\] we
reduce the equation \(W^1_{\partial{}B^{n+1}}(it)=0\) to the
equation
\[\frac{\sin{}(n+1)\varphi}{\sin{}\varphi}=0,\ \ -\frac{\pi}{2}<\varphi<\frac{\pi}{2}\,.\]
The roots of the latter equation are:
\[\varphi_k=\frac{k\pi}{n+1},
\quad{}-\left[\frac{n}{2}\right]\leq{}k\leq{}\left[\frac{n}{2}\right],\
\ k\not=0\,.\]
 So, the roots \(t_k\) of the equation
\(W^{1}_{\partial{}B^{n+1}}(it)=0\) are
\[t_k=\tg{}\textstyle{\frac{k\pi}{n+1},\quad{}
-\left[\frac{n}{2}\right]\leq{}k\leq{}\left[\frac{n}{2}\right],\ \
k\not=0\,.}\] In particular, the polynomial \(W^{1}_{S^n}\) is
conservative.
\vspace{1.0ex}
\end{rem}
\noindent
\textsf{Negative results}:
\begin{thm}
\label{NRWPSq}%
 Let \(p\in\mathbb{Z}\) with \(p\geq{}5\). If \(n\) is
large enough: \(n\geq{}N(p)\), then the Weyl polynomial
\(W^p_{\partial(B^n\times{}0)}\) is not conservative: some of its
roots do not belong to the imaginary axis.
\end{thm}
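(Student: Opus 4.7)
The plan is to study the Weyl polynomial asymptotically as $n\to\infty$, identify an entire function as its rescaled limit, express this limit via a Struve function, and then apply Hurwitz's theorem. First, combining the Steiner polynomial of the ball, $S_{B^n}^{\mathbb{R}^n}(t)=\omega_n(1+t)^n$, with Lemma \ref{IPR} and the identity \eqref{WCSqC} for Weyl coefficients of squeezed-cylinder surfaces, one finds the explicit formula
\[
W^p_{\partial(B^n\times 0)}(t)\;=\;2\omega_n\sum_{l=0}^{[n/2]}\frac{l!}{(p/2+1)_l}\binom{n}{2l}\,t^{2l}.
\]
Conservativeness of $W^p_{\partial(B^n\times 0)}$ is equivalent to the polynomial $Q_n(\tau)=\sum_l \frac{l!}{(p/2+1)_l}\binom{n}{2l}\tau^l$ in $\tau=t^2$ having only simple roots in $(-\infty,0)$.

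Next I rescale $\tau=u/n^2$. For each fixed $l$, $\binom{n}{2l}/n^{2l}\to 1/(2l)!$, so an elementary dominated-convergence argument shows that $Q_n(u/n^2)$ converges locally uniformly on $\mathbb{C}$ to the entire function
\[
H_p(u)\;=\;\sum_{l=0}^{\infty}\frac{l!}{(2l)!(p/2+1)_l}\,u^l.
\]
Using $l!/(p/2+1)_l=(p/2)B(l+1,p/2)$ and $\sum_l z^l/(2l)!=\cosh\sqrt{z}$, I obtain the integral representation
\[
H_p(u)\;=\;p\int_0^1 y(1-y^2)^{p/2-1}\cosh(y\sqrt{u})\,dy,
\]
and one integration by parts at $u=-s^2$, combined with the Poisson-type representation of the Struve function $\mathbf{H}_\nu$, gives
\[
H_p(-s^2)\;=\;1-\sqrt{\pi}\,\Gamma(p/2+1)\,2^{(p-1)/2}\,s^{-(p-1)/2}\,\mathbf{H}_{(p+1)/2}(s).
\]
Thus $H_p$ has all zeros in $(-\infty,0)$ if and only if the even entire function $s\mapsto H_p(-s^2)$ has all zeros real.

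The decisive step, and the genuine obstacle, is to show that for every $p\geq 5$ the function $s\mapsto H_p(-s^2)$ admits at least one non-real zero, equivalently that $H_p$ itself has a zero $u_0\in\mathbb{C}\setminus(-\infty,0]$. I intend to establish this by combining an asymptotic count with a failure of a Laguerre--P\'olya-type inequality. The large-$s$ expansion $\mathbf{H}_\nu(s)\sim Y_\nu(s)+(s/2)^{\nu-1}/(\sqrt{\pi}\,\Gamma(\nu+1/2))$ with $\nu=(p+1)/2$ shows that on the real line the positive constant $1$ cancels against the leading polynomial part of $\mathbf{H}_\nu$, so $H_p(-s^2)$ oscillates via $Y_\nu$ and possesses $\sim R/\pi$ real zeros on $[0,R]$. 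A Jensen-formula count on $|s|\leq R$, using the growth rate of $H_p$, then produces a strictly larger total number of zeros once $\nu\geq 3$, i.e.\ $p\geq 5$; the deficit must be made up by non-real zeros. An equivalent route is to exhibit a Jensen polynomial $J_m[H_p]$ whose discriminant turns negative at $p=5$, certifying $H_p\notin\mathcal{L}$-$\mathcal{P}$, and then to upgrade this non-membership to an actual non-real zero of $H_p$ using the concrete Struve representation. The threshold $p=5$ should emerge naturally as the first integer at which the relevant Tur\'an/Newton-type inequality between consecutive Taylor coefficients of $H_p$ breaks.

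Once such a $u_0$ is secured, the conclusion is routine. By Hurwitz's theorem applied to the locally uniform convergence $Q_n(u/n^2)\to H_p(u)$, for every sufficiently large $n\geq N(p)$ there exists a zero $u_n$ of $Q_n(\,\cdot\,/n^2)$ with $u_n\to u_0$. Setting $\tau_n=u_n/n^2$, we have $\tau_n\notin(-\infty,0]$ for $n$ large, so the associated root $t_n=\sqrt{\tau_n}$ of $W^p_{\partial(B^n\times 0)}$ is not purely imaginary; this contradicts the conservativeness of $W^p_{\partial(B^n\times 0)}$ and proves the theorem.
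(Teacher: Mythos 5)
Your overall strategy coincides with the paper's: identify the limiting entire function (your $H_p$, which is the paper's $\mathcal{W}^p_{\partial(B^\infty\times 0)}$ after $u=-t^2$), show that it has non-real zeros when $p\geq 5$, and then pass the non-real zeros back to the polynomials via locally uniform convergence and Hurwitz's theorem. The framing up to and including the integral representation and the Struve reduction is correct and is a nice equivalent formulation of the paper's Fox--Wright asymptotics. The transfer step at the end is also sound.

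The gap is in the decisive step, and it is a substantive one. You write that after the constant $1$ cancels against the leading polynomial term of $\mathbf{H}_\nu$, ``$H_p(-s^2)$ oscillates via $Y_\nu$ and possesses $\sim R/\pi$ real zeros on $[0,R]$'', and then argue that a Jensen count must give strictly more zeros for $p\geq 5$. Both claims fail for $p\geq 5$. You have truncated the Struve expansion one term too early: the next polynomial term of $\mathbf{H}_\nu(s)-Y_\nu(s)$, namely $\frac{\Gamma(3/2)}{\pi\,\Gamma(\nu-1/2)}(s/2)^{\nu-3}$, after multiplication by $s^{-(p-1)/2}$ contributes a non-oscillatory term of exact size $-p\,s^{-2}$ to $H_p(-s^2)$, whereas the $Y_\nu$ part contributes an oscillation of size $O(s^{-p/2})$. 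For $p>4$ the $s^{-2}$ term dominates, so along the real axis $H_p(-s^2)\sim -p\,s^{-2}$ has \emph{no} oscillation and the set of real zeros is bounded; it is precisely this competition $s^{-2}$ versus $s^{-p/2}$ that produces the threshold $p=4$. This is the paper's \eqref{AsyW} and the logarithmic-parabola discussion in Section \ref{LocRoot}. Your Jensen comparison also cannot work as stated: $H_p(-s^2)$ has exponential type $1$ for \emph{every} $p$, so Jensen's formula gives $\sim 2R/\pi$ zeros in $|s|\leq R$ regardless of $p$, exactly matching the density of real zeros you claimed; there would be no deficit at all. The argument can be repaired, but along the opposite lines from what you wrote: for $p>4$ the real zeros are \emph{finitely many} (by the $-p\,s^{-2}$ asymptotic), while Jensen still gives $\sim 2R/\pi$ total zeros, so infinitely many zeros must be non-real. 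That corrected version is essentially what the paper does (via Lemma \ref{AnWp}/Theorem \ref{str}). Your alternative route via Jensen polynomial discriminants and a Tur\'an inequality is too vague to assess, and in particular would still need the asymptotic input above to locate where the inequality first fails and to justify that non-membership in $\mathscr{L}\text{-}\mathscr{P}$ is witnessed by an actual non-real zero.
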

 For an integer \(q:q\geq{}1\),  let
\(E_{n,\,q,\,\varepsilon}\) be the \(n+q\)-\,dimensional
ellipsoid:
\begin{subequations}
\label{Ell}
\begin{equation}
\label{Ell1}
E_{n,\,q,\,\varepsilon}=\{(x_1,\,x_2,\,\ldots\,,\,x_n,\,\ldots\,,\,x_{n+q})\in\mathbb{R}^{n+q}:
\sum\limits_{0\leq{}j\leq{}n+q}(x_j/a_j)^2\leq{}1\},%
\end{equation}%
where
\begin{equation}
\label{Ell2} a_j=1\ \ \textup{for}\ \ 1\leq{}j\leq{}n,\ \
a_j=\varepsilon\ \
\textup{for}\ \ n+1\leq{}j\leq{}n+q\,.%
\end{equation}%
\end{subequations}

\begin{thm}\ \
\label{NMR}%
\begin{enumerate}
\item Let \(q\in\mathbb{Z}\) with \(5\leq{}q<\infty\). If \(n\) is large
enough:\,\(n\geq{}N(q)\), and \(\varepsilon\) is small enough:
\(0<\varepsilon\leq\varepsilon(n,\,q)\), then the Steiner
polynomial \(S_{E_{n,\,q,\,\varepsilon}}^{\mathbb{R}^{n+q}}\) is
not dissipative: some of its roots are situated in the open
right-half plane.
\item
Let \(p, q\in\mathbb{Z}\) such that \(q\) is odd, \,
\(p\geq{}1,\,q\geq{}1,\,p+q\geq{}6\,\).  %
If \,\,\(n\) is large enough:\,\(n\geq{}N(p,q)\) and
\(\varepsilon\) is small enough:
\(0<\varepsilon\leq\varepsilon(n,\,p,\,q)\), then the Weyl
polynomial \, \(W^p_{E_{n,\,q,\,\varepsilon}}\) \,is not
conservative: some of its roots do not belong to the imaginary
axis.
\end{enumerate}
\end{thm}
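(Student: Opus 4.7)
My plan is to prove both statements by a degeneration argument: sending \(\varepsilon\to 0^{+}\), the ellipsoid \(E_{n,q,\varepsilon}\) converges in Hausdorff metric to the squeezed cylinder \(B^{n}\times 0^{q}\subset\mathbb{R}^{n+q}\), and the existence of `bad' roots will be established first for the two limit polynomials and then transferred back to \(E_{n,q,\varepsilon}\) for all sufficiently small \(\varepsilon>0\) by continuity.

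First I would establish coefficientwise convergence. The coefficients of the Steiner polynomial are mixed volumes, which are continuous with respect to Hausdorff convergence of convex bodies, so \(S_{E_{n,q,\varepsilon}}^{\mathbb{R}^{n+q}}\to S_{B^{n}\times 0^{q}}^{\mathbb{R}^{n+q}}\) coefficientwise as \(\varepsilon\to 0^{+}\). By Definitions \ref{DNWP} and \ref{DGWP}, the same passage to the limit holds for the Weyl polynomials, \(W^{\,p}_{\partial E_{n,q,\varepsilon}}\to W^{\,p}_{\partial(B^{n}\times 0^{q})}\). Consequently it suffices to prove the two root-location statements for the limit polynomials themselves: (i) for \(q\geq 5\) and \(n\) large, \(S_{B^{n}\times 0^{q}}^{\mathbb{R}^{n+q}}\) has a root with strictly positive real part; (ii) for \(q\) odd with \(p+q\geq 6\) and \(n\) large, \(W^{\,p}_{\partial(B^{n}\times 0^{q})}\) has a root off the imaginary axis.

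Next I would work out the limit polynomials explicitly. Starting from \(S_{B^{n}}^{\mathbb{R}^{n}}(t)=\Vol_{n}(B^{n})(1+t)^{n}\) and iterating Lemma \ref{IPR} a total of \(q\) times, the Gamma-function factors telescope to yield
\[
S_{B^{n}\times 0^{q}}^{\mathbb{R}^{n+q}}(t)\;=\;\pi^{q/2}\,\Vol_{n}(B^{n})\,t^{q}\sum_{k=0}^{n}\frac{\Gamma(\tfrac{k}{2}+1)}{\Gamma(\tfrac{k+q}{2}+1)}\binom{n}{k}t^{k}.
\]
The polynomial \(t^{-q}S_{B^{n}\times 0^{q}}^{\mathbb{R}^{n+q}}(t)\) is therefore the \(n\)-th Jensen polynomial of the explicit entire function \(f_{q}(t)=\sum_{k\geq 0}\frac{\pi^{q/2}\,\Gamma(k/2+1)}{\Gamma((k+q)/2+1)\,k!}\,t^{k}\); by the same iteration, \(W^{\,p}_{\partial(B^{n}\times 0^{q})}\) becomes, up to a monomial factor, the Jensen polynomial of a related entire function \(g_{p,q}\) obtained from \(f_{q}\) by applying the Weyl-index weights \(2^{-l}\Gamma(\tfrac{p}{2}+1)/\Gamma(\tfrac{p}{2}+l+1)\) to the appropriate subsequence. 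The analysis of membership of these generating functions in the Hurwitz class (Definition \ref{DeHuPo}), respectively in the Laguerre-P\'olya class, is precisely the subject of the later section on the degenerate sets \(B^{n+1}\times 0^{q}\): by the P\'olya-Schur multiplier theorems, membership of \(f_{q}\) (resp.\ \(g_{p,q}\)) in the Hurwitz (resp.\ Laguerre-P\'olya) class would force every associated Jensen polynomial to be dissipative (resp.\ conservative after \(t\mapsto it\)). I would verify that \(f_{q}\) leaves the Hurwitz class as soon as \(q\geq 5\), and that \(g_{p,q}\) leaves the Laguerre-P\'olya class exactly when \(q\) is odd with \(p+q\geq 6\); this failure at the level of the generating entire function propagates, for sufficiently large \(n\), to an `escaping' root of the Jensen polynomial.

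Finally, continuity of polynomial roots with respect to coefficients ensures that a root of the limit polynomial with \(\textup{Re}\,z>0\) (respectively off the imaginary axis) persists as a nearby root of the corresponding polynomial of \(E_{n,q,\varepsilon}\) for all sufficiently small \(\varepsilon>0\), yielding the required thresholds \(\varepsilon(n,q)\) and \(\varepsilon(n,p,q)\). The main obstacle is the entire-function analysis above, namely identifying the precise thresholds at which \(f_{q}\) and \(g_{p,q}\) leave their respective classes. Theorem \ref{NRWPSq} corresponds to the case \(q=1\) of assertion (ii), so the present result is the natural extension of that argument by the incorporation of additional `squeezed' directions; the parity restriction on \(q\) in the Weyl case reflects the asymmetric role of even versus odd Steiner coefficients already visible in \eqref{WPHC}.
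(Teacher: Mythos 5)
Your overall strategy is the same as the paper's: degenerate the ellipsoid to the squeezed cylinder $B^{n}\times 0^{q}$, prove the root-location failure at the level of the generating entire function, transfer to the Jensen polynomials for large $n$ by Hurwitz's theorem, and then pull back to $E_{n,q,\varepsilon}$ by continuity of the Steiner polynomial in the Hausdorff metric. For part (1) this matches the paper precisely: the generating entire function is $\mathcal{M}_{B^{\infty}\times 0^{q}}$ (your $f_q$), and Lemma \ref{NonHq} in Section \ref{LocRoot} supplies the key fact that it escapes the Hurwitz class for $q>4$.

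For part (2), however, you leave a genuine gap. You assert, without proof, that the Weyl generating entire function $g_{p,q}$ leaves the Laguerre-P\'olya class \emph{exactly} when $q$ is odd and $p+q\geq 6$, describing $g_{p,q}$ only loosely as ``$f_q$ with the Weyl-index weights applied to the appropriate subsequence.'' Taken at face value, this would require a fresh asymptotic analysis of an entire function for every pair $(p,q)$. The paper instead reduces everything to the two base cases $q=0$ and $q=1$ via Lemma \ref{TINVQ}: for $q$ odd, $\omega_p t^p\,W^{\,p}_{\partial(V\times 0^{q})}=\omega_{p+q-1}t^{p+q-1}\,W^{\,p+q-1}_{\partial(V\times 0^{1})}$, so the generating entire function for $\partial(B^{n}\times 0^{q})$ at index $p$ is exactly $\mathcal{W}^{\,p+q-1}_{\partial(B^{\infty}\times 0)}$, and the threshold $p+q-1\geq 5$ is then read off from Theorem \ref{str}; for $q$ even the same lemma gives $W^{\,p+q}_{\partial V}=W^{\,p+q}_{\partial B^{n}}$, which is always conservative (Theorem \ref{LoWP}), explaining why the parity condition is essential and not merely an artifact. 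Without Lemma \ref{TINVQ} (or an equivalent explicit identity among the Fox-Wright functions $g_{p,q}$), your claim about the threshold for general $q$ is unsupported; this is the one missing step you would need to fill in to make the argument complete.
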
%
Proof of Theorem \ref{NMR} is presented in
Section \ref{ExInSp}.

\begin{rem}
\label{ONR}
In the recent paper \cite{HeHe} other examples of solid convex sets were constructed,
having Steiner polynomials with roots in the right half plane.
\end{rem}
%%%%%%%%%%%%%%%%%%%%%%%%%%%%%%%%%%%%%%%%%%%%%%%%%%
\section{The explicit expressions for the Steiner and Weyl
polynomials associated with the `regular' convex sets.\label{EEWMP}}
Hereafter, we use the following identity for the
\(\Gamma\)-\,function:
\begin{equation}
\label{IfGa} \Gamma(\zeta+1/2)\,
\Gamma(\zeta+1)={\pi}^{1/2}2^{-2\zeta}\Gamma(2\zeta+1)\,,\ \forall
\zeta\in\mathbb{C}: 2\zeta\not=-1,\,-2,\,-3,\,\ldots\,\,.
\end{equation}
Let as present explicit expressions for the Steiner polynomials
associated with the `regular' convex sets: balls, cubes, squeezed
cylinders, as well as the expression for the Weyl polynomials
associated with the boundary surfaces of these sets. The items related
to balls are marked by the symbol \(\odot\), the items related
to cubes are marked by the symbol \(\boxdot\) .
\\[0.5ex]
\hspace*{0.0ex}\(\odot\)\hspace*{0.5ex}\textsf{The unit ball
\(B^n\)}.\\
Since \(B^n+tB^n=(1+t)B^n\) for \(t>0\), then, according to
\eqref{DeMiPo},
\begin{equation}
\label{EMPb}
S_{B^n}^{\mathbb{R}^n}(t)=%
\textup{Vol}_n(B^n)\cdot(1+t)^n\,,
\end{equation}
or
\begin{equation}
\label{EMPb1}
S_{B^n}^{\mathbb{R}^n}(t)=%
\textup{Vol}_n(B^n)\sum\limits_{0\leq{}k\leq{}n}\frac{n!}{(n-k)!}\cdot{}\frac{t^k}{k!}\,.
\end{equation}
\hspace*{2.0ex}Thus, the coefficients of the Steiner polynomial
\(S_{B^n}^{\mathbb{R}^n}\) for the ball \(B^n\) are:
\begin{equation}
\label{MiCB}
s_{\,k}^{\mathbb{R}^n}(B^n)=\textup{Vol}_n(B^n)\cdot{}
\frac{n!}{(n-k)!}\cdot{}\frac{1}{k!}\,,\quad{}0\leq{}k\leq{}n\,.
\end{equation}
%%%%%%%%%%%%
\hspace*{0.0ex}\(\odot\)\hspace*{0.5ex}\textsf{The squeezed
spherical
cylinder \(B^{n}\times{}0\)}.\\
The Steiner polynomial for the squeezed spherical cylinder
\(B^{n}\times{}0\) is:
\begin{equation}
\label{MiSqB} S_{B^{n}\times{}0}^{\mathbb{R}^{n+1}}(t)=
\textup{Vol}_n(B^n)\cdot\!t\!\!\sum\limits_{0\leq{}k\leq{}n}\frac{n!}{(n-k)!}%
\frac{{\pi}^{1/2}\Gamma(\frac{k}{2}+1)}{\Gamma(\frac{k+1}{2}+1)}\frac{1}{k!}\,
t^k\,.
\end{equation}
The expression \eqref{MiSqB} is derived from \eqref{EMPb1} and
\eqref{IRP1}-\eqref{MPn1}. (See Lemma \ref{IPR}.)\\
\hspace*{1.5ex}Thus, the coefficients of the Steiner polynomial
\(S_{B^n\times{}0}^{\mathbb{R}^{n+1}}\) for the squeezed spherical
cylinder \(B^{n}\times{}0\) are:
\begin{multline}
\label{CoMiSqB}%
 s_{\,0}^{\mathbb{R}^{n+1}}(B^n\times{}0)=0,\quad
s_{\,k+1}^{\mathbb{R}^{n+1}}(B^n\times{}0)=\\
=\textup{Vol}_n(B^n)\cdot{}\frac{n!}{(n-k)!}\cdot{}
\frac{{\pi}^{1/2}\Gamma(\frac{k}{2}+1)}%
{\Gamma(\frac{k+1}{2}+1)}\frac{1}{k!}\,,\quad{}0\leq{}k\leq{}n\,.
\end{multline}

\noindent
\hspace*{0.0ex}\(\odot\)\hspace*{0.5ex}\textsf{The unit sphere
\(S^n=\partial{}B^{n+1}\)}.\\
According to \eqref{NWP} and \eqref{MiCB}, the Weyl coefficients
of the \(n\)-\,dimensional sphere \(S^n=\partial{}B^{n+1}\) are:
\begin{equation}
\label{WCSp} w_{2l}(\partial{}B^{n+1})=\textup{Vol}_{n}
(\partial{}B^{n+1})\cdot\frac{n!}{(n-2l)!}\cdot\frac{1}{l!}\frac{1}{2^l}\,,
\quad{}0\leq{}l\leq[{\textstyle\frac{n}{2}}]\,.
\end{equation}
Thus, the Weyl polynomials associated with the \(n\)-\,dimensional
sphere are:
\begin{multline}
\label{WPpS} %
W_{\partial{}B^{n+1}}^{\,p}(t)=\textup{Vol}_{n}
(\partial{}B^{n+1})\cdot{}\\
\cdot{}\sum\limits_{l=0}^{[\frac{n}{2}]}%
\frac{n!}
{(n-2l)!}\cdot{}\frac{2^{-l}\,\Gamma(\frac{p}{2}+1)}{\Gamma(\frac{p}{2}+l+1)}
\cdot{}\frac{1}{l!}\cdot{}\Big(\frac{t^2}{2}\Big)^l,\ \
p=1,\,2,\,\ldots\,.
\end{multline}
\begin{equation}
 \label{WPiS}
 W_{\partial{}B^{n+1}}^{\,\infty}(t)=\textup{Vol}_{n}
(\partial{}B^{n+1})
\cdot{}\sum\limits_{l=0}^{[\frac{n}{2}]}%
\frac{n!}
{(n-2l)!}\cdot{}\frac{1}{l!}\cdot{}\Big(\frac{t^2}{2}\Big)^l\,\cdot
\end{equation}
\hspace*{0.0ex}\(\odot\)\hspace*{0.5ex}\textsf{The boundary
surface \(\partial(B^{n}\times{}0)\) of the
squeezed spherical cylinder \(B^{n}\times{}0\)}.\\
According to \eqref{WCSqC} and \eqref{MiCB}, the Weyl coefficients
of the \(n\)-\,dimensional improper surface
\(\partial(B^{n}\times{}0)\) are:
\begin{equation}
\label{WCSqB} w_{2l}(\partial(B^{n}\times{}0))=\textup{Vol}_{n}
(\partial(B^{n}\times{}0))\cdot\frac{n!}{(n-2l)!}\cdot\frac{\Gamma(1/2)}{\Gamma(l+1/2)}\,\frac{1}{2^l},
\quad{}0\leq{}l\leq[{\textstyle\frac{n}{2}}]\,.
\end{equation}
Thus, the Weyl polynomials associated with the (improper) surface
\(\partial(B^{n}\times{}0)\) are:
\begin{multline}
\label{WPpSSqC} %
W_{\partial{}(B^{n}\times{}0)}^{\,p}(t)=\textup{Vol}_{n}
(\partial{}(B^{n}\times{}0))\cdot{}\\
\cdot{}\sum\limits_{l=0}^{[\frac{n}{2}]}%
\frac{n!}
{(n-2l)!}\cdot{}\frac{2^{-l}\,\Gamma(\frac{p}{2}+1)}{\Gamma(\frac{p}{2}+l+1)}
\cdot{}\frac{\Gamma(1/2)}{\Gamma(l+1/2)}\cdot{}\Big(\frac{t^2}{2}\Big)^l,\
\ p=1,\,2,\,\ldots\,.
\end{multline}
\begin{equation}
 \label{WPiSSqC}
 W_{\partial{}(B^{n+1}\times{}0)}^{\,\infty}(t)=\textup{Vol}_{n}
(\partial{}(B^{n+1}\times{}0))
\cdot{}\sum\limits_{l=0}^{[\frac{n}{2}]}%
\frac{n!}
{(n-2l)!}\cdot{}\frac{\Gamma(1/2)}{\Gamma(l+1/2)}
\cdot{}\Big(\frac{t^2}{2}\Big)^l\,\cdot
\end{equation}
\hspace*{0.0ex}\(\boxdot\)\hspace*{0.5ex}\textsf{The unit cube
\(Q^n\)}.\\
The Steiner polynomial \(S_{Q^n}^{\mathbb{R}^n}\) is:
\begin{equation}
\label{MPUQ}%
 S_{Q^n}^{\mathbb{R}^n}(t)= \textup{Vol}_{n}%
(Q^{n})\cdot{}\!\!\!\!\sum\limits_{0\leq{}k\leq{}n}\frac{n!}{(n-k)!}
\frac{1}{\Gamma(\frac{k}{2}+1)k!}\Big(\frac{\sqrt{\pi}}{2}\Big)^k\,t^k\,.
\end{equation}
Expression \eqref{MPUQ} is obtained in the following way. The
\(n\)-\,dimensional cube \(Q^{n}\) is considered as the Cartesian
product of the one-dimensional cubes:
\begin{equation*}
\label{CsCP} Q^{n}=Q^1\times\,\cdots\,\times{}Q^1\,.
\end{equation*}
For \(n=1\), the Steiner polynomial  is:
\(S_{Q^1}^{\mathbb{R}^1}(t)=2(1+t)\)\,.
We here use the fact that the Steiner polynomial of a Cartesian product
can be expressed in terms of the Steiner polynomials belonging to
the Cartesian factors. (See
details in Section \ref{MPCaPr}.)\\ %
\hspace*{2.0ex} We find, the coefficients of the Steiner polynomial
for the cube \(Q^n\) to be given by:
\begin{equation}
\label{MiCQ}
s_{\,k}^{\mathbb{R}^n}(Q^n)=\textup{Vol}_n(Q^n)\cdot{}
\frac{n!}{(n-k)!}\cdot{}\frac{1}{\Gamma(\frac{k}{2}+1)k!}
\Big(\frac{\sqrt{\pi}}{2}\Big)^k\,,\quad{}0\leq{}k\leq{}n\,.
\end{equation}
%%%%%%%%%%%%%%%%%%%%%%%%%%%%%%%%%%%%%%%%%%%%%%%%%
\hspace*{0.0ex}\(\boxdot\)\hspace*{0.5ex}\textsf{The squeezed
cubic cylinder \(Q^n\times{}0\)}.\\
The Steiner polynomial \(S_{Q^{n}\times{}0}^{\mathbb{R}^{n+1}}\)
is:
\begin{equation}
\label{MPUQC}%
 S_{Q^{n}\times{}0}^{\mathbb{R}^{n+1}}(t)= \textup{Vol}_{n}%
(Q^{n})\cdot{}t\!\!\!\!\sum\limits_{0\leq{}k\leq{}n}\frac{n!}{(n-k)!}
\frac{\Gamma(\frac{1}{2})}{\Gamma(\frac{k+1}{2}+1)k!}\Big(\frac{\sqrt{\pi}}{2}\Big)^k\,t^k\,.
\end{equation}
The expression \eqref{MPUQC} is derived from \eqref{MPUQ} and
\eqref{IRP1}-\eqref{MPn1}. (See Lemma \ref{IPR}.)\\ %
\hspace*{2.0ex}Thus, the coefficients of the Steiner polynomial
\(S_{Q^n\times{}0}^{\mathbb{R}^{n+1}}\) for the squeezed cubic
cylinder are:
\begin{multline}
\label{CoMiSqQ} s_{\,0}^{\mathbb{R}^{n+1}}(Q^n\times{}0)=0,\quad
s_{\,k+1}^{\mathbb{R}^{n+1}}(Q^n\times{}0)=\\
=\textup{Vol}_n(Q^n)\cdot{}\frac{n!}{(n-k)!}\cdot{}
\frac{\Gamma(\frac{1}{2})}%
{\Gamma(\frac{k+1}{2}+1)}\frac{1}{k!}\Big(\frac{\sqrt{\pi}}{2}
\Big)^k\,,\quad{}0\leq{}k\leq{}n\,.
\end{multline}
%%%%%%%%%%%%%%%%%%%%%%%%%%%%%%%%%%%%%%%
\hspace*{0.0ex}\(\boxdot\)\hspace*{0.5ex}\textsf{The boundary
surface \(\partial{}Q^{n+1}\) of the unit cube \(Q^{n+1}\)}.\\
According to \eqref{NWP} and \eqref{MiCQ}, the Weyl coefficients
of the \(n\)-\,dimensional surface \(\partial{}Q^{n+1}\) are:
\begin{multline}
\label{WCQp} w_{2l}(\partial{}Q^{n+1})=\textup{Vol}_{n}
(\partial{}Q^{n+1})\cdot\frac{n!}{(n-2l)!}\cdot\\
\cdot\frac{1}{\Gamma(l+\frac{1}{2}+1)}\frac{1}{2^l\,l!}\,\Big(\frac{\sqrt{\pi}}{2}\Big)^{2l+1},
\quad{}0\leq{}l\leq[{\textstyle\frac{n}{2}}]\,.
\end{multline}
Taking into account the identity
\(\Gamma(l+1+{\textstyle\frac{1}{2}})\cdot\Gamma(l+1)={\pi}^{1/2}2^{-(2l+1)}\Gamma(2l+2)\),
which is obtained by setting \(\zeta=l+1/2\) in \eqref{IfGa}, we can
rewrite the equality \eqref{WCQp}:
\begin{equation}
\label{aWCQp} w_{2l}(\partial{}Q^{n+1})=\textup{Vol}_{n}
(\partial{}Q^{n+1})\cdot\frac{n!}{(n-2l)!}
\cdot\frac{1}{(2l+1)!}\,\Big(\frac{\pi}{2}\Big)^{l},
\quad{}0\leq{}l\leq[{\textstyle\frac{n}{2}}]\,.
\end{equation}
Thus, the Weyl polynomials associated with the \(n\)-\,dimensional
surface \(\partial{}Q^{n+1}\) are:
\begin{multline}
\label{WPpQ} %
W_{\partial{}Q^{n+1}}^{\,p}(t)=\textup{Vol}_{n}
(\partial{}Q^{n+1})\cdot{}\\
\cdot{}\sum\limits_{l=0}^{[\frac{n}{2}]}%
\frac{n!} {(n-2l)!}\cdot{}\frac{
2^{-l}\,\Gamma(\frac{p}{2}+1)}{\Gamma(\frac{p}{2}+l+1)}
\cdot{}\frac{1}{(2l+1)!}\cdot{}\Big({\frac{{\pi}t^2}{2}}\,\Big)^l,\
\ p=1,\,2,\,\ldots\,.
\end{multline}
\begin{equation}
 \label{WPiQ}
 W_{\partial{}Q^{n+1}}^{\,\infty}(t)=\textup{Vol}_{n}
(\partial{}Q^{n+1})
\cdot{}\sum\limits_{l=0}^{[\frac{n}{2}]}%
\frac{n!}
{(n-2l)!}\cdot{}\frac{1}{(2l+1)!}\cdot{}\Big(\frac{{\pi}t^2}{2}\Big)^l\,\cdot
\end{equation}
%%%%%%%%%%%%%%%%%%%%%%%%%%%%%%%%%%%
\hspace*{0.0ex}\(\boxdot\) \hspace*{0.5ex}\textsf{The boundary
surface \(\partial(Q^{n}\times{}0)\) of the
squeezed cubic cylinder \(Q^{n}\times{}0\)}.\\
According to \eqref{WCSqC} and \eqref{MiCQ}, the Weyl coefficients
of the surface (improper) \(\partial{}(Q^{n}\times{}0)\) are:
\begin{multline}
\label{WCSQp} w_{2l}(\partial{}Q^{n}\times{}0)=\textup{Vol}_{n}
(\partial{}Q^{n}\times{}0)\cdot\frac{n!}{(n-2l)!}\cdot\\
\cdot\frac{\sqrt{\pi}}{\Gamma(l+\frac{1}{2})}\frac{1}{l!\,2^{l}\,}\,\Big(\frac{\pi}{2}\Big)^{l},
\quad{}0\leq{}l\leq[{\textstyle\frac{n}{2}}]\,.
\end{multline}
Using the identity
\(\Gamma(l+1/2)\Gamma(l+1)=\sqrt{\pi}2^{-2l}\Gamma(2l+1)\), which
is obtained by setting  \(\zeta=l\) in \eqref{IfGa},
the equality \eqref{WCSQp} can be rewritten as follows:
\begin{equation}
\label{aWCSQp} w_{2l}(\partial{}Q^{n}\times{}0)=\textup{Vol}_{n}
(\partial{}Q^{n}\times{}0)\cdot\frac{n!}{(n-2l)!}\cdot
\frac{1}{(2l)!}\,\Big(\frac{\pi}{2}\Big)^{2l},
\quad{}0\leq{}l\leq[{\textstyle\frac{n}{2}}]\,.
\end{equation}
Thus, the Weyl polynomials associated with the improper
\(n\)-\,dimensional surface \(\partial{}(Q^{n}\times{}0)\) are:
\begin{multline}
\label{WPpQSqC} %
W_{\partial{}(Q^{n}\times{}0)}^{\,p}(t)=\textup{Vol}_{n}
(\partial{}(Q^{n}\times{}0))\cdot{}\\
\cdot{}\sum\limits_{l=0}^{[\frac{n}{2}]}%
\frac{n!} {(n-2l)!}\cdot{}
\frac{2^{-l}\,\Gamma(\frac{p}{2}+1)}{\Gamma(\frac{p}{2}+l+1)}
\cdot{}\frac{1}{(2l)!}\cdot{}\Big({\frac{{\pi}t^2}{2}}\,\Big)^l,\
\ p=1,\,2,\,\ldots\,.
\end{multline}
\begin{equation}
 \label{WPiQSqC}
 W_{\partial{}(Q^{n}\times{}0)}^{\,\infty}(t)=\textup{Vol}_{n}
(\partial{}(Q^{n}\times{}0))
\cdot{}\sum\limits_{l=0}^{[\frac{n}{2}]}%
\frac{n!}
{(n-2l)!}\cdot{}\frac{1}{(2l)!}\cdot{}\Big(\frac{{\pi}t^2}{2}\Big)^l\,\cdot
\end{equation}

\section{Weyl and Steiner polynomials of `regular' convex
sets
as renormalized Jensen polynomials.\label{WMPJen}}
It would be difficult to directly investigate the location of the roots
for the Steiner
polynomials \(S_{B^n}^{\mathbb{R}^n}\),
\(S_{B^{n}\times{}0}^{\mathbb{R}^{n+1}}\),\,
\(S_{Q^n}^{\mathbb{R}^n}\),\,
\(S_{Q^{n}\times{}0}^{\mathbb{R}^{n+1}}\)\, \,and Weyl polynomials
\(W_{\partial{}B^{n+1}}^{\,p}\),\,
\(W_{\partial{}(B^{n}\times{}0)}^{\,p}\),\,\,
\(W_{\partial{}Q^{n+1}}^{\,\infty}\),\,\,
\(W_{\partial{}(Q^{n}\times{}0)}^{\,p}\) for a \textit{finite}
\(n\). It turns out to be much easier first  to
investigate the roots of the entire functions,
which are the limits of the (renormalized) Steiner and Weyl
polynomials as \(n\to\infty\). The properties of the roots
belonging to the original Steiner and Weyl polynomials can then
be deduced from the properties of these limiting entire functions.\\

\paragraph{Jensen polynomials.}
From the explicit expressions \eqref{EMPb1}, \eqref{MiSqB},
\eqref{MPUQ}, \eqref{MPUQC} for the Steiner polynomials and
\eqref{WPpS}, \eqref{WPiS},  \eqref{WPpSSqC}, \eqref{WPiSSqC},
\eqref{WPpQ}, \eqref{WPiQ}, \eqref{WPpQSqC}, \eqref{WPiQSqC} for
the Weyl polynomials we notice that each of these expressions
contains the factor \(\dfrac{n!}{(n-k)!}\), which is `a part' of
the binomial coefficient \(\binom{n}{k}\). This factor
can be expressed as%
\begin{equation}%
\label{BinRat}%
 \frac{n!}{(n-k)!}=1\cdot{}
\Big(1-\frac{1}{n}\Big)\cdot{}\Big(1-\frac{2}{n}\Big)\cdot\,\,\cdots\,\cdot\,{}
\Big(1-\frac{k-1}{n}\Big)\cdot{}n^k\,,\quad{}\,1\leq{}k\leq{}n\,.
 \end{equation}
\begin{defn} \ \ %
\label{DefJP}%
\begin{enumerate}
\item
Given a formal power series \(f\):
\begin{equation}%
\label{FPS}%
 f(t)=\sum\limits_{0\leq{}l<\infty}a_{l}t^{l}\,,
\end{equation}
we define the following sequence of polynomials
\(\mathscr{J}_n(f;t),\,n=1,\,2,\,3,\,\ldots:\)
\begin{equation}
\label{DJP}%
\mathscr{J}_n(f;t)=\sum\limits_{0\leq{}l\leq{}n}\frac{n!}{(n-l)!}\frac{1}{n^l}\cdot{}a_{l}t^{l},
\end{equation}
or, rewriting the factor \(\frac{n!}{(n-l)!}\frac{1}{n^l}\),
\begin{equation}
\label{DJPd}%
\mathscr{J}_n(f;t)=a_0+\sum\limits_{1\leq{}l\leq{}n}1\big(1-{\textstyle{}\frac{1}{n}}\big)
\big(1-{\textstyle{}\frac{2}{n}}\big)\,\cdots\,\big(1-{\textstyle{}\frac{l-1}{n}}\big)\,
\cdot{}a_{l}t^{l}.
\end{equation}
The polynomials \(\mathscr{J}_n(f;t)\) are said to be \textsf{the
Jensen polynomials associated with the power series \(f\).}\\[0.0ex]
\item
Given a holomorphic \emph{function} \(f\)  in the disc
\(\{t:\,|t|<R\}\), where \(R\leq{}\infty\), we associate the
sequence of the Jensen polynomials with the Taylor series
\eqref{FPS} of the function \(f\) according to \eqref{DJP}.
We denote these
 polynomials by \(\mathscr{J}_n(f;t)\) as well and call them \textsf{the Jensen
 polynomials associated with the function \(f\).}\\[0.0ex]
 \item
 The factors
 \begin{multline}
 \label{JeFa}
  j_{n,0}=1,\quad{}j_{n,k}=1\big(1-{\textstyle{}\frac{1}{n}}\big),
\big(1-{\textstyle{}\frac{2}{n}}\big)\,%
\cdots\,\big(1-{\textstyle{}\frac{k-1}{n}}\big),\,\,1\leq
  k\leq n,\\
j_{n,k}=0,\,\,k>n\,,
 \end{multline}
 are said to be \textsf{the Jensen multipliers}.
 \vspace{1.0ex} %

Thus, the Jensen polynomials associated with an \(f\) of the form
\eqref{FPS}
 can be written as:
 \begin{equation}
\label{DJP1}%
\mathscr{J}_n(f;t)=\sum\limits_{0\leq{}<\infty}j_{n,l}\cdot{}a_{l}t^{l}\,.
\end{equation}
\end{enumerate}
\end{defn}
Since
\begin{math}%
\label{ReLRel}%
 j_{n,k}\to{}1 {}\ \textup{as \(k\) is fixed}, %
 \ n\to\infty\,,
\end{math}
 the following result is evident:
\begin{lem}[The approximation property of Jensen polynomials]{}\ %
\label{CJPL} %
Given the power series \eqref{FPS}, the following statements hold:
\begin{enumerate}
\item
The sequence
 of the Jensen polynomials \(\mathscr{J}_n(f;t)\) converges to the series \(f\)
 coefficients-wise\,;
\item
If, moreover, the radius of convergence of the power series
\eqref{FPS} is positive, say equal to \(R,\,0<R\leq{}\infty\),
then the sequence
 of the Jensen polynomials \(\mathscr{J}_n(f;t)\) converges to the function
 which is the sum of this power series
 locally uniformly in the disc \(\{t:\,|t|<R\}\).
\end{enumerate}
\end{lem}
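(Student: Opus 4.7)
The plan is to base everything on the \emph{a priori} estimate $0 \leq j_{n,k} \leq 1$ valid for every $n\geq 1$ and $k \geq 0$, which reads off directly from the product formula \eqref{JeFa}: for $1 \leq k \leq n$ each factor $1 - j/n$ with $1 \leq j \leq k-1$ lies in $[0,1]$, while $j_{n,0}=1$ and $j_{n,k}=0$ for $k > n$. This uniform bound is what lets us dominate the tail of $\mathscr{J}_n(f;t)$ independently of $n$.

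For part (1) I would simply fix the index $k$ and note that for all $n \geq k$ one has
\[
j_{n,k} = \prod_{j=1}^{k-1}\Big(1 - \frac{j}{n}\Big) \longrightarrow 1 \quad\text{as } n \to \infty,
\]
since the product contains a fixed number of factors, each of which tends to $1$. Consequently the coefficient $j_{n,k}a_k$ of $t^k$ in $\mathscr{J}_n(f;t)$ converges to $a_k$, which is precisely the coefficient-wise statement.

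For part (2), fix $r$ with $0 < r < R$ and $\varepsilon > 0$. Using the decomposition
\[
f(t) - \mathscr{J}_n(f;t) = \sum_{k=0}^{n}(1-j_{n,k})\,a_k t^k + \sum_{k=n+1}^{\infty}a_k t^k,
\]
together with $0 \leq 1 - j_{n,k} \leq 1$, we get for all $|t| \leq r$ and all $n \geq N$
\[
|f(t) - \mathscr{J}_n(f;t)| \leq \sum_{k=0}^{N}(1-j_{n,k})|a_k|r^k + 2\sum_{k=N+1}^{\infty}|a_k|r^k.
\]
Since $\sum |a_k|r^k$ converges (as $r<R$), choose $N$ so large that the second sum is less than $\varepsilon/2$. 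The first sum is then a \emph{finite} sum whose terms each tend to zero as $n\to\infty$ by part (1); hence it is less than $\varepsilon/2$ for $n$ large enough. Combining gives $|f(t) - \mathscr{J}_n(f;t)| < \varepsilon$ uniformly on $\{|t| \leq r\}$, which is the asserted local uniform convergence.

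There is no real obstacle to the argument; it is a standard Weierstrass-type splitting into head and tail. The one conceptual ingredient, which I would flag explicitly at the start of the proof, is the uniform bound $|j_{n,k}| \leq 1$ — without it no dominating convergent majorant would be available for the tail of the Jensen polynomial, and the interchange of limits would fail.
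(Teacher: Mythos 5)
Your proof is correct and fleshes out exactly what the paper treats as ``evident'': the paper supplies no detailed argument, but its remark immediately following the lemma names the same two ingredients you use --- pointwise convergence $j_{n,k}\to1$ for fixed $k$ and the uniform bound $\sup_{n,k}|j_{n,k}|<\infty$ (here $0\leq j_{n,k}\leq1$). Your head-tail splitting is the standard way to turn those two facts into the stated local uniform convergence, and it is what the paper's Remark implicitly has in mind.
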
%

The approximation property in not specific to the polynomials
constructed from the \textit{Jensen multipliers} \(j_{n,k}\). This
property holds for \textit{any} multipliers \(j_{n,k}\), which
satisfy the conditions
\( j_{n,k}\to{}1 \,\textup{as \(k\) is fixed},%
\,n\to\infty\,,\) and are uniformly bounded:
\(\sup\limits_{k,n}|j_{n,k}|<\infty\,.\) What is much more
specific, that for some \(f\), the polynomial
\(\mathscr{J}_n(f;t)\) constructed from the \textit{Jensen
multipliers} \(j_{n,k}\) preserve the property of \(f\) to possess
only real roots.
\begin{nthm}
[Jensen] Let \(f\) be a \emph{polynomial} such that all its roots
are real. Then for each \(n\), all roots of the Jensen polynomial
\(\mathscr{J}_n(f,\,t)\) are real as well.
\end{nthm}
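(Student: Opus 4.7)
The plan is to identify the Jensen multipliers \((j_{n,k})_{k\geq 0}\) as a multiplier sequence of the first kind and invoke the P\'olya--Schur theorem: a sequence \((\lambda_k)\) is a multiplier sequence (meaning \(\sum\lambda_k a_k t^k\) has only real roots whenever \(\sum a_k t^k\) does) if and only if the generating series \(\sum\lambda_k t^k/k!\) is a locally uniform limit on \(\mathbb{C}\) of polynomials whose zeros are all real and of a common sign. This is precisely the framework the paper develops in the section on the Laguerre--P\'olya class and multipliers preserving location of roots.

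For the Jensen multipliers this generating series admits a closed form:
\[
\sum_{k=0}^{n}\frac{j_{n,k}}{k!}\,t^{k}
\;=\;\sum_{k=0}^{n}\frac{n!}{(n-k)!\,n^{k}\,k!}\,t^{k}
\;=\;\sum_{k=0}^{n}\binom{n}{k}\Bigl(\tfrac{t}{n}\Bigr)^{\!k}
\;=\;\Bigl(1+\tfrac{t}{n}\Bigr)^{\!n}.
\]
This polynomial has the single real zero \(t=-n\) of multiplicity \(n\), so the P\'olya--Schur criterion is satisfied without a limit even being needed: \((j_{n,k})_{k=0}^{n}\), extended by zero for \(k>n\), is a multiplier sequence of the first kind. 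Applied to any \(f(t)=\sum a_k t^k\) with only real roots, this gives at once that \(\mathscr{J}_n(f;t)=\sum_{k=0}^{n}j_{n,k}\,a_k\,t^{k}\) has only real roots, as required.

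The main obstacle is of course the P\'olya--Schur characterization itself, which is a nontrivial prerequisite. If a self-contained argument is preferred, one can instead invoke the Hermite--Poulain theorem (\(p(D)q\) has only real roots whenever \(p\) and \(q\) do). A short calculation shows
\[
\mathscr{J}_n(f;t)\;=\;\frac{t^n}{n^n}\,P_n\!\Bigl(\frac{n}{t}\Bigr),\qquad
P_n(t):=f(D)\,t^n\;=\;\sum_{k=0}^{\min(n,\,\deg f)}\frac{n!}{(n-k)!}\,a_k\,t^{n-k},
\]
where the truncation at \(k=n\) is automatic because \(D^{k}t^{n}=0\) for \(k>n\), matching the truncation already present in the definition of \(\mathscr{J}_n\). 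Hermite--Poulain applied to \(f\) and to \(t^n\) shows that \(P_n\) has only real roots; and the substitution \(t\mapsto n/t\) followed by multiplication by \(t^n/n^n\) is the coefficient reversal composed with a dilation, both of which preserve real-rootedness. Either route reaches the conclusion.
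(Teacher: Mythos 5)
Your first argument is exactly the one the paper takes: it identifies the Jensen multipliers' generating series as $\Psi(t)=(1+t/n)^n$ and invokes the P\'olya--Schur theorem, which is precisely what the paper records when it notes that Jensen's theorem is the special case of P\'olya--Schur obtained by choosing $\Psi(t)=(1+\tfrac{t}{n})^n$ (the paper also remarks it can be deduced from the Schur composition theorem). Your Hermite--Poulain route is a correct independent alternative --- the identity $\mathscr{J}_n(f;t)=n^{-n}t^n\,(f(D)s^n)\big|_{s=n/t}$ reduces the claim to the stability of $f(D)$ on real-rooted polynomials followed by coefficient reversal and dilation --- and is arguably more elementary in that it avoids the full P\'olya--Schur characterization, at the cost of invoking Hermite--Poulain and a couple of root-preserving operations instead.
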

This result is a special case of the Schur Composition Theorem
\cite{Schu1}. Actually, Jensen, \cite{Jen}, obtained a more
general result in which formulation \(f\) can be, not only a
polynomial with real roots, but also an entire function belonging to the
 \emph{Laguerre-P\'olya class}. We return to this generalization later, in Section
\ref{LPEF}. Now we focus our attention on the representation of the
Steiner and Weyl polynomials as Jensen polynomials of certain
entire functions.

The relation \eqref{ReLRel} as well as the expressions
\eqref{EMPb1},\,\eqref{MiSqB},\,\eqref{MPUQ},\,\eqref{MPUQC} for
the Steiner polynomials suggest us how the Steiner polynomials
should be renormalized so that the renormalized polynomials tend
to a non-trivial limit as \(n\to\infty\).
%%%%%%%%%%%%%%%%%%%%%%%%%%%%%%%%%%%%%%%%%%%%%%%%%%%%%%%%%%%%%%%%%%%%%%%
\paragraph{Entire functions which generate the Steiner polynomials
for balls, cubes, spherical and cubic cylinders.} Let us introduce
the infinite power series:
\begin{subequations}
\label{LiEFMPS}
\begin{align}
 \mathcal{M}_{B^{\infty}}(t)&=\sum\limits_{0\leq{}k<\infty}
\frac{1}{k!}\,t^k\,;
\label{LiEFMPS1}\\%
\mathcal{M}_{B^{\infty}\times{}0}(t)&=\sum\limits_{0\leq{}k<\infty}
\frac{\Gamma(\frac{1}{2}+1)\Gamma(\frac{k}{2}+1)}{\Gamma(\frac{k+1}{2}+1)}\,
\frac{1}{k!}\,t^k\,;
\label{LiEFMPS2}\\%
\mathcal{M}_{Q^{\infty}}(t)&=\sum\limits_{0\leq{}k<\infty}
\frac{1}{\Gamma(\frac{k}{2}+1)k!}\Big(\frac{\sqrt{\pi}}{2}\Big)^k\,\,t^k\,;
\label{LiEFMPS3}\\%
\mathcal{M}_{Q^{\infty}\times{}0}(t)&=\sum\limits_{0\leq{}k<\infty}
\frac{\Gamma(\frac{1}{2}+1)}{\Gamma(\frac{k+1}{2}+1)k!}\Big(\frac{\sqrt{\pi}}{2}\Big)^k\,t^k\,.%
\label{LiEFMPS4}
\end{align}
\end{subequations}
The series \eqref{LiEFMPS} represent entire functions which grow
 not faster than  exponentially. More precisely, the
functions \(\mathcal{M}_{B^{\infty}}\) and
\(\mathcal{M}_{B^{\infty}\times{}0}\) grow exponentially: they are
of  order  \(1\) and of normal type, the functions
\(\mathcal{M}_{Q^{\infty}}\) and
\(\mathcal{M}_{BQ^{\infty}\times{}0}\) grow subexponentially: they
are of order \(2/3\) and of normal type.

For each of the entire functions \eqref{LiEFMPS} we consider the associated
sequence of Jensen polynomials:
\begin{subequations}
\label{JLiEFMPS}
\begin{align}
 \mathcal{M}_{B^{n}}(t)&{=}
 \mathscr{J}_n(\mathcal{M}_{B^{\infty}};t)\,,
\label{JLiEFMPS1}\\%
\mathcal{M}_{B^{n}\times{}0}(t)&{=}\mathscr{J}_n(\mathcal{M}_{B^{\infty}\times{}0};t)\,,
\label{JLiEFMPS2}\\%
\mathcal{M}_{Q^{n}}(t)&{=}\mathscr{J}_n(\mathcal{M}_{Q^{\infty}};t)\,
\label{JLiEFMPS3}\\%
\mathcal{M}_{Q^{n}\times{}0}(t)&{=}
\mathscr{J}_n(\mathcal{M}_{Q^{\infty}\times{}0};t)\,,%
\label{JLiEFMPS4}
\end{align}
\end{subequations}
From the expressions \eqref{EMPb1}, \eqref{MiSqB}, \eqref{MPUQ},
\eqref{MPUQC} for the Steiner polynomials, it follows that they
are related to the above introduced polynomials \eqref{JLiEFMPS}
as follows:
\begin{subequations}
\label{RemOr}
\begin{alignat}{2}
S_{B^n}^{\mathbb{R}^n}(t)&=
\textup{Vol}_n(B^n)&\,&\mathcal{M}_{B^n}(nt)\,;
\label{RemOr1}\\%
S_{{B^n}\times{}0}^{\mathbb{R}^{n+1}}(t)&=
\textup{Vol}_n(B^{n})\,\omega_{1}t&\,&\mathcal{M}_{B^n\times{}0}(nt)\,;
\label{RemOr2}\\%
S_{Q^n}^{\mathbb{R}^n}(t)&=
\textup{Vol}_n(Q^n)&\,&\mathcal{M}_{Q^n}(nt)\,;
\label{RemOr3}\\%
S_{{Q^n}\times{}0}^{\mathbb{R}^{n+1}}(t)&=
\textup{Vol}_n(Q^n)\,\omega_{1}t&\,&\mathcal{M}_{Q^n\times{}0}(nt)\,;
\label{RemOr4}
\end{alignat}
\end{subequations}
The polynomials \(\mathcal{M}_{B^n}\),
\(\mathcal{M}_{B^n\times{}0}\), \(\mathcal{M}_{Q^n}\),
\(\mathcal{M}_{Q^n\times{}0}\) can be interpreted as
\textit{renormalized Steiner polynomials}. We take
the equalities \eqref{RemOr} as the \textsf{definition} of the
renormalized Steiner polynomials \(\mathcal{M}_{B^n}\),
\(\mathcal{M}_{B^n\times{}0}\), \(\mathcal{M}_{Q^n}\),
\(\mathcal{M}_{Q^n\times{}0}\) in terms of the `original'
Steiner polynomials \(S_{B^n}^{\mathbb{R}^n}\),
\(S_{{B^n}\times{}0}^{\mathbb{R}^{n+1}}\),
\(S_{Q^n}^{\mathbb{R}^n}\), and
\(S_{{Q^n}\times{}0}^{\mathbb{R}^{n+1}}\)\,.

From the approximation property of Jensen polynomials and from
\eqref{JLiEFMPS} it follows that
\begin{multline}
\label{LiEFM} %
\mathcal{M}_{B^{n}}(t)\to\mathcal{M}_{B^{\infty}}(t),\ \ %
\mathcal{M}_{B^{n}\times{}0}(t)\to\mathcal{M}_{B^{\infty}\times{}0}(t),\ \ %
\mathcal{M}_{Q^{n}}(t)\to\mathcal{M}_{Q^{\infty}}(t),\\ %
\mathcal{M}_{Q^{n}\times{}0}(t)\to\mathcal{M}_{Q^{\infty}\times{}0}(t)\
\ \textup{as}\ \ n\to\infty\,.%
\end{multline}
This explains the notation \eqref{LiEFMPS}\,.\\

We summarize the above stated consideration as follows\\[-4.0ex]
\begin{thm}%
\label{MPaJP}%
Let \(\{V^n\}\) be one of the four families of convex sets:
\(\{B^n\}\), \(\{B^n\times{}0\}\),  \(\{Q^n\}\),
\(\{Q^n\times{}0\}\). For each of these four families, there
exists an entire function\,\footnote{The symbol
\(V^\infty\) denotes whichever of the sets \(\{B^\infty\}\), \(\{B^\infty\times{}0\}\),
\(\{Q^\infty\}\),
\(\{Q^\infty\times{}0\}\) the case demands.} %
\(\mathcal{M}_{V^{\infty}}\)  such that in \textsf{any}
dimension \(n\), the renormalized Steiner polynomials
\(\mathcal{M}_{V^{n}}\), defined  by  \eqref{RemOr}, are generated
by this entire function \(\mathcal{M}_{V^{\infty}}\) to be the Jensen
polynomials \(\mathscr{J}_n(\mathcal{M}_{V^{\infty}})\): for all \(n\) the
equalities \eqref{JLiEFMPS} hold.
\end{thm}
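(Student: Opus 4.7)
The plan is to treat Theorem \ref{MPaJP} as a bookkeeping statement that consolidates the formulas \eqref{EMPb1}, \eqref{MiSqB}, \eqref{MPUQ}, \eqref{MPUQC} for the four Steiner polynomials into the language of Jensen polynomials. Two things must be checked: (i) each of the formal series in \eqref{LiEFMPS} defines an \emph{entire} function $\mathcal{M}_{V^{\infty}}$, and (ii) for every dimension $n$, the identity $\mathcal{M}_{V^{n}}(t)=\mathscr{J}_{n}(\mathcal{M}_{V^{\infty}};t)$, combined with the dilation $t\mapsto nt$ in \eqref{RemOr}, reproduces the explicit Steiner polynomial of the corresponding convex set.

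For (ii), I would begin by isolating the key algebraic identity that underlies the entire construction: if $f(t)=\sum_{l\geq 0}a_{l}t^{l}$, then by \eqref{DJP},
\begin{equation*}
\mathscr{J}_{n}(f;\,nt)=\sum\limits_{0\leq l\leq n}\frac{n!}{(n-l)!}\,a_{l}\,t^{l}.
\end{equation*}
This is precisely the shape of the four Steiner polynomials after dividing out $\textup{Vol}_{n}(V^{n})$ (and the factor $\omega_{1}t$ in the degenerate cases): the coefficient $\frac{n!}{(n-l)!}$ is common to all of them, and the remaining $l$-dependent factor is exactly $a_{l}$ from the proposed generating series \eqref{LiEFMPS}. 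Thus each claim in \eqref{RemOr} reduces, after one substitution, to a comparison of the $l$-th coefficient of the two sides. For $V^{n}=B^{n}$ this is immediate: $a_{l}=1/l!$ matches \eqref{EMPb1}. For $V^{n}=Q^{n}$ the coefficient $a_{l}=\frac{1}{\Gamma(l/2+1)\,l!}(\sqrt{\pi}/2)^{l}$ reproduces \eqref{MPUQ}. The two squeezed-cylinder cases follow from the same comparison once the extra factor $\omega_{1}t=2t$ is accounted for; the Gamma-function coefficient in \eqref{LiEFMPS2} and \eqref{LiEFMPS4} is then read off directly from Lemma \ref{IPR}, which supplies the $\Gamma(\frac{1}{2})\Gamma(\frac{k}{2}+1)/\Gamma(\frac{k+1}{2}+1)$ factor converting a Steiner polynomial of $V$ into one of $V\times 0$.

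For (i) I would estimate the coefficient growth. The series for $\mathcal{M}_{B^{\infty}}$ is just $e^{t}$, hence entire of order $1$, normal type. For $\mathcal{M}_{B^{\infty}\times 0}$ the Gamma-ratio $\Gamma(k/2+1)/\Gamma((k+1)/2+1)$ grows only polynomially in $k^{-1/2}$ (by the standard asymptotic $\Gamma(z+a)/\Gamma(z+b)\sim z^{a-b}$), so the $1/k!$ still dominates and the series is entire of order $1$ and normal type. For the cube series $\mathcal{M}_{Q^{\infty}}$ the extra factor $1/\Gamma(k/2+1)$ produces, via Stirling, an additional super-geometric decay: the coefficients behave like $(\text{const})^{k}/(k!)^{3/2}$, so the series defines an entire function of order $2/3$ and normal type. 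The same Stirling estimate, now with an additional $\Gamma((k+1)/2+1)^{-1}$, handles $\mathcal{M}_{Q^{\infty}\times 0}$. These order assertions are not needed for the theorem itself but are stated in the paragraph preceding the theorem and will reappear later, so I would record them in passing.

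The proof is then essentially complete: taking \eqref{LiEFMPS} as the \emph{definition} of $\mathcal{M}_{V^{\infty}}$, the Jensen polynomial $\mathscr{J}_{n}(\mathcal{M}_{V^{\infty}};t)$ agrees term by term with $\mathcal{M}_{V^{n}}(t)$ as defined through \eqref{RemOr}, which is what \eqref{JLiEFMPS} asserts. The convergence \eqref{LiEFM} is then just the approximation property of Lemma \ref{CJPL} applied to each of the four entire functions. I do not anticipate a serious obstacle here; the only subtlety is being careful with the two squeezed-cylinder cases, where the factor $\omega_{1}t$ on the right-hand side of \eqref{RemOr2} and \eqref{RemOr4} must be absorbed into the shift $k\mapsto k+1$ of the index so that the coefficients on both sides really do line up with Lemma \ref{IPR}.
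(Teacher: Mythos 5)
Your proposal is correct and takes essentially the same route as the paper: the theorem has no separate proof in the text because it is precisely the summary of the coefficient comparisons made via \eqref{EMPb1}, \eqref{MiSqB}, \eqref{MPUQ}, \eqref{MPUQC} against the Jensen polynomials of the generating series \eqref{LiEFMPS}, and your key identity \(\mathscr{J}_n(f;\,nt)=\sum_{0\leq l\leq n}\frac{n!}{(n-l)!}a_l t^l\) is exactly the algebraic cancellation the paper uses implicitly. (One small clarification: the factor \(\omega_1 t\) in the degenerate cases is not absorbed by an index shift in \(\mathcal{M}_{V^n}\) itself; rather \(\omega_1=2\) supplies the constant \(\Gamma(\tfrac12)/\Gamma(\tfrac32)=2\) needed to reconcile the coefficient \(\Gamma(\tfrac32)\Gamma(\tfrac{k}{2}+1)/\Gamma(\tfrac{k+1}{2}+1)\) in \eqref{LiEFMPS2}, \eqref{LiEFMPS4} with the factor \(\Gamma(\tfrac12)\Gamma(\tfrac{k}{2}+1)/\Gamma(\tfrac{k+1}{2}+1)\) in Lemma \ref{IPR}, while the extra \(t\) in front accounts for the degree shift \(k\mapsto k+1\).)
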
%
%%%%%%%%%%%%%%%%%%%%%%%%%%%%%%%%%%%%%%%%%%%%%%%%%%%%%%%%%%%%%%%%%%%%%%%
\paragraph{Entire functions which generate the Weyl polynomials
for the surfaces of balls, cubes, spherical and cubic cylinders.}
Let us introduce the infinite power series:
\begin{subequations}
\label{ReWP}
\begin{align}%
\mathcal{W}_{\partial{}B^{\infty}}^{\,p}(t)&=
\sum\limits_{l=0}^{\infty}%
\frac{2^{-l}\,\Gamma(\frac{p}{2}+1)}{\Gamma(\frac{p}{2}+l+1)}
\cdot{}\frac{1}{l!}\cdot{}\Big(-\frac{t^2}{2}\Big)^l,\ \
p=1,\,2,\,\ldots\,;
\label{ReWP1}\\
\mathcal{W}_{\partial{}B^{\infty}}^{\,\infty}(t)&=
\sum\limits_{l=0}^{\infty}%
\frac{1}{l!}\cdot{}\Big(-\frac{t^2}{2}\Big)^l\,;
 \label{ReWP2}\\
\mathcal{W}_{\partial{}(B^{\infty}\times{}0)}^{\,p}(t)&=
\sum\limits_{l=0}^{\infty}%
\frac{2^{-l}\,\Gamma(\frac{p}{2}+1)}{\Gamma(\frac{p}{2}+l+1)}
\cdot{}\frac{\Gamma(1/2)}{\Gamma(l+1/2)}\cdot{}\Big(-\frac{t^2}{2}\Big)^l,\
\  p=1,\,2,\,\ldots\,;%
 \label{ReWP3}\\ %
\mathcal{W}_{\partial{}(B^{\infty}\times{}0)}^{\,\infty}(t)&=
\sum\limits_{l=0}^{\infty}%
\frac{\Gamma(1/2)}{\Gamma(l+1/2)}\cdot{}\Big(-\frac{t^2}{2}\Big)^l\,; %
\label{ReWP4} \\ %
%\end{align}
 %\begin{align}
\mathcal{W}_{\partial{}Q^{\infty}}^{\,p}(t)&=
\sum\limits_{l=0}^{\infty}%
 \frac{2^{-l}\,\Gamma(\frac{p}{2}+1)}{\Gamma(\frac{p}{2}+l+1)}
\cdot{}\frac{1}{(2l+1)!}\cdot{}\Big(-{\frac{{\pi}t^2}{2}}\,\Big)^l,\
\  p=1,\,2,\,\ldots\,;%
\label{ReWP5}\\ %
\mathcal{W}_{\partial{}Q^{\infty}}^{\,\infty}(t)&=
\sum\limits_{l=0}^{\infty}%
\frac{1}{(2l+1)!}\cdot{}\Big(-{\frac{{\pi}t^2}{2}}\,\Big)^l\, ;
\label{ReWP6}\\ %
\mathcal{W}_{\partial{}(Q^{\infty}\times{}0)}^{\,p}(t)&=
\sum\limits_{l=0}^{\infty}%
\frac{2^{-l}\,\Gamma(\frac{p}{2}+1)}{\Gamma(\frac{p}{2}+l+1)}
\cdot{}\frac{1}{(2l)!}\cdot{}\Big(-{\frac{{\pi}t^2}{2}}\,\Big)^l,\
\ p=1,\,2,\,\ldots\,;%
\label{ReWP7}\\ %
\mathcal{W}_{\partial{}(Q^{\infty}\times{}0)}^{\,\infty}(t)&=
\sum\limits_{l=0}^{\infty}%
\frac{1}{(2l)!}\cdot{}\Big(-{\frac{{\pi}t^2}{2}}\,\Big)^l\,.
\label{ReWP8}
\end{align}%
\end{subequations}
The series \eqref{ReWP} represent entire functions. The functions
\eqref{ReWP2} and \eqref{ReWP4} are of order \(2\) and normal
type, the functions \eqref{ReWP1}, \eqref{ReWP3}, \eqref{ReWP6}
and \eqref{ReWP8} are of order \(1\) and normal type, the
functions \eqref{ReWP5} and \eqref{ReWP7} are of order \(2/3\) and
normal type.

For each of the entire functions \eqref{ReWP} we consider the assotiated
sequence of Jensen polynomials:
\begin{subequations}
\label{JReWP}
\begin{alignat}{2}
 \mathcal{W}^p_{\partial{}B^{n+1}}(t)&{=}
 \mathscr{J}_{2[n/2]}(\mathcal{W}^p_{\partial{}B^{\infty}};t)\,,&\qquad&1\leq{}p\leq\infty\,;
\label{JReWP1}\\%
\mathcal{W}^p_{\partial{}(B^{n+1}\times{}0)}(t)&{=}
\mathscr{J}_{2[n/2]}(\mathcal{W}^p_{\partial{}(B^{\infty}\times{}0)};t)\,,&\qquad&1\leq{}p\leq\infty\,;
\label{JReWP2}\\%
\mathcal{W}^p_{\partial{}Q^{n}}(t)&{=}
\mathscr{J}_{2[n/2]}(\mathcal{W}^p_{\partial{}Q^{\infty}};t)\,,&\qquad&1\leq{}p\leq\infty\,;
\label{JReWP3}\\%
\mathcal{W}^p_{\partial{}(Q^{n}\times{}0)}(t)&{=}
\mathscr{J}_{2[n/2]}(\mathcal{W}^p_{\partial{}(Q^{\infty}\times{}0)};t)\,,&\qquad&1\leq{}p\leq\infty\,.
\label{JReWP4}
\end{alignat}
\end{subequations}
From the expressions \eqref{WPpS}, \eqref{WPiS}, \eqref{WPpSSqC},
\eqref{WPiSSqC},  \eqref{WPpQ}, \eqref{WPiQ}, \eqref{WPpQSqC},
\eqref{WPiQSqC},    for the Weyl polynomials it follows that they
are related to the above introduced polynomials \eqref{JReWP} in the following way:
\begin{subequations}
\label{WRemOr}
\begin{alignat}{2}
W^p_{\partial{}B^{n+1}}(t)&=\phantom{(\times{})}
\textup{Vol}_n(\partial{}B^{n+1})&\,\cdot\,&\mathcal{W}^p_{\partial{}B^{n+1}}(int)\,;
\label{WRemOr1}\\%
W^p_{\partial{}(B^{n}\times{}0)}(t)&=
\textup{Vol}_n(\partial{}(B^{n}\times{}0))
\,&\,\cdot\,&\mathcal{W}^p_{\partial{}(B^{n}\times{}0)}(int)\,;
\label{WRemOr2}\\%
W^p_{\partial{}Q^{n+1}}(t)&=\phantom{(\times{})}
\textup{Vol}_n(\partial{}Q^{n+1})&\,\cdot\,&\mathcal{W}^p_{\partial{}Q^{n+1}}(int)\,;
\label{WRemOr3}\\%
W^p_{\partial{}(Q^{n}\times{}0)}(t)&=
\textup{Vol}_n(\partial{}(Q^{n}\times{}0))
\,&\,\cdot\,&\mathcal{W}^p_{\partial{}(Q^{n}\times{}0)}(int)\,;
\label{WRemOr4}
\end{alignat}
\end{subequations}
The equalities  \eqref{WRemOr} hold for all
\(n:\,1\leq{}n<\infty,\,\,p:\,1\leq{}p\leq{}\infty\).

The polynomials \(\mathcal{W}^p_{\partial{}B^{n+1}}\),
\(\mathcal{W}^p_{\partial(B^n\times{}0)}\),
\(\mathcal{W}^p_{\partial{}Q^{n+1}}\),
\(\mathcal{W}^p_{\partial{}(Q^n\times{}0)}\) can be interpreted as
\textit{renormalized Weyl polynomials}. We take the equalities
\eqref{WRemOr} as the \textsf{definition} of the renormalized Weyl
polynomials \(\mathcal{W}^p_{\partial{}B^{n+1}}\),
\(\mathcal{W}^p_{\partial(B^n\times{}0)}\),
\(\mathcal{W}^p_{\partial{}Q^{n+1}}\),
\(\mathcal{W}^p_{\partial{}(Q^n\times{}0)}\) in terms of the
`original' Steiner polynomials \(W^p_{\partial{}B^{n+1}}\),
\(W^p_{\partial(B^n\times{}0)}\), \(W^p_{\partial{}Q^{n+1}}\),
\(W^p_{\partial{}(Q^n\times{}0)}\).

From the approximation property of Jensen polynomials and from
\eqref{JReWP} it follows that for every fixed
\(p,\,\,1\leq{}p\leq{}\infty\),
\begin{multline}
\label{LiEFMW} %
\mathcal{W}^p_{\partial{}B^{n+1}}(t)\to\mathcal{W}^p_{\partial{}B^{\infty}}(t),\ \ %
\mathcal{W}^p_{\partial{}(B^{n}\times{}0)}(t)
\to\mathcal{W}^p_{\partial{}(B^{\infty}\times{}0)}(t),\ \ \\ %
\mathcal{W}^p_{\partial{}Q^{n+1}}(t)\to\mathcal{W}^p_{\partial{}Q^{\infty}}(t), %
\mathcal{W}^p_{\partial{}(Q^{n}\times{}0)}(t)
\to\mathcal{W}^p_{\partial{}(Q^{\infty}\times{}0)}(t)\
\ \textup{as}\ \ n\to\infty\,.%
\end{multline}
This explains the notation \eqref{ReWP}\,.

We summarize the above as follows.
\begin{thm}%
\label{WPaJP}%
Let \(\{\mathscr{M}^n\}\) be one of the four families of
\(n\)-dimensional convex surfaces: \(\{\partial{}B^{n+1}\}\),
\(\{\partial(B^n\times{}0)\}\), \(\{\partial{}Q^{n+1}\}\),
\(\{\partial(Q^n\times{}0)\}\). For each of these four families,
and for each \(p,\,1\leq{}p\leq{}\infty\), there exists an
 entire function\,\footnote{The symbol
\(\mathscr{M}^\infty\) denotes whichever of the sets \(\{\partial{}B^\infty\}\),
\(\{\partial{}(B^\infty\times{}0)\}\), \(\{\partial{}Q^\infty\}\),
\(\{\partial{}(Q^\infty\times{}0)\}\) the case demands.} %
\(\mathcal{W}^p_{\,\mathscr{M}^{\infty}}\)  such that in
\textsf{any} dimension \(n\), the renormalized Weyl
polynomials \(\mathcal{W}^p_{\mathscr{M}^{n}}\), defined  by
\eqref{WRemOr}, are generated by this entire function
\(\mathcal{W}^p_{\mathscr{M}^{\infty}}\) to be the Jensen polynomials
\(\mathscr{J}_2[n/2](\mathcal{W}^p_{\mathscr{M}^{\infty}})\).
\end{thm}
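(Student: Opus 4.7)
The plan is to verify Theorem \ref{WPaJP} by direct coefficient comparison, using the explicit expressions for the Weyl polynomials assembled in Section \ref{EEWMP} and the explicit power series \eqref{ReWP} defining the entire functions \(\mathcal{W}^p_{\mathscr{M}^\infty}\). The entire functions themselves are not constructed abstractly; they are declared by \eqref{ReWP}, and the content of the theorem is the structural identity expressed by \eqref{WRemOr} together with its Jensen-polynomial interpretation \eqref{JReWP}. No new machinery beyond what already appears in the excerpt is required.

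First I would fix one of the four families, say \(\mathscr{M}^n = \partial{}B^{n+1}\), together with a finite index \(p\). Formula \eqref{WPpS} exhibits \(W^p_{\mathscr{M}^n}(t)/\textup{Vol}_n(\mathscr{M}^n)\) as a sum over \(l = 0,\ldots,[n/2]\) whose general term factors naturally as \(\frac{n!}{(n-2l)!}\) times a quantity depending only on \(p\) and \(l\). After the substitution \(s = int\), so that \(t^{2l} = (-1)^l s^{2l}/n^{2l}\), the prefactor \(\frac{n!}{(n-2l)!}\cdot n^{-2l}\) is exactly the Jensen multiplier \(j_{n,2l}\) of \eqref{JeFa}; the residual factor matches, sign included, the Taylor coefficient of \(\mathcal{W}^p_{\partial{}B^\infty}\) read off from \eqref{ReWP1}, the \((-1)^l\) produced by \(i^{2l}\) being absorbed into the factor \((-s^2/2)^l\) present in that series. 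This identifies \(W^p_{\partial{}B^{n+1}}(t)\) with \(\textup{Vol}_n(\partial{}B^{n+1}) \cdot \mathscr{J}_{2[n/2]}(\mathcal{W}^p_{\partial{}B^\infty}; int)\), as claimed.

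The three remaining families are handled in parallel: each of \eqref{WPpSSqC}, \eqref{WPpQ}, and \eqref{WPpQSqC} carries the same combinatorial factor \(\frac{n!}{(n-2l)!}\), so the renormalization \(t \mapsto int\) again produces the Jensen multiplier, and the residual \(p\)- and family-dependent factor matches term-for-term the appropriate power series among \eqref{ReWP3}, \eqref{ReWP5}, \eqref{ReWP7}. The case \(p = \infty\) is handled with the same calculation, using \eqref{WPiS}, \eqref{WPiSSqC}, \eqref{WPiQ}, \eqref{WPiQSqC} and \eqref{ReWP2}, \eqref{ReWP4}, \eqref{ReWP6}, \eqref{ReWP8}. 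No step constitutes a real obstacle; the only care needed is clerical, in tracking the Gamma-function normalizations and the sign \(i^{2l} = (-1)^l\), and in noting that the summation bound \(l \leq [n/2]\) on the Weyl side is automatically enforced on the Jensen side, where the multiplier \(j_{n,k}\) vanishes for \(k > n\).
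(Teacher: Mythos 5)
Your strategy — verifying the theorem by direct coefficient comparison of \eqref{WPpS}, \eqref{WPpSSqC}, \eqref{WPpQ}, \eqref{WPpQSqC} against the power series \eqref{ReWP} under the substitution $s=int$ — matches the paper's intent, and your bookkeeping of the combinatorial factor is correct. The trouble is the last line of your first paragraph: you correctly find that the change of variable turns $\frac{n!}{(n-2l)!}$ into the Jensen multiplier $j_{n,2l}$, so the polynomial you have produced is $\mathscr{J}_{n}(\mathcal{W}^p_{\mathscr{M}^\infty};\,int)$, but you then declare it to be $\mathscr{J}_{2[n/2]}(\cdot\,;int)$. Those coincide only for even $n$; when $n$ is odd one has $2[n/2]=n-1$ and $j_{n-1,2l}\neq j_{n,2l}$, so the asserted identification fails. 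As a concrete check, for $n=3$, $p=1$ one has $W^1_{\partial B^4}(t)=\textup{Vol}_3(\partial B^4)(1+t^2)$ and $\mathcal{W}^1_{\partial B^\infty}(t)=\sin t/t$, and indeed
\[
\mathscr{J}_3\bigl(\mathcal{W}^1_{\partial B^\infty};\,3it\bigr)=1+t^2,\qquad
\mathscr{J}_2\bigl(\mathcal{W}^1_{\partial B^\infty};\,3it\bigr)=1+\tfrac{3}{4}t^2,
\]
so only the index $3$ reproduces the Weyl polynomial. The correct Jensen index is therefore $n$, which is also the index that matches the argument $int$ in \eqref{WRemOr} and parallels the Steiner case \eqref{JLiEFMPS}, \eqref{RemOr}; note that the paper's own proof of Theorem \ref{NRWPSq} already invokes $\mathscr{J}_n(\mathcal{W}^p_{\partial(B^\infty\times 0)};t)=\mathcal{W}^p_{\partial(B^n\times 0)}(t)$, not $\mathscr{J}_{2[n/2]}$. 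Since $\mathcal{W}^p_{\mathscr{M}^\infty}$ is an even power series, $\mathscr{J}_n$ of it is automatically an even polynomial of degree $2[n/2]$ — presumably how the subscript $2[n/2]$ crept into \eqref{JReWP} and the theorem statement — but that is a degree count, not the Jensen index, and you should not copy it when your own computation shows the index to be $n$.
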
%
\section{Entire functions of the Hurwitz
 and of the Laguerre-P\'olya class.
Multipliers preserving location of roots.
\label{LPEF}}%
\paragraph{Hurwitz class of entire functions.}
\begin{defn}
\label{DeHuC}
An  entire function \(H\) is said to
be in the \textsf{Hurwitz class}, written
\(H\in\mathscr{H}\), if
\begin{enumerate}
\item
\(H\not\equiv{}0\), and the real part of any root
 of \(H\) is strictly negative, i.e. if \(H(\zeta)=0\),
then \(\textup{Re}\,\zeta<0\).
\item
The function \(H\) is of exponential type:
 \(\varlimsup\limits_{|z|\to\infty}\frac{\ln{}|H(z)|}{|z|}<\infty\),
 and its defect \(d_H\) is non-negative: \(d_H\geq{}0\), where
 \begin{equation}%
 \label{DefH}%
 2d_H=\textstyle{\varlimsup\limits_{r\to+\infty}\frac{\ln{}|H(r)|}{r}\,-
 \varlimsup\limits_{r\to+\infty}\frac{\ln{}|H(-r)|}{r}}\,.
 \end{equation}%
\end{enumerate}
\end{defn}

The following functions are examples of entire functions
in class \(\mathscr{H}\):\\
a). A dissipative polynomial \(P(t)\).\\ %
b). An exponential \(\exp\{\alpha{}t\}\), where
\(\textup{Re}\,\alpha\geq{}0\,\). \\ %
c). The product \(P(t)\cdot\exp\{\alpha{}t\}\): \(P(t)\) is a
dissipative polynomial, \(\textup{Re}\,\alpha\geq{}0\,\).\\

\noindent
\textsf{ The significance of the Hurwitz class of entire functions stems from
the fact that functions in this class\,%
\footnote{The full description of the class of entire functions which are the
 limits of dissipative polynomials can be found in \cite[Chapter VIII, Theorem 4]{Lev1}.
 This class (up to the change of variables \(z\to{}iz\))
 is denoted by \(P^{\ast}\) there.} %
are locally uniform limits in \(\mathbb{C}\) of
dissipative polynomials.}
\vspace{2.5ex}
%%%%%%%%%%%%%%%%%%%%%%%%%%%%%%%%%%%%%%%%%%%%%%%%%%%%%%%%%%%%%%%%%%%%%%%%%%%%%%%%
\paragraph{Laguerre-P\'olya class of entire functions.}
\begin{defn}
\label{DFLPC} An  entire function \(E\) is said to
be in the \textsf{Laguerre-P{\'o}lya class}, written
\(E\in\mathscr{L}\text{-}\mathscr{P}\), if \(E\) is real
and
 can be expressed in the form
\begin{equation}
\label{DLP2}%
E(t)=ct^ne^{-{\beta}t^2+{\alpha}t}\prod\limits_{k=1}^{\infty}
\left(1+t\alpha_k\right)e^{-t\alpha_k},
\end{equation}
where
\(c\in\mathbb{R}\setminus{}0,\,\beta\geq{}0,\,\alpha\in\mathbb{R},\,
\alpha_k\in\mathbb{R}\), \(n\) is non-negative integer, and
\(\sum_{k=1}^{\infty}\alpha_k^{2}<\infty\).

A function \(E\in\mathscr{L}\text{-}\mathscr{P}\) is  said to be
of type I, denoted \(E\in\mathscr{L}\text{-}\mathscr{P}\text{-}\textup{I}\)\,,
if it is expressible in the form:
\begin{equation}
\label{DLP23}%
E(t)=ct^ne^{{\alpha}t}\prod\limits_{k=1}^{\infty}
\left(1+t\alpha_k\right),
\end{equation}
where \(c\in\mathbb{R}\setminus{}0,\,\,\alpha\geq{}0,\,
\alpha_k\geq{}0\), \(n\) is non-negative integer, and
\(\sum_{k=1}^{\infty}\alpha_k<\infty\).\\
\end{defn}

\noindent
\textsf{ The significance of the Laguerre-P\'olya class stems from
the fact that functions in this class, \textit{and only these}, are
the locally uniform limits in \(\mathbb{C}\) of polynomials with
only real roots.} (See \cite[Chapter 8]{Lev1},  \cite[Chapter
II, Theorems 9.1,\,9.2,\,9.3]{Obr}.)

\begin{lem}
\label{HLPI} An entire function \(E\), which is in
 the Laguerre-P\'olya class of type \textup{I}
 also belongs to the Hurwitz class:
\[\mathscr{L}\text{-}\mathscr{P}\text{-}\textup{I}\,\subset\,\mathscr{H}\,.\]
\end{lem}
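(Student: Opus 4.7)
The plan is to verify both defining properties of the Hurwitz class from Definition \ref{DeHuC} for an arbitrary $E \in \mathscr{L}\text{-}\mathscr{P}\text{-}\textup{I}$, working directly from its factorization \eqref{DLP23}: $E(t) = c\,t^n\,e^{\alpha t}\,E_1(t)$, where $E_1(t) := \prod_{k=1}^\infty (1 + t\alpha_k)$, with $c \in \mathbb{R}\setminus\{0\}$, $\alpha\geq 0$, $\alpha_k\geq 0$, and $\sum_k \alpha_k < \infty$. The exponential factor never vanishes, $t^n$ contributes a zero at the origin, and $E_1$ has zeros exactly at the points $-1/\alpha_k$ (one for each $k$ with $\alpha_k>0$), all on the negative real axis. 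Hence in the case $n = 0$, which is the only one consistent with the strict negativity demanded by Condition (1) of Definition \ref{DeHuC}, every root of $E$ lies in the open left half-plane, establishing Condition (1).

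For Condition (2), the only analytic input I need is
\begin{equation*}
\lim_{r\to+\infty} \frac{\ln E_1(r)}{r} = 0,
\end{equation*}
which I prove by dominated convergence applied termwise to $\frac{1}{r}\sum_k \ln(1+r\alpha_k)$: each summand tends to $0$ as $r\to\infty$, and the elementary bound $\ln(1+r\alpha_k)/r \leq \alpha_k$ provides a summable majorant. Combined with the triangle estimate $|1+z\alpha_k|\leq 1+|z|\alpha_k$, this yields $|E_1(z)|\leq E_1(|z|) = e^{o(|z|)}$, so $E$ is of exponential type (in fact of type at most $\alpha$), and the factorization gives $\lim_{r\to\infty} r^{-1}\ln|E(r)| = \alpha$. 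For the left-hand ray, the same inequality gives $|E_1(-r)| = \prod_k |1 - r\alpha_k| \leq \prod_k (1 + r\alpha_k) = E_1(r)$, so $\limsup_{r\to\infty} r^{-1}\ln|E_1(-r)|\leq 0$ and therefore $\limsup_{r\to\infty} r^{-1}\ln|E(-r)|\leq -\alpha$. Substituting into \eqref{DefH} yields
\begin{equation*}
2 d_E \;\geq\; \alpha - (-\alpha) \;=\; 2\alpha \;\geq\; 0,
\end{equation*}
which is Condition (2).

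The only non-routine point is the dominated-convergence step, which relies in an essential way on the summability $\sum\alpha_k<\infty$ that is built into Definition \ref{DFLPC}; everything else is a direct reading of Definitions \ref{DFLPC} and \ref{DeHuC}. The same analysis makes transparent why the lemma does not extend to the full class $\mathscr{L}\text{-}\mathscr{P}$: a Gaussian factor $e^{-\beta t^2}$ with $\beta > 0$ would destroy the finite-exponential-type requirement, while any negative $\alpha_k$ would immediately produce a root with positive real part.
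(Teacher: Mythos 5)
Your proof takes essentially the same route as the paper: read off the root set from the factorization \eqref{DLP23}, then verify exponential type and nonnegative defect directly from the product. Your dominated-convergence argument showing $\lim_{r\to\infty} r^{-1}\ln E_1(r)=0$ is an explicit version of what the paper dispatches with a one-line appeal to ``properties of the infinite product,'' and the one-sided estimate $\limsup_{r\to\infty} r^{-1}\ln|E(-r)|\leq -\alpha$ that you obtain already gives $d_E\geq\alpha\geq 0$; the paper asserts equality there, which is also true (the canonical product $E_1$ is of exponential type zero and hence has identically vanishing indicator) but is not actually needed.

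The point worth flagging is your observation about the monomial factor $t^n$. You are right that if $n\geq 1$, then $E(0)=0$, and $\textup{Re}\,0 = 0$ is not strictly negative, so Condition 1 of Definition \ref{DeHuC} fails and the inclusion $\mathscr{L}\text{-}\mathscr{P}\text{-}\textup{I}\subset\mathscr{H}$ as literally stated holds only for $n=0$. The paper's own proof silently discards the $t^n$ factor (it enumerates the roots $-1/\alpha_k$ and nothing else), so your noticing this is a genuine sharpening. In the paper's downstream applications of this lemma the exponent $n$ is always $0$, so no harm results, but the restriction should be made explicit, as you did.
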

\begin{proof} The roots of the entire function \(E\) which admit the
representation \eqref{DLP23} are located at the points
\(-(\alpha_k)^{-1}\), thus they are strictly negative. From the
properties of the infinite product
\(\prod\limits_{k=1}^{\infty}\left(1+t\alpha_k\right)\)  %
with
 \(\sum_{k=1}^{\infty}|\alpha_k|<\infty\), it follows that
a function \(E\) which admits the representation \eqref{DLP23} is
of exponential type \(\alpha\), and
\(\varlimsup\limits_{r\to+\infty}\frac{\ln{}|H(\pm{}r)|}{r}=\pm{}\alpha\).
Thus, the defect \(d_H=\alpha\geq{}0\) since \(\alpha\geq{}0\).
\end{proof}

\paragraph{Multipliers preserving the reality of roots.}
\begin{defn}%
\label{PReZ}%
A sequence \(\{\gamma_k\}_{0\leq{}k<\infty}\) of real numbers is a
\textsf{P-S multiplier sequence}%
\footnote{P-S stands for P\'olya-Schur} %
 if for every polynomial \(f\):
\[f(t)=\sum\limits_{0\leq{}k\leq{}n}a_kt^k\]
with only real roots, the polynomial
\[h(t)=\sum\limits_{0\leq{}k\leq{}n}\gamma_ka_kt^k\]
has only real roots as well. (The degree \(n\) of the polynomial \(f\)
can be arbitrary.)
\end{defn}%
\begin{nthm} [P\'olya, Schur]
A sequence  \(\{\gamma_k\}_{0\leq{}k<\infty}\) of real numbers, \(\gamma_k\not\equiv{}0\),
 is a\\ P-S multiplier sequence if and only
if the power series
\[\Psi(t)=\sum\limits_{0\leq{}k\leq{}\infty}\frac{\gamma_k}{k!}\,t^k\]
represents an entire function, and either the function \(\Psi(t)\)
or the function \(\Psi(-t)\) is in the Lagierre-P\'olya class of
type \textup{I}.
\end{nthm}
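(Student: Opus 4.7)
\emph{Necessity.} I would prove necessity and sufficiency separately; both lean on Lemma \ref{CJPL} (approximation by Jensen polynomials) and on the classical theorem that locally uniform limits on $\mathbb{C}$ of polynomials with only real roots are precisely the functions in $\mathscr{L}\text{-}\mathscr{P}$ (see \cite[Chap.~VIII]{Lev1}). For necessity, assume $\{\gamma_k\}$ is a P-S multiplier sequence and write $\Psi(t)=\sum \gamma_k t^k/k!$ as a formal series. Feeding the multiplier the test polynomial $f_n(t)=(1+t/n)^n$, all of whose roots are real, produces
\[
h_n(t)=\sum_{k=0}^{n}\gamma_k\binom{n}{k}\frac{t^k}{n^k}=\sum_{k=0}^{n}j_{n,k}\,\frac{\gamma_k}{k!}\,t^k=\mathscr{J}_n(\Psi;t),
\]
which by hypothesis has only real roots. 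Standard coefficient estimates for real-rooted polynomials (log-concavity of the normalized coefficients) applied to $h_n$ and passed to the limit in $n$ supply a growth bound on $|\gamma_k|/k!$ strong enough to make $\Psi$ entire of finite exponential type. Lemma \ref{CJPL}(2) then upgrades the coefficient-wise convergence to locally uniform convergence $\mathscr{J}_n(\Psi)\to\Psi$ on $\mathbb{C}$, and the Laguerre--P\'olya limit theorem places $\Psi\in\mathscr{L}\text{-}\mathscr{P}$. Running the same argument with $g_n(t)=(1-t/n)^n$ yields $\Psi(-t)\in\mathscr{L}\text{-}\mathscr{P}$ as well. To force the type I sign structure, I would additionally feed the multiplier polynomials $(1+\alpha t)(1-\beta t)$ with $\alpha,\beta>0$: if $\Psi$ had one root on each half-line, a judicious choice of $\alpha,\beta$ would force the image polynomial to have negative discriminant, contradicting the multiplier property. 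Hence all roots of $\Psi$ lie on a single half-line, so either $\Psi(t)$ or $\Psi(-t)$ belongs to $\mathscr{L}\text{-}\mathscr{P}\text{-}\textup{I}$.

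\emph{Sufficiency.} After replacing $t$ by $-t$ if necessary, assume $\Psi(t)=c\,t^m e^{\alpha t}\prod_k(1+\alpha_k t)\in\mathscr{L}\text{-}\mathscr{P}\text{-}\textup{I}$ with $c\in\mathbb{R}\setminus\{0\}$ and $\alpha,\alpha_k\geq 0$. Approximate $\Psi$ locally uniformly by the finite products
\[
\Phi_{N,n}(t)=c\,t^m\bigl(1+\alpha t/n\bigr)^{n}\prod_{k\leq N}(1+\alpha_k t),
\]
using the Euler limit $e^{\alpha t}=\lim_{n\to\infty}(1+\alpha t/n)^n$. Each $\Phi_{N,n}$ is a finite product of linear factors with non-negative slopes, hence a polynomial in $\mathscr{L}\text{-}\mathscr{P}\text{-}\textup{I}$, and the associated multiplier-sequence property for $\Phi_{N,n}$ follows from the Schur Composition Theorem \cite{Schu1} (the polynomial instance of the very result being proved; a single factor $(1+ct)^{n}$ is handled by Jensen's theorem stated just before Definition \ref{PReZ}, since the corresponding action on a real-rooted $f$ is $f\mapsto\mathscr{J}_n(f;cnt)$). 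Passing to the limit $N,n\to\infty$ and applying the Laguerre--P\'olya limit theorem to the family of images of any fixed real-rooted test polynomial $f$ shows that the sequence attached to $\Psi$ itself is a P-S multiplier.

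\emph{Main obstacle.} The heart of the proof is the necessity direction, and within it the quantitative step of extracting, from the mere real-rootedness of the Jensen polynomials $\mathscr{J}_n(\Psi)$, a growth bound on $|\gamma_k|/k!$ sharp enough to promote $\Psi$ from a formal power series to an entire function of finite exponential type; this is precisely what makes the Laguerre--P\'olya limit theorem applicable. A secondary delicate point is the passage from $\mathscr{L}\text{-}\mathscr{P}$ to the type I subclass, which is invisible to the one-sided tests $(1\pm t/n)^n$ in isolation and requires the genuinely two-rooted test polynomials sketched above.
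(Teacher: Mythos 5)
The paper states the P\'olya--Schur theorem and immediately attributes it to \cite{PoSch} without giving a proof, so there is no in-paper argument for you to match; your proposal has to be judged on its own.

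Your sufficiency direction is sound in outline: replace $t$ by $-t$ to reduce to $\Psi\in\mathscr{L}\text{-}\mathscr{P}\text{-}\textup{I}$, approximate by the finite products $\Phi_{N,n}$, invoke the polynomial case (Jensen/Schur composition) for each $\Phi_{N,n}$, and pass to the limit on a fixed test polynomial. This is the standard route and nothing in it is circular in a problematic way.

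The necessity direction contains two genuine gaps. First, there is an ordering problem. You derive the growth bound on $|\gamma_k|/k!$ from Newton's inequalities for $\mathscr{J}_n(\Psi)$ \emph{before} establishing the sign pattern of the $\gamma_k$, but the step "Tur\'an $\Rightarrow$ geometric growth" requires the sign pattern. Newton for $\mathscr{J}_n(\Psi)$ gives exactly $\gamma_k^2\ge\gamma_{k-1}\gamma_{k+1}$; this forces $|\gamma_{k+1}/\gamma_k|$ to be non-increasing, hence $|\gamma_k|\le C\rho^k$, \emph{only} when $\gamma_{k-1}\gamma_{k+1}\ge 0$ throughout. If the sign can flip with $\gamma_{k-1}\gamma_{k+1}<0$, the Tur\'an inequality at that index is vacuous and no bound follows. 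So the sign-structure lemma must come first, then entireness, then $\mathscr{J}_n(\Psi)\to\Psi$ locally uniformly, then $\Psi\in\mathscr{L}\text{-}\mathscr{P}$. Second, your sign-structure argument as phrased conflates the zeros of $\Psi$ with the signs of the coefficients $\gamma_k$. Feeding $t^{k-1}(1+\alpha t)(1-\beta t)$ to the multiplier and taking $\alpha=\beta$ yields the image $t^{k-1}(\gamma_{k-1}+\gamma_k(\alpha-\beta)t-\gamma_{k+1}\alpha\beta t^2)$, whose quadratic factor has discriminant $\gamma_k^2(\alpha-\beta)^2+4\gamma_{k-1}\gamma_{k+1}\alpha\beta$; non-negativity for all $\alpha,\beta>0$ is \emph{exactly} the constraint $\gamma_{k-1}\gamma_{k+1}\ge 0$. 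This says nothing directly about "one root of $\Psi$ on each half-line"; the link between coefficient signs and root location is the conclusion you are trying to reach, not a hypothesis you can plug into the test. The resulting constraints $\gamma_{k-1}\gamma_{k+1}\ge 0$ together with Tur\'an do pin down the four admissible sign patterns (all $\gamma_k\ge 0$, all $\le 0$, or $(-1)^k\gamma_k$ of one sign), which is what you want, but they are not by themselves sufficient to characterize multiplier sequences: for instance $1,0,0,1,0,0,\dots$ satisfies both families of inequalities yet maps $(1+t)^3$ to $1+t^3$, which has non-real roots. So the necessity argument also needs a careful treatment of vanishing $\gamma_k$ --- a multiplier sequence can only have a leading and a trailing block of zeros, never interior isolated zeros --- and this is invisible in the two tests you use.
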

\noindent
This result was obtained in \cite{PoSch}.
\begin{thm}{\textup{\textsf{[Jensen-Craven-Csordas-Williamson.]}}}
\label{JSW}
Let \(E(t)\) be an entire function belonging to the
Laguerre-P\'olya class \(\mathscr{L}\text{-}\mathscr{P}\), and
\(\{\mathscr{J}_n(E,\,t)\}_{n=1,\,2,\,3,\,\ldots}\) be
the sequence of the Jensen polynomials associated with the
function \(E\). \textup{(Definition \ref{DefJP}.)}
\begin{enumerate}
\item
Then, for each \(n\), all roots of the polynomial
\(\mathscr{J}_n(E,\,t)\) are  real.\\
\item
If \(E(t)\) belongs to the subclass \(\mathscr{L}\text{-}\mathscr{P}\textup{-I}\)
of the Laguerre-P\'olya class \(\mathscr{L}\text{-}\mathscr{P}\), then
for each \(n\), all roots of the polynomial
\(\mathscr{J}_n(E,\,t)\) are  negative.\\
\item
If, moreover, \(E(t)\) is not of the form
\(E(t)=p(t)\,e^{\beta{}t}\), where \(p(t)\) is a polynomial, then
for each \(n\), all roots of the polynomial
\(\mathscr{J}_n(E,\,t)\) are simple\,.
\end{enumerate}
\end{thm}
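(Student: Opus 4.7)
The plan is to reduce Theorem \ref{JSW} to its polynomial case---the Jensen theorem quoted immediately above---and then to lift the conclusion to the full class $\mathscr{L}\text{-}\mathscr{P}$ via the characterization of that class as the set of locally uniform limits on $\mathbb{C}$ of real polynomials with only real zeros.

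For parts~1 and~2, the decisive observation is that the Jensen multipliers $\{j_{n,k}\}_{k\geq 0}$ form a P\'olya--Schur multiplier sequence for each fixed $n$, because their generating function
\[
\Psi_n(t)=\sum_{k=0}^{\infty}\frac{j_{n,k}}{k!}\,t^k
=\sum_{k=0}^{n}\binom{n}{k}\frac{t^k}{n^k}=\Bigl(1+\frac{t}{n}\Bigr)^{n}
\]
belongs to $\mathscr{L}\text{-}\mathscr{P}\text{-I}$ (all its zeros are equal to $-n$). The P\'olya--Schur theorem then yields that $\mathscr{J}_n(f,t)$ has only real zeros whenever the polynomial $f$ has only real zeros, and a standard refinement (using that $\Psi_n$ itself, not only $\Psi_n(-t)$, lies in $\mathscr{L}\text{-}\mathscr{P}\text{-I}$) gives that $\mathscr{J}_n(f,t)$ has only non-positive zeros whenever $f$ has only non-positive zeros. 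To transfer to $E\in\mathscr{L}\text{-}\mathscr{P}$, write $E=\lim_{m\to\infty}P_m$ as a locally uniform limit of polynomials with only real zeros; in the $\mathscr{L}\text{-}\mathscr{P}\text{-I}$ case the factored form \eqref{DLP23} lets one choose the $P_m$ with only non-positive zeros. Taylor coefficients pass to the limit, the degrees of $\mathscr{J}_n(P_m,\cdot)$ are uniformly bounded by $n$, so $\mathscr{J}_n(P_m,\cdot)\to\mathscr{J}_n(E,\cdot)$ locally uniformly, and Hurwitz's theorem yields parts~1 and~2. The strictness of ``negative'' in part~2 follows from the positivity of the Jensen multipliers together with the non-negativity of the Taylor coefficients of $E$: after factoring out the leading $t^{m}$ from the Hadamard product, the constant term of $\mathscr{J}_n(E,\cdot)$ equals $E(0)>0$, so $0$ is not a root.

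Part~3 is the main obstacle. The plan is to invoke the Laguerre inequality: a real polynomial $P$ with only real zeros satisfies $(P'(x))^{2}-P(x)P''(x)\ge 0$ on $\mathbb{R}$, with equality at a zero $x_0$ of $P$ only when $x_0$ is a multiple zero. A double zero of $\mathscr{J}_n(E,\cdot)$ at $x_0$ would thus force
\[
\bigl(\mathscr{J}_n'(E,x_0)\bigr)^{2}=\mathscr{J}_n(E,x_0)\cdot\mathscr{J}_n''(E,x_0)=0,
\]
and this has to be precluded whenever $E\in\mathscr{L}\text{-}\mathscr{P}\text{-I}$ has infinitely many nontrivial factors $(1+t\alpha_k)$, which is exactly the condition $E\ne p(t)e^{\beta t}$. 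I plan to approximate $E$ by partial products $P_m$ of its Hadamard factorization; each $P_m$ has simple, distinct, non-positive zeros, so combining the polynomial Jensen theorem with the \emph{strict} Laguerre inequality for polynomials with distinct zeros shows that every zero of $\mathscr{J}_n(P_m,\cdot)$ is simple. The difficulty is to transfer this simplicity to the limit: I will do so by a Rolle-type counting argument showing that, once $P_m$ carries more than $2n$ distinct real zeros, the Laguerre discriminant $(\mathscr{J}_n'(P_m))^{2}-\mathscr{J}_n(P_m)\mathscr{J}_n''(P_m)$ is \emph{uniformly} bounded below at every zero of $\mathscr{J}_n(P_m,\cdot)$, with a bound depending only on $n$ and on a compact set containing those zeros. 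Such a uniform lower bound passes to $\mathscr{J}_n(E,\cdot)$ in the limit and precludes a multiple zero. Establishing this uniform quantitative separation is the technical heart of the argument and the step I expect to be the main obstacle.
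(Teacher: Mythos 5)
The paper itself does not prove Theorem~\ref{JSW}: after stating it, it attributes part~1 to Jensen/P\'olya--Schur (via the choice $\Psi(t)=(1+t/n)^n$) and part~3 to Csordas--Williamson \cite{CsWi}, as corrected in \cite[Section~4.1]{CrCs3}. So there is no ``paper's proof'' to compare against, only the route the paper's commentary indicates.

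For parts~1 and~2 your argument follows exactly that indicated route and is essentially correct: the Jensen multipliers have generating function $\Psi_n(t)=\sum_k j_{n,k}t^k/k!=(1+t/n)^n\in\mathscr{L}\text{-}\mathscr{P}\text{-I}$, so by the P\'olya--Schur theorem they form a multiplier sequence; passing from polynomials to $E\in\mathscr{L}\text{-}\mathscr{P}$ (resp.\ $\mathscr{L}\text{-}\mathscr{P}\text{-I}$) by approximation with real-rooted (resp.\ nonpositive-rooted) polynomials and Hurwitz's theorem is standard. One small inaccuracy: your justification of \emph{strict} negativity in part~2 writes ``the constant term of $\mathscr{J}_n(E,\cdot)$ equals $E(0)>0$,'' which fails when the Hadamard factor $t^m$ in \eqref{DLP23} has $m\geq 1$; then $0$ is a root of $\mathscr{J}_n(E,\cdot)$ of multiplicity $m$ for every $n\geq m$. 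This imprecision is inherited from the theorem as stated, but your argument does not repair it.

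Part~3 has a genuine gap, which you partly flag yourself, and one of its intermediate claims is actually wrong. You assert that ``combining the polynomial Jensen theorem with the strict Laguerre inequality for polynomials with distinct zeros shows that every zero of $\mathscr{J}_n(P_m,\cdot)$ is simple.'' The polynomial Jensen theorem gives only \emph{reality} of the zeros of $\mathscr{J}_n(P_m,\cdot)$, and the Laguerre inequality for $P_m$ constrains $P_m$, not $\mathscr{J}_n(P_m,\cdot)$. Moreover, part~3 explicitly excludes $E$ of the form $p(t)e^{\beta t}$ --- a class containing all polynomials --- so the theorem itself does not promise simplicity for $\mathscr{J}_n(P_m,\cdot)$ when $P_m$ is a polynomial; this is not an oversight but the reason the hypothesis ``not $p(t)e^{\beta t}$'' is there. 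Beyond that, the proposed uniform lower bound on the Laguerre discriminant of $\mathscr{J}_n(P_m,\cdot)$ at its zeros, uniformly over $m$, is unestablished, and there is no a priori reason the zeros of $\mathscr{J}_n(P_m,\cdot)$ cannot coalesce as $m\to\infty$; ruling that out is precisely what the missing bound would have to do. As it stands, part~3 is an outline of a plan whose two load-bearing steps (finite-$m$ simplicity and the uniform discriminant bound) are both open, and one of them is in fact false in the generality in which you use it.
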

Statement 1 of the theorem was proved by Jensen\,\footnote{Though
Jensen himself did not introduce explicitly the polynomials which
are called `the Jensen polynomials' now.}, \cite{Jen}. It is a
special case of Theorem by G.P\'olya and I.\,Schur obtaining by
choosing \(\Psi(t)=\Big(1+\dfrac{t}{n}\Big)^n\). Statement 3 is a
refinement of Statement 1 due to G.Csordas and J.\,Williamson in
\cite{CsWi}, who also offer an alternative proof of Statement 1.
In \cite{CsWi}, the main Theorem  is formulated on p.\,263. It
appears as the Statement 3 of Theorem \ref{JSW} of the present
paper.  In \cite{CsWi}, this theorem was inaccurately formulated.
A corrected version can be found in \cite[Section 4.1]{CrCs3}.
\begin{thm}
\label{JDM} Let \(H\) be an entire function belonging to the
Hurwitz class \(\mathscr{H}\), and
\(\{\mathscr{J}_n(H,\,t)\}_{n=1,\,2,\,3,\,\ldots}\) be the
sequence of the Jensen polynomials associated with the function
\(H\). \textup{(Definition \ref{DefJP}.)} Then, for each \(n\),
the polynomial \(\mathscr{J}_n(H,\,t)\) is dissipative.
\end{thm}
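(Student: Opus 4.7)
My plan is to mirror the strategy that the paper uses for the Laguerre-P\'olya analogue in Theorem \ref{JSW}: reduce from entire functions to polynomials by the approximation property of $\mathscr{H}$, and treat the polynomial case with the Hermite-Biehler theorem deployed in Section \ref{HBieT}. The starting observation is that the exponential generating function of the Jensen multipliers is
\begin{equation*}
\sum_{k=0}^{\infty}\frac{j_{n,k}}{k!}\,t^{k}=\sum_{k=0}^{n}\binom{n}{k}\Big(\frac{t}{n}\Big)^{k}=\Big(1+\frac{t}{n}\Big)^{n},
\end{equation*}
a polynomial whose $n$ zeros all coincide at $-n$; hence it lies in the Laguerre-P\'olya class of type I, and by Lemma \ref{HLPI} in the Hurwitz class $\mathscr{H}$ as well. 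This puts $\{j_{n,k}\}$ in the position to serve as a ``Hurwitz multiplier sequence,'' the dissipative analogue of the P\'olya-Schur multipliers used in the proof of Theorem \ref{JSW}.

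The core step is to show that for every dissipative polynomial $P$ the polynomial $\mathscr{J}_{n}(P;t)$ is again dissipative. The Hermite-Biehler theorem translates the dissipativity of $P$ into the decomposition $P(it)=U(t)+iV(t)$ with $U,V$ real polynomials having strictly interlacing real zeros; a short coefficient-matching calculation then gives
\begin{equation*}
\mathscr{J}_{n}(P;it)=\widetilde{U}(t)+i\widetilde{V}(t),
\end{equation*}
where $\widetilde{U}$ (resp.\ $\widetilde{V}$) is obtained from $U$ (resp.\ $V$) by multiplying the coefficient of $t^{2l}$ (resp.\ $t^{2l+1}$) by $j_{n,2l}$ (resp.\ $j_{n,2l+1}$). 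To conclude via Hermite-Biehler that $\mathscr{J}_{n}(P;t)$ is dissipative I need the two sub-sequences $\{j_{n,2l}\}_{l}$ and $\{j_{n,2l+1}\}_{l}$ to preserve both the real-rootedness of $U$ and $V$ and the strict interlacing between them. Once the polynomial case is in hand, the general statement follows by approximation: since any $H\in\mathscr{H}$ is a locally uniform limit of dissipative polynomials $P_{m}$ (cited after Definition \ref{DeHuC}), the Taylor coefficients converge, so $\mathscr{J}_{n}(P_{m};t)\to\mathscr{J}_{n}(H;t)$ and every root of the limit lies in the closed left half-plane; purely imaginary roots would be excluded by perturbing the approximants within $\mathscr{H}$ using the defect inequality $d_{H}\geq 0$ to keep the Hermite-Biehler interlacing strict in the limit.

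The main obstacle is the interlacing-preservation step in the polynomial case. The P\'olya-Schur theorem yields the real-rootedness of $\widetilde{U}$ and $\widetilde{V}$ --- both generating functions $\sum_{l}j_{n,2l}\,t^{l}/l!$ and $\sum_{l}j_{n,2l+1}\,t^{l}/l!$ are readily verified to lie in $\mathscr{L}\text{-}\mathscr{P}$ --- but it does not by itself deliver the strict interlacing that Hermite-Biehler demands for dissipativity. The expected route is to factor the generating polynomial $(1+t/n)^{n}$ into $n$ elementary multipliers of the form $1+t/n$ and to check by a one-step Hermite-Biehler argument that each elementary multiplier preserves interlacing of a dissipative pair $(U,V)$; iterating $n$ times then yields the interlacing for the full Jensen multiplier sequence.
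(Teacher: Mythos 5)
Your starting observations are sound, and the high-level plan parallels the Laguerre-P\'olya case, but the crucial step --- preservation of interlacing --- rests on a factorization that does not work, and the closing limit argument is not actually carried out. Composing two P\'olya-Schur multiplier operators with sequences $\{\gamma_k\}$ and $\{\gamma'_k\}$ yields the \emph{pointwise} product $\{\gamma_k\gamma'_k\}$, and the exponential generating function of a pointwise product is not the product of the two generating functions. The multiplier sequence generated by $1+t/n$ is $(1,\,1/n,\,0,\,0,\,\dots)$; composing it with itself $n$ times gives $(1,\,n^{-n},\,0,\,0,\,\dots)$, nothing like the Jensen sequence $j_{n,k}$. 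So the factorization of $(1+t/n)^n$ into elementary factors does not hand you a finite chain of elementary multipliers through which to iterate a one-step interlacing lemma. The side claim that $\sum_l j_{n,2l}t^l/l!$ and $\sum_l j_{n,2l+1}t^l/l!$ lie in $\mathscr{L}\text{-}\mathscr{P}$ is asserted without proof, and even if verified it would give only real-rootedness of $\widetilde U$ and $\widetilde V$, not the interlacing that Hermite-Biehler demands.

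The paper sidesteps the interlacing difficulty entirely by reformulating it in a form that transports linearly under $\mathscr{J}_n$. Writing $f(t)=H(it)=A(t)+iB(t)$ with $A,B$ real entire, the hypothesis $H\in\mathscr{H}$ places $f$ in Levin's class $\mathscr{P}$, and by \cite[Ch.\ VII]{Lev1} this is equivalent to $(A,B)$ being a \emph{real pair}: for every $\theta\in\mathbb{R}$ the function $C_\theta=\cos\theta\,A+\sin\theta\,B$ has only real simple zeros, together with $A(0)B'(0)-B(0)A'(0)>0$. Each $C_\theta$ lies in $\mathscr{L}\text{-}\mathscr{P}$ by Hadamard factorization, so Theorem \ref{JSW} gives that $\mathscr{J}_n(C_\theta)$ has only real simple zeros; since the Jensen operator is linear, $\mathscr{J}_n(C_\theta)=\cos\theta\,\mathscr{J}_n(A)+\sin\theta\,\mathscr{J}_n(B)$, so $(\mathscr{J}_n(A),\mathscr{J}_n(B))$ is again a real pair, and because $j_{n,0}=j_{n,1}=1$ the Wronskian-type quantity at the origin is literally preserved. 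This treats entire $H$ directly with no polynomial approximation and no separate interlacing argument: interlacing is encoded in the real-pair condition and inherited automatically from linearity of $\mathscr{J}_n$, which is exactly the mechanism your proposal is missing.
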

Theorem \ref{JDM} can be obtained as a consequence of Theorem
\ref{JSW} and the Hermite-Biehler Theorem. A proof of Theorem \ref{JDM}
will be presented in Section \ref{LPEF}.

\vspace{1.0ex}
\paragraph{Laguerre multipliers.}
\begin{nthm}[Laguerre]
Let an entire function \(E(t)\),
\begin{equation}
\label{EE}%
E(t)=\sum\limits_{0\leq{}l<\omega}\varepsilon_lt^l,\quad
\omega\leq{}\infty,
\end{equation}
be in the Laguerre-P\'olya class:
\(E\in\mathscr{L}\text{-}\mathscr{P}\). Furthermore, let \(\psi\) be an entire
function  be in the Laguerre-P\'olya class
\(\mathscr{L}\text{-}\mathscr{P}\), such that all of its roots are negative.
\begin{enumerate}
\item
Then the power series
\begin{equation}
\label{PSPs}%
E_{\psi}=\sum\limits_{0\leq{}l<\omega}\varepsilon_l\psi(l)t^l
\end{equation}
converges for every \(t\) and its sum is an entire function of
the Laguerre-P\'olya class:
\(E_{\psi}\in\mathscr{L}\text{-}\mathscr{P}\).
\item
If moreover \(E(t)\) is of type \textup{I}:
\(E\in\mathscr{L}\text{-}\mathscr{P}\textup{-I}\), then the the
sum of power series \eqref{PSPs} is also an entire function of
type \textup{I}:
\(E_{\psi}\in\mathscr{L}\text{-}\mathscr{P}\textup{-I}\).
\end{enumerate}
\end{nthm}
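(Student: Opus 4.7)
The plan is to show that the sequence $\{\psi(l)\}_{l\geq 0}$ acts as a Taylor-coefficient multiplier that maps $\mathscr{L}\text{-}\mathscr{P}$ into itself (and $\mathscr{L}\text{-}\mathscr{P}\text{-}\textup{I}$ into itself). Since $\psi\in\mathscr{L}\text{-}\mathscr{P}$ has only negative roots, it in fact lies in $\mathscr{L}\text{-}\mathscr{P}\text{-}\textup{I}$, and, possibly after replacing $\psi$ by $-\psi$ (which does not affect the statement), admits the representation
\begin{equation*}
\psi(t) = c\, e^{\alpha t}\prod_{k=1}^{\infty}(1 + \alpha_k t),\qquad c>0,\ \alpha\geq 0,\ \alpha_k\geq 0,\ \sum\nolimits_k\alpha_k<\infty\,.
\end{equation*}
In particular $\psi(l)>0$ for every non-negative integer $l$, and the elementary bound $\prod_k(1+\alpha_k l)\leq e^{l\sum_k\alpha_k}$ yields $\psi(l)\leq c\exp\bigl((\alpha+\sum_k\alpha_k)\,l\bigr)$. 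Since $E$ is entire, the coefficients $\varepsilon_l$ decay faster than any geometric rate, so the Weierstrass $M$-test shows that the series defining $E_\psi$ converges locally uniformly on $\mathbb{C}$ and represents an entire function, proving convergence in both parts.

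Now truncate to $\psi_N(t):=c\, e^{\alpha t}\prod_{k\leq N}(1+\alpha_k t)$. The same exponential bound is uniform in $N$, so termwise dominated convergence gives $E_{\psi_N}\to E_\psi$ locally uniformly on $\mathbb{C}$, with nonzero limit. Using the characterization recalled in the excerpt, that $\mathscr{L}\text{-}\mathscr{P}$ and $\mathscr{L}\text{-}\mathscr{P}\text{-}\textup{I}$ are precisely the (nonzero) locally uniform limits of polynomials with only real, respectively only non-positive, roots, the theorem reduces to establishing both statements for each $\psi_N$. For $\psi_N$, the map $E\mapsto E_{\psi_N}$ decomposes as the composition of three kinds of elementary operators: multiplication by the positive constant $c$; the dilation $E(t)\mapsto E(e^{\alpha}t)$, realizing the multiplier $e^{\alpha l}$; and, for each $k\leq N$, the operator $\mathcal{L}_{\alpha_k}\colon E\mapsto E+\alpha_k\, tE'$, realizing the multiplier $1+\alpha_k l$. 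The first two clearly preserve $\mathscr{L}\text{-}\mathscr{P}$ and its subclass $\mathscr{L}\text{-}\mathscr{P}\text{-}\textup{I}$, so the entire argument collapses to the key lemma: \emph{$\mathcal{L}_\beta$ preserves $\mathscr{L}\text{-}\mathscr{P}$ and $\mathscr{L}\text{-}\mathscr{P}\text{-}\textup{I}$ for every $\beta\geq 0$.}

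To prove the key lemma I would first treat the polynomial case. For $P(t)=a\prod_{j=1}^n(t-r_j)$ with only real roots, assume momentarily that $0\notin\{r_j\}$ and that the $r_j$ are distinct; the general configuration will be handled by perturbation and by factoring out the maximal power of $t$. Away from the zeros of $P$, $\mathcal{L}_\beta P/P = 1 + \beta t\, P'/P$, so the zeros of $\mathcal{L}_\beta P$ outside $\{0,r_1,\dots,r_n\}$ are exactly the real solutions of
\begin{equation*}
f(t):=\sum_{j=1}^{n}\frac{1}{t-r_j}+\frac{1}{\beta t}=0\,.
\end{equation*}
Since $f'(t)=-\sum_j(t-r_j)^{-2}-(\beta t^2)^{-1}<0$ on its domain, $f$ is strictly decreasing on each interval between consecutive poles of $\{0,r_1,\dots,r_n\}$; on each of the $n$ bounded such intervals it ranges from $+\infty$ down to $-\infty$ and therefore has exactly one real zero, while on the two unbounded intervals it tends to $0$ without changing sign. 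Thus $\mathcal{L}_\beta P$ has at least $n$ real zeros, and since $\deg\mathcal{L}_\beta P=n$ (leading coefficient $a(1+\beta n)\neq 0$), these are all its zeros. Under the additional assumption $r_j\leq 0$ of the LP-\textup{I} case, the $n+1$ poles all lie in $(-\infty,0]$, so the $n$ bounded intervals, and hence the zeros they contain, also lie in $(-\infty,0]$. Finally, for a general $E\in\mathscr{L}\text{-}\mathscr{P}$, write $E=\lim P_m$ locally uniformly with $P_m$ polynomials having only real roots; locally uniform convergence of analytic functions entails locally uniform convergence of derivatives, so $\mathcal{L}_\beta P_m\to \mathcal{L}_\beta E$ locally uniformly, and a second application of the LP-characterization places $\mathcal{L}_\beta E$ in $\mathscr{L}\text{-}\mathscr{P}$ (non-vanishing of $\mathcal{L}_\beta E$ is immediate from its Taylor coefficients). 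The principal technical obstacle is the interlacing/counting step in the polynomial case — in particular the sign analysis of $f$ on the two unbounded intervals and the careful bookkeeping for degenerate configurations (coincident roots, or $r_j=0$) — together with the verification that the various approximation limits remain in the correct class; the remaining steps are routine.
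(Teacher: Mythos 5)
Your approach — reducing to an explicit Hadamard factorization of $\psi$, truncating, and showing that each elementary factor preserves $\mathscr{L}\text{-}\mathscr{P}$ via the operator $\mathcal{L}_\beta\colon E\mapsto E+\beta tE'$ — is a sensible strategy, and your interlacing argument for the polynomial key lemma is correct. However, the reduction at the very first step fails: it is \emph{not} true that $\psi\in\mathscr{L}\text{-}\mathscr{P}$ with only negative roots must lie in $\mathscr{L}\text{-}\mathscr{P}\text{-}\textup{I}$. Membership in $\mathscr{L}\text{-}\mathscr{P}\text{-}\textup{I}$ additionally requires $\sum_k\alpha_k<\infty$ (genus zero), $\alpha\geq 0$, and the absence of a Gaussian factor $e^{-\beta t^2}$ — none of which follows merely from the roots being negative. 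The function $1/\Gamma(t+1)$ of Lemma~\ref{PrGa}, which is exactly the $\psi$ used throughout this paper's applications of the Laguerre theorem (Lemma~\ref{PLaM}, Theorem~\ref{EFGWP}), is a counterexample: it lies in $\mathscr{L}\text{-}\mathscr{P}$ with only negative roots, but its roots $-k$ give $\sum 1/k=\infty$, so it is of genus one and \emph{not} in $\mathscr{L}\text{-}\mathscr{P}\text{-}\textup{I}$.

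As a result, the representation $\psi(t)=c\,e^{\alpha t}\prod_k(1+\alpha_k t)$, the bound $\psi(l)\leq c\exp\bigl((\alpha+\sum_k\alpha_k)l\bigr)$, and the truncation $\psi_N(t)=c\,e^{\alpha t}\prod_{k\leq N}(1+\alpha_k t)$ do not exist in the general case — without the compensating factors $e^{-\alpha_k t}$ the product diverges, and the stated bound is vacuous. Part of this is repairable: truncating the genuine factorization $\psi(t)=c\,e^{-\beta t^2+\alpha t}\prod_k(1+\alpha_k t)e^{-\alpha_k t}$ gives the uniform bound $\psi_N(l)\leq c\,e^{-\beta l^2+\alpha l}$ and writes each $\psi_N$ as a composition of your operators $\mathcal{L}_{\alpha_k}$ with dilations by positive (possibly sub-unit) scalars, which preserve both classes, so the condition $\alpha\geq 0$ can also be dispensed with. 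What cannot be reached this way is the factor $e^{-\beta t^2}$ with $\beta>0$: its polynomial approximants $(1-\beta t^2/N)^N$ have real but \emph{positive} roots $\pm\sqrt{N/\beta}$, so $\mathcal{L}_\beta$ does not produce them, and you would need the sharper form of Laguerre's polynomial theorem — that a multiplier polynomial whose real roots avoid $\{0,1,\dots,\deg E\}$ preserves reality of zeros — together with a limit argument as $N\to\infty$. That ingredient is missing from the proposal, and with it the genus-one case that the paper actually uses.
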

This theorem appeared in \cite[section 18,~p.117]{Lag1},
 or \cite[p.~202]{Lag}. Laguerre himself formulated
this theorem for a polynomial with
real roots. The extended formulation, where \(\E\) is a general
entire function from the class \(\mathscr{L}\text{-}\mathscr{P}\),
can be found in the paper \cite[p.\,112]{PoSch}, or in its reprint
in \cite[p.123]{Po}. In \cite{PoSch} the extended formulation is
attributed to Jensen, see \cite{Jen}.

The above mentioned results of P\'olya, Schur,
Laguerre, and Jensen, as well as of many related results, can be found
in \cite[Chapter II]{Obr}, \cite[Chapter VIII]{Lev1},
\cite[Chapter 5, especially Sections 5.5, 5.6,  5.7]{RaSc}, and in
numerous papers of Th.\,Craven and G.\,Csordas (See for example
\cite{CrCs1}). See also \cite[Part five]{PoSz}. The book by L.\,de
Branges \cite{deBr} is also closely related to this group of problems.

\section{Properties of entire functions
generating Steiner and Weyl polynomials
of `regular' convex sets and their surfaces. \label{PEFG}}%
\paragraph{Entire functions generating the Steiner polynomials.}
\begin{thm}\ \ %
\label{EFGWP}%
The entire functions \eqref{LiEFMPS} generating the renormalised
Steiner polynomials of balls, cubes, squeezed spherical and
cubic cylinders, possesses the following properties:
\begin{enumerate}
\item
The function \(\mathcal{M}_{B^{\infty}}\) is of type \textup{I} of
the Laguerre-P\'olya class.
\item
The function \(\mathcal{M}_{B^{\infty}\times{}0}\) belongs to the
Hurwitz class. It has infinitely roots, and all but finitely many of its
roots are non-real;
\item
The function \(\mathcal{M}_{Q^{\infty}}\) is of type \textup{I} of
the Laguerre-P\'olya class;
\item
The function \(\mathcal{M}_{Q^{\infty}\times{}0}\) is of type
\textup{I} of the Laguerre-P\'olya class\,.
\end{enumerate}
\end{thm}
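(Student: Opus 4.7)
Parts~(1), (3), and (4) fit into a single framework provided by Laguerre's multiplier theorem from Section~\ref{LPEF}. Statement~(1) is immediate: the series $\mathcal{M}_{B^{\infty}}(t)=\sum_{k\ge 0}t^k/k!$ is $e^t$, which matches \eqref{DLP23} with $\alpha=1$ and no finite zeros. For~(3) and~(4) I would write the $k$-th Taylor coefficient as $\psi(k)/k!$ for the entire multipliers $\psi_3(z)=(\sqrt{\pi}/2)^z/\Gamma(z/2+1)$ and $\psi_4(z)=\Gamma(3/2)(\sqrt{\pi}/2)^z/\Gamma(z/2+3/2)$, respectively. In both cases $\psi$ is entire, and its Weierstrass--Hadamard factorisation (inherited from that of $1/\Gamma$) places $\psi\in\mathscr{L}\text{-}\mathscr{P}$ with all roots negative: $\{-2n\}_{n\ge 1}$ in~(3) and $\{-(2n+1)\}_{n\ge 1}$ in~(4). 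Applying the second part of Laguerre's theorem to $E(t)=e^t\in\mathscr{L}\text{-}\mathscr{P}\text{-}\textup{I}$ with these multipliers yields $\mathcal{M}_{Q^{\infty}},\,\mathcal{M}_{Q^{\infty}\times{}0}\in\mathscr{L}\text{-}\mathscr{P}\text{-}\textup{I}$.

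For~(2) the natural candidate $\psi(z)=\Gamma(3/2)\Gamma(z/2+1)/\Gamma(z/2+3/2)$ is only \emph{meromorphic} --- the poles at $z=-2,-4,\dots$ are inherited from $\Gamma(z/2+1)$, so Laguerre's theorem does not apply. The Beta identity $\psi(z)=\tfrac12B(z/2+1,1/2)=\int_0^1 v^{z+1}(1-v^2)^{-1/2}\,dv$ nevertheless lets me substitute termwise into the Taylor expansion and obtain the key integral representation
\begin{equation*}
\mathcal{M}_{B^{\infty}\times{}0}(t)=\int_0^1\frac{v\,e^{tv}}{\sqrt{1-v^2}}\,dv.
\end{equation*}
From this, three of the four assertions of~(2) follow quickly. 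First, the integrand is strictly positive for real $t$, so $\mathcal{M}_{B^{\infty}\times{}0}$ has \emph{no} real zeros, and hence all but finitely many zeros are non-real. Second, Laplace's method concentrated at $v=1$ gives $\mathcal{M}_{B^{\infty}\times{}0}(r)\sim e^r\sqrt{\pi/(2r)}$ as $r\to+\infty$, while integration by parts near $v=0$ gives $\mathcal{M}_{B^{\infty}\times{}0}(-r)=O(r^{-2})$; this asymmetric growth is incompatible with any factorisation of the form $p(t)e^{\alpha t}$, so by Hadamard's theorem the function has infinitely many zeros, and the same estimates supply $\limsup r^{-1}\ln|\mathcal{M}_{B^{\infty}\times{}0}(r)|=1$ and $\limsup r^{-1}\ln|\mathcal{M}_{B^{\infty}\times{}0}(-r)|=0$, confirming $d_H=\tfrac12>0$ in the sense of Definition~\ref{DeHuC}.

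The remaining and hardest point is strict Hurwitz-class membership, i.e.\ that every (non-real) zero lies in the \emph{open} left half-plane. I would apply the entire-function Hermite--Biehler criterion: writing $\mathcal{M}_{B^{\infty}\times{}0}(it)=A(t)+iB(t)$ with real entire $A,B$, a direct resummation of the odd-indexed terms in \eqref{LiEFMPS2} identifies the odd part of $\mathcal{M}_{B^{\infty}\times{}0}$ with $(\pi/2)\,I_1(t)$, so $B(t)=(\pi/2)\,J_1(t)$, whose zeros $\{0,\pm j_{1,k}\}_{k\ge 1}$ are real and simple. The even part $A(t)=\int_0^1 v\cos(tv)/\sqrt{1-v^2}\,dv$ must then be shown to be real-rooted with zeros strictly interlacing those of $J_1$; a short calculation reduces the non-vanishing of $\mathcal{M}_{B^{\infty}\times{}0}$ on the imaginary axis to $A(j_{1,k})\neq 0$ for every $k\ge 1$. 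I expect the main obstacle to be exactly this interlacing: it does not follow by inspection from the integral formulas and will require a Sturm-type comparison with Bessel's equation for $J_1$, combined with an argument-principle count on a large semicircle in the closed right half-plane (controlled by the asymptotics already established) to rule out a right-half-plane zero.
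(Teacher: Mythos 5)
Parts (1), (3), and (4) of your proposal follow exactly the paper's route: compose $e^t \in \mathscr{L}\text{-}\mathscr{P}\text{-}\textup{I}$ with an entire Laguerre multiplier built from a reciprocal Gamma function, and invoke the second clause of Laguerre's multiplier theorem. The bookkeeping choice of placing the factor $(\sqrt{\pi}/2)^z$ inside $\psi$ rather than inside $E$ (as the paper does) is cosmetic. Your integral representation for $\mathcal{M}_{B^\infty\times 0}$ agrees with the paper's \eqref{IntRep} up to normalization, the defect computation is correct, and the observation that the function has no real zeros (so trivially only finitely many real zeros) is fine. Your infinitude-of-zeros argument, by incompatibility of the indicator with $p(t)e^{\alpha t}$, is a compressed form of the Cartwright--Levinson reasoning the paper uses.

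The genuine gap lies in the last and only hard step of (2): showing $\mathcal{M}_{B^\infty\times 0}$ has no zeros in the \emph{closed} right half-plane. Your identification $B(t)=(\pi/2)J_1(t)$ is correct, and the reduction of the imaginary-axis case to $A(j_{1,k})\neq 0$ is correct, but you then defer both the real-rootedness and $J_1$-interlacing of $A$ and the argument-principle count in the open right half-plane to a ``Sturm-type comparison'' and a winding-number estimate that you do not carry out. Neither is routine: $A(t)=\int_0^1 \xi(1-\xi^2)^{-1/2}\cos(t\xi)\,d\xi$ is not itself a Bessel function, its real-rootedness is not obvious, and interlacing with $J_1$ is not an off-the-shelf fact. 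As written, this is a plan, not a proof. What actually closes the step in the paper is a one-line appeal to P\'olya's continuous Kakeya--Enestr\"om criterion (Lemma~\ref{NZRHP} in Section~\ref{LocRoot}): if $\varphi\geq 0$ is nondecreasing on $[0,1]$, then $\int_0^1\varphi(\xi)e^{\xi z}\,d\xi$ has no zeros with $\mathrm{Re}\,z\geq 0$. Here $\varphi(\xi)=\xi(1-\xi^2)^{-1/2}$ has $\varphi^{\prime}(\xi)=(1-\xi^2)^{-3/2}>0$, so the conclusion is immediate --- no Bessel comparison or winding-number argument is needed. You should replace the Hermite--Biehler sketch with this monotonicity criterion; your asymptotics and the $J_1$ identification, while true, do not by themselves locate the zeros.
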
%
\begin{lem}
\label{PrGa} The function \(\dfrac{1}{\Gamma(t+1)}\), where
\(\Gamma\) is the Euler Gamma function, is in the Laguerre-P\'olya
class and all its roots are negative.
\end{lem}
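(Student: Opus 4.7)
The plan is to verify the defining representation of the Laguerre--P\'olya class for $1/\Gamma(t+1)$ directly, by invoking the classical Weierstrass infinite product for the reciprocal Gamma function.

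The first step would be to recall the Weierstrass product
\[
\frac{1}{\Gamma(z)} = z\,e^{\gamma z}\prod_{k=1}^{\infty}\left(1 + \frac{z}{k}\right)e^{-z/k},
\]
where $\gamma$ denotes the Euler--Mascheroni constant, and to combine it with the functional equation $\Gamma(z+1) = z\,\Gamma(z)$ to obtain
\[
\frac{1}{\Gamma(t+1)} = e^{\gamma t}\prod_{k=1}^{\infty}\left(1 + \frac{t}{k}\right)e^{-t/k}.
\]
This is the only nontrivial analytic input; everything else is bookkeeping.

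Next, I would match this expression against the form \eqref{DLP2} in Definition~\ref{DFLPC} by reading off $c = 1$, $n = 0$, $\beta = 0$, $\alpha = \gamma$, and $\alpha_k = 1/k$. All of these quantities are real, and the one convergence condition required by the definition, namely $\sum_k \alpha_k^2 < \infty$, is satisfied since $\sum_{k\geq 1} 1/k^2 = \pi^2/6$. This places $1/\Gamma(t+1)$ in the Laguerre--P\'olya class. For the second assertion, the zeros of $1/\Gamma(t+1)$ coincide with the poles of $\Gamma(t+1)$, which lie at $t = -1, -2, -3, \ldots$ and are therefore strictly negative; equivalently, they appear as the points $-1/\alpha_k = -k$ in the product above.

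There is no genuine obstacle to this argument; the only subtle point worth flagging is that one should \emph{not} attempt to place $1/\Gamma(t+1)$ into the subclass $\mathscr{L}\text{-}\mathscr{P}\text{-}\textup{I}$ of Definition~\ref{DFLPC}. Indeed, the zeros $-k$ give $\alpha_k = 1/k$ with $\sum_k \alpha_k = \infty$, so the convergence factors $e^{-t/k}$ cannot be absorbed, and the type~I normal form \eqref{DLP23} does not apply. This is consistent with the statement of the lemma, which asserts membership in $\mathscr{L}\text{-}\mathscr{P}$ but not in its type~I subclass.
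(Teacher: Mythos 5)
Your proof is correct and follows essentially the same route as the paper: both invoke the Weierstrass product representation $1/\Gamma(t+1)=e^{\gamma t}\prod_{k\geq1}(1+t/k)e^{-t/k}$ and read off membership in $\mathscr{L}\text{-}\mathscr{P}$ together with the negativity of the zeros $-1,-2,\ldots$ directly from it. Your added observation that $\sum_k 1/k=\infty$ prevents membership in the type~I subclass is accurate and a useful clarification, though not required for the lemma.
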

Indeed,
\[\frac{1}{\Gamma(t+1)}=
e^{Ct}\prod\limits_{1\leq{}k<\infty}\left(1+\frac{t}{k}\right)e^{-\frac{t}{k}}\,,\]
(\(C\) is the Euler constant,
\(C\approx{}0.5772156\ldots\,\,\,.\))%

\begin{proof}[Proof of Theorem \ref{EFGWP}] Statement 1 is evident:
\(\mathcal{M}_{B^{\infty}}(t)=e^t\). \\%
\hspace*{3.0ex}To obtain Statement 3, we remark that the function
\(\mathcal{M}_{Q^{\infty}}\) is of the form \(E_{\psi}\),
\eqref{PSPs}, where \(E(t)=\exp\{\frac{\sqrt{\pi}}{2}t\}\), and
\(\psi(t)=\dfrac{1}{\Gamma(\frac{t}{2}+1)}\). Then we apply the
Laguerre theorem on multipliers to these \(E\) and \(\psi\). The
needed property of \(\psi\) is formulated as Lemma \ref{PrGa}.\\ %
\hspace*{3.0ex}Statement 4 can be obtained in the same way
as the statement 3. One need only take
\(E(t)=\exp\{\frac{\sqrt{\pi}}{2}t\}\), and
\(\psi(t)=\dfrac{\Gamma(\frac{1}{2})}{\Gamma(\frac{t+1}{2}+1)}\).\\ %
\hspace*{3.0ex}Proof of Statement 2 is more complicated. From
\eqref{LiEFMPS2} it follows that
\[\mathcal{M}_{B^{\infty}\times{}0}(t)=
\sum\limits_{0\leq{}k<\infty}B({\textstyle{\frac{k}{2}+1,\frac{1}{2}}})\dfrac{1}{k!}\,t^k
=\sum\limits_{0\leq{}k<\infty}
\int\limits_0^1{\xi}^{\frac{k}{2}}(1-\xi)^{-\frac{1}{2}}\,d\xi\,%
\dfrac{1}{k!}\,t^k\,.\] Changing the order of summation and
integration and summing the exponential series, we obtain the
integral representation:
\begin{equation}
\label{IntRep}%
\mathcal{M}_{B^{\infty}\times{}0}(t)=
 2\int\limits_0^1{}(1-\xi^2)^{-\frac{1}{2}}\xi{}e^{\xi{}t}d\xi\,.
\end{equation}
The fact that the function \(\mathcal{M}_{B^{\infty}\times{}0}\)
belongs to the Hurwitz class will be derived from the integral
representation \eqref{IntRep}. This will be done in %
Section~\ref{LocRoot}.%
\end{proof}

\paragraph{Entire functions generating the Weyl polynomials.}
\begin{lem}%
\label{PLaM}%
 Let
\begin{equation}
\label{EEF}%
E(t)=\sum\limits_{0\leq{}l<\infty}a_l{}t^{2l}
\end{equation}
 be an even entire function of the class
\(\mathscr{L}\text{-}\mathscr{P}\), and let \(p>0\) be a number.
Then the function \(E_p(t)\) defined by the power series
\begin{equation}
\label{EEFs}%
E_p(t)\stackrel{\textup{\tiny
def}}{=}\sum\limits_{1\leq{}l<\infty}\frac{2^{-l}\Gamma\big(\frac{p}{2}+1\big)}
{\Gamma\big(l+\frac{p}{2}+1\big)}\cdot{}a_lt^{2l},
\end{equation}
belongs to the class \(\mathscr{L}\text{-}\mathscr{P}\) as well.
\end{lem}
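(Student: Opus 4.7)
The plan is to reduce Lemma \ref{PLaM} to Laguerre's theorem on multipliers (stated in Section \ref{LPEF}) by introducing the auxiliary variable $s = t^2$. Writing $E(t) = F(t^2)$ with $F(s) = \sum_l a_l s^l$, the evenness of $E$ together with the canonical Hadamard factorization \eqref{DLP2} forces the linear exponent to vanish, the monomial factor to have even degree, and the zeros to come in pairs $\pm \alpha_k$, so that
\[
E(t) = c\, t^{2m} e^{-\beta t^2} \prod_k (1 - \alpha_k^2 t^2), \qquad \beta \geq 0, \ \sum_k \alpha_k^2 < \infty.
\]
Consequently
\[
F(-s) = (-1)^m c\, s^m e^{\beta s} \prod_k (1 + \alpha_k^2 s),
\]
which is a representation in the canonical Laguerre-P\'olya type~I form: the zeros of $F(-s)$ lie at $0$ and at $-\alpha_k^{-2} \leq 0$, the coefficients $\beta$ and $\alpha_k^2$ are non-negative, and $\sum \alpha_k^2 < \infty$. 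Hence $F(-s) \in \mathscr{L}\text{-}\mathscr{P}\text{-}\textup{I}$.

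Next I would introduce the candidate multiplier
\[
\psi(s) = \frac{2^{-s}\, \Gamma\!\left(\tfrac{p}{2}+1\right)}{\Gamma\!\left(s + \tfrac{p}{2} + 1\right)},
\]
chosen so that $\psi(l)$ is exactly the coefficient appearing in the definition of $E_p$. The factor $2^{-s} = e^{-s \log 2}$ lies in $\mathscr{L}\text{-}\mathscr{P}$ and has no zeros, while $1/\Gamma(s + p/2 + 1)$ is a translate of $1/\Gamma(s+1)$, which by Lemma \ref{PrGa} lies in $\mathscr{L}\text{-}\mathscr{P}$ with only negative zeros; the translate has zeros at $s = -p/2 - j$, $j = 1, 2, \ldots$, and these are strictly negative because $p > 0$. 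Since $\mathscr{L}\text{-}\mathscr{P}$ is closed under multiplication (products of real-rooted polynomials are real-rooted, and the class is closed under locally uniform limits), $\psi \in \mathscr{L}\text{-}\mathscr{P}$ and all its zeros are negative --- exactly the hypothesis imposed on the multiplier in Laguerre's theorem.

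Applying part~(2) of Laguerre's theorem to the pair $\bigl(F(-s), \psi\bigr)$ then yields
\[
\sum_l (-1)^l a_l\, \psi(l)\, s^l \in \mathscr{L}\text{-}\mathscr{P}\text{-}\textup{I}.
\]
Substituting $s \mapsto -s$ transforms this into $F_p(s) := \sum_l a_l\, \psi(l)\, s^l$, which therefore has only non-negative real zeros and admits a canonical factorization $F_p(s) = c'\, s^{m'} e^{-\beta' s} \prod_k \bigl(1 - (\alpha_k')^2 s\bigr)$ with $\beta', (\alpha_k')^2 \geq 0$ and $\sum_k (\alpha_k')^2 < \infty$. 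Returning to $t$ via $s = t^2$ gives
\[
E_p(t) = F_p(t^2) = c'\, t^{2m'} e^{-\beta' t^2} \prod_k \bigl(1 - (\alpha_k')^2 t^2\bigr),
\]
which is precisely the even Hadamard canonical form of an $\mathscr{L}\text{-}\mathscr{P}$ function, so $E_p \in \mathscr{L}\text{-}\mathscr{P}$. The only non-routine step is the opening bookkeeping relating the canonical factorization of the even $E(t)$ to the type~I factorization of $F(-s)$; once that translation is set up cleanly, the remainder is a single invocation of Laguerre's multiplier theorem together with Lemma \ref{PrGa}.
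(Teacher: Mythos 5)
Your proof is correct, and it follows a route closely parallel to, but slightly longer than, the paper's own. The paper (in Section \ref{PEFG}) simply applies part~1 of Laguerre's multiplier theorem directly to the even function \(E(t)=\sum_l\varepsilon_l t^l\) in the variable \(t\), using the multiplier
\[
\psi_p(t)=\frac{2^{-t/2}\Gamma\bigl(\tfrac{p}{2}+1\bigr)}{\Gamma\bigl(\tfrac{t}{2}+\tfrac{p}{2}+1\bigr)}\,,
\]
which is in \(\mathscr{L}\text{-}\mathscr{P}\) with negative zeros by Lemma~\ref{PrGa}; the odd coefficients \(\varepsilon_{2l+1}\) vanish, and \(\psi_p(2l)=\frac{2^{-l}\Gamma(p/2+1)}{\Gamma(l+p/2+1)}\) reproduces exactly the factors in \eqref{EEFs}, so the conclusion is immediate with no change of variables and no appeal to the subclass \(\mathscr{L}\text{-}\mathscr{P}\text{-}\textup{I}\). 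You instead pass to \(s=t^2\), observe (correctly, via the even Hadamard form) that \(F(-s)\in\mathscr{L}\text{-}\mathscr{P}\text{-}\textup{I}\), apply part~2 of Laguerre's theorem with the multiplier \(\psi(s)=\psi_p(2s)\), and then undo the substitutions \(s\mapsto -s\) and \(s=t^2\). What this buys is a cleaner conceptual picture of why evenness matters; what it costs is the extra bookkeeping of the Hadamard factorization, the observation about \(\mathscr{L}\text{-}\mathscr{P}\text{-}\textup{I}\), and the two sign-reversals. Both proofs are sound, and at their core they rest on the same multiplier and the same theorem of Laguerre. One small notational quirk in yours: you parameterize the zeros of \(F_p\) as \((\alpha_k')^2\), which is only a relabeling (the roots of \(F(-s)_\psi\) lie in \((-\infty,0]\), so the roots of \(F_p\) are non-negative and the square-root substitution is harmless), but it is worth saying explicitly that \(\sum_k(\alpha_k')^2<\infty\) is just the \(\mathscr{L}\text{-}\mathscr{P}\text{-}\textup{I}\) summability condition for \(F(-s)_\psi\), so that when you return to the even variable the condition \(\sum(\pm\alpha_k')^2<\infty\) of \eqref{DLP2} holds.
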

\begin{proof} %
Lemma \ref{PLaM} is the consequence of the Laguerre theorem on
multipliers. The function
\begin{equation}%
\label{LaMup}
\psi_p(t)=\frac{2^{-\frac{t}{2}}\Gamma\big(\frac{p}{2}+1\big)}
{\Gamma\big(\frac{t}{2}+\frac{p}{2}+1\big)}
\end{equation}%
is in the Laguerre-P\'olya class (see Lemma \ref{PrGa}), and its
roots are negative.
\end{proof}

\vspace{2.0ex}%
We point out, see \eqref{ReWP}, that the entire functions
\(\mathcal{W}_{\partial{}B^{\infty}}^{\,p}\),
\(\mathcal{W}_{\partial{}(B^{\infty}\times{}0)}^{\,p}\),
\(\mathcal{W}_{\partial{}Q^{\infty}}^{\,p}\),
\(\mathcal{W}_{\partial{}(Q^{\infty}\times{}0)}^{\,p}\), which
generate the Weyl polynomials with finite index \(p\) for the
appropriate families of convex surfaces, can be obtained from the
entire functions
\(\mathcal{W}_{\partial{}B^{\infty}}^{\,\infty}\),
\(\mathcal{W}_{\partial{}(B^{\infty}\times{}0)}^{\,\infty}\),
\(\mathcal{W}_{\partial{}Q^{\infty}}^{\,\infty}\),
\(\mathcal{W}_{\partial{}(Q^{\infty}\times{}0)}^{\,\infty}\),
which generate the Weyl polynomials with infinite index, by
means of a transformation of the form
\[\sum\limits_{0\leq{}k<\infty}a_kt^k\to\,
\sum\limits_{0\leq{}k<\infty}\psi_p(k)\,a_kt^k\,.\]
\begin{thm}
\label{BLPC}%
\ \ %
\begin{enumerate}
\item %
 The functions \(\mathcal{W}_{\partial{}B^{\infty}}^{\,\infty}\),
\(\mathcal{W}_{\partial{}Q^{\infty}}^{\,\infty}\),
\(\mathcal{W}_{\partial{}(Q^{\infty}\times{}0)}^{\,\infty}\)
belong to the Laguerre-P\'olya class
\(\mathscr{L}\text{-}\mathscr{P}\).
\item
The function
\(\mathcal{W}_{\partial{}(B^{\infty}\times{}0)}^{\,\infty}\) does
not belong to the Laguerre-P\'olya class
\(\mathscr{L}\text{-}\mathscr{P}\): this function has infinitely
many non-real roots.
\end{enumerate}
\end{thm}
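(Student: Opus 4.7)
For Part~1, each of the three functions can be recognized as an elementary function by inspection of its defining series. Reading \eqref{ReWP2} one obtains $\mathcal{W}^{\,\infty}_{\partial{}B^{\infty}}(t)=e^{-t^{2}/2}$ directly; with the substitution $u=t\sqrt{\pi/2}$ the Taylor expansions of $\sin$ and $\cos$ identify \eqref{ReWP6} and \eqref{ReWP8} as $\mathcal{W}^{\,\infty}_{\partial{}Q^{\infty}}(t)=\sin(u)/u$ and $\mathcal{W}^{\,\infty}_{\partial{}(Q^{\infty}\times{}0)}(t)=\cos(u)$. All three are real entire functions whose only zeros are real and simple, and each admits the canonical representation required by Definition~\ref{DFLPC}, so they belong to $\mathscr{L}\text{-}\mathscr{P}$.

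For Part~2 set $f(t):=\mathcal{W}^{\,\infty}_{\partial{}(B^{\infty}\times{}0)}(t)$. The first step is to extract a closed-form expression. Using the duplication identity $\Gamma(l+\tfrac{1}{2})=\sqrt{\pi}\,(2l)!/(4^{l}l!)$, the series \eqref{ReWP4} rewrites as $f(t)=\sum_{l\ge 0}(-1)^{l}t^{2l}/(2l-1)!!$, and its coefficients satisfy the two-term recursion $(2l+1)a_{l+1}+a_{l}=0$, which translates into the linear ODE
\[
tf'(t)+(t^{2}-1)f(t)=-1,\qquad f(0)=1.
\]
Solving by the integrating factor $e^{t^{2}/2}/t$ and matching the value at $t=0$ gives
\[
f(t)=1-t\,D(t),\qquad D(t):=e^{-t^{2}/2}\!\int_{0}^{t}\!e^{s^{2}/2}\,ds,
\]
and a direct computation shows $D'(t)=1-tD(t)=f(t)$. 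Thus the positive real zeros of $f$ coincide with the critical points of the Dawson-type function $D$ on $(0,\infty)$.

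To count these critical points I analyze $D$ on $(0,\infty)$. Since $D$ is odd with $D(0)=0$, $D>0$ on $(0,\infty)$, and repeated integration by parts of the defining integral yields the asymptotic $D(t)=1/t+O(1/t^{3})$, the function $D$ attains its positive supremum at some interior point. At any critical point $t^{*}\in(0,\infty)$ the equation $D'(t^{*})=0$ forces $D(t^{*})=1/t^{*}$, and differentiating $D'=1-tD$ again gives $D''(t)=(t^{2}-1)D(t)-t$, whence $D''(t^{*})=-1/t^{*}<0$. Every critical point of $D$ on $(0,\infty)$ is therefore a strict local maximum, but two such maxima would force a local minimum between them; hence there is exactly one critical point, and by evenness $f$ has precisely two real zeros $\pm t^{*}$.

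The final step is to prove that $f$ has infinitely many zeros in $\mathbb{C}$. From $(2l-1)!!\sim\sqrt{2}\,(2l/e)^{l}$ the function $f$ is entire of order~$2$ and positive finite type. On the real axis $f(t)\sim-1/t^{2}$, while the substitution $u=iv$ in the defining integral gives $f(is)=1+s\,e^{s^{2}/2}\!\int_{0}^{s}\!e^{-v^{2}/2}\,dv\sim s\sqrt{\pi/2}\,e^{s^{2}/2}$. If $f$ had only finitely many zeros, Hadamard's theorem would force $f(t)=q(t)\,e^{Q(t)}$ with $Q$ a real polynomial of degree at most~$2$, and a short case analysis on the leading coefficient of $Q$ shows that no such form can simultaneously reproduce the polynomial decay on $\mathbb{R}$ and the Gaussian growth on $i\mathbb{R}$. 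Consequently $f$ has infinitely many zeros, only two of which are real, so $f\notin\mathscr{L}\text{-}\mathscr{P}$ and has infinitely many non-real roots. The principal obstacle in this plan is the unimodality step for $D$; once that is in place, the growth contrast between the real and imaginary axes cleanly separates $f$ from the Laguerre-P\'olya class.
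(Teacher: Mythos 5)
Your Part~1 is essentially the paper's own argument: recognize the three series as $e^{-t^2/2}$, $\sin u/u$ and $\cos u$ with $u=t\sqrt{\pi/2}$, and invoke the product representation in Definition~\ref{DFLPC}. Your Part~2, however, takes a genuinely different route, and it is worth comparing the two. The paper identifies the function with the Mittag-Leffler function via $\mathcal{W}_{\partial(B^\infty\times 0)}^{\infty}(t)=\sqrt{\pi}\,\mathscr{E}_{1,1/2}(-t^2/2)$, derives an integral representation and the asymptotics \eqref{AsML}, observes that $\mathscr{E}_{1,1/2}(it)$ belongs to Levin's class $C$, and then applies the Cartwright--Levinson theorem to conclude that $\mathscr{E}_{1,1/2}$ has infinitely many roots of positive density concentrated near $\arg t=\pm\pi/2$; pulling back through $t\mapsto -t^2/2$ places the roots near the four rays $\arg t=\pi/4,3\pi/4,5\pi/4,7\pi/4$. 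You instead rewrite the coefficients as $1/(2l-1)!!$, derive the linear ODE $tf'+(t^2-1)f=-1$, solve it to get $f=1-tD$ with $D$ a rescaled Dawson function satisfying $D'=f$, show by the second-derivative test plus the two-maxima-force-a-minimum argument that $D$ is unimodal on $(0,\infty)$ so that $f$ has exactly two real zeros, compute order $2$ and type $1/2$, and finish by a Hadamard factorization argument: if $f$ had only finitely many zeros it would have to be $q(t^2)e^{at^2+b}$ with $a,b$ real, and no choice of $a$ can reconcile the algebraic decay $f(t)\sim -1/t^2$ on $\mathbb{R}$ with the Gaussian growth $f(is)\sim s\sqrt{\pi/2}\,e^{s^2/2}$ on $i\mathbb{R}$. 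Each step checks out: the recursion $(2l+1)a_{l+1}+a_l=0$ and the ODE are correct, $D'=f$ and $D''(t^*)=-1/t^*$ at critical points are verified, the change of variables $u=s/t$ shows your $1-tD(t)$ agrees with the paper's integral representation \eqref{GMLI} evaluated at $-t^2/2$, and the imaginary-axis computation $f(is)=1+s\,e^{s^2/2}\int_0^s e^{-v^2/2}dv$ is right. The trade-off is clear: the paper's route is shorter and yields angular location and density of the zeros as a by-product, but it leans on a nontrivial imported theorem; your route is elementary and self-contained (Hadamard plus explicit asymptotics suffice), and the unimodality analysis hands you the pleasant extra fact that exactly two of the zeros are real. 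One small remark: for the stated conclusion the unimodality argument is more than you need --- $f(t)\to 0$ along $\mathbb{R}$ already confines any real zeros to a bounded interval, where real-analyticity forces there to be only finitely many --- but the sharper count is a nice addition and costs little.
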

\noindent%
\begin{proof} Statement 1 is evident in view of the explicit
expressions:
\begin{align}%
\label{EE1}%
\mathcal{W}_{\partial{}B^{\infty}}^{\,\infty}(t)&=\exp\{-t^2/2\}\,,\\
\label{EE2}%
 \mathcal{W}_{\partial{}Q^{\infty}}^{\,\infty}\,(t)&=
\frac{\sin\{(\pi/2)^{\frac{1}{2}}{}t\}}{(\pi/2)^{\frac{1}{2}}{}t}\,,\\
\label{EE3}%
 \mathcal{W}_{\partial{}(Q^{\infty}
\times{}0)}^{\,\infty}&=\cos\{(\pi/2)^{\frac{1}{2}}{}\,t\}\,.
\end{align}%
The function \(\mathcal{W}_{\partial{}(B^{\infty}
\times{}0)}^{\,\infty}\), which appears in \textup{Statement 2},
can not be expressed in terms of `elementary' functions, but it
can be expressed in terms of the Mittag-Leffler function
\(\mathscr{E}_{1,\,\frac{1}{2}}\):
\begin{equation}
\label{EE4}%
 \mathcal{W}_{\partial{}(B^{\infty}
\times{}0)}^{\,\infty}(t)=\sqrt{\pi}\mathscr{E}_{1,\,\frac{1}{2}}\left(-\frac{t^2}{2}\right),
\end{equation}
where
\begin{equation}%
\label{GML}%
\mathscr{E}_{\alpha,\beta}(z)=\sum\limits_{0\leq{}k<\infty}\frac{z^k}
{\Gamma(\alpha{}k+\beta)}\,.
\end{equation}
The integral representation
\begin{equation}%
\label{GMLI}%
\sqrt{\pi}\mathscr{E}_{1,\,\frac{1}{2}}(t)=1+
t\int\limits_0^1(1-\xi)^{-\frac{1}{2}}e^{t\xi}\,d\xi\,.
\end{equation}
can be derived from \eqref{GML}.
The integral representation \eqref{GMLI} can be derived from the
Taylor series \eqref{GML} in the same way as the integral
representation \eqref{IntRep} was derived from the Taylor series
\eqref{LiEFMPS2}. From \eqref{GMLI} the following asymptotic relations can
be obtained:
\begin{equation}%
\label{AsML}%
\sqrt{\pi}\mathscr{E}_{1,\,\frac{1}{2}}(t)=
\begin{cases}%
\hspace*{4.0ex}\frac{1}{2t}\,(1+o(1)),& t\to{}-\infty,\\[1.0ex]
\sqrt{\pi{}t}\, e^t\,(1+o(1)), &t\to{}+\infty.\,\\[1.0ex]
\hspace*{6.0ex}O(|t|),&t\to{}\pm{}i\infty.
\end{cases}%
\end{equation}%
From \eqref{AsML} it follows that the indicator diagram of the
entire function \(\mathscr{E}_{1,\,\frac{1}{2}}(t)\) of the
exponential type is the interval \([0,\,1]\,.\) Moreover, the
function \(\mathscr{E}_{1,\,\frac{1}{2}}(it)\) belongs to the
class \(C\),  as this class was defined in \cite[Lecture 17]{Lev2}.
It follows from the Cartwright-Levinson Theorem,
which appears as Theorem 1 of the Lecture 17 in \cite{Lev2}),
that the function \(\mathscr{E}_{1,\,\frac{1}{2}}(t)\) has
infinitely many roots.
These roots have a positive density, and are located `near' the
rays \(\arg t=\frac{\pi}{2}\) and \(\arg t=-\frac{\pi}{2}\). From
this and from \eqref{EE4} it follows that the roots of the
function \( \mathcal{W}_{\partial{}(B^{\infty}
\times{}0)}^{\,\infty}(t)\) are located near four rays
\(\arg{}t=\frac{\pi}{4},\,\arg{}t=
\frac{3\pi}{4},\,\arg{}t=\frac{5\pi}{4},\,\arg{}t=\frac{7\pi}{4}\,.\)
In particular, infinitely many of the roots of the function \(
\mathcal{W}_{\partial{}(B^{\infty} \times{}0)}^{\,\infty}(t)\) are
non-real.
\end{proof}
 \begin{rem}
Much more precise results about the Mittag-Leffler function
\(\mathscr{E}_{\alpha,\beta}\) and the distribution of its roots are
available. See, for example, \cite[sec\-tion 18.1]{EMOT},
or \cite{Djr}.
 \end{rem}
\begin{thm} \ \
\label{ImCo}%
\begin{enumerate}
\item
For every \(p=1,\,2,\,\ldots\), the functions
\(\mathcal{W}_{\partial{}B^{\infty}}^{\,p}\),
\(\mathcal{W}_{\partial{}Q^{\infty}}^{\,p}\),
\(\mathcal{W}_{\partial{}(Q^{\infty}\times{}0)}^{\,p}\) belong to
the Laguerre-P\'olya class \(\mathscr{L}\text{-}\mathscr{P}\).
\item
If \(p\) is large enough, then the function
\(\mathcal{W}_{\partial{}(B^{\infty}\times{}0)}^{\,p}\) does not
belong to the Laguerre-P\'olya class
\(\mathscr{L}\text{-}\mathscr{P}\): it has non-real roots.
\end{enumerate}
\end{thm}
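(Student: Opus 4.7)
Part 1 will follow immediately from Lemma~\ref{PLaM}. Each of the three functions $\mathcal{W}^\infty_{\partial B^\infty}$, $\mathcal{W}^\infty_{\partial Q^\infty}$, $\mathcal{W}^\infty_{\partial (Q^\infty\times 0)}$ is an even entire function lying in $\mathscr{L}\text{-}\mathscr{P}$ by Theorem~\ref{BLPC}(1). A direct comparison of the series (\ref{ReWP1})--(\ref{ReWP2}), (\ref{ReWP5})--(\ref{ReWP6}), (\ref{ReWP7})--(\ref{ReWP8}) shows that, in each case, passing from the $\infty$-index to the $p$-index function amounts to the coefficient transformation $a_l\mapsto \frac{2^{-l}\Gamma(p/2+1)}{\Gamma(p/2+l+1)}\,a_l$ treated in Lemma~\ref{PLaM}. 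A single invocation of that lemma then produces Part~1.

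For Part~2, my plan is to apply Hurwitz's theorem to the rescaled limit $\mathcal{W}^p_{\partial(B^\infty\times 0)}(\sqrt{p}\,t)\to\mathcal{W}^\infty_{\partial(B^\infty\times 0)}(t)$ as $p\to\infty$. Using $\Gamma(p/2+1)/\Gamma(p/2+l+1)=\prod_{k=1}^{l}(p/2+k)^{-1}$ in (\ref{ReWP3}), the $t^{2l}$-coefficient of the rescaled function can be put in the form
\[
c_l(p) \;=\; \frac{(-1)^l\,\Gamma(1/2)}{2^l\,\Gamma(l+1/2)\,\prod_{k=1}^{l}(1+2k/p)}\,,
\]
which converges termwise, as $p\to\infty$, to the $t^{2l}$-coefficient $c_l(\infty)$ of $\mathcal{W}^\infty_{\partial(B^\infty\times 0)}$ read off from (\ref{ReWP4}). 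Since $\prod_{k=1}^{l}(1+2k/p)\geq 1$, we get the majorization $|c_l(p)|\leq|c_l(\infty)|$ uniformly in $p$, and because the dominating series $\sum c_l(\infty)\, t^{2l}$ represents an entire function, Weierstrass' M-test promotes the termwise convergence to locally uniform convergence on all of $\mathbb{C}$.

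The conclusion then follows from Hurwitz's theorem: Theorem~\ref{BLPC}(2) supplies an isolated non-real zero $z_0$ of $\mathcal{W}^\infty_{\partial(B^\infty\times 0)}$. Picking a disk $D$ around $z_0$ that avoids the real axis, for all sufficiently large $p$ the entire function $\mathcal{W}^p_{\partial(B^\infty\times 0)}(\sqrt{p}\,t)$ must have a zero in $D$, which is necessarily non-real; since $\sqrt{p}$ is a positive real scaling, $\mathcal{W}^p_{\partial(B^\infty\times 0)}$ itself has a non-real zero and thus fails to belong to $\mathscr{L}\text{-}\mathscr{P}$. The main obstacle is the locally uniform convergence step: the coefficient identity and the elementary bound $\prod_{k=1}^{l}(1+2k/p)\geq 1$ are routine, but confirming that the resulting dominating series actually converges on all of $\mathbb{C}$ relies on the entire-function growth information for $\mathcal{W}^\infty_{\partial(B^\infty\times 0)}$ already extracted via the Mittag-Leffler representation in the proof of Theorem~\ref{BLPC}(2).
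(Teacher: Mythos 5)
Your proposal is correct and follows essentially the same route the paper takes: Part 1 via Lemma~\ref{PLaM} combined with Theorem~\ref{BLPC}(1), and Part 2 via the rescaled limit $\mathcal{W}^p_{\partial(B^\infty\times 0)}(\sqrt{p}\,t)\to\mathcal{W}^\infty_{\partial(B^\infty\times 0)}(t)$ (the entire-function analogue of the approximation property \eqref{LiRe}) together with Hurwitz's theorem and Theorem~\ref{BLPC}(2). You supply the coefficient computation and the majorization $\prod_{k=1}^l(1+2k/p)\ge 1$ to justify the locally uniform convergence that the paper leaves implicit, which is a welcome amplification but not a different method.
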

\begin{proof} Statement 1 of this theorem is a consequence of the
statement 1 of Theorem \ref{BLPC} and Lemma \ref{PLaM}. The
statement 2 of this theorem is a consequence of statement 2 from
Theorem \ref{BLPC} and the approximation property \eqref{LiRe}.
\end{proof}
\begin{rem}%
\label{RefToBes}%
The fact that the function
\(\mathcal{W}_{\partial{}B^{\infty}}^{\,p}\) belongs to the
Laguerre-P\'olya class \(\mathscr{L}\text{-}\mathscr{P}\), i.e., the fact that
all its roots are real, can be established without reference to
\textup{Lemma \ref{PLaM}}. The function
\(\mathcal{W}_{\partial{}B^{\infty}}^{\,p}\) can be expressed in
terms of Bessel functions \(J_{\nu}\). Recall that for arbitrary
\(\nu\),
\begin{equation}%
\label{Bess}%
J_{\nu}(t)=\left(\frac{t}{2}\right)^{\nu}\sum\limits_{0\leq{}l<\infty}%
\frac{(-1)^l(t^2/4)^l}{l!\,\Gamma(\nu+l+1)}\,.
\end{equation}%
Comparing \eqref{Bess} with \eqref{ReWP1}, we see that
\begin{equation}%
\label{CompBess}%
\mathcal{W}_{\partial{}B^{\infty}}^{\,p}(t)=\Gamma\left(\frac{p}{2}+1\right)
\left(\frac{t}{2}\right)^{-\frac{p}{2}}J_{\frac{p}{2}}(t)\,.
\end{equation}%
In particular, \\ %
 for\,%
\footnote{ Deriving \eqref{ExC1} from \eqref{CompBess}, we used
the formula
\(J_{\frac{1}{2}}(t)=\big(\frac{2}{\pi{}t}\big)^{\frac{1}{2}}\sin{}t\,.\)
\textup{(}Concerning this formula, see, for example,
\textup{\cite[section \textsf{17.24}]{WhWa}.}\textup{)} However,
\eqref{ExC1} may be obtained directly from \eqref{ReWP1}.}
\(p=1\),
\begin{equation}%
\label{ExC1}%
\mathcal{W}^{\,1}_{\partial{}B^{\infty}}(t)=\frac{\sin{}t}{t},
\end{equation}
for
 \(p=2\),
\begin{equation}%
\label{ExC2}%
\mathcal{W}^{\,2}_{\partial{}B^{\infty}}(t)=2\,\frac{J_1(t)}{t}.
\end{equation}
It is known that for every \(\nu>-1\), all roots of the Bessel
function \(J_{\nu}(t)\) are real (This result is due to A.Hurwitz.
See, for example, \textup{\cite[Chapter XV, Section 15.27]{Wat}}.)
\end{rem}%
The statement 2 of Theorem \ref{ImCo} can be further refined.
\begin{thm}\ \
\label{str}
\begin{enumerate}
\item
For \(p=1,\,2,\,4\), the function
\(\mathcal{W}_{\partial{}(B^{\infty}\times{}0)}^{\,p}\)
 belongs to the Laguerre-P\'olya class
\(\mathscr{L}\text{-}\mathscr{P}\)\,;
\item
For \(p:\,5\leq{}p\leq{}\infty\), the function
\(\mathcal{W}_{\partial{}(B^{\infty}\times{}0)}^{\,p}\) does not
 belong to the Laguerre-P\'olya class
\(\mathscr{L}\text{-}\mathscr{P}\)\,: it has infinitely many
non-real roots\,.
\end{enumerate}
\end{thm}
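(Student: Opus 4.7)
The plan is to work from the integral representation
\[\mathcal{W}^p_{\partial(B^{\infty}\times{}0)}(t) = p\int_0^1 y(1-y^2)^{p/2-1}\cos(ty)\,dy,\]
derived from \eqref{ReWP3} by rewriting $l!/\Gamma(l+p/2+1) = B(l+1,p/2)/\Gamma(p/2)$ and interchanging summation with integration. Equivalently, $\mathcal{W}^p_{\partial(B^{\infty}\times{}0)}(t) = {}_1F_2(1;\,p/2+1,\,1/2;\,-t^2/4)$, a hypergeometric function which for small integer $p$ reduces to elementary trigonometric expressions or, for $p=1$, to a Struve function.

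For Part 1, the plan is to carry out this reduction explicitly for each $p\in\{1,2,4\}$ and analyze the zeros. For $p=2$, integration by parts yields
\[\mathcal{W}^2_{\partial(B^{\infty}\times{}0)}(t) = \frac{4\sin(t/2)\bigl[\,t\cos(t/2)-\sin(t/2)\,\bigr]}{t^2},\]
so reality of zeros reduces to the classical fact that the transcendental equation $\tan(z/2)=z$ has only real solutions. For $p=4$, a parallel integration by parts produces an explicit closed form in $\sin t, \cos t$, and polynomial factors; after factoring, reality of zeros reduces to a related transcendental equation, again amenable to a classical argument. For $p=1$ one identifies $\mathcal{W}^1_{\partial(B^{\infty}\times{}0)}(t) = \tfrac{\pi}{2}\mathbf{H}_{-1}(t)$, the Struve function of order $-1$; reality of its zeros follows from the integral representation combined with a Hermite--Biehler argument paralleling the proof of Theorem \ref{H10H0}, using the Hurwitz-class membership of $\mathcal{M}_{B^{\infty}\times{}0}$ established in Theorem \ref{EFGWP} and the limit form of the even/odd decomposition that produced $\mathcal{W}^1$ from $\mathcal{M}_{B^{\infty}\times{}0}$.

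For Part 2 the strategy is different. The case $p=\infty$ is exactly Statement 2 of Theorem \ref{BLPC}, proved via the identification \eqref{EE4} with the Mittag--Leffler function $\mathscr{E}_{1,1/2}$ and the Cartwright--Levinson analysis of its indicator diagram. For finite $p\geq 5$ I would apply a Cartwright--Levinson indicator argument of the same flavor. The integral representation exhibits $\mathcal{W}^p_{\partial(B^{\infty}\times{}0)}$ as an even entire function of exponential type $1$ with indicator $h(\theta) = |\sin\theta|$, so any member of $\mathscr{L}\text{-}\mathscr{P}$ of order $1$ with this indicator is forced to have real-zero density exactly $1/\pi$. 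Repeated integration by parts in the Beta integral, exploiting the vanishing of $(1-y^2)^{p/2-1}$ at $y=1$ for $p\geq 4$, yields the asymptotic $\mathcal{W}^p_{\partial(B^{\infty}\times{}0)}(t)\sim c_p\,t^{-(p+1)/2}\cos(t-\varphi_p)$ as $t\to\pm\infty$; for $p\geq 5$ this decay is fast enough that the count of real zeros falls strictly short of $r/\pi$ as $r\to\infty$, and Cartwright--Levinson then forces infinitely many non-real zeros.

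The main obstacle in Part 1 is establishing rigorously that $\tan(z/2)=z$ (and the analogous transcendental equation for $p=4$) has only real roots; although classical, this requires careful application of the Hermite--Biehler criterion or of the Laguerre--P\'olya characterization via limits of polynomials with interlaced real roots, rather than any simple coefficient test. The main obstacle in Part 2 is the sharpness of the density estimate at precisely $p=5$: the soft relation $\mathcal{W}^p_{\partial(B^{\infty}\times{}0)}(\sqrt{p}\,t)\to\mathcal{W}^{\infty}_{\partial(B^{\infty}\times{}0)}(t)$ combined with Hurwitz's theorem yields non-reality only for sufficiently large $p$, so identifying the exact cutoff requires the quantitative asymptotic expansion above, together with a verification that the boundary behaviour at $y=1$ first produces the relevant mismatch at $p=5$.
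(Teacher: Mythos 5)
Your framework for Part 1 is sound for $p=2$, but the argument you sketch for $p=4$ does not go through as stated, and your Part 2 argument has a genuine gap in the asymptotics.

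For $p=2$ your closed form is correct, and reducing to the transcendental equation $\tan(t/2)=t$ is a legitimate route, different from the paper's. The paper instead reduces the whole of Part 1 to showing that $\mathcal{M}_{B^{\infty}\times 0^{p}}$ lies in the Hurwitz class $\mathscr{H}$: by \eqref{RelMW}, $\mathcal{W}^{p}_{\partial(B^{\infty}\times 0)}$ is the real part of $\mathcal{M}_{B^{\infty}\times 0^{p}}(it)$, and Levin's Hermite--Biehler theorem for the class $\mathscr{P}$ (Lemma \ref{CondL}) then forces its roots to be real. Hurwitz-class membership is proved for $p\leq 2$ by Pólya's monotone-kernel lemma (Lemma \ref{NZRHP}), and for $p=4$ by an explicit Rouché argument on $I_4(z)=\big((2z^2-6z+6)e^z+(z^2-6)\big)/z^4$ (Lemma \ref{QEqFour}). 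Your $p=1$ route, via Hurwitz-class membership of $\mathcal{M}_{B^{\infty}\times 0}$ plus a Hermite--Biehler step, is effectively this same argument. But for $p=4$ the integration-by-parts computation gives
\[t^4\,\mathcal{W}^{4}_{\partial(B^{\infty}\times 0)}(t)=(24-8t^2)\cos t+24t\sin t-4t^2-24,\]
and this does \emph{not} factor into a product whose zero set reduces to a single classical $\tan$-type transcendental equation, unlike the $p=2$ case; asserting that it is ``again amenable to a classical argument'' is precisely the step that needs proof and is nontrivial. The paper's Rouché argument on a half-disc contour is what actually closes this case.

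For Part 2 the claimed asymptotic $\mathcal{W}^p_{\partial(B^\infty\times 0)}(t)\sim c_p\,t^{-(p+1)/2}\cos(t-\varphi_p)$ is wrong in two ways. The exponent should be $-p/2$ (you have written the exponent for $\mathcal{W}^p_{\partial B^{\infty}}$, the full sphere, not for the squeezed cylinder), and — more importantly — you have dropped the non-oscillatory endpoint contribution from $y=0$, which produces a term $-\tfrac{2}{\Gamma(p/2)}\,t^{-2}$ (see \eqref{AsyW}). Your statement ``exploiting the vanishing of $(1-y^2)^{p/2-1}$ at $y=1$ for $p\geq 4$'' has the endpoints reversed: the oscillatory $t^{-p/2}$ term comes from $y=1$; the $t^{-2}$ term comes from $y=0$ and does not vanish for any $p$. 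This matters fatally for your density argument: $c_p\,t^{-\alpha}\cos(t-\varphi_p)$ has real zeros of density exactly $1/\pi$ for \emph{any} $\alpha>0$, so fast decay of the amplitude alone cannot make the real-zero count fall short of $r/\pi$. What actually kills the real zeros for $p>4$ is that $p/2>2$ makes the cosine term subdominant to the fixed-sign $t^{-2}$ term, so the function is eventually nonvanishing on the real axis; the Cartwright--Levinson theorem (giving infinitely many zeros of positive density overall) then forces them to be non-real. With this correction the cutoff at $p=5$ is automatic for integer $p$ from $p/2>2$, and there is no delicate sharpness issue to verify.
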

\begin{proof} For every \(p\geq{}1\), the function
\(\mathcal{W}_{\partial{}(B^{\infty}\times{}0)}^{\,p}\) admits
 the integral representation
\begin{equation}
\label{IRWGF}
\mathcal{W}_{\partial{}(B^{\infty}\times{}0)}^{\,p}(t)= %
p\int\limits_{0}^{1}(1-\xi^2)^{\frac{p}{2}-1}\xi\cos{}t\xi\,d\xi\,.
\end{equation}
This integral representation can be obtained from \eqref{ReWP3} in
the same way that the integral representation \eqref{IntRep} was
obtained from \eqref{LiEFMPS2}. Using the identity
\[\Gamma(l+1/2)\,\Gamma(l+1)=\Gamma(1/2)\,2^{-2l}\,\Gamma(2l+1),\]
in equation  \eqref{ReWP3}, we obtain:
\[\mathcal{W}_{\partial{}(B^{\infty}\times{}0)}^{\,p}(t)=
\frac{p}{2}\sum\limits_{0\leq{}l<\infty}\text{B}(l+1,p/2)(-1)^l\frac{t^{2l}}{(2l)!}\,.\]
We use then the integral representation for the beta-function,
change the order of summation and integration and sum the
series using the Taylor expansion for \(\cos z\).
 For every \(p: \,1\leq{}p<\infty\), the function
\(\mathcal{W}_{\partial{}(B^{\infty}\times{}0)}^{\,p}\) can be
calculated asymptotically. This calculation can be done using the
integral representation \eqref{IRWGF}. The
asymptotic expression for the function
\(\mathcal{W}_{\partial{}(B^{\infty}\times{}0)}^{\,p}\) is
presented in Section \ref{LocRoot}, see \eqref{AsyW},
\eqref{EstRema}. From this expression
it follows that:\\
1. For \(p>4\), infinitely many (more specifically, all but finitely many)
roots of the
\(\mathcal{W}_{\partial{}(B^{\infty}\times{}0)}^{\,p}\) are
non-real. This is sufficient for the negative result of the
statement 2 of Theorem \ref{str} to be obtained.
\\
2. For \(p\leq{}4\), all but finitely many roots of the function
\(\mathcal{W}_{\partial{}(B^{\infty}\times{}0)}^{\,p}\) are real
and simple. This alone is not sufficient to show
Statement 1.  For \(p=2\) and \(p=4\), the function
\(\mathcal{W}_{\partial{}(B^{\infty}\times{}0)}^{\,p}\) can be
calculated explicitly. The case \(p=3\) remains open. A proof of the
fact that for \(p=1,\,2,\,4\) all roots of the function
\(\mathcal{W}_{\partial{}(B^{\infty}\times{}0)}^{\,p}\) are real
will be presented in Section \ref{LocRoot}. See Lemma \ref{WPBLP}.
\end{proof}
\noindent%
\begin{proof}[Proof of Theorem \ref{LoWP}] According to Theorems
\ref{BLPC}, \ref{ImCo} and \ref{str} (we here refer to the first statement
of each of these theorems), each of the functions
\(\mathcal{W}_{\partial{}B^{\infty}}^{\,p}\),
\(\mathcal{W}_{\partial{}Q^{\infty}}^{\,p}\),
\(\mathcal{W}_{\partial{}(Q^{\infty}\times{}0)}^{\,p}\) with
\(p:\,1\leq{}p\leq{}\infty\), and
\(\mathcal{W}_{\partial{}(B^{\infty}\times{}0)}^{\,p}\) with
\(p=1,\,2,\,4\), belongs to the  Laguerre-P\'olya class
\(\mathscr{L}\text{-}\mathscr{P}\). By  Theorem \ref{JSW},
the Jensen polynomials associated with
each of these entire functions, have only simple real roots.
According to Theorem \ref{WPaJP}, the renormalized Weyl
polynomials \(\mathcal{W}^p_{\partial{}B^{n+1}}\),
\(\mathcal{W}^p_{\partial{}Q^{n+1}}\),
\(\mathcal{W}^p_{\partial{}(Q^{n}\times{}0)}\) with
\(p:\,1\leq{}p\leq{}\infty\), and
\(\mathcal{W}^p_{\partial{}(B^{n}\times{}0)}\) with
\(p=1,\,2,\,4\) have only simple real roots. Because of the
renormalizing relations \eqref{WRemOr}, the Weyl polynomials
\(W^p_{\partial{}B^{n+1}}\), \(W^p_{\partial{}Q^{n+1}}\),
\(W^p_{\partial{}(Q^{n}\times{}0)}\) are conservative.
 \end{proof}

\begin{proof}[Proof of Theorem \ref{NRWPSq}] According to Theorem
\ref{str}, Statement 2, for \(p:\,5\leq{}p\leq{}\infty\), each of
the entire functions
\(\mathcal{W}_{\partial{}(B^{\infty}\times{}0)}^{\,p}\) has
infinitely many non-real roots. Since for fixed \(p\),
\(\mathscr{J}_n(\mathcal{W}_{\partial{}(B^{\infty}\times{}0)}^{\,p};t)
\to\mathcal{W}_{\partial{}(B^{\infty}\times{}0)}^{\,p}(t)\)
locally uniformly in \(\mathbb{C}\) as \(n\to\infty\), the Hurwitz
Theorem yields that every polynomial
\(\mathscr{J}_n(\mathcal{W}_{\partial{}(B^{\infty}\times{}0)}^{\,p})\)
with \(p,\,n:\,\,p\geq{}5,\,\,n\geq{}N(p)\) has non-real roots.  By
Theorem \ref{WPaJP}, \(\mathscr{J}_n(\mathcal{W}_{\partial{}(B^{\infty}\times{}0)}^{\,p})=
\mathcal{W}^p_{\partial{}(B^{n}\times{}0)}\).
Thus, the polynomial
\(\mathcal{W}^p_{\partial{}(B^{n}\times{}0)}\) has non-real
roots. Because of \eqref{JReWP}, the polynomial
\(W^p_{\partial{}(B^{n}\times{}0)}\) has roots
which do not belong to the imaginary axis. \end{proof}
\begin{proof}[Proof of Lemma \ref{LWPo}] This lemma is a consequence of
lemma \ref{PLaM}. If the polynomial
\(W_{\mathscr{M}}^{\,\infty}(t)\) is conservative, then the
polynomial \(E(t)=W_{\mathscr{M}}^{\,\infty}(it)\) is a real
polynomial with only real simple roots. The function
\(E_p(t)=W_{\mathscr{M}}^{\,p}(it)\) is related to \(E(t)=W_{\mathscr{M}}^{\,\infty}(it)\)
as well as the function \(E_p(t)\) from \eqref{EEFs} is
related to \(E(t)\) from \eqref{EEF}. By Lemma \ref{PLaM}, all
roots of \(E_p\) are real. Let us show that the roots are simple.
Consider the function \(E(t)+\varepsilon\), were \(\varepsilon\)
is a small real number, positive or negative. Since all roots of
the polynomial \(E(t)\) are real and simple, all roots of the
polynomial \(E(t)+\varepsilon\) are real if \(|\varepsilon|\) is
small enough. By Lemma \ref{PLaM}, all roots of the polynomial
\(E_p(t)+\varepsilon\) are real. However if the polynomial \(E_p(t)\)
has a multiple root, by applying the perturbation
\(E_p(t)\to{}E_p(t)+\varepsilon\) with an appropriate choice of sign
for \(\varepsilon\), this root splits into  simple
roots and some of these roots
 will be non-real.\end{proof}
\begin{rem}%
\label{SiSpC}%
We apply \textup{Lemma \ref{LWPo}} in the special cases
\(n=2,\,3,\,4,\,5\) only. In these cases the Lemma is quite
elementary. Only the cases \(n=4\) and \(n=5\) deserve some
attention. The cases \(n=2\) and \(n=3\) are trivial.
The cases \(n=4\) and \(n=5\) are reduced to the following
 elementary statement:\\[1.0ex]
\hspace*{3.0ex}\begin{minipage}[t]{0.9\linewidth} \textsl{Let
\(w_0,\,w_2,\,w_4\) be positive numbers. Assume that the roots of
the polynomial \(Q(t)=w_0+w_2t+w_4t^2\) are negative and
different. Then for every \(p>0\), the roots of the polynomial
\[Q^p(t)=w_0+\frac{w_2}{(p+2)}t+\frac{w_4}{(p+2)(p+4)}t^2\]
are also negative and different.}
\end{minipage}
\end{rem}%

Indeed, the conditions posed on polynomials \(Q\) and \(Q^p\) are
equivalent to the inequalities
\[w_2^2>w_0w_4\quad\textup{and}\quad{}\bigg(\frac{w_2}{p+2}\bigg)^2>w_0\frac{w_2}{(p+2)(p+4)}\,. \]
It is evident that the first of these inequalities implies the
second.

\section{The Hermite-Biehler Theorem and its application.
\label{HBieT}}%
In its traditional form, Hermite-Biehler Theorem gives conditions under
which all roots of a polynomial belong to the upper half plane
\(\{z:\,\textup{Im}\,z>0\}\). We need the version of this theorem
adopted to the left half plane, and for the case of polynomials
with non-negative coefficients only. Before we present out
reformulation of the Hermite-Biehler Theorem, we give several
definitions:
\begin{defn} %
\label{DefInlac}
Let \(S_1\) and \(S_2\) be two sets which are situated on the same straight
line\footnote{In our considerations the straight line \(L\)
 will be either the real axis
or the imaginary axis.}
 \(L\) of the complex plane:
\(S_1\subset{}L,\ S_2\subset{}L\,\), such that each of the sets
\(S_1,S_2\) consists only of isolated points. The sets \(S_1\) and
\(S_2\) \textsf{interlace} if between every two points of \(S_1\)
there is a point of \(S_2\), and between every two points of
\(S_2\) there is a point of \(S_1\).
\end{defn}
\begin{defn} %
\label{DefRIPa} %
Let \(P\) be a power series:
\begin{equation}
\label{CoPo} P(t)=\sum\limits_{0\leq{}k}p_kt^k,
\end{equation}
where \(t\) is a complex variable and the coefficients \(p_k\)
are  complex numbers.

 The \textsf{real part} \ \({}^{\mathscr{R}\!}P\)
and the
\textsf{imaginary part} \ \({}^{\mathscr{I}\!}P\)  of \(P\) are defined as %
\begin{equation}%
\label{RIPa}
{}^{\mathscr{R}\!}P(t)=\frac{P(t)+\overline{P(\overline{t})}}{2},
\quad{}{}^{\mathscr{I}\!}P(t)=\frac{P(t)-\overline{P(\overline{t})}}{2i}\,,
\end{equation}%
where the overline bar is used as a notation for complex
conjugation.

The  \textsf{even part} \({}^{\mathscr{E}\!}P\) and the
\textsf{odd part} \({}^{\mathscr{O}\!}P\)  of \(P\) are defined as %
\begin{equation}%
\label{EOIPa} {}^{\mathscr{E}\!}P(t)=\frac{P(t)+P(-t)}{2},
\quad{}{}^{\mathscr{O}\!}P(t)=\frac{P(t)-P(-t)}{2}\,,
\end{equation}%
In term of coefficients,
 \begin{subequations}
 \label{TeCo}
\begin{gather}%
\label{TeCo1}%
{}^{\mathscr{R}\!}P(t)=\sum\limits_{0\leq{}k}a_kt^k,\quad
{}^{\mathscr{I}\!}P(t)=\sum\limits_{0\leq{}k}b_kt^k, \\
\intertext{where}%
 a_k=\frac{p_k+\overline{p_k}}{2},
\quad{}b_k=\frac{p_k-\overline{p_k}}{2i}\,,
\label{TeCo2}%
\end{gather}%
\end{subequations}
and
\begin{equation}%
{}^{\mathscr{E}\!}P(t)=\sum\limits_{0\leq{}l}p_{2l}t^{2l},\quad
{}^{\mathscr{O}\!}P(t)=\sum\limits_{0\leq{}l}p_{2l+1}t^{2l+1}.
\label{EOCo}%
\end{equation}%
\end{defn}
\begin{nthm}[Hermite-Biehler] %
Let \(P\) be a polynomial, \(A={}^{\mathscr{R}\!}P\) and
\(B={}^{\mathscr{I}\!}P\) be the real and imaginary parts of
\(P\), i.e.
\[P(t)=A(t)+iB(t),\]
where \(A\) and \(B\) be a polynomials with real coefficients. In
order for all roots of \(P\) to be contained within the open upper
half plane \(\{z:\,\textup{Im}\,z>0\}\), it is necessary and
sufficient that the following three conditions be satisfied:
\begin{enumerate}
\item
The roots of each of the polynomials \(A\) and \(B\) are all real
and simple.
\item
The sets \(\mathscr{Z}_A\) and \(\mathscr{Z}_B\) of the roots of
the polynomials \(A\) and \(B\) interlace.
\item
The inequality
\begin{equation}%
\label{ALN} %
B^{\prime}(0)A(0)-A^{\prime}(0)B(0)>0
\end{equation}%
 holds.
\end{enumerate}
\end{nthm}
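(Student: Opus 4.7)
The centerpiece of the proof is the Wronskian
\begin{equation*}
W(t)\,:=\,B'(t)A(t)-A'(t)B(t)
\end{equation*}
together with the identity, valid on $\mathbb{R}$ away from the zeros of $P$,
\begin{equation*}
\textup{Im}\,\frac{P'(t)}{P(t)}\;=\;\frac{W(t)}{A(t)^{2}+B(t)^{2}}\;=\;\sum_{k=1}^{n}\frac{\textup{Im}\,z_{k}}{|t-z_{k}|^{2}}.
\end{equation*}
The first equality is obtained by expanding $P'/P=(A'+iB')/(A+iB)$ and separating real and imaginary parts; the second is the partial-fraction expansion $P'/P=\sum_{k}(t-z_{k})^{-1}$. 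This identity is the engine for both implications, since it ties the sign of $W$ on the real axis directly to the imaginary parts of the roots of $P$.

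For the necessity direction, assume every root $z_{k}$ of $P$ lies in the open upper half-plane, so the right-hand side above is strictly positive and hence $W>0$ on $\mathbb{R}$. Evaluating at $t=0$ yields condition $3$. At any real zero $t_{0}$ of $A$, $W(t_{0})=-A'(t_{0})B(t_{0})>0$ forces $A'(t_{0})\neq 0$ and $B(t_{0})\neq 0$, so real zeros of $A$ are simple and disjoint from those of $B$; symmetrically for $B$. Since $(B/A)'=W/A^{2}>0$ on every interval of continuity, $B/A$ is strictly increasing there, and therefore has exactly one zero of $B$ strictly between any two consecutive real zeros of $A$. To obtain the full count that $A$ and $B$ have $n$ real zeros each, I introduce
\begin{equation*}
\phi(t)\,:=\,\frac{P(t)}{\overline{P(\overline t)}}\,=\,\prod_{k=1}^{n}\frac{t-z_{k}}{t-\overline{z_{k}}},
\end{equation*}
which has unit modulus on $\mathbb{R}$; a factor-by-factor computation shows that $\arg\phi$ increases by exactly $2\pi n$ as $t$ sweeps from $-\infty$ to $+\infty$. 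Since $B(t)/A(t)=\tan(\arg\phi(t)/2)$ on $\mathbb{R}$, this winding produces the required count of real poles and zeros of $B/A$, completing conditions $1$ and $2$.

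For the sufficiency direction I run a homotopy argument. Assuming $\deg A=n$ (if $\deg B=n$ instead, run the symmetric homotopy $\lambda A+iB$ with the roles of $A,B$ swapped), set $P_{\lambda}(t):=A(t)+i\lambda B(t)$ for $\lambda\in[0,1]$, so that $P_{0}=A$ has $n$ real simple roots by condition $1$ and $P_{1}=P$. The roots of $P_{\lambda}$ depend continuously on $\lambda$, and no root can become real for $\lambda>0$, since $A(z)+i\lambda B(z)=0$ with $z$ real would force $A(z)=B(z)=0$, contradicting condition $2$. The implicit function theorem at $\lambda=0$ gives, at each real root $a_{j}$ of $A$, the initial velocity
\begin{equation*}
\frac{dz}{d\lambda}\bigg|_{\lambda=0}\;=\;-\frac{iB(a_{j})}{A'(a_{j})},
\end{equation*}
whose imaginary part is $-B(a_{j})/A'(a_{j})$. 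Interlacing forces $B(a_{j})$ and $A'(a_{j})$ to alternate in sign together as $j$ runs from $1$ to $n$, so the residues $\rho_{j}:=B(a_{j})/A'(a_{j})$ of $B/A$ all share a common sign, and the partial-fraction derivative $(B/A)'(t)=-\sum_{j}\rho_{j}/(t-a_{j})^{2}$ therefore has constant sign on $\mathbb{R}\setminus\{a_{j}\}$. Condition $3$ pins that sign down: if $A(0)=0$ it says $\rho_{j}<0$ at that $a_{j}=0$ directly via $W(0)=-A'(0)B(0)>0$; if $A(0)\neq 0$ it says $(B/A)'(0)=W(0)/A(0)^{2}>0$, which in conjunction with the partial-fraction formula again forces $\rho_{j}<0$. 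Hence $-B(a_{j})/A'(a_{j})>0$ at every $a_{j}$, so every root of $P_{\lambda}$ moves into the open upper half-plane at $\lambda=0^{+}$ and cannot cross back; at $\lambda=1$ we conclude.

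The main obstacle is the degree bookkeeping: when $\deg A\neq\deg B$ (equivalently, when the leading coefficient of $P$ is purely real or purely imaginary), the choice of homotopy matters, since the starting polynomial must already carry $n=\deg P$ real simple roots. The rule is to use $A+i\lambda B$ when $\deg A=n$ and $\lambda A+iB$ when $\deg B=n$, and in both cases the residue analysis above applies verbatim. A related technicality, already in the necessity direction, is to justify the winding computation $\arg\phi\mapsto 2\pi n$ carefully at $\pm\infty$, since the boundary value $\phi(\pm\infty)=p_{n}/\overline{p_{n}}$ may cause one of the real zeros of $A$ or $B$ to migrate to infinity; tracking $\tan$ of a linearly increasing angle with total increment $\pi n$ reconciles this with the count of real zeros in every degree configuration.
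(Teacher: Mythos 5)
The paper states the Hermite--Biehler theorem as a classical named result without proof (citing Levin's treatise for the entire-function version), so there is no internal argument to compare against. Your proof is correct and self-contained. The necessity half follows a standard route: the identity
\[
\operatorname{Im}\frac{P'(t)}{P(t)}=\frac{B'(t)A(t)-A'(t)B(t)}{A(t)^{2}+B(t)^{2}}=\sum_{k}\frac{\operatorname{Im}z_{k}}{|t-z_{k}|^{2}}
\]
immediately gives $W>0$ on $\mathbb{R}$ (hence condition 3 and the local simplicity and separation of real zeros), the monotonicity of $B/A$ gives the interlacing of real zeros, and the winding-number computation for $\phi=P/\overline{P(\bar t)}$ supplies the global count needed to conclude that \emph{all} zeros of $A$ and $B$ are real. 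The sufficiency half via the homotopy $P_{\lambda}=A+i\lambda B$ (with the roles of $A$ and $B$ swapped when $\deg B>\deg A$) is a clean alternative to the more usual argument-principle or Cauchy-index proofs found in Levin, Chebotarev, Gantmacher; the residue computation correctly identifies the sign forced by condition 3, and the ``no real root for $\lambda>0$'' observation together with constancy of degree rules out escape across the real axis or through infinity. Two points that deserve a careful sentence in a polished write-up, which you flag but do not fully carry out: (i) the endpoint bookkeeping in the $\phi$-winding argument, which requires checking that $\theta(\pm\infty)=\tfrac{1}{2}\arg(p_{n}/\overline{p_{n}})$ never lands on a multiple of $\pi$ or of $\pi/2$ in a way inconsistent with the degrees of $A$ and $B$ (this in fact follows from $\deg A=n$ forcing $a_{n}\neq0$, etc.); and (ii) the passage from ``each root has upward initial velocity at $\lambda=0^{+}$'' to ``all roots lie strictly above $\mathbb{R}$ for all $\lambda\in(0,1]$'', which is most cleanly phrased via local constancy of the number of zeros in the open upper half-plane once it is noted that $P_{\lambda}$ never vanishes on $\mathbb{R}$ and has constant degree. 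Neither point is a genuine gap; both can be closed along the lines you sketch.
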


Let us formulate a version of Hermite-Biehler Theorem for the left half plane.
\begin{lem} %
\label{VHBT}%
Let \(M\) be a polynomial with positive coefficients,
\[M(t)=\sum\limits_{0\leq{}k\leq{}n}s_{k}t^{k},
\quad{}s_k>0,\ 0\leq{}k\leq{}n\,,\] and let
\({}^{\mathscr{E}\!}M\) and \({}^{\mathscr{O}\!}M\) be the even
and the odd parts of \(M\). In order for the polynomial \(M\) to be
dissipative it is necessary and sufficient that the following two
conditions be satisfied:
\begin{enumerate}%
\item
 The polynomials \({}^{\mathscr{E}\!}M\) and
\({}^{\mathscr{O}\!}M\) are conservative.
\item
The sets of roots for the polynomials  \({}^{\mathscr{E}\!}M\)
and \({}^{\mathscr{O}\!}M\) interlace.
\end{enumerate}%
\end{lem}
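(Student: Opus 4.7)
The plan is to derive this lemma from the classical Hermite--Biehler theorem via the substitution $P(t) = M(it)$. The map $z \mapsto -iz$ is a bijection of the open left half-plane $\{\mathrm{Re}\,z<0\}$ onto the open upper half-plane $\{\mathrm{Im}\,t>0\}$, carrying the imaginary axis onto the real axis; consequently $M$ is dissipative if and only if all roots of $P$ lie in $\{\mathrm{Im}\,t>0\}$, which is exactly the hypothesis required to apply Hermite--Biehler to $P$.

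The first step is to decompose $P$ into real and imaginary parts in terms of the even and odd parts of $M$. A direct substitution, using $i^{2m}=(-1)^m$, gives
\[
P(t)=M(it) = {}^{\mathscr{E}\!}M(it) + {}^{\mathscr{O}\!}M(it) = A(t) + iB(t),
\]
where $A(t) = {}^{\mathscr{E}\!}M(it)$ is a real even polynomial and $B(t) = -i\,{}^{\mathscr{O}\!}M(it)$ is a real odd polynomial. The zero set of $A$ on $\mathbb{R}$ corresponds, under the bijection $\beta \leftrightarrow i\beta$, to the zero set of ${}^{\mathscr{E}\!}M$ on $i\mathbb{R}$, and similarly for $B$ and ${}^{\mathscr{O}\!}M$. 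This correspondence preserves multiplicities and the natural order on the respective axes.

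The second step is to translate each of the three conditions of the Hermite--Biehler theorem for $P = A + iB$ into conditions for $M$. The requirement that $A$ and $B$ have only simple real roots is exactly the conservativity of ${}^{\mathscr{E}\!}M$ and ${}^{\mathscr{O}\!}M$; the interlacing of the real roots of $A$ and $B$ is exactly the interlacing of the imaginary roots of ${}^{\mathscr{E}\!}M$ and ${}^{\mathscr{O}\!}M$ under the bijection above. The remaining Wronskian-type condition $B'(0)A(0) - A'(0)B(0) > 0$ is automatic: by parity $A'(0) = 0$ and $B(0) = 0$, while $A(0) = s_0$ and $B'(0) = s_1$, both strictly positive by the assumption that every coefficient of $M$ is positive.

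There is no serious obstacle to this argument beyond careful bookkeeping of the factor of $i$ introduced by $t \mapsto it$, so that the correspondence between the real axis on the $P$-side and the imaginary axis on the $M$-side respects orientation, simplicity, and interlacing. The strict positivity of the coefficients $s_k$ enters only through the free verification of the positivity condition at the origin; without such a hypothesis, roots could in principle escape to the boundary axis, and the stated equivalence would fail.
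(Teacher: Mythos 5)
Your proof is correct and follows essentially the same route as the paper's: the same substitution $P(t)=M(it)$, the same identification $A(t)={}^{\mathscr{E}\!}M(it)$, $B(t)=i^{-1}\,{}^{\mathscr{O}\!}M(it)$, the same translation of the first two Hermite--Biehler conditions into conservativity and interlacing, and the same observation that $B'(0)A(0)-A'(0)B(0)=s_0s_1>0$ is automatic because all coefficients are positive.
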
%
\begin{lem}
\label{CPtBC}%
 Let \(W\),
\begin{equation}
W(t)=w_{0}+w_{2}t^{2}+w_{4}t^{4}\,\cdots\,+w_{2m-2}t^{2m-2}+w_{2m}t^{2m}
\end{equation}
  be an even polynomial with positive coefficients \(w_{2l}\):
  \[w_{0}>0,\,w_{2}>0,\,\dots\,,\,w_{2m}>0\,.\]
  In order for the polynomial \(W\) to be conservative it is
  necessary and sufficient that the polynomial \(M=W+W^{\prime}\)
   be dissipative, where \(W^{\prime}\) is the derivative of
  \(W\):
\begin{equation}
W^{\prime}(t)=2\cdot{}w_{2}t^{}+4\cdot{}w_{4}t^{3}\,\cdots\,
+(2m-2)\cdot{}w_{2m-2}t^{2m-3}+2m\cdot{}w_{2m}t^{2m-1}\,.
\end{equation}
\end{lem}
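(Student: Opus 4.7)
The plan is to reduce the statement to Lemma \ref{VHBT}. Since \(W\) is even and \(W'\) is odd, one has the identifications \(W={}^{\mathscr{E}\!}M\) and \(W'={}^{\mathscr{O}\!}M\). All coefficients of \(M=W+W'\) are strictly positive---they interleave \(w_{2l}\) in the even slots with \((2l)\,w_{2l}\) in the odd slots, i.e.\ \(M(t)=w_0+2w_2t+w_2t^2+4w_4t^3+w_4t^4+\cdots+2m\,w_{2m}t^{2m-1}+w_{2m}t^{2m}\)---so Lemma \ref{VHBT} applies to \(M\). The necessity direction of the present lemma is then immediate: if \(M\) is dissipative, the necessary part of Lemma \ref{VHBT} already gives that \({}^{\mathscr{E}\!}M=W\) is conservative.

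For the sufficiency direction I would substitute \(u=t^2\) and write \(W(t)=P(u)\) with \(P(u)=\sum_{l=0}^{m} w_{2l}u^l\). Conservatism of \(W\) translates to \(P\) having \(m\) distinct negative real roots \(-y_m^2<-y_{m-1}^2<\cdots<-y_1^2<0\), where \(0<y_1<\cdots<y_m\). Applying Rolle's theorem to \(P\) on each consecutive pair of its roots produces a simple zero \(u_j\) of \(P'\) in \((-y_{j+1}^2,-y_j^2)\) for \(j=1,\dots,m-1\); since \(\deg P'=m-1\), these exhaust the roots of \(P'\) and are all strictly negative. Setting \(r_j=\sqrt{-u_j}\) gives \(y_j<r_j<y_{j+1}\). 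Because \(W'(t)=2t\,P'(t^2)\), the zeros of \(W'\) are precisely \(t=0\) together with \(t=\pm ir_j\) for \(j=1,\dots,m-1\); all \(2m-1\) of them are simple and purely imaginary, so \(W'\) is conservative.

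The remaining hypothesis of Lemma \ref{VHBT}, namely interlacing of the zero sets of \(W\) and \(W'\) on the imaginary axis, now follows from the chain \(0<y_1<r_1<y_2<r_2<\cdots<r_{m-1}<y_m\) combined with the fact that \(W'\) has the additional zero \(0\) sandwiched between \(\pm iy_1\): the roots of \(W\) and \(W'\) then strictly alternate along the whole imaginary axis. Both conditions of Lemma \ref{VHBT} being satisfied, \(M\) is dissipative, which completes the sufficiency direction of Lemma \ref{CPtBC}. The only substantive ingredient is the Rolle argument applied to the auxiliary polynomial \(P\); the rest is bookkeeping, and I anticipate no real obstacle.
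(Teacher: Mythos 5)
Your proof is correct and follows essentially the same path as the paper's: necessity is read off from Lemma \ref{VHBT}, and sufficiency comes from applying Rolle's theorem to deduce that $W'$ is conservative with zeros interlacing those of $W$, then invoking Lemma \ref{VHBT} again. Your passage to $P(u)=W(\sqrt{u})$ is simply a more explicit bookkeeping of the same Rolle argument the paper uses.
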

\begin{proof}[Proof of Lemma \ref{VHBT}] Let
\begin{equation}%
\label{LtoU}%
P(t)=M(it),\quad{}A(t)=({}^{\mathscr{E}\!}M)(it),
\quad{}B(t)=i\sp{-1}\cdot({}^{\mathscr{O}\!}M)(it),
\end{equation}%
so that
\[P(t)=A(t)+iB(t)\,.\]
\(A\) and \(B\) are polynomials with real coefficients:
\[A(t)=\sum\limits_{0\leq{}l\leq\left[\frac{n}{2}\right]}(-1)^ls_{2l}t^{2l},\quad{}
B(t)=t\!\!\!\!\sum\limits_{0\leq{}l\leq\left[\frac{n-1}{2}\right]}\!\!\!\!(-1)^ls_{2l+1}t^{2l}\,.\]
Moreover,
\begin{equation}%
\label{ALN1}%
 B^{\prime}(0)A(0)-A^{\prime}(0)B(0)=s_0\,s_1\,.
\end{equation}%
From \eqref{LtoU} it is evident that
\begin{align*}
(\textup{\footnotesize{}All roots of }A\textup{ \footnotesize
{}are real and simple})&\Leftrightarrow(\textup{\footnotesize{}The
polynomial}{}^{\ \mathscr{E}\!}M\textup{ \footnotesize {}is conservative})\\
(\textup{\footnotesize{}All roots of }B\textup{ \footnotesize
{}are real and simple})&\Leftrightarrow(\textup{\footnotesize{}The
polynomial}{}^{\ \mathscr{O}\!}M\textup{ \footnotesize {}is conservative})\\
(\textup{\footnotesize{}All roots of }P\textup{ \footnotesize{}lie
in \(\{z\!:\textup{Im}\,z>0\}\)})&
\Leftrightarrow(\textup{\footnotesize{}The polynomial }M\textup{
\footnotesize{}is dissipative})
\end{align*}
and under the condition that all roots of \(A\) and \(B\) are real,
\begin{equation*}
(\textup{\footnotesize{}The roots of \(A\) and \(B\)
interlace})\Leftrightarrow{}(\textup{\footnotesize{}The roots of \ %
\({}^{\mathscr{E}\!}M\) and \({}^{\mathscr{O}\!}M\) interlace})
\end{equation*}
Thus, Lemma \ref{VHBT} is an immediate consequence of the
Hermite-Biehler Theorem. \textit{The
inequality \eqref{ALN} is ensured automatically by \eqref{ALN1},
since the
coefficients \(s_k\) are assumed to be positive.}\\ %
\end{proof}%
 \begin{proof}[Proof of Lemma \ref{CPtBC}]
It is clear that the polynomials \(W\) and \(W^{\prime}\) are, respectively,
the even and the odd parts of \(M=W+W^{\prime}\):
\[W={}^{\mathscr{E}}\!M,\quad{}W^{\prime}={}^{\mathscr{O}}\!M.\]
Let \(M\) be dissipative. Then, according to Lemma \ref{VHBT},
\(W\) is conservative. Conversely, let \(W\) be conservative.
According to Rolle's Theorem, the polynomial \(W^{\prime}\) is also
conservative, and the sets of roots belonging to \(W\) and
\(W^{\prime}\) interlace. By Lemma \ref{VHBT}, the polynomial
\(M\) is dissipative. \end{proof}
\begin{proof}[Proof of Theorem \ref{H10H0}] The relation \eqref{MPOP}
means that the polynomial \(tW_{\partial V}^{1}(t)\) is the odd
part of the Steiner polynomial \(S_V\).
 Thus, we are in the situation of Lemma \ref{VHBT}.
 Since the polynomial
\(S_V\) is dissipative, the point \(z=0\) is not a root of \(M\),
that is, \(s_0(V)\not=0\). According to \eqref{EvEq}, this means
that \(\textup{Vol}_n(V)\not=0\). Thus, the set \(V\) is solid. By
Proposition \ref{PCMP}, all coefficients \(s_k(V)\) of the
polynomial \(S_V\) are strictly positive. According to Lemma
\ref{VHBT}, the polynomial \({}^{\mathscr{O}\!}(S_V)\) is
conservative. Since \({}^{\mathscr{O}\!}(S_V)(0)=0\), the polynomial
\(t^{-1}\cdot{}{}^{\mathscr{O}\!}(S_V)(t)=W_{\partial V}^{1}(t)\) is conservative as well.%
\end{proof}

\begin{proof}[Proof of Theorem \ref{JDM}.] In the course of the proof we
shall refer to some facts from the theory of entire functions
which are usually  formulated  in literature for functions whose
roots are in the upper rather than in the left half plane.
Therefore, it is convenient to pass from the variable \(t\) to the
variable \(it\). Given a function \(H(t)\) of the Hurwitz class
\(\mathscr{H}\), let \(f(t)=H(it)\).  Then \(f\) is an entire
function of exponential type. All roots of \(f\) are in the upper
half plane and the defect \(d_f\) of \(f\) is
non-negative, where
\[2\,d_f=\varlimsup\limits_{r\to+\infty}f(-ir)-\varlimsup\limits_{r\to+\infty}f(-ir)\,.\]
(It is clear that \(d_f=d_H\), where \(d_H\) is the same as in
\eqref{DefH}.) Thus the function \(f\) is in the class \(P\) as
this class was defined in \cite[Chapter VII, Section 4]{Lev1}. Let
\[f(t)=A(t)+iB(t)\,,\]
where \(A\) and \(B\) are real entire functions. Combining Lemma 1
from \cite[Chapter VII, Section 4]{Lev1} with Theorem 4 from
\cite[Chapter VII, Section 2]{Lev1},  we see that the functions
\(A\) and \(B\) possess the following properties:
\begin{enumerate}
\item
\(A\) and \(B\) are real entire functions of exponential type;
\item
\(A(0)B^{\prime}(0)-B(0)A^{\prime}(0)>0;\)
\item
For every \(\theta\in\mathbb{R}\), all roots of the linear
combination \(C_{\theta}\), where
\(C_{\theta}(t)=\cos\theta{}A(t)+\sin\theta{}B(t),\) are simple
and real.  (The entire functions \(A\) and \(B\) form \textit{a
real pair} in the terminology of N.G.Chebotarev, \cite{Cheb}.)
\end{enumerate}
According to Hadamard's Factorization Theorem, the entire function
\(C_{\theta}\) is in the Laguerre-P\'olya class. According to the
Jensen-Csordas-Williamson Theorem (Theorem \ref{JSW}), for each
\(n\), all roots of the Jensen polynomial
\(C_{\theta,n}(t)=\mathscr{J}_n(C_{\theta};t)\) are real and
simple. Thus, the real polynomials \(A_n(t)=\mathscr{J}_n(A;t)\)
and \(B_n(t)=\mathscr{J}_n(B;t)\) possess the following property:
\textit{For every \(\theta\in\mathbb{R}\), all roots of the linear
combination \(\cos\theta{}A_{n}(t)+\sin\theta{}B_{n}(t),\) are
real and simple}. (The polynomials \(A_n\) and \(B_n\) are a real
pair as well.) From the real pair property of the polynomials \(A_n\) and
\(B_n\) and the property
\(A_n(0)B_n^{\prime}(0)-B_n(0)A_n^{\prime}(0)\) it follows that
all roots of the polynomial \(f_n(t)=A_n(t)+iB_n(t)\) are in the
upper half plane. Thus, all roots of the polynomial
\(H_n(t)=f_n(-it)\) are in the left half plane. In other words,
the polynomial \(H_n\) is a Hurwitz polynomial. On the other hand,
from the construction it follows that
\(H_n(t)=\mathscr{J}_n(H;t)\,.\) \end{proof}
%%%%%%%%%%%%%%%%%%%%%%%%%%%%%%%%%%%%%%%%%%%%%%%%%%%%%%%%%%%%%%%%%%
\section{Properties of Steiner polynomials.
 \label{ChPrMiPo}  }
%%%%%%%%%%%%%%%%%%%%%%%%%%%%%%%%%%%%%%%%%%%%%%%%%%%
 {\small\textsf{RIGID MOTION INVARIANCE}}: \textit{Let
\(V,\,V\subset \mathbb{R}^n\), be a compact convex set,
{\large\(\tau\)} be a motion\footnote{The rigid motion of the space
\(\mathbb{R}^n\) is an affine transformation of \(\mathbb{R}^n\)
which preserves the Euclidean distance in \(\mathbb{R}^n\).} of
the space \(\mathbb{R}^n\)}, and {\large\(\tau\)}(V) be the image
of the set \(V\) under he motion {\large\(\tau\)}. Then
\(S_{{\mbox{\small\(\tau\)}}(V)}(t)=S_V(t)\).\\[1.0ex]
\noindent {\small\textsf{CONTINUITY}}: \textit{The correspondence
\(V\to{}S_V\) between compact convex sets \(V\) in
\(\mathbb{R}^n\) and their Steiner polynomials \(S_V\) is
continuous \footnote{\label{topo}The set of compact convex sets in
\(\mathbb{R}^n\) is equipped with the Hausdorff metric, the set of
all polynomials is equipped with the topology of the locally uniform
convergence in \(\mathbb{C}\).}.}
\\[1.0ex]
A sketch of the proof of  the continuity property can be found in
\cite[section \textsf{29}]{BoFe}, \cite[section
\textsf{19.2}]{BuZa}, \cite[section \textsf{5.1}]{Schn}, \cite{Web}.\\

\noindent {\small\textsf{MONOTONICITY}}:
\textit{%
Let \(V_1\) and \(V_2\) be compact convex sets in
\(\mathbb{R}^n\), and \(S_{V_1}, S_{V_2}\) be the associated
Steiner polynomials.} \textit{If \(V_1\subset{}V_2\), then the
coefficients \(s_k(V_1)\), \,\,\(s_k(V_2)\) of the polynomials
\(S_{V_1}\), \(S_{V_2}\), defined as in \eqref{MiP}, satisfy the
inequalities}
\begin{equation}
\label{IMiP} s_k(V_1)\leq{}s_k(V_2),\quad{}0\leq k\leq n\,.
\end{equation}
\noindent%
\textsf{Explanation.} According to the definition of the
\textit{mixed volumes},
\begin{equation}
\label{MiX}%
 s_k(V)=\frac{n!}{(n-k)!\,k!}\,
\textup{Vol}(\underbrace{V,V,\,\dots\,,V}_{n-k};\,\underbrace{B^n,B^n,\,\dots\,,B^n}_{k}).
\end{equation}
Inequalities \eqref{IMiP} follow from the \textit{monotonicity of the
mixed volumes \eqref{MiX} with respect to \(V\)}. (Concerning the
monotonicity of the mixed volumes see, for example, \cite[section \textsf{29}]{BoFe},
 \cite[section \textsf{19.2}]{BuZa},
\cite[Theorem \textsf{6.4.11}]{Web},  \cite[ section \textsf{5.1},
formula (5.1.23)]{Schn}.)
\begin{lem}
\label{PCMP}
\begin{enumerate}
\begin{subequations}
\label{PosProp}
\item[\textup{a)}.]
 For any compact convex set
\(V,\,V\subset{}\mathbb{R}^n\), the coefficients \(s_k(V)\) of its
Steiner polynomial, defined as in \eqref{MiP}, are non-negative:
\begin{equation}
\label{PPa} 0\leq{}s_k(V),\ \ 0\leq k\leq n.
\end{equation}
\textup{(}According to \eqref{EvEq}, the coefficient \(s_n(V)\) is strictly
positive.\textup{)}
\item[\textup{b)}.]
 If, moreover, the set \(V\) is solid, then
all coefficients \(s_k(V)\) are strictly  positive:
\begin{equation}
\label{PPb}%
 0<s_k(V),\ \ 0\leq k\leq n.
\end{equation}
\end{subequations}
The Weyl coefficients \(w_{2l}(\partial{}V),\,0\leq
l\leq\left[\frac{n-1}{2}\right]\), \,defined by \textup{Definition
\ref{DNWP}},
 are strictly positive as well.
\end{enumerate}
\end{lem}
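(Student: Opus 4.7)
My plan is to reduce everything to Minkowski's monotonicity of mixed volumes, which is already invoked in the paragraph just preceding the lemma. The key identity is \eqref{MiX}: the Steiner coefficient $s_{\,k}^{\mathbb{R}^n}(V)$ equals $\binom{n}{k}$ times the mixed volume $\textup{Vol}(\underbrace{V,\ldots,V}_{n-k};\,\underbrace{B^n,\ldots,B^n}_{k})$. Since mixed volumes are non-negative and monotonic in each slot, part (a) follows by comparing $V$ with $\{x_0\}$ for any chosen $x_0\in V$. Because $S_{\{x_0\}}^{\mathbb{R}^n}(t)=\textup{Vol}_n(\{x_0\}+tB^n)=\textup{Vol}_n(B^n)\,t^n$, the comparison yields $s_{\,k}^{\mathbb{R}^n}(V)\geq 0$ for $0\leq k\leq n-1$. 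The separate claim $s_n(V)>0$ is not a monotonicity statement at all but the universal identity $s_n(V)=\textup{Vol}_n(B^n)$ already recorded in \eqref{EvEq}.

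For part (b) the solid-set hypothesis supplies a genuine strict lower bound. There exist $\varepsilon>0$ and $x_0\in\mathbb{R}^n$ with $\varepsilon B^n+x_0\subset V$. Rigid-motion invariance of the Steiner polynomial (inherited from that of the Euclidean volume) reduces the comparison to $\varepsilon B^n$, for which $S_{\varepsilon B^n}^{\mathbb{R}^n}(t)=\textup{Vol}_n(B^n)(\varepsilon+t)^n$ and consequently $s_{\,k}^{\mathbb{R}^n}(\varepsilon B^n)=\binom{n}{k}\varepsilon^{\,n-k}\textup{Vol}_n(B^n)>0$. The coefficient-wise monotonicity \eqref{IMiP} then delivers $s_{\,k}^{\mathbb{R}^n}(V)\geq \binom{n}{k}\varepsilon^{\,n-k}\textup{Vol}_n(B^n)>0$ for every $k$ with $0\leq k\leq n$.

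The final clause about the Weyl coefficients is a direct corollary. Here $\partial V$ is the $(n-1)$-dimensional boundary surface of the solid set $V\subset\mathbb{R}^n$, so \textup{Definition \ref{DNWP}}, applied with ambient dimension $n=(n-1)+1$, gives $w_{2l}(\partial V)=2^l\,\Gamma(l+\tfrac{3}{2})/\Gamma(\tfrac{3}{2})\cdot s_{\,2l+1}^{\mathbb{R}^n}(V)$ with a strictly positive numerical prefactor, whence positivity of the odd-indexed coefficients from part (b) finishes the proof. I do not anticipate a serious obstacle: the nontrivial content, Minkowski's monotonicity of mixed volumes, is in hand from standard convex-body theory, so the remaining work is bookkeeping. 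The one point that deserves a little care is that monotonicity alone cannot produce strict positivity in (a) without the solidness used in (b); this is what forces the two-part structure of the statement.
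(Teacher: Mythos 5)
Your proof is correct and follows essentially the same route as the paper's: both parts (a) and (b) are obtained from the monotonicity of Steiner coefficients \eqref{IMiP} by comparing $V$ with a one-point subset and, for (b), with a ball $x_0+\rho B^n\subset V$ whose Steiner coefficients are computed explicitly, and the Weyl-coefficient clause then follows by reading off the positive prefactor in \eqref{NWP}.
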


\begin{proof} Taking  \(V\) as \(V_2\) and any one-point subset of \(V\) as
\(V_1\) in \eqref{IMiP}, we obtain \eqref{PPa}. If the set \(V\)
is solid, then there exist a ball \(x_0+\rho{}B^n\) of some
positive radius \(\rho\): \(x_0+\rho{}B^n\subset{}V\). Taking the
ball \(x_0+\rho{}B^n\) as \(V_1\) and \(V\) as \(V_2\) in
\eqref{IMiP}, we obtain the inequalities
\(s_k(x_0+\rho{}B^n)\leq{}s_k(V),\,\,\,0\leq k\leq n.\) Moreover,
\(s_k(x_0+\rho{}B^n)=s_k(\rho{}B^n)=\rho^{n-k}s_k(B^n)=
\rho^{n-k}\frac{n!}{k!\,(n-k)!}\,\textup{Vol}_n(B^n)>0.\)
\end{proof}

\vspace{2.0ex}
\begin{rem}%
\label{NSND2}%
The notion of an interior point for a set \(V\) depend on the
space in which \(V\) is embedded. The set \(V\),
\(V\subset\mathbb{R}^{n},\) which is non-solid with respect to the
`original' space \(\mathbb{R}^{n}\), is solid if \(V\) is
considered as embedded in the space \(\mathbb{R}^{d}\) of appropriate
dimension \(d,\,d<n\). The dimension \(\dim{}V\) of the
set \(V\) should be chosen as \(d\).
\end{rem}
\begin{defn}
\label{DDCS} Let \(V,\,V\subseteq\mathbb{R}^n\), be a convex set.
The \textsf{dimension} \(\dim V\) \,of~\(V\) is the dimension of
the smallest affine subspace of \(\mathbb{R}^n\) which contains
\(V\).
\end{defn}
\begin{lem}%
\label{Degdeg}%
Let \(V,\,V\subset\mathbb{R}^n\), be a compact convex set of
dimension \(d\):
\begin{equation}
\label{DiCoS} \dim{}V=d,\quad{}0\leq{}d\leq{}n.
\end{equation}
Then
\begin{equation}
s_k^{\mathbb{R}^n}(V)=0\ \ \textup{for}\ \ 0\leq{}k<n-d;\ \
s_k^{\mathbb{R}^n}(V)>0\ \ \textup{for}\ \ n-d\leq{}k\leq{}n\,.
\end{equation}
\end{lem}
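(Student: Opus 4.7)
The plan is to reduce to Lemma~\ref{IPR} (in an iterated form) by using rigid-motion invariance to place $V$ inside a coordinate subspace, and then compute the tube volume in $\mathbb{R}^n$ by Fubini across the normal directions.

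By the rigid-motion invariance of Steiner polynomials stated at the start of this section, I may assume without loss of generality that $V = V_0 \times \{0\}$, where $V_0 \subset \mathbb{R}^d$ is a compact convex set which is solid with respect to $\mathbb{R}^d$; this is precisely the content of Definition~\ref{DDCS} combined with $\dim V = d$. In the decomposition $\mathbb{R}^n = \mathbb{R}^d \times \mathbb{R}^{n-d}$, a direct check shows that $(y_1,y_2) \in V + tB^n$ iff $|y_2| \le t$ and $y_1 \in V_0 + \sqrt{t^2 - |y_2|^2}\, B^d$. Hence Fubini gives
\[
S_V^{\mathbb{R}^n}(t) \;=\; \int_{|y_2|\le t} S_{V_0}^{\mathbb{R}^d}\!\bigl(\sqrt{t^2 - |y_2|^2}\bigr)\, dy_2.
\]

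Next I expand $S_{V_0}^{\mathbb{R}^d}(r) = \sum_{k=0}^{d} s_k^{\mathbb{R}^d}(V_0)\, r^k$, pass to polar coordinates in $\mathbb{R}^{n-d}$, and evaluate the radial beta integral via $|y_2| = t\sin\theta$ to obtain
\[
\int_{|y_2|\le t}(t^2 - |y_2|^2)^{k/2}\, dy_2 \;=\; \frac{\pi^{(n-d)/2}\,\Gamma(\tfrac{k}{2}+1)}{\Gamma(\tfrac{n-d+k}{2}+1)}\, t^{\,n-d+k}.
\]
This produces the explicit identity
\[
S_V^{\mathbb{R}^n}(t) \;=\; \pi^{(n-d)/2}\sum_{k=0}^{d}\frac{\Gamma(\tfrac{k}{2}+1)}{\Gamma(\tfrac{n-d+k}{2}+1)}\, s_k^{\mathbb{R}^d}(V_0)\, t^{\,n-d+k},
\]
which for $n-d=1$ specializes to formula~\eqref{MPn1} of Lemma~\ref{IPR} and may alternatively be derived by iterating that lemma $n-d$ times.

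Reading off coefficients now finishes the proof: no power $t^j$ with $j < n-d$ appears, so $s_j^{\mathbb{R}^n}(V)=0$ for $0 \le j < n-d$; for $j = n-d+k$ with $0 \le k \le d$,
\[
s_j^{\mathbb{R}^n}(V) \;=\; \frac{\pi^{(n-d)/2}\,\Gamma(\tfrac{k}{2}+1)}{\Gamma(\tfrac{j}{2}+1)}\, s_k^{\mathbb{R}^d}(V_0),
\]
and since $V_0$ is solid in $\mathbb{R}^d$, Lemma~\ref{PCMP}(b) gives $s_k^{\mathbb{R}^d}(V_0) > 0$, whence $s_j^{\mathbb{R}^n}(V) > 0$ for $n-d \le j \le n$. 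The only conceptual step is the reduction to the solid case via the rigid-motion normalization; the Fubini slicing and the beta integral are mechanical, so no serious obstacle is anticipated.
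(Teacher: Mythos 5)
Your proposal is correct and follows essentially the same route the paper takes: reduce to the codimension-shift formula expressing $S_V^{\mathbb{R}^n}$ in terms of $S_{V_0}^{\mathbb{R}^d}$ (the paper's Lemma~\ref{CoRiDi}, equivalently Theorem~\ref{IRPp}), then invoke Lemma~\ref{PCMP}(b) for positivity of the $s_k^{\mathbb{R}^d}(V_0)$. The only minor variant is that you derive the shift formula directly by a single Fubini/polar-coordinate beta integral rather than by iterating Lemma~\ref{IPR} on the codimension as the paper does, and you note this equivalence yourself.
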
%
This lemma is a consequence of Lemma \ref{PCMP} and of the
following
\begin{lem}
\label{CoRiDi}%
 Let \(V\), \(V\subset{}\mathbb{R}^n\), be a convex set of
 dimension \(d\), \(d\leq{}n\), and let
 \(S_V^{\mathbb{R}^n}(t)=\sum\limits_{0\leq{}k\leq{}n}s_k^{\mathbb{R}^n}(V)t^k\)
 and\,%
 \footnote{Defining the Steiner polynomial
 \(S_V^{\mathbb{R}^d}\), we can assume that
 the
smallest affine subspace of \(\mathbb{R}^n\) which contains \(V\) is the space \(\mathbb{R}^d\).} %
  \(S_V^{\mathbb{R}^d}(t)=\sum\limits_{0\leq{}{\,k}\leq{}d}s_{\,k}^{\mathbb{R}^d}(V)t^k\)
 be the Minkovski polynomials of the set \(V\) with respect to the
 ambient spaces \(\mathbb{R}^n\) and \(\mathbb{R}^d\)
 respectively. Then
 \begin{equation}
S_V^{\mathbb{R}^n}(t)=t^{n-d}\cdot\hspace*{-1.0ex}\sum\limits_{0\leq{}{\,k}\leq{}d}
\,\frac{{\pi}^{\frac{n-d}{2}}%
\Gamma(\frac{k}{2}+1)}{\Gamma(\frac{k+n-d}{2}+1)}s_{\,k}^{\mathbb{R}^d}(V)t^k\,.
 \end{equation}
\end{lem}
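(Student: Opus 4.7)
The plan is to compute $\textup{Vol}_n(V+tB^n)$ by slicing the $t$\,-\,neighborhood of $V$ in $\mathbb{R}^n$ with affine subspaces parallel to $\mathbb{R}^d$, applying Fubini, and recognizing the Steiner polynomial of $V$ in $\mathbb{R}^d$ in each slice.

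First I would write $\mathbb{R}^n=\mathbb{R}^d\times\mathbb{R}^{n-d}$ so that $V\subset\mathbb{R}^d\times\{0\}$. The key geometric observation is that a point $(x,y)\in\mathbb{R}^d\times\mathbb{R}^{n-d}$ lies in $V+tB^n$ if and only if there exists $v\in V$ with $|x-v|^2+|y|^2\leq t^2$, i.e.\ if and only if $|y|\leq t$ and $\dist(x,V)\leq\sqrt{t^2-|y|^2}$. Therefore the $y$\,-\,slice of $V+tB^n$ is precisely $V+\sqrt{t^2-|y|^2}\,B^d$ (a subset of $\mathbb{R}^d$), and Fubini gives
\begin{equation*}
\textup{Vol}_n(V+tB^n)=\int_{\{|y|\leq t\}}\textup{Vol}_d\bigl(V+\sqrt{t^2-|y|^2}\,B^d\bigr)\,dy
=\int_{\{|y|\leq t\}}S_V^{\mathbb{R}^d}\bigl(\sqrt{t^2-|y|^2}\bigr)\,dy.
\end{equation*}

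Expanding $S_V^{\mathbb{R}^d}(s)=\sum_{0\leq k\leq d}s_k^{\mathbb{R}^d}(V)\,s^k$ and interchanging sum and integral, I obtain
\begin{equation*}
S_V^{\mathbb{R}^n}(t)=\sum_{0\leq k\leq d}s_k^{\mathbb{R}^d}(V)\int_{\{|y|\leq t\}}(t^2-|y|^2)^{k/2}\,dy.
\end{equation*}
Switching to spherical coordinates in $\mathbb{R}^{n-d}$ and using that the surface area of the unit sphere in $\mathbb{R}^{n-d}$ equals $2\pi^{(n-d)/2}/\Gamma((n-d)/2)$, the inner integral becomes
\begin{equation*}
\frac{2\pi^{(n-d)/2}}{\Gamma\bigl(\tfrac{n-d}{2}\bigr)}\int_0^t (t^2-r^2)^{k/2}\,r^{n-d-1}\,dr.
\end{equation*}
The substitution $r=t\sqrt{u}$ reduces this to a Beta integral, and using $B(\tfrac{k}{2}+1,\tfrac{n-d}{2})=\Gamma(\tfrac{k}{2}+1)\Gamma(\tfrac{n-d}{2})/\Gamma(\tfrac{k+n-d}{2}+1)$ I get
\begin{equation*}
\int_{\{|y|\leq t\}}(t^2-|y|^2)^{k/2}\,dy=\pi^{(n-d)/2}\cdot\frac{\Gamma(\tfrac{k}{2}+1)}{\Gamma(\tfrac{k+n-d}{2}+1)}\,t^{\,k+n-d}.
\end{equation*}

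Substituting this back and factoring out $t^{n-d}$ yields the claimed formula. There is no real obstacle here beyond the bookkeeping with the Beta integral; the one place that merits care is checking that the slice identity $(V+tB^n)\cap(\mathbb{R}^d\times\{y\})=V+\sqrt{t^2-|y|^2}\,B^d$ holds for $|y|\leq t$ (and the slice is empty otherwise), which uses in an essential way that the distance in $\mathbb{R}^n$ splits orthogonally between $\mathbb{R}^d$ and $\mathbb{R}^{n-d}$. Note that this recovers Lemma \ref{IPR} upon taking $n-d=1$, since then $\pi^{(n-d)/2}=\Gamma(\tfrac{1}{2})$.
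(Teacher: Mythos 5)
Your proof is correct, and it takes a somewhat different route than the paper. The paper first establishes the codimension-one case (Lemma \ref{IPRom}, i.e.\ $n-d=1$) by the very same slicing-and-Fubini argument you use, reducing to the Beta integral $B(\tfrac{k}{2}+1,\tfrac{1}{2})$, and then proves the general statement (Theorem \ref{IRPp}) by induction on the codimension $q=n-d$, at the cost of introducing the multiplier sequence $\gamma^{(q)}_k=\pi^{q/2}\Gamma(\tfrac{k}{2}+1)/\Gamma(\tfrac{k+q}{2}+1)$ and verifying the semigroup identity $\gamma^{(q)}_k\cdot\gamma^{(1)}_{k+q}=\gamma^{(q+1)}_k$. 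You skip the induction entirely by slicing in all $n-d$ normal directions at once: Fubini over $y\in\mathbb{R}^{n-d}$, spherical coordinates in the normal directions, and a single Beta integral $B(\tfrac{k}{2}+1,\tfrac{n-d}{2})$. The two approaches are computationally of the same depth; yours is more direct and avoids the $\gamma$-identity bookkeeping, while the paper's modularizes the argument so that only the one-dimensional slice integral ever has to be computed (and it reuses that one-dimensional lemma elsewhere, e.g.\ in the squeezed-cylinder computations). Your sanity check that the $n-d=1$ case recovers Lemma \ref{IPR} via $\pi^{1/2}=\Gamma(\tfrac12)$ is exactly the right consistency check and matches the paper's base case.
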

Lemma \ref{CoRiDi} appears in slightly different notation as
Theorem \ref{IRPp} in Section \ref{ExInSp}, where a proof is
presented.

\begin{defn}
\label{CrSM}%
The mixed volumes appearing in \eqref{MiX} are said to be
\textsf{cross-sectional measures}  of the set \(V\) and are denoted
 as \(s_{n-k}(V)\):
\begin{equation}
\label{CSM}
\textup{Vol}(\underbrace{V,V,\,\dots\,,V}_{n-k};\,\underbrace{B^n,B^n,\,\dots\,,B^n}_{k})=
s_{n-k}(V),\quad 0\leq k\leq n\,.
\end{equation}
\end{defn}
Thus, the coefficients of the Minkovski polynomials \(S_V\), which
appear in \eqref{MiP}, can be written as %
\begin{equation}
\label{CrSFC}%
 s_k(V)=\binom{n}ks_{n-k}(V),\quad
\binom{n}k=\frac{n!}{k!\,(n-k)!} \text{ \small are binomial
coefficients,}%
\end{equation}
and the Steiner polynomial itself can be written as
\begin{equation}
\label{CrSFP}%
S_V(t)=\sum\limits_{0\leq k\leq n}\binom{n}kv_{n-k}(V)t^k,
\end{equation}
The following fact  will be used in Section \ref{LoDi}: \\[2.5ex]
\noindent \textbf{Alexandrov\,--\,Fenchel Inequalities.}
\textit{Let \(V\), \(V\subset\mathbb{R}^n\), be a compact convex
set. Then its cross-sectional measures \(v_k(V)\) satisfy the
inequalities}
%\vspace*{-1.5ex}%
\begin{equation}
\label{AFIn}%
v_k^2(V)\geq{}v_{k-1}(V)\,v_{k+1}(V),\ \ 1\leq k\leq n-1\,.
\end{equation}

\vspace{2.0ex}
\noindent%
A.D.\,Alexandrov published two proofs of this inequality in
\cite{Al1} and \cite{Al2}. The first of them, a combinatorial proof,
is carried out for convex polyhedra. The second proof is more
analytical. It uses the theory of self-adjoint elliptic operators
depending on a parameter. This proof is carried out for smooth
convex sets. To the general case, both proofs are generalized by
limit arguments. The first proof is developed in detail in the
textbook \cite{Le}. The second proof is reproduced in Busemann
\cite{Bus}. It has become customary to refer to \eqref{AFIn} as the
 `Alexandrov-Fenchel
inequality', because Fenchel \cite{Fen} also stated the inequality
and sketched  the proof. Its detailed presentation was never
published. At the end of 1978  Tessier in Paris and
A.G.Khovanski{\u\i} in Moscow both independently obtained an
algebraic-geometrical proof of the Alexandrov-Fenchel inequality using
 the Hodge index
theorem. This proof is developed in \S 27 of the English
translation of \cite{BuZa} and was written by
A.G.\,Khovanski{\u\i}. (In the Russian original of \cite{BuZa} an
erroneous algebraic proof of the Alexandrov-Fenchel inequality was
included, which has been excluded
 in the English translation.) Regarding the Alexandrov-Fenchel
 inequality, see also \cite[\S\,20]{BuZa},  and
 \cite[Section \textbf{6.3}]{Schn}.
\begin{defn}%
\label{DefLogConc}%
A sequence \(\{p_k\}_{0\leq{}k\leq{}n}\) of non-negative numbers:
\begin{equation}%
\label{NoNeCo}%
p_k\geq{}0\,, \ \ \ 0\leq{}k\leq{}n,
\end{equation}%
 is said to be
\textsf{logarithmically concave}, if the following inequalities hold:
\begin{equation}
\label{LogCoIn}
p_k^2\geq{}p_{k-1}p_{k+1},\quad{}1\leq{}k\leq{}n-1\,.
\end{equation}
\end{defn}%
Thus, the Alexandrov-Fenchel inequalities can be formulated in the
form:\\[2.0ex]
\hspace*{3.0ex}
\begin{minipage}{0.9\linewidth}
 \textit{For any convex set \(V\), the sequence \(\{v_k(V)\}_{0\leq{}k\leq{}n}\) of
 its
cross sectional measures  is logarithmically concave.}
\end{minipage}

\vspace{2.0ex} Under the extra condition \eqref{NoNeCo}, the
logarithmic concavity inequalities \eqref{LogCoIn} for the
coefficients of the polynomial
\begin{equation}
\label{TuPo}%
 P(t)=\sum\limits_{0\leq{}k\leq{}n}\binom{n}k\,p_kt^k,
\end{equation}
or for the coefficients of the entire function
\begin{equation}
\label{TuEf}
P(t)=\sum\limits_{0\leq{}k<\infty}\frac{p_k}{k!}\,t^{k},
\end{equation}
have been considered in connection with the distribution of the roots
belonging to \(P\). In this setting, such (and analogous) inequalities are
commonly known as \textsf{Tur\'an Inequalities}
(\textsf{Tur\'an-like Inequalities).} Concerning Tur\'an
inequalities see, for example, \cite{KaSc} and
\cite{CrCs2}.\\
\begin{rem}
\label{CoTiI}%
The Tur\'an inequalities \eqref{LogCoIn} for the coefficients of
the polynomial \eqref{TuPo} or entire function \eqref{TuEf} impose
some restrictions on roots of \(P\). However,
\textsf{these inequalities alone do not ensure} that all roots of
\(P\) are located in the left half plane
\(\{z:\,\textup{Re}\,z<0\}.\)
\end{rem}
For example, given \(m\in\mathbb{N}\), let
\begin{equation}
\label{SE}%
p_k=1 \textup{ \ \small{}for \ }k=0, 1, \,\dots\,,\,m  \textup{ \
\small{}and \  }p_k=0\textup{ \ \small{}for \ }k>m.
\end{equation}
Such \(p_k\) satisfy the Tur\'an inequalities \eqref{LogCoIn}. The
function \eqref{TuEf} corresponding to these \(p_k\) is the
polynomial
\begin{equation}
\label{SES} %
P_m(t)=\sum\limits_{0\leq{}k\leq{}m}\frac{t^k}{k!}\,.
\end{equation}
This polynomial is a truncation of the exponential series. It is
known that already for \(m=5\) the polynomial \eqref{SES} has two
roots located in the half plane \(\{z:\,\textup{Re}\,z>0\}.\) G.\,Szeg\"o,
\cite{Sz}, studied the limiting distribution of roots for
sequence of polynomials \(P_m\), \eqref{SES}, as \(m\to\infty\).
From his results on the limiting distributions of the roots, it
follows that for large \(m\), the polynomial \(P_m\) not only has
roots in the half plane \(\{z:\,\textup{Re}\,z>0\}\), but that the total
number of its roots located there has a positive density as
\(m\to\infty\). For further information on the roots of truncated power series
we refer the reader to the book \textup{\cite{ESV}} and to the survey
\textup{\cite{Ost1}.}
 For \(m<n\), the polynomial
\eqref{TuPo} with \(p_k\) as in \eqref{SE} takes the form
\begin{equation}
\label{TBiP}%
P_{m,n}(t)=\sum\limits_{0\leq{}k\leq{}m}\binom{n}kt^k\,.
\end{equation}%
I.V.\,Ostrovskii, \textup{\cite{Ost2}}, studied the limiting
distribution of roots for sequence of the polynomials
\(P_{m,n}\) as \(m,n\to\infty\),
\(m/n\to\alpha,\,\alpha\in(0,1).\) From his results it follows
that for large \(m,n\): \(n/m=O(1),\,n/(n-m)=O(1)\) the polynomial
\(P_{m,n}\) not only has roots in the half plane
\(\textup{Re}\,z>0\), but that the total number of its roots
located there has a positive density as
\(m,n\to\infty,\,m/n\to\alpha\in(0,1)\).%

\vspace{3.0ex}
\section{The Routh-Hurwitz criterion.\label{DeCr}}
 If possible, a geometric approach to finding the location
of the roots belonging to Steiner and Weyl polynomials would be
useful. At the moment we are, however, not able to do this. The
only general tool from geometry which we can use are the
the Alexandrov-Fenchel inequalities \eqref{AFIn} for
cross-sectional measures \(v_k(V)\) of convex sets. Therefore, one
should express all polynomials which we investigate in terms of
these cross-sectional measures.

As it was explained in \eqref{CrSFP}, the expression of the
Steiner polynomial \(S_V^{\mathbb{R}^n}\) for the convex set
\(V,\,V\subset\mathbb{R}^n\), in terms of the cross-sectional
measures \(v_k(V)\) is
\begin{equation}
\label{CrSFP1}%
S_V^{\mathbb{R}^n}(t)=\sum\limits_{0\leq k\leq
n}\binom{n}kv_{n-k}(V)t^k\,.
\end{equation}
\begin{lem} %
\label{EWRC}%
 Let \(\mathscr{M}\) be a closed convex surface, \(\dim\mathscr{M}=n\), and let
 \(V\), \(V\subset\mathbb{R}^{n+1}\), be a generating convex set:
 \(\mathscr{M}=\partial{}V.\)

 Then the Weyl polynomial \(W_{\mathscr{M}}^{\infty}\) can be
 expressed as
\begin{equation}
\label{MaWePM}%
W_{\mathscr{M}}^{\infty}(t)=\sum\limits_{0\leq{}l\leq{}\left[\frac{n}{2}\right]}
\frac{(n+1)!}{2^ll!(n-2l)!}\,v_{n-2l}(V)\,t^{2l}\,,
\end{equation}
where \(v_k(V)\) are the cross-sectional measures of the
generating convex set \(V\).
\end{lem}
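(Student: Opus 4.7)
The plan is to chase the formula through three successive definitions, reducing everything to cross-sectional measures. Starting from Definition \ref{deLWP}, the infinite index Weyl polynomial is by construction
\[
W_{\mathscr{M}}^{\infty}(t)=\sum_{l=0}^{[n/2]} w_{2l}(\mathscr{M})\,t^{2l},
\]
so the task reduces to identifying each coefficient $w_{2l}(\mathscr{M})$ with the quantity $\dfrac{(n+1)!}{2^{l}\,l!\,(n-2l)!}\,v_{n-2l}(V)$.

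First, I would invoke Definition \ref{DNWP}, which gives
\[
w_{2l}(\mathscr{M})=2^{l}\,\frac{\Gamma(l+\tfrac{1}{2}+1)}{\Gamma(\tfrac{1}{2}+1)}\,s_{2l+1}^{\mathbb{R}^{n+1}}(V),
\]
and then translate the Steiner coefficient into a cross-sectional measure via \eqref{CrSFC} applied in the ambient space $\mathbb{R}^{n+1}$:
\[
s_{2l+1}^{\mathbb{R}^{n+1}}(V)=\binom{n+1}{2l+1}v_{n-2l}(V).
\]
At this stage the claim is purely a $\Gamma$-function identity, namely that
\[
2^{l}\,\frac{\Gamma(l+\tfrac{3}{2})}{\Gamma(\tfrac{3}{2})}\,\binom{n+1}{2l+1}=\frac{(n+1)!}{2^{l}\,l!\,(n-2l)!}\,.
\]

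The only nontrivial step is simplifying the ratio of Gamma values. I would apply the duplication identity \eqref{IfGa} with $\zeta=l+\tfrac{1}{2}$ to obtain
\[
\Gamma(l+\tfrac{3}{2})\,\Gamma(l+1)=\pi^{1/2}\,2^{-(2l+1)}(2l+1)!,
\]
so that $\Gamma(l+\tfrac{3}{2})=\dfrac{\pi^{1/2}(2l+1)!}{2^{2l+1}\,l!}$, and combine with $\Gamma(\tfrac{3}{2})=\tfrac{\sqrt{\pi}}{2}$ to get
\[
2^{l}\,\frac{\Gamma(l+\tfrac{3}{2})}{\Gamma(\tfrac{3}{2})}=\frac{(2l+1)!}{2^{l}\,l!}\,.
\]
Multiplying by $\binom{n+1}{2l+1}=\dfrac{(n+1)!}{(2l+1)!(n-2l)!}$ cancels the factor $(2l+1)!$ and produces exactly the coefficient asserted in \eqref{MaWePM}. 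Summing over $l$ yields the lemma; there is no analytic obstacle, only the bookkeeping of binomial and $\Gamma$ factors.
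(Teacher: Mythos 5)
Your proof is correct, and it is exactly the paper's argument — the paper compresses it into one line ("a consequence of \eqref{UWP}, \eqref{NWP} and \eqref{CrSFC}"), while you spell out the Gamma-function bookkeeping via \eqref{IfGa} with $\zeta=l+\tfrac{1}{2}$. No gap and no divergence in method; you have simply made the omitted computation explicit.
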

 \begin{proof} The expression \eqref{MaWePM} is a consequence of
 \eqref{UWP}, \eqref{NWP} and \eqref{CrSFC}.
 \end{proof}
%%%%%%%%%%%%%%%%%%%%%%%%%%%%%%%%%%%%%%%%

The inequalities \eqref{AFIn}  for the
coefficients of the polynomials \eqref{CrSFP1} and \eqref{MaWePM}
comprise one of two general tools, which will be used in the
study of the location of roots of these polynomials. The second
general tool at our disposal is the collection
of criteria describing the dissipativity
(or, alternatively conservativeness) property of polynomials
in term of their coefficients.
 \begin{nthm}[Routh-Hurwitz]
Let
 \begin{equation}
 \label{RaHuPo}
 P(t)=a_0t^n+a_1t^{n-1}+\,\dots{}\,+a_{n-1}t+a_n
 \end{equation}
  be a
polynomial with strictly positive coefficients:
\begin{equation}
 \label{PoCo}
a_0>0,\,a_1>0,\,\dots\,,\,a_{n-1}>0,\,a_n>0\,.
\end{equation}
For the polynomial \(P\) to be dissipative, it is necessary and
sufficient that all the determinants
\(\Delta_k,\,k=1,\,2,\,\dots\,,\,n-1,\,n,\) be strictly positive:
\begin{equation}
\label{PRHD}%
\Delta_1>0,\,\Delta_2>0,\,\dots\,,\,\Delta_{n-1}>0,\,\Delta_n>0,
\end{equation}
where {\footnotesize
\begin{multline}
\label{RHD} \Delta_1=a_1,\quad\Delta_2=
\begin{vmatrix}a_1&a_3\\
a_0&a_2
\end{vmatrix},\quad
\Delta_3=
\begin{vmatrix}a_1&a_3&a_5\\
a_0&a_2&a_4\\
0&a_1&a_3
\end{vmatrix},\quad \\
\Delta_4=
\begin{vmatrix}a_1&a_3&a_5&a_7\\
a_0&a_2&a_4&a_6\\
0&a_1&a_3&a_5\\
0&a_0&a_2&a_4
\end{vmatrix},\quad\dots
\quad ,\ \Delta_n=
\begin{vmatrix}a_1&a_3&a_5&\dots&0\\
a_0&a_2&a_4&\dots&0\\
0&a_1&a_3&\dots&0\\
\hdotsfor{5} \\
\hdotsfor{4}&a_n%
\end{vmatrix}\cdot
\end{multline}
}
\vspace{2.0ex}
\end{nthm}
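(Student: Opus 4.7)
The plan is to deduce the Routh-Hurwitz criterion from the Hermite-Biehler framework already set up in Lemma \ref{VHBT}, combined with a classical identity linking the Hurwitz minors \(\Delta_k\) to the successive quotients of the Euclidean algorithm applied to the even and odd parts of \(P\).

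First, I would decompose \(P(t)=E(t^2)+t\,O(t^2)\), where \(E\) and \(O\) are real polynomials whose coefficients are, respectively, \(a_0,a_2,a_4,\dots\) and \(a_1,a_3,a_5,\dots\); by hypothesis \eqref{PoCo}, all coefficients of both \(E\) and \(O\) are strictly positive. Lemma \ref{VHBT} then reduces dissipativity of \(P\) to the conjunction of two conditions: that the even part \({}^{\mathscr{E}}\!P(t)=E(t^2)\) and the odd part \({}^{\mathscr{O}}\!P(t)=t\,O(t^2)\) are both conservative, and that their imaginary-axis roots interlace. Substituting \(u=-t^2\) translates this into the statement that \(E(-u)\) and \(O(-u)\) have only simple positive real roots and that these roots interlace.

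Next, I would invoke the classical Hurwitz-Stieltjes characterization of interlacing: two polynomials with positive leading coefficients and simple positive real roots interlace if and only if the continued-fraction expansion
\begin{equation*}
\frac{E(-u)}{O(-u)}=\gamma_1 u+\cfrac{1}{\gamma_2 u+\cfrac{1}{\gamma_3 u+\cdots}}
\end{equation*}
produced by running the Euclidean algorithm on \(E\) and \(O\) terminates after \(n\) steps with all quotients \(\gamma_k\) strictly positive. This is a standard consequence of Sturm's theorem applied to the pencil \(E-\lambda O\), and it converts the geometric interlacing condition into a sequence of sign conditions on algebraic data extracted from the coefficients \(a_j\).

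Finally, I would establish the identification \(\Delta_k>0\Leftrightarrow \gamma_k>0\) for every \(k\). This is done through an explicit \(LU\)-type factorization of the Hurwitz matrix: the successive rows of \eqref{RHD} encode the polynomials produced by the Euclidean algorithm on \((E,O)\), and the Gaussian-elimination pivots in this matrix are, up to positive normalizing factors determined by the \(a_j\), precisely the ratios \(\Delta_k/\Delta_{k-1}\), which in turn equal (up to the same positive factors) the quotients \(\gamma_k\). The computational heart of the theorem — and the main obstacle — is precisely this combinatorial matching between the Hurwitz minors and the Euclidean-algorithm quotients. It is classical but not transparent, and would be handled by induction on \(n\), carrying along the shape of the Hurwitz matrix after the first reduction step and verifying that it becomes the Hurwitz matrix associated with the shorter polynomial obtained from one Euclidean division; once this inductive shape is locked in, the equivalence of the determinantal and continued-fraction conditions follows automatically.
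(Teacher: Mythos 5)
The paper does not prove this theorem; after stating it, it refers the reader to Gantmacher \cite{Gant} and Krein--Naimark \cite{KrNa}, so there is no internal argument to compare yours against. What you outline is the classical Stieltjes--Hurwitz derivation found in those references: reduce dissipativity to an interlacing statement via the Hermite--Biehler theorem (Lemma~\ref{VHBT}), recode interlacing as positivity of a continued-fraction expansion, and identify the continued-fraction coefficients with ratios of consecutive Hurwitz minors through an $LU$-type reduction. This fits well with the paper's own use of Hermite--Biehler in Section~\ref{HBieT}, where the analogous criterion for conservativeness is in fact derived from Lemma~\ref{VHBT} together with Lemma~\ref{CPtBC}.

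There is, however, a concrete flaw in the sketch that you would need to repair. You pass to the variable $u=-t^2$ and run the Euclidean algorithm on $E(-u)$ and $O(-u)$; but when $\deg P=n$ these have degrees roughly $n/2$ in $u$, so the Euclidean algorithm there terminates after roughly $n/2$ divisions and produces only about $\lceil n/2\rceil$ quotients $\gamma_k$, not $n$ of them as you claim. Consequently the proposed one-to-one matching $\Delta_k>0\Leftrightarrow\gamma_k>0$ for $k=1,\dots,n$ cannot be literally correct. The version of the continued fraction that does pair off bijectively with the $n$ Hurwitz minors is the Stieltjes expansion taken in the variable $t$ itself, for the ratio ${}^{\mathscr{O}}\!P(t)/{}^{\mathscr{E}}\!P(t)$ (or its reciprocal), in which each partial quotient is a monomial $c_kt$ and the sign of $c_k$ tracks the sign of $\Delta_k/\Delta_{k-1}$; equivalently one uses the Routh array rather than Euclidean division in $u$. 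With that substitution — or with the bookkeeping that one $u$-step corresponds to two $t$-steps — the $LU$-factorization and the induction you sketch do close up, but as written the counts simply do not match.
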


\noindent
This result, as well as many related results, can be found in
\cite[Chapter XV]{Gant}. See also \cite{KrNa}.
\begin{rem}
\label{LieCh}%
 Actually, to prove that the polynomial \(P\), \eqref{RaHuPo}, of
 degree \(n\) with positive coefficients \(a_k,\,k=1,\,2,\,\dots\,,\,n,\) is dissipative,
  there is no need to inspect \textit{all} Hurwitz determinants
 \(\Delta_k,\,k=1,\,2,\,\dots\,,\,n,\) for positivity. It is
 enough to inspect either the determinants \(\Delta_k\) with even
 \(k\), or the determinants \(\Delta_k\) with odd
 \(k\). \textup{(See \cite[Chapter XV, \S 13]{Gant}.)}
\end{rem}
Applying the Routh-Hurwitz criterion to determine whether the Steiner
polynomial \(S_V^{\mathbb{R}^n}\) is dissipative, we should take, according to
\eqref{CrSFP1},
\begin{equation}%
 \label{RHuC}%
 a_k=\frac{n!}{k!(n-k)!}v_k(V)\,, \quad 0\leq{}k\leq{}n, \ \ a_k=0, \quad
 k>n\,.
 \end{equation}%
From the criterion for dissipativity, the criterion of
conservativeness can be derived easily.
\begin{nthm}[Criterion for conservativeness]
Let
\begin{equation}
\label{CoPoA}
P(t)=a_0t^{2m}+a_{2}t^{2m-2}+\,\dots{}\,+a_{2m-2}t^{2}+a_{2m}
\end{equation}
be a polynomial with strictly positive coefficients
\(a_{2l},\,0\leq{}l\leq{}m\):
\begin{equation}
\label{PCFA}%
 a_0>0,\,a_2>0,\,\dots\,,\,a_{2m-2}>0,\,a_{2m}>0\,.
\end{equation}
For the polynomial \(P\) to be conservative, it is necessary and
sufficient that all the determinants
\(D_k,\,k=1,\,2,\,\dots\,,2m-1,\,2m,\) be strictly positive:
\begin{equation}
\label{PRHDc}%
D_1>0,\,D_2>0,\,D_3>0,\,\dots\,,\,D_{2m-1}>0,\,D_{2m}>0,
\end{equation}
where \(D_k\) are constructed from the
coefficients  of the polynomial \(P\) according to the following
rule:\(D_k\) is the determinant \(\Delta_k\),
\eqref{RHD}, whose entries \(a_{2l},\,\,0\leq{}l\leq{}m,\)  are
the coefficients of the polynomial \(P\), and
\(a_{2l+1}=(m-l)\,a_{2l}\), \(0\leq{}l\leq{}m-1\): {\footnotesize
\begin{multline*}
  D_1=\textstyle{}m\,a_0,\quad{}D_2=
\begin{vmatrix}\textstyle
m\,a_0&(m-1)a_2\\
a_0&a_2
\end{vmatrix},\quad
D_3=
\begin{vmatrix}ma_0&(m-1)a_2&(m-2)a_4\\
a_0&a_2&a_4\\
0&m\,a_0&(m-1)a_2
\end{vmatrix},%\quad \\[3.0ex]
\end{multline*}
\begin{equation}
\label{RHCo} D_4=
\begin{vmatrix}m\,a_0&(m-1)\,a_2&(m-2)a_4&(m-4)a_6\\
a_0&a_2&a_4&a_6\\
0&m\,a_0&(m-1)\,a_2&(m-2)a_4\\
0&a_0&a_2&a_4
\end{vmatrix},\quad\dots
%\quad ,\\[3.0ex]
\end{equation}
\begin{equation*}
 D_{2m}=
\begin{vmatrix}m\,a_0&(m-1)a_2&(m-2)a_4&\dots&0\\
a_0&a_2&a_4&\dots&0\\
0&ma_0&(m-1)a_2&\dots&0\\
\hdotsfor{5} \\
\hdotsfor{4}&a_{2m}%
\end{vmatrix}\cdot
\end{equation*}
}
\end{nthm}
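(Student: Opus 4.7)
The plan is to reduce the stated criterion to the Routh--Hurwitz criterion applied to a suitable companion polynomial, combined with Lemma~\ref{VHBT}. The companion I would use is
\[
Q(t) \;=\; P(t) + \tfrac{1}{2}P'(t).
\]
Since $P$ is even of degree $2m$ with coefficient $a_{2l}$ at $t^{2m-2l}$, its derivative $P'$ is odd of degree $2m-1$ with coefficient $(2m-2l)\,a_{2l}$ at $t^{2m-2l-1}$. Therefore $Q$ is a polynomial of degree $2m$ whose consecutive coefficients (at $t^{2m}, t^{2m-1}, \dots, t^{0}$) are precisely $a_0, a_1, a_2, \dots, a_{2m}$, with $a_{2l+1}=(m-l)a_{2l}$ for $0\le l\le m-1$. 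The hypothesis $a_{2l}>0$ therefore forces $a_k>0$ for all $k$, so the classical Routh--Hurwitz criterion applies to $Q$.

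Next, I would establish that $P$ is conservative if and only if $Q$ is dissipative; this is essentially Lemma~\ref{CPtBC}, the factor $\tfrac{1}{2}$ being irrelevant since it does not change the root set of the odd part of $Q$. The even and odd parts of $Q$ are $P$ and $\tfrac{1}{2}P'$, respectively, so if $Q$ is dissipative, Lemma~\ref{VHBT} immediately yields that $P={}^{\mathscr{E}}\!Q$ is conservative. For the converse, write $P(t)=\tilde P(t^2)$ for a polynomial $\tilde P$ of degree $m$; conservativeness of $P$ is equivalent to $\tilde P$ having $m$ distinct strictly negative real roots. Rolle's theorem furnishes $m-1$ distinct strictly negative real roots of $\tilde P'$ which interlace those of $\tilde P$, and the identity $P'(t)=2t\,\tilde P'(t^2)$ translates this into the statement that $P$ and $P'$ each have purely imaginary, simple roots, and that these roots interlace on the imaginary axis (with $0$ being a root of $P'$ falling between the pair $\pm i\mu_1$ of smallest-modulus roots of $P$). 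Conditions (1) and (2) of Lemma~\ref{VHBT} are thereby met, whence $Q$ is dissipative.

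Finally, I invoke the Routh--Hurwitz theorem for $Q$: $Q$ is dissipative iff its Hurwitz determinants $\Delta_k(Q)$, $k=1,\dots,2m$, defined by \eqref{RHD}, are all strictly positive. Substituting the coefficients of $Q$ — namely $a_{2l}$ together with the derived $a_{2l+1}=(m-l)a_{2l}$ — into the Routh--Hurwitz formulas produces exactly the determinants $D_1, \dots, D_{2m}$ displayed in \eqref{RHCo}. Chaining the two equivalences yields the criterion. No step poses a serious obstacle: the only substantive content beyond Lemma~\ref{VHBT} and Routh--Hurwitz is the elementary Rolle-theorem verification of the interlacing property for $P$ and $P'$, and the bookkeeping that identifies the coefficients of $P+\tfrac{1}{2}P'$ with the prescribed sequence $\{a_k\}$.
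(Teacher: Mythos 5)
Your proof is correct and follows essentially the same route as the paper, which derives the criterion from Lemma~\ref{CPtBC} (conservativeness of an even polynomial \(W\) with positive coefficients is equivalent to dissipativity of \(W+W'\)) and the Routh--Hurwitz theorem applied to the companion polynomial. Your choice of \(Q=P+\tfrac{1}{2}P'\) rather than \(P+P'\) is actually the more careful one: it reproduces the stated relation \(a_{2l+1}=(m-l)a_{2l}\) exactly, whereas with \(P+P'\) the odd coefficients are doubled and each \(\Delta_k\) acquires the harmless positive factor \(2^{\lceil k/2\rceil}\); your Rolle/interlacing verification via \(P(t)=\tilde P(t^{2})\) is just the body of Lemma~\ref{CPtBC}'s proof written out.
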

\begin{proof} This theorem is the immediate consequence of the Hermite-Biehler
Theorem and Lemma \ref{CPtBC}.{\ }\end{proof}

Applying criterion for conservativeness to determine whether the Weyl
polynomial \(W_{\mathscr{M}}^{\infty}\) is conservative, we should take, according to
 \eqref{MaWePM} and \eqref{CoPoA},
\begin{multline}%
 \label{RHuCl}%
 a_{2l}=\frac{(n+1)!}{2^{m-l}(m-l)!(2l+n-2m)!}v_{2l+n-2m}(V)\,,\\
 \quad 0\leq{}l\leq{}m, \ \ a_{2l}=0, \quad
 l>m\,,\quad \textup{where }m=\textstyle{\left[\frac{n}{2}\right]}
 \end{multline}%

\section{The case of low dimension:
proofs of Theorems  \(\boldsymbol{\ref{LDC}}\) and \(\boldsymbol{\ref{LDCW}}\).\label{LoDi}}

\begin{proof}[Proof of Theorem \ref{LDC}]
 We apply the Routh-Hurwitz
criterion for dissipativity to the Steiner polynomial
\(S_{\,\,V}^{{\mathbb{R}^n}}\). `Expanding' the Hurwitz determinants
\(\Delta_k\), \eqref{RHD} , and taking into account that \(a_k=0\)
for \(k>n\), we obtain that for \(n\leq{}5\),
\begin{subequations}
\label{ODet}
\begin{gather}
\Delta_1=a_1\tag{\theequation.\footnotesize{1}}\label{ODet1}\\
\Delta_2=a_1a_2-a_0a_3\,,\tag{\theequation.\footnotesize{2}}\label{ODet2}\\
\Delta_3=a_1a_2a_3+a_0a_1a_5-a_0a_3^2-a_1^2a_4\,,\tag{\theequation.%
\footnotesize{3}}\label{ODet3}\\
\Delta_4=a_1a_2a_3a_4+a_0a_2a_3a_5+2a_0a_1a_4a_5-a_1^2a_4^2-a_0^2a_5^2-a_0a_3^2a_4-a_1a_2^2a_5,
\tag{\theequation.%
\footnotesize{4}}\label{ODet4}\\
\Delta_5=a_5\Delta_4\,,\tag{\theequation.%
\label{ODet5}\footnotesize{5}}
\end{gather}
\end{subequations}
where we should take \(a_k\) as in \eqref{RHuC}.

 According to the Routh-Hurwitz criterion, we have to prove
 that \(\Delta_1>0,\ \Delta_2>0,\,\dots\,,\,\Delta_n>0\).
 The cases \(n=2,\,3,\,4,\,5\) will be considered separately.
 Since \(V\) is solid, we have that \(v_k(V)>0,\,0\leq{}k\leq{}n\).
  (Corollary \ref{PCMP}.b  and \eqref{CrSFC}\,.)\\
  Thus, the determinant \(\Delta_1=\binom{n}{1}v_1(V)\) is always
  positive.\\[1.0ex]
  The cases \(n=2,\,3,\,4,\,5\) will again be considered separately.
  For simplicity, we write \(v_k\) instead
  \(v_k(V)\).\\[1.0ex]
  \hspace*{3.0ex}\(\boldsymbol{n=2.}\) In this case,
  \[a_0=v_0,\,a_1=2v_1,\,a_2=v_2,\]
  \[\Delta_2=2v_2v_1\,.\]
  The inequality \(\Delta_2>0\) is evident. Thus,
  the Steiner polynomial \(S_{V}^{\mathbb{R}^2}\) is dissipative.
  \\[1.0ex]
  \hspace*{3.0ex}\(\boldsymbol{n=3.}\) In this case,
  \[a_0=v_0,\quad{}a_1=3v_1,\quad{}a_2=3v_2,\quad{}a_3=v_3\,,\quad{}a_k=0,\,k>3\,.\]
  Substituting these expressions for \(a_k\) into \eqref{ODet}, we
  obtain
  \[\Delta_2=9v_1v_2-v_0v_3,\quad{}\Delta_3=v_3\Delta_2\,.\]
  Thus, \(S_V^{\mathbb{R}^3}\) is dissipative
  if, and only if
  \begin{subequations}
  \label{MP3}
  \begin{align}
 9v_1v_2&>v_0v_3,\tag{\ref{MP3}}
\end{align}
  \end{subequations}
  \hspace*{3.0ex}\(\boldsymbol{n=4.}\) In this case,
  \[a_0=v_0,\quad{}a_1=4v_1,\quad{}a_2=6v_2,\quad{}a_3=4v_3\,\quad{}a_4=v_4,\quad{}a_k=0,\,k>4\,.\]
 Substituting these expressions for \(a_k\) into \eqref{ODet}, we
  obtain
  \begin{multline*}
  \Delta_2=24v_1v_2-4v_0v_3,\quad{}\Delta_3=96v_1v_2v_3-16v_0v_3^2-16v_1^2v_4\,,\quad
\Delta_4=v_4\Delta_3\,.
  \end{multline*}
Thus, \(S_V^{\mathbb{R}^4}\) is dissipative
  if, and only if the pair of inequalities
  \begin{subequations}
  \label{MP4}
  \begin{align}
  6v_1v_2&>v_0v_3,%\tag{\ref{MP4}.\footnotesize{2}}
  \label{MP42} \\
  6v_1v_2v_3&>v_0v_3^2+v_1^2v_4\,.
  %\tag{\ref{MP4}.\footnotesize{3}}
  \label{MP43}
  \end{align}
  \end{subequations}
  \hspace*{3.0ex}\(\boldsymbol{n=5.}\) In this case,
  \[a_0=v_0,\ a_1=5v_1,\ a_2=10v_2,\ a_3=10v_3,\,\ a_4=5v_4,\ %
  a_5=v_5,\,\,a_k=0,\,k>5\,.\]
 Substituting these expressions for \(a_k\) into \eqref{ODet}, we
  obtain
  \begin{multline*}
  \Delta_2=50v_1v_2-10v_0v_3,\quad{}\Delta_3=500v_1v_2v_3+5v_0v_1v_5-100v_0v_3^2-125v_1^2v_4\,,\\
\Delta_4=2500v_1v_2v_3v_4+100v_0v_2v_3v_5+50v_0v_1v_4v_5\\
-625v_1^2v_4^2-v_0^2v_5^2-500v_0v_3^2v_4 -500v_0v_1^2v_5\,,\ \
\Delta_5=v_5\Delta_4\,.
  \end{multline*}
Thus, \(S_V^{\mathbb{R}^5}\) is dissipative
  iff the following three inequalities hold:
  \begin{subequations}
  \label{MP5}
  \begin{align}
  5v_1v_2&>v_0v_3,
  %\tag{\ref{MP5}.\footnotesize{2}}
  \label{MP52} \\
  100v_1v_2v_3+v_0v_1v_5&>20v_0v_3^2+25v_1^2v_4\,,
  %\tag{\ref{MP5}.\footnotesize{3}}
  \label{MP53}
  \\
  2500v_1v_2v_3v_4+100v_0v_2v_3v_5+&50v_0v_2v_4v_5>{}
  %\tag{\ref{MP5}.\footnotesize{4}}
  \label{MP54}\\
 {} >625v_1^2v_4^2&+500v_0v_3^2v_4+500v_1v_2^2v_5+v_0^2v_5^2\,.\notag
  \end{align}
  \end{subequations}
  As it is claimed  below in Lemma \ref{CoAFI},
  the inequalities \eqref{MP3}, \eqref{MP4},
  \eqref{MP5}, where \(v_k=v_k(V)\) are the cross-sectional
  measures of the solid compact set \(V\) of  appropriate
  dimension, are consequences of the Alexandrov-Fenchel
  inequalities. This completes the proof. \end{proof}
  \begin{rem} The above
\textsf{\textsl{does not}} hold for all \(n\). If \(n\) is large
enough, then the conditions \( v_k^2\geq{}v_{k-1}v_{k+1},\quad
1\leq{}k\leq{}n-1\,,\) for positive numbers \(v_k\), do not
imply the inequalities \(\Delta_k\geq{}0\) for all
\(k=1,\,\dots,\,n\), where \(\Delta_k\) is constructed from
\(a_k\)'s given by
\(a_k=\binom{n}{k}v_k\).
 Already for \(n=30\),  \(\Delta_5<0\) for certain
\(v_k\) satisfying these conditions.
 Moreover, as we will see later,
for \(n\)  large enough, there exist examples of  compact convex
sets \(V\subset\mathbb{R}^n\) for which the Steiner polynomial
\(S_V\) is not dissipative and the Weyl polynomial
\(W_{\partial{}V}^1\) is not conservative. Although the sets \(V\)
in such examples  are solid, they are "almost degenerate".
\end{rem}
  \begin{lem}
  \label{CoAFI}
  Let \(v_k,\,0\leq{}k\leq{}n,\) be strictly positive numbers
  satisfying the inequalities
  \begin{equation}
  \label{CoFVk}
  v_k^2\geq{}v_{k-1}v_{k+1},\quad 1\leq{}k\leq{}n-1\,.
  \end{equation}
  Then:\\[1.0ex]
  \hspace*{2.0ex}\textup{1}. If \(n=3\),  the inequality \eqref{MP3} holds;\\[1.0ex]
  \hspace*{2.0ex}\textup{2}. If \(n=4\),  the inequalities \eqref{MP4} holds;\\[1.0ex]
  \hspace*{2.0ex}\textup{3}. If \(n=5\),  the inequalities \eqref{MP5} holds.
  \end{lem}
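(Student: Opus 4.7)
The plan rests on one master tool: the pair-product inequality
\[
v_i v_j \geq v_k v_l \quad\text{whenever } i+j=k+l \text{ and } \min(i,j) \geq \min(k,l).
\]
This follows from \eqref{CoFVk} because the hypothesis, combined with $v_k>0$, forces the ratios $v_k/v_{k-1}$ to be nonincreasing, so moving one index outward while compensating by moving another inward only decreases the product. The particular cases I will use are $v_1v_2 \geq v_0v_3$, $v_2v_3 \geq v_1v_4$, $v_3v_4 \geq v_2v_5$, and the chain $v_0v_5 \leq v_1v_4 \leq v_2v_3$, whose product yields $v_0^2v_5^2 \leq v_1v_2v_3v_4$.

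For $n=3$, inequality \eqref{MP3} is immediate from $v_1v_2 \geq v_0v_3$ (the coefficient $9$ is lavishly more than needed). For $n=4$, the same estimate handles \eqref{MP42}, while \eqref{MP43} follows by multiplying $v_1v_2 \geq v_0v_3$ and $v_2v_3 \geq v_1v_4$ by an additional factor to get $v_1v_2v_3 \geq v_0v_3^2$ and $v_1v_2v_3 \geq v_1^2v_4$; the sum of the right-hand sides is dominated by $2v_1v_2v_3$, which is well below the coefficient $6$ on the left. The case $n=5$ treats \eqref{MP52} and \eqref{MP53} analogously, with the positive leftover $v_0v_1v_5$ supplying strictness in the latter.

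The only non-routine case is \eqref{MP54}, which has four monomials on each side. The plan is to dominate each right-hand monomial by a multiple of the single leading left-hand term $v_1v_2v_3v_4$, using in turn the pair inequalities $v_2v_3 \geq v_1v_4$, $v_1v_2 \geq v_0v_3$, $v_3v_4 \geq v_2v_5$, and the derived estimate $v_0^2v_5^2 \leq v_1v_2v_3v_4$. Totalling the required multiples gives $625+500+500+1 = 1626$, which is comfortably less than the $2500$ available, and the two remaining positive left-hand terms then enforce strictness. The only delicate point I anticipate is matching each right-hand quartic monomial to the correct pair inequality (in particular, recognising the three-step chain needed for $v_0^2v_5^2$); the coefficient bookkeeping that follows is mechanical.
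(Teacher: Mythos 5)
Your proof is correct and follows essentially the same approach as the paper: derive the pair-product inequalities from the log-concavity hypothesis and then dominate each right-hand monomial term by term, with the excess in the binomial coefficients supplying the strictness. You have simply made explicit the coefficient bookkeeping (including the three-step chain $v_0v_5\le v_1v_4\le v_2v_3$ yielding $v_0^2v_5^2\le v_1v_2v_3v_4$) that the paper's very terse proof compresses into a bare list of the relevant pair inequalities.
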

  \begin{proof}[Proof of Lemma \ref{CoAFI}]
  The \textit{logarithmic concavity} inequalities \eqref{CoFVk} imply the inequalities
  \begin{equation}
  \label{CoFVkI}
  v_pv_s\geq{}v_qv_r\,,
  \end{equation}
  where \(p,\,q,\,r,\,s\) are arbitrary indices satisfying the conditions
  \begin{equation}%
  \label{ConC}
  1\leq{}p\leq{}q\leq{}r\leq{}s\leq{}n\,,\quad{}p+s=q+r\,.
  \end{equation}

\vspace*{1.0ex}
\noindent
   \(\boldsymbol{n=3}.\)\hfill \(v_1v_2\geq{}v_0v_3\,\ \ \Rightarrow\ \ \eqref{MP3}\)\hfill\hfill

\vspace{1.0ex}
 \noindent
\(\boldsymbol{n=4}.\)\hfill \(v_1v_2\geq{}v_0v_3\,,\ \ v_2v_3\geq{}v_1v_4\ \
  \Rightarrow\ \ \eqref{MP4}\)
\hfill\hfill

\vspace{1.0ex}
 \noindent
\(\boldsymbol{n=5}.\)\\
\hspace*{2.0ex}\hfill \(v_1v_2\geq{}v_0v_3\,,\  v_0v_5\geq{}v_1v_4\,,\
v_2v_3\geq{}v_1v_4\,,\  v_3v_4\geq{}v_2v_5\,,\ \ v_2v_3\geq{}v_0v_5\,\
  \Rightarrow\  \eqref{MP5}\)\hfill{\ }
 \end{proof}
%%%%%%%%%%%%%%%%%%%%%%%%%%%%%%%%%%%%%%%%%%%%%%%%%%%%%%%%%%%%%%%%%%%%%%%

\begin{proof}[PROOF of THEOREM \ref{LDCW}] We apply the \textsf{Conservativeness
Criterion}, which was formulated in the previous section,
for Weyl polynomial \(W_{\mathscr{M}}^{\infty}\). Expanding the
determinants \(D_k\), \eqref{RHCo} , and taking into account
that \(a_{2l}=0\) for \(l>\left[\frac{n}{2}\right]\), we obtain
that for \(n\leq{}5\),
that is,\,%
\footnote{Recall that \(n=\dim \mathscr{M},\,n+1=\dim V:
\mathscr{M}=\partial V.\)}
 for \(m=\left[\frac{n}{2}\right]\leq{}2\),
\begin{subequations}
\label{DDet}
\begin{gather}
D_1=ma_0\tag{\theequation.\footnotesize{1}}\label{DDet1}\\
D_2=a_0a_2\,,\tag{\theequation.\footnotesize{2}}\label{DDet2}\\
D_3=a_0\big((m-1)a_2^2-2ma_0a_4\big)\,,\tag{\theequation.%
\footnotesize{3}}\label{DDet3}\\
D_4=a_0a_4(a_2^2-4a_0a_4)\,.
\tag{\theequation.%
\footnotesize{4}}\label{DDet4}
\end{gather}
\end{subequations}
where we take \(a_{2l}\) as in \eqref{RHuCl}.

According to the Conservativeness Criterion, we have to prove that
\(D_1>0\), \(D_2>0,\,\dots\,,\,D_{2m}>0\), where
 \(m=\left[\frac{n}{2}\right]\).\\
Since \(V\) is solid, \(v_k(V)>0,\,0\leq{}k\leq{}n+1\).
  (Corollary \ref{PCMP}.b  and \eqref{CrSFC}\,.)
Thus, the determinants \(D_1,\,D_2\) are always positive.\\[1.0ex]
Therefore, if \(n=2\) or if \(n=3\), that is, if \(m=1\), the Weyl
polynomial \(W_{\mathscr{M}}^{\infty}\) is conservative. Of
course, this fact is evident without referring to the
conservativeness criterion:\\[1.0ex]
In the case \(\boldsymbol{n=2}\), according to \eqref{RHuC} or \eqref{MaWePM},
\[W_{\mathscr{M}}^{\infty}(t)=3v_2+3v_0t^2\,.\]
In the case \(\boldsymbol{n=3}\), according to \eqref{RHuC} or \eqref{MaWePM},
\[W_{\mathscr{M}}^{\infty}(t)=v_3+3v_1t^2\,.\]
In both cases, \(n=2\) or \(n=3\), the polynomial \(W_{\mathscr{M}}^{\infty}\)
is  clearly conservative.\\[1.0ex]
In the cases \(n=4,\ n=5\), which corresponds to the case where \(m=2\),
\[D_3=a_0(a_2^2-4a_0a_4),\quad D_4=a_4D_3\,.\]
According to \eqref{RHuCl}, we must also take the following
two cases into account:\\
In the case that \(\boldsymbol{n=4}\)
\[a_0=15v_0,\quad{}a_2=30v_2,\quad{}a_4=5v_4\,.\]
Thus,
\[D_3=15v_0(900v_2^2-300v_0v_4)\,.\]
The conditions \(D_3>0,\,D_4>0\) take the form
\begin{equation}
\label{WePo4}
3v_2^2>v_0v_4\,.
\end{equation}
In the case that \(\boldsymbol{n=5}\)
\[a_0=90v_1,\quad{}a_2=60v_3,\quad{}a_4=6v_5\,.\]
Thus,
\[D_3=90v_1(900v_2^2-300v_0v_4)\,.\]
The conditions \(D_3>0,\,D_4>0\) take the form
\begin{equation}
\label{WePo5}
5v_3^2>3v_1v_5\,.
\end{equation}
The Weyl polynomial \(W_{\mathscr{M}}^{\infty}\) is, therefore,
conservative in the cases \(n=4\) and \(n=5\) if the inequalities
\eqref{WePo4} and \eqref{WePo5} hold, respectively.
\(v_k=v_k(V)\) are the cross-sectional
  measures of the solid compact set \(V\) generating the surface
  \(\mathscr{M}:\ \mathscr{M}=\partial{}V.\) The inequalities
\eqref{WePo4} and \eqref{WePo5} are, in turn, consequences of the inequalities
\(v_2^2\geq{}v_0v_4\) and \(v_3^2\geq{}v_1v_5\), respectively.
 The latter inequalities are special cases
of the inequalities \eqref{CoFVkI}. Thus, in
the cases \(n=4\) and \(n=5\) the Weyl
\(W_{\mathscr{M}}^{\infty}\) polynomial with infinite index is
conservative. By Lemma \ref{LWPo}, all Weyl polynomials
\(W_{\mathscr{M}}^{\,p},\,p=1,\,2,\,3,\,\dots\,\,,\) are
 conservative as well.\end{proof}

%%%%%%%%%%%%%%%%%%%%%%%%%%%%%%%%%%%%%%%%%%%%%%%%%%%%%%%%
\section{Extending the ambient
space.\label{ExInSp}}
%%%%%%%%%%%%%%%%%%%%%%%%%%%%%%%%%%%%
\paragraph{Adjoint convex sets.}
Let \(V\) be a compact convex set, \(V\subset\mathbb{R}^n\).
 We may consider the space \(\mathbb{R}^n\) as a subspace
of  \(\mathbb{R}^{n+q}\), where
 \(q=1,\,2,\,3,\,\dots\,\). The
embedding \(\mathbb{R}^n\) in   \(\mathbb{R}^{n+q}\) is canonical:
\begin{equation*}%
\label{Emb}%
\mathbb{R}^{n}\hookrightarrow \mathbb{R}^{n+q}:\ \ %
(\xi_1,\,\dots\,,\,\xi_n)\to(\xi_1,\,\dots\,,\,\xi_n;%
\underbrace{\,0,\,\dots\,,\,0}_q\,).
\end{equation*}%
 Thus, the set \(V\), which originally was considered as a
subset of \(\mathbb{R}^n\), may also be considered as a subset of
\(\mathbb{R}^{n+q}\). In other words, we identify the set
\(V\subset\mathbb{R}^n\) with the set \(V\times{}0^{q}\), which is
the Cartesian product of the set \(V\) and the origin
\(0^{q}\) of the space \(\mathbb{R}^q\):
\(V\times{}0^{q}\subset\mathbb{R}^{n+q}.\)
\begin{defn}%
\label{DeAdM}
 Given a compact convex set \(V\), \(V\subset{}\mathbb{R}^n\),
and a number \(q\),
 \(q=0,\,1,\,2,\,3,\,\dots\,,\,\,\) the \textsf{\(q\)-th adjoint to \(V\)
set \(V^{(q)}\)} is defined as:
\begin{equation}
\label{AdS}
V^{(q)}\stackrel{\textup{\tiny{}def}}{=}V\times{}0^{q},\quad
V^{(q)}\subset\mathbb{R}^{n+q}\,,
\end{equation}
where \(0^q\) is the zero point of the space \(\mathbb{R}^{q}\),
and the space \(\mathbb{R}^{n+q}\) is considered as the Cartesian product:
\(\mathbb{R}^{n+q}=\mathbb{R}^{n}\times\mathbb{R}^{q}\).\\
The Steiner polynomial
\(S_{\,V\times{}0^q}^{\,\mathbb{R}^{n+q}}\) of the \(q\)-th
adjoint set, denoted by \(V^{(q)}\),
\begin{equation}
\label{defAMP}
S_{\,V\times{}0^q}^{\mathbb{R}^{n+q}}=\textup{Vol}_{n+q}(V\times{}0^q+tB^{n+q}),
\end{equation}
 is said to be the \textsf{\(q\)-th adjoint
Steiner polynomial for the set~\(V\)}.\\
\end{defn}
For \(q=0\), the set \(V^{(0)}\) coincides with \(V\), and the polynomial
 \(S_{\,V\times{}0^0}^{\mathbb{R}^{n+0}}\) coincides with  \(S_{\,\,V}^{\mathbb{R}^{n}}\).
 For \(q=1\), the set \(V^{(1)}\) is called the
 squeezed cylinder with the base \(V\).
%%%%%%%%%%%%%%%%%%%%%%%%%%%%%%%%%%%%%%%%%%%%%%%%%%%%%%%%%%%%%%%%%%%%%%%%%%%%%%
%%%%%%%%%%%%%%%%%%%%%%%%%%%%%%%%%%%%%%%%%%%%%%%%%%%%%%%%%%%%%%%%%%%%%%%%%%%%%%
%%%%%%%%%%%%%%%%%%%%%%%%%%%%%%%%%%%%%%%%%%%%%%%%%%%%%%%%%%%%%%%%%%%%%%%%%%%%%%%%%%
%%%%%%%%%%%%%%%%%%%%%%%%%%%%%%%%%%%%%%%%%%%%%%%%%%%%%%%%%%%%%%%%%%%%%%%%%%%%%%%%%%%
\paragraph{Steiner polynomials for adjoint sets.}
Let us consider the following question:\\[1.0ex]
\centerline{\textit{What is the relationsheep between the
polynomials \(S_{\,V}^{\mathbb{R}^{n}}(t)\) and
\(S_{\,V\times{}0^q}^{\mathbb{R}^{n+q}}(t)\)}?}\\[2.0ex]
We have
\begin{lem}
\label{IPRom}%
Let \(V\) be a compact convex set in \(\mathbb{R}^n\), and
\begin{equation}%
\label{IRPom1}%
 S_{\,V}^{\mathbb{R}^n}(t)=\sum\limits_{0\leq{}k\leq{}n}s_{\,k}^{\mathbb{R}^n}(V)t^k
\end{equation}%
be the Steiner polynomial with respect to the ambient space
\(\mathbb{R}^n\). Then the Steiner polynomial
\(S_{\,V\times{}0}^{\mathbb{R}^{n+1}}(t)\) is equal to:
\begin{equation}%
\label{MPn1om}%
S_{\,V\times{}0^1}^{\mathbb{R}^{n+1}}(t)=%
t\!\!\sum\limits_{0\leq{}k\leq{}n}\frac{{\pi}^{1/2}
\Gamma(\frac{k}{2}+1)}{\Gamma(\frac{k+1}{2}+1)}\,s_{\,k}^{\mathbb{R}^n}(V)%
\, t^k\,.
\end{equation}%
\end{lem}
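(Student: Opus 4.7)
The plan is to compute $\textup{Vol}_{n+1}(V\times{}0+tB^{n+1})$ directly by slicing perpendicular to the $(n+1)$-st coordinate axis. A point $(x,y)\in\mathbb{R}^n\times\mathbb{R}$ lies in $V\times{}0+tB^{n+1}$ precisely when there exists $v\in V$ with $\|x-v\|^2+y^2\leq{}t^2$; equivalently, $|y|\leq{}t$ and $x\in{}V+\sqrt{t^2-y^2}\,B^n$. Hence the horizontal slice at height $y$ is a translate of $V+\sqrt{t^2-y^2}\,B^n$, whose $n$\,-\,dimensional volume is $S_{\,V}^{\mathbb{R}^n}(\sqrt{t^2-y^2})$ by definition of the Steiner polynomial.

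Next, I would apply Fubini's theorem to write
\begin{equation*}
S_{\,V\times{}0^1}^{\mathbb{R}^{n+1}}(t)=\int_{-t}^{t}S_{\,V}^{\mathbb{R}^n}\!\left(\sqrt{t^2-y^2}\right)dy,
\end{equation*}
and expand using \eqref{IRPom1}:
\begin{equation*}
S_{\,V\times{}0^1}^{\mathbb{R}^{n+1}}(t)=\sum_{0\leq{}k\leq{}n}s_{\,k}^{\mathbb{R}^n}(V)\int_{-t}^{t}(t^2-y^2)^{k/2}\,dy.
\end{equation*}
The substitution $y=t\sin\theta$ reduces each integral to $2t^{k+1}\int_0^{\pi/2}\cos^{k+1}\theta\,d\theta$, which is a classical Wallis-type integral evaluated by the Beta function: $\int_0^{\pi/2}\cos^{k+1}\theta\,d\theta=\tfrac{1}{2}\mathrm{B}\!\left(\tfrac{k}{2}+1,\tfrac12\right)=\tfrac{\sqrt{\pi}}{2}\cdot\tfrac{\Gamma(k/2+1)}{\Gamma((k+1)/2+1)}$. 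Assembling the pieces and factoring out $t$ yields exactly \eqref{MPn1om}.

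There is essentially no obstacle: the slicing argument is elementary once one observes that $V\times 0+tB^{n+1}$ is the union over $|y|\le t$ of the slabs $\bigl(V+\sqrt{t^2-y^2}\,B^n\bigr)\times\{y\}$, and all the remaining work is a one-line trigonometric integral. The only point worth noting is that the identity $\int_{-t}^{t}(t^2-y^2)^{k/2}dy=\sqrt{\pi}\,t^{k+1}\,\Gamma(k/2+1)/\Gamma((k+1)/2+1)$ holds for \emph{every} non-negative integer $k$ (including odd $k$, where the integrand is not a polynomial in $y$ but is still integrable), so the identity \eqref{MPn1om} must be read as an equality of functions of $t$ rather than being derived term-by-term from a polynomial identity in the integrand; once the integrals are computed, the right-hand side automatically becomes a polynomial of degree $n+1$ in $t$, in agreement with Minkowski's theorem on the polynomiality of mixed volumes.
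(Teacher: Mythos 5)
Your argument is correct and is essentially the same as the paper's: you slice perpendicular to the last axis, observe via the Pythagorean theorem that the section at height $y$ is $V+\sqrt{t^2-y^2}\,B^n$, integrate $S_{\,V}^{\mathbb{R}^n}(\sqrt{t^2-y^2})$ over $y\in[-t,t]$, and identify the resulting integral $\int_{-t}^{t}(t^2-y^2)^{k/2}\,dy$ with a Beta function. The only cosmetic difference is the substitution: you use $y=t\sin\theta$ to land on a Wallis integral, while the paper substitutes $s\to t s^{1/2}$ to land directly on Euler's Beta integral; both routes give $\mathrm{B}\bigl(\tfrac{k}{2}+1,\tfrac12\bigr)$ and hence the stated coefficients.
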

Proceeding the induction over \(q\) and using Lemma \ref{IPRom} we obtain
\begin{thm}
\label{IRPp}%
Let \(V\) be a compact convex set in \(\mathbb{R}^n\), and
\begin{equation}%
\label{MPn}%
S_{\,V}^{\mathbb{R}^n}(t)=
\sum\limits_{0\leq{}k\leq{}n}s_{\,k}^{\mathbb{R}^n}(V)t^k
\end{equation}%
 be the Steiner polynomial of the set
\(V\). Then the \(q\)-th adjoint Steiner polynomial
\(S_{\,V\times{}0^q}^{\,\mathbb{R}^{n+q}}(t)\) of the set \(V\) is
equal to:
\begin{gather}%
\label{MPn2}%
S_{\,V\times{}0^q}^{\,\mathbb{R}^{n+q}}(t)=%
\sum\limits_{0\leq{}k\leq{}n}s_{\,k}^{\mathbb{R}^n}(V)\,\gamma_{\,\,k}^{(q)}\,\,%
t^{k+q}\,,\\
\intertext{where}
\label{MuSe}
\gamma_{\,k}^{(q)}={\pi}^{q/2}\,\frac{\Gamma(\frac{k}{2}+1)}{\Gamma(\frac{k+q}{2}+1)}\,,
\quad k=0,\,1,\,2,\,\dots\,\,;\ \ q=0,\,1,\,2,\,\dots.
\end{gather}
\vspace{2.0ex}
\end{thm}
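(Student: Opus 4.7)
The plan is a straightforward induction on $q$, using Lemma \ref{IPRom} (i.e.\ Lemma \ref{IPR}, already stated earlier) as the single substantive tool. The base case $q=0$ is trivial, since $\gamma_k^{(0)}=1$ and \eqref{MPn2} reduces to \eqref{MPn}.

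For the inductive step, I would assume \eqref{MPn2}--\eqref{MuSe} for a fixed $q$ and apply Lemma \ref{IPRom} to the set $W:=V\times 0^{q}\subset\mathbb{R}^{n+q}$, regarded now as sitting inside $\mathbb{R}^{n+q+1}$ via the canonical embedding of Definition \ref{DeAdM}. Under this embedding, $W\times 0^{1}=V\times 0^{q+1}$, so Lemma \ref{IPRom} yields
\begin{equation*}
S_{\,V\times{}0^{q+1}}^{\,\mathbb{R}^{n+q+1}}(t)=
t\!\!\sum_{0\le j\le n+q}\frac{{\pi}^{1/2}\Gamma(\tfrac{j}{2}+1)}{\Gamma(\tfrac{j+1}{2}+1)}
\,s_{\,j}^{\mathbb{R}^{n+q}}(V\times 0^{q})\,t^{j}\,.
\end{equation*}
By the inductive hypothesis the coefficient $s_{\,j}^{\mathbb{R}^{n+q}}(V\times 0^{q})$ vanishes for $j<q$ and equals $s_{\,j-q}^{\mathbb{R}^{n}}(V)\,\gamma_{\,j-q}^{(q)}$ for $q\le j\le n+q$. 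Substituting $k=j-q$ and rearranging gives
\begin{equation*}
S_{\,V\times{}0^{q+1}}^{\,\mathbb{R}^{n+q+1}}(t)=
\sum_{0\le k\le n} s_{\,k}^{\mathbb{R}^{n}}(V)\,\gamma_{\,k}^{(q)}\,
\frac{{\pi}^{1/2}\Gamma(\tfrac{k+q}{2}+1)}{\Gamma(\tfrac{k+q+1}{2}+1)}\,t^{k+q+1}\,.
\end{equation*}

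It then remains to verify the recursion
\begin{equation*}
\gamma_{\,k}^{(q+1)}=\frac{{\pi}^{1/2}\Gamma(\tfrac{k+q}{2}+1)}{\Gamma(\tfrac{k+q+1}{2}+1)}\,\gamma_{\,k}^{(q)},
\end{equation*}
which is immediate from the explicit formula \eqref{MuSe}: the two $\Gamma(\tfrac{k+q}{2}+1)$ factors cancel and $\pi^{q/2}\cdot\pi^{1/2}=\pi^{(q+1)/2}$, leaving $\pi^{(q+1)/2}\,\Gamma(\tfrac{k}{2}+1)/\Gamma(\tfrac{k+q+1}{2}+1)$ as required. This closes the induction and proves \eqref{MPn2}.

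There is no real obstacle here; the whole content of the theorem is packed into Lemma \ref{IPRom}, whose nontrivial derivation (carried out elsewhere in Section \ref{ExInSp}) identifies the one-step multipliers $\gamma_k^{(1)}$. Once that lemma is in hand, the multiplicative structure of the $\Gamma$-ratios in \eqref{MuSe} telescopes cleanly under iteration, and the only thing to watch is the bookkeeping of the shift $j\mapsto k+q$ in the exponent of $t$ and the corresponding shift in indices of the Steiner coefficients.
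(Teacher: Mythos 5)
Your proposal is correct and is essentially identical to the paper's own proof: induction on $q$ with base case $q=0$, applying Lemma \ref{IPRom} to $V\times 0^q$ in the inductive step, and closing via the multiplier identity $\gamma_k^{(q+1)}=\gamma_{k+q}^{(1)}\,\gamma_k^{(q)}$ (which is exactly the $\Gamma$-ratio recursion you verify). The index-shift bookkeeping you describe matches the paper's equations \eqref{SiMup}--\eqref{SiMuInd}.
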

\noindent
A sketch of the proof for this theorem can be found in \cite[Chapter VI, Section 6.1.9]{Had}.
 A detailed proof is presented below.
\begin{rem}%
\label{FnTp}%
Theorem \ref{IRPp} means that the sequence of the coefficients\\
\(\{s_{\,k}^{\mathbb{R}^{n+q}}(V\times{}0^q)\}_{0\leq k\leq n+q}\)
of the polynomial \(S_{\,V\times{}0^q}^{\mathbb{R}^{n+q}}\):
\begin{equation}%
S_{\,V\times{}0^q}^{\,\mathbb{R}^{n+q}}(t)=
\sum\limits_{0\leq{}k\leq{}n+q}s_{\,k}^{\mathbb{R}^{n+q}}(V\times{}0^q)\,t^k
\end{equation}%
 are obtained from the sequence of the coefficients
\(\{s_{k}(V)\}_{0\leq k\leq n}\) of the polynomial \(S_{\,V}^{\mathbb{R}^{n}}\),
\textup{\eqref{MPn}}, by means of a shift and multiplication:
\begin{subequations}
\label{SiMu}
\begin{alignat}{2}%
s_{\,\,k}^{\mathbb{R}^{n+q}}(V\times{}0^q)&=0, & & 0\leq k<q;
\label{SiMua1} \\ %
s_{\,{k+q}}^{\mathbb{R}^{n+q}}(V\times{}0^q)&=
s_{k}^{\mathbb{R}^{n}}(V)\,\gamma_{\,k}^{(q)},& \ \ &0\leq k\leq
n\,.\label{SiMua2}
\end{alignat}%
\end{subequations}
\end{rem}
\begin{rem} %
\label{CompPol}%
According to \textup{Theorem \ref{IRPp}},
the transformation which maps the polynomial \(S_{\,V}^{{\mathbb{R}^{n}}}\)
into the polynomial \(S_{\,V\times{}0^q}^{{\mathbb{R}^{n+q}}}\) is of the form
\begin{equation}
\label{MTP} %
\sum\limits_{0\leq k\leq n}s_kt^k\to \sum\limits_{0\leq k\leq
n}\gamma_ks_kt^k,
\end{equation}
where \(\gamma_k\) is a certain sequence of \textsf{multipliers}.
\textup{(}The factor \(t^q\) in front of the sum in \eqref{MPn2}  can
be omitted\textup{)}. Transformations of the form
\textup{\eqref{MTP}} were already discussed is Section \ref{LPEF}.
Such transformations were considered in relation with
locations of roots for polynomials and entire functions.
\end{rem}%
\begin{lem}%
\label{NonMult}%
 For any \(q\), \(q=1,\,2,\,3,\,\ldots\), the
sequence \(\{\gamma_{\,k}^{(q)}\}_{k=0,\,1,\,2,\,\ldots}\) is not
a multiplier sequence in the sense of \textup{Definition
\ref{PReZ}}.
\end{lem}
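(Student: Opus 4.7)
The plan is to apply the P\'olya-Schur theorem from Section \ref{LPEF}, which reduces the question to the location of the roots of a single entire function. Specifically, $\{\gamma_k^{(q)}\}$ is a P-S multiplier sequence if and only if the generating function
\[\Psi_q(t) = \sum_{k=0}^{\infty}\frac{\gamma_k^{(q)}}{k!}\,t^k\]
is entire and either $\Psi_q(t)$ or $\Psi_q(-t)$ belongs to $\mathscr{L}\text{-}\mathscr{P}\text{-}\textup{I}$. Since $\gamma_k^{(q)}$ is bounded in $k$ by Stirling, $\Psi_q$ is certainly entire, so it suffices to rule out both alternatives.

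The one nontrivial ingredient is an integral representation for $\Psi_q$, in complete analogy with the one obtained for $q=1$ in the proof of Theorem \ref{EFGWP}, cf. \eqref{IntRep}. Starting from the Beta-function identity
\[\frac{\Gamma(k/2+1)}{\Gamma((k+q)/2+1)} = \frac{1}{\Gamma(q/2)}\int_0^1 \xi^{k/2}(1-\xi)^{q/2-1}\,d\xi,\]
substituting into the coefficients of $\Psi_q$, interchanging summation and integration, summing the exponential series and finally changing variable $u=\sqrt\xi$, one arrives at
\[\Psi_q(t) = \frac{2\pi^{q/2}}{\Gamma(q/2)}\int_0^1 u\,(1-u^2)^{q/2-1}\,e^{ut}\,du,\qquad q\geq 1.\]
The integrand is strictly positive on $(0,1)$ for every $t\in\mathbb{R}$, so $\Psi_q(t)>0$ on $\mathbb{R}$; in particular neither $\Psi_q(t)$ nor $\Psi_q(-t)$ has any real zero.

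The conclusion now comes from the canonical representation \eqref{DLP23}: every nonzero member of $\mathscr{L}\text{-}\mathscr{P}\text{-}\textup{I}$ has all its zeros on the non-positive real axis, so a zero-free member of this class must reduce to a pure exponential $c\,e^{\alpha t}$ with $\alpha\geq 0$. Thus, if either $\Psi_q(t)$ or $\Psi_q(-t)$ lay in $\mathscr{L}\text{-}\mathscr{P}\text{-}\textup{I}$, we would be forced to have $\Psi_q(t)=c\,e^{\alpha' t}$ for some real $\alpha'$, which would in turn force $\{\gamma_k^{(q)}\}$ to be a geometric sequence. However formula \eqref{MuSe} gives
\[\frac{\gamma_{k+2}^{(q)}}{\gamma_k^{(q)}} = \frac{k+2}{k+q+2},\]
and this ratio genuinely depends on $k$ for every $q\geq 1$. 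The resulting contradiction completes the proof; the main obstacle is verifying the integral representation, but this is a direct repetition of the $q=1$ computation already executed in Section \ref{PEFG}.
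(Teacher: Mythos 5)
Your proof is correct, and it takes a genuinely different route from the paper's. The paper proves the lemma by showing that the entire function $\mu_q$ has infinitely many \emph{non-real} zeros, via the asymptotics/Cartwright--Levinson machinery developed in Section~\ref{LocRoot} (so that $\mu_q\notin\mathscr{L}\text{-}\mathscr{P}$ at all, a fortiori not $\mathscr{L}\text{-}\mathscr{P}\text{-}\textup{I}$). You instead observe the complementary fact that $\Psi_q$ has \emph{no real} zeros at all (from the integral representation with positive integrand), then invoke the structure of the canonical product \eqref{DLP23}: a zero-free member of $\mathscr{L}\text{-}\mathscr{P}\text{-}\textup{I}$ must be a pure exponential, which would make $\{\gamma_k^{(q)}\}$ geometric, and the explicit ratio $\gamma_{k+2}^{(q)}/\gamma_k^{(q)}=(k+2)/(k+q+2)$ rules that out for $q\geq 1$. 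Your route is strictly more elementary: it needs only the positivity of the integrand in \eqref{IRMuF} and the explicit form of $\gamma_k^{(q)}$, avoiding any asymptotic analysis of the zero distribution. What the paper's heavier argument buys is information used elsewhere (the existence of infinitely many non-real roots of $\mu_q$ underlies the negative results in Theorems~\ref{NRWPSq} and~\ref{NMR}), and indeed the remark following the lemma acknowledges that the full analysis is ``much more detail than is needed to prove Lemma~\ref{NonMult}.'' For the lemma itself, yours is the cleaner proof.
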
%
\begin{proof} In Section  \ref{LocRoot} we explain that the entire
function
\begin{equation}
\label{Multq}
\mu_q(t)=\displaystyle\sum\limits_{0\leq{}k<\infty}\frac{\gamma_{\,k}^{(q)}}{k!}t^k
\end{equation}
has infinitely many non-real roots.
 The entire function \(\mu_q(t)\),
\eqref{Multq}, appears as the function
\(\mathcal{M}_{B^{\infty}\times{}0^q}(t)\) in Section
\eqref{JPMEb}. (Up to a constant factor which is not essential for
study the roots.)
 According to the P\'olya-Schur
Theorem, which was formulated in \ref{LPEF}, the sequence
\(\{\gamma_{\,k}^{(q)}\}_{k=0,\,1,\,2,\,\ldots}\) is not a
multiplier sequence. \end{proof}

\begin{rem}
In Section \ref{LocRoot} we study the function \(\mu_q\)
in much more detail that is needed to prove Lemma \ref{NonMult}.
We investigate for which \(q\) function's roots
are located in the left half plane.
The question of whether or  not there are non-real roots
is much less involved. This question may be answered
from very general considerations.
 The function \(\mu_q\) admits the
integral representation:
\begin{equation}%
\label{IRMuF}%
\mu_q(t) =
q\omega_q\int\limits_{0}^{1}(1-\xi^2)^{\frac{q}{2}-1}\xi\,{}e^{\xi{}t}\,d\xi\,.
\end{equation}
(Expanding the exponential \(e^{\xi{}t}\) into the Taylor series,
we see that the Taylor coefficients of the function on the right
hand side of \eqref{IRMuF} are the numbers
\(\frac{\gamma_{\,k}^{(q)}}{k!}\).) From \eqref{IRMuF} it follows
that the function \(\mu_q(t)\) is an entire function of
exponential type and that its indicator diagram is the interval
\([0,\,1]\). Moreover,
\(\sup_{-\infty<t<\infty}|\mu_q(it)|<\infty\). In particular, the
function \(\mu_q(it)\) belongs to the class of entire functions,
denoted by \(C\) in \cite[ Lecture 17]{Lev2}. It follows from the
\textup{Cartwright-Levinson Theorem (Theorem 1 of the Lecture
17 from \cite{Lev2})}  that the function \(\mu_q(t)\)
has infinitely many roots. These roots have  positive density, and
the `majority' of these roots are located `near' the rays \(\arg
t=\frac{\pi}{2}\) and \(\arg t=-\frac{\pi}{2}\). In particular,
the function \(\mu_q(t)\) has infinitely many non-real roots. (We
already used this reasoning to prove \textup{Statement 2 of Theorem
\ref{BLPC}}.)
\end{rem}

%%%%%%%%%%%%%%%%%%%%%%%%%%%%%%%%%%%%%%%%%%%%%%%%%%%%%%%%%%%%%%%%%%%%%%%%%%%%%%%%%%%%%
\noindent%
\begin{proof}[Proof of Lemma \ref{IPRom}] Let
\((x,s)\in{\mathbb{R}^{n+1}}\), where \(x\in\mathbb{R}^n\), and
\(s\in\mathbb{R}\). Then, by the Pythagorean Theorem,
\[\textup{dist}_{\mathbb{R}^{n+1}}^2((x,s),V\times{0})=
\textup{dist}_{\mathbb{R}^n}^2(x,V)+s^2.\] Therefore, the
equivalence
\begin{equation}
\label{eqd}%
\Big( \textup{dist}_{\mathbb{R}^{n+1}}((x,s),V\times{}0)\leq
t\Big) \Longleftrightarrow
\Big(\textup{dist}_{\mathbb{R}^{n}}(x,V)\leq\sqrt{t^2-s^2}\Big)
\end{equation}
holds. Let
\begin{equation}
\label{EV} \mathfrak{T}_{V\times{}0^1}^{\mathbb{R}^{n+1}}(t)=
\{(x,s)\in\mathbb{R}^{n+1}:\,\textup{dist}_{\mathbb{R}^{n+1}}((x,s),{V\times{}0^1})\leq
t\}.
\end{equation}
be the \(t\)-neighborhood of the set \(V\times{}0^1\) with respect
to the ambient space \(\mathbb{R}^{n+1}\). Thus,
\begin{equation}%
\label{Vnp1}%
\textup{Vol}_{n+1}(\mathfrak{T}_{V\times{}0^1}^{\mathbb{R}^{n+1}}(t))=
S_{V\times{}0^1}^{\mathbb{R}^{n+1}}(t).
\end{equation}%
For fixed \(s\in\mathbb{R}\), let \(\mathfrak{S}(s)\) be the
`horizontal section' of the set
\(\mathfrak{T}_{V\times{}0^1}^{\mathbb{R}^{n+1}}(t)\) on the
`vertical level' \(s\):
\[\mathfrak{S}(s)=
\{x\in\mathbb{R}^n:\,(x,s)\in\mathfrak{T}_{\,\,V}^{\mathbb{R}^{n+1}}(t)\}.\]
Clearly
\begin{equation}%
\label{Kav}
\textup{Vol}_{n+1}(\mathfrak{T}_{V\times{}0^1}^{\mathbb{R}^{n+1}}(t))=
\int\textup{Vol}_n(\mathfrak{S}(s))ds\,.
\end{equation}%
 The equivalence \eqref{eqd} means that
\begin{equation*}%
\mathfrak{S}(s)=\mathfrak{T}_{V}^{\mathbb{R}^{n}}(\sqrt{t^2-s^2})=
\{x\in\mathbb{R}^n:\,\textup{dist}_{\mathbb{R}^{n}}(x,V)\leq{}\sqrt{t^2-s^2}\}.
\end{equation*}
Thus,
\begin{equation}%
\label{VoSe}%
 \textup{Vol}_n(\mathfrak{S}(s))=S_V^{\mathbb{R}^n}(\sqrt{t^2-s^2}).
\end{equation}%
From \eqref{Kav} and \eqref{VoSe} it follows that
\begin{equation*}%
\label{fin}
S_{V\times{}0^1}^{\mathbb{R}^{n+1}}(t)=\int\limits_{-t}^tS_V^{\mathbb{R}^{n}}(\sqrt{t^2-s^2})ds\,.
\end{equation*}%
Making the substitutions \(s\to{}ts^{1/2}\), we obtain
\begin{equation*}
S_{\,\,V}^{\mathbb{R}^{n+1}}(t)=
t\,\int\limits_{0}^1S_V^{\mathbb{R}^{n}}(t(1-s)^{1/2})s^{-1/2}ds.
\end{equation*}
Substituting the expression \eqref{IRP1} for
\(S_{V}^{\mathbb{R}^{n}}\) into the latter formula, we obtain
\begin{equation*}
S_{V\times{}0^1}^{\mathbb{R}^{n+1}}(t)=t\sum\limits_{0\leq k\leq
n}s_k(V)\,t^k\int\limits_0^1(1-s)^{k/2}s^{-1/2}ds.
\end{equation*}
According to Euler,
\[\int\limits_0^1(1-s)^{k/2}s^{-1/2}ds=\textup{B}{\textstyle\big(\frac{k}{2}+1,\frac{1}{2}\big)}=
\frac{\Gamma\big(\frac{1}{2}\big)\,
\Gamma\big(\frac{k}{2}+1\big)}{\Gamma\big(\frac{k+1}{2}+1\big)}=
\pi^{1/2}\frac{
\Gamma\big(\frac{k}{2}+1\big)}{\Gamma\big(\frac{k+1}{2}+1\big)}\cdot\]
Thus, \eqref{MPn1om} holds.
 \end{proof}

\noindent%
 \begin{proof}[Proof of Theorem \ref{IRPp}]
We proceed by induction over \(q\). For \(q=0\), the assertion is
trivially clear. We now show that if the statement holds for \(q\),
then it also holds for \(q+1\). Since
 \(V\times{}0^{q+1}=(V\times{}0^{q})\times{}0^1\), and
  \(\mathbb{R}^{n+q+1}=\mathbb{R}^{n+q}\times{}\mathbb{R}^{1}\),
 we can apply Lemma \ref{IPRom} to the convex set \(V\times{}0^{q}\)
 whose Steiner polynomial is \eqref{MPn2} (by the inductive hypothesis).
 The inductive hypothesis can be formulated as
\begin{subequations}
\label{SiMup}
\begin{alignat}{2}%
s_{\,\,k}^{\mathbb{R}^{n+q}}(V\times{}0^q)&=0, & & 0\leq k<q;
\label{SiMup1} \\ %
s_{\,{k}}^{\mathbb{R}^{n+q}}(V\times{}0^q)&=
s_{k-q}^{\mathbb{R}^{n}}(V)\,\gamma_{\,k-q}^{(q)},& \ \ &q\leq
k\leq q+n\,.\label{SiMup2}
\end{alignat}%
\end{subequations}
 By Lemma \ref{IPR},
 \begin{subequations}
\label{SiMuInd}
 \begin{align}
 \label{SiMuaInd1}
 s_{\,{k}}^{\mathbb{R}^{(n+q)+1}}((V\times{}0^q)\times{}0^1)&=0,\ \ k=0\,;\\
 \label{SiMuaInd2}
s_{\,{k}}^{\mathbb{R}^{(n+q)+1}}((V\times{}0^q)\times{}0^1)&=
s_{\,{k-1}}^{\mathbb{R}^{(n+q)}}((V\times{}0^q))\cdot{}\gamma_{\,k-1}^{(1)}\,,\
\ 1\leq{}k\leq{}n+q+1\,.
 \end{align}
 \end{subequations}
 In view of the identity
 \[\gamma_{k}^{(q)}\cdot{}\gamma_{k+q}^{(1)}=\gamma_{k}^{(q+1)}\,,\]
 \eqref{SiMuInd} takes the form \eqref{SiMup} with \(q\) replaced
 by \(q+1\)\,.
 \end{proof}
\begin{rem}
From \eqref{Vpdb} and \eqref{MuSe} it follows that
\begin{equation}
\label{IdFG}%
 \gamma_{\,k}^{(q)}=\frac{\omega_{k+q}}{\omega_{k}}\,.
\end{equation}
Thus, the equalities \eqref{SiMua2} can be rewritten as
\begin{equation}
\label{IntrCo}%
\frac{\,s_{\,{k+q}}^{\mathbb{R}^{n+q}}(V\times{}0^q)}{\omega_{k+q}}=
\frac{\,s_{\,{k}}^{\mathbb{R}^{n}}(V)\,}{\omega_{k}},\quad
q=0,\,1,\,2,\,\ldots\,.
\end{equation}
The equality \eqref{IntrCo} holds for \(k=0,\,1,\,\ldots\,,\,n\).
The value \(s_k^{\mathbb{R}^n}(V)\) is not yet
defined for other values of \(k\). Let us agree that
\begin{equation}
\label{Agre}%
\omega_k=1 \textup{\ \ for\ \ }k<0,\ \ s_k^{\mathbb{R}^n}(V)=0
\textup{\ \ for\ \ }k<0\textup{\ \ and for\ \ }k>n\,.
\end{equation}
The equality \eqref{IntrCo} then holds for \emph{every}
\(k\in\mathbb{Z}\).  For \(k>n\) or for \(k<-q\)  \eqref{IntrCo}
is trivial. For \(-q\leq{}k\leq{}-1\) \eqref{IntrCo} coincides
with \eqref{SiMua1}. For \(0\leq{}k\leq{}n\) \eqref{IntrCo}
coincides with \eqref{SiMua2}. The ratio
\(\displaystyle\frac{\,s_{\,{n-k}}^{\mathbb{R}^{n}}(V)\,}{\omega_{n-k}}\)
is called the \textit{\(k\)-th intrinsic volume of \(V\)} in
\cite{McM}.
\end{rem}
\paragraph{The Steiner polynomials for the \(q\)-th adjoint to the ball \(B^n\).}
In particular, applying Theorem \ref{IRPp} to the case \(V=B^n\),
\(B^n\subset\mathbb{R}^n\), we obtain:
\begin{equation}
\label{ExpMinAdq}%
S_{B^n\times{}0^q}^{\mathbb{R}^{n+q}}(t)=
\omega_n\omega_q\,t^q\mathcal{M}_{B^n\times{}0^q}(nt)\,,
\end{equation}
where the normalized Steiner polynomial
\(\mathcal{M}_{B^n\times{}0^q}\) is defined as
\begin{equation}
\label{NMPq}%
\mathcal{M}_{B^n\times{}0^q}(t)=
\sum\limits_{0\leq{}k\leq{}n}\frac{n!}{(n-k)!n^k}%
\frac{\Gamma{(\frac{q}{2}+1)}\Gamma(\frac{k}{2}+1)}{\Gamma(\frac{k+q}{2}+1)}\frac{t^k}{k!}\,.
\end{equation}
The polynomial \(\mathcal{M}_{B^n\times{}0^q}\) is the Jensen
polynomial associated with the entire function
\(M^{B^n\times{}0^q}\):
\begin{equation}
\label{JPMPa}
\mathcal{M}_{B^n\times{}0^q}(t)=\mathscr{J}_n(\mathcal{M}_{B^{\infty}\times{}0^q};t),
\end{equation}
where
\begin{equation}
\label{JPMEb}
\mathcal{M}_{B^{\infty}\times{}0^q}(t)=\sum\limits_{0\leq{}k<\infty}
\frac{{\Gamma(\frac{q}{2}+1)}\Gamma(\frac{k}{2}+1)}{\Gamma(\frac{k+q}{2}+1)}\frac{t^k}{k!}\,.
\end{equation}
Comparing \eqref{JPMEb} with
\eqref{IRMuF}, we obtain
\begin{equation}
\label{NMPq1}%
\mathcal{M}_{B^{\infty}\times{}0^q}(t)=
q\int\limits_{0}^{1}(1-\xi^2)^{\frac{q}{2}-1}\xi\,{}e^{\xi{}t}\,d\xi\,.
\end{equation}
For every \(q=0,\,1,\,2,\,\ldots\,\), the function
\(\mathcal{M}_{B^{\infty}\times{}0^q}\) is an entire function of
the exponential type one.
\begin{lem}\ \
\label{HPEGM}
\begin{enumerate}
\item %
For \(q=0,\,1,\,2,\,4\), the entire function
\(\mathcal{M}_{B^{\infty}\times{}0^q}\) is in the Hurwitz class
\(\mathscr{H}\);
\item
For \(q\geq{}5\),  the entire function
\(\mathcal{M}_{B^{\infty}\times{}0^q}\) is not in the Hurwitz
class: it has infinitely many roots in the open right half plane
\(\{z:\,\,\textup{Im} z >0\}\).
\end{enumerate}
\end{lem}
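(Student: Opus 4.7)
The plan is to combine an asymptotic root-location analysis of the integral representation \eqref{IRMuF} with a Hermite--Biehler argument to handle the bounded region for small $q$, paralleling the proof of Theorem~\ref{str}. The case $q=0$ is trivial since $\mathcal{M}_{B^{\infty}}(t)=e^{t}$ has no zeros and obviously lies in $\mathscr{H}$.

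First I would extract the leading asymptotic of $\mathcal{M}_{B^{\infty}\times 0^q}(t)$ for $|t|\to\infty$ by isolating the two endpoint contributions in
\[
\mathcal{M}_{B^{\infty}\times 0^q}(t)=q\int_0^1(1-\xi^2)^{q/2-1}\xi\,e^{\xi t}\,d\xi.
\]
Watson-type analysis at $\xi=1$, where $(1-\xi^2)^{q/2-1}\xi\sim(2(1-\xi))^{q/2-1}$, yields a contribution of order $q\,2^{q/2-1}\Gamma(q/2)\,e^{t}/t^{q/2}$; iterated integration by parts at $\xi=0$, where the integrand vanishes to first order, yields a contribution of order $q/t^{2}$, with lower-order terms of order $1/t^{4}$. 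Equating the sum to zero reduces the equation $\mathcal{M}_{B^{\infty}\times 0^q}(t)=0$ asymptotically to $e^{t}\sim -c_q\,t^{q/2-2}$ with $c_q=2^{1-q/2}/\Gamma(q/2)$. Substituting $t=2\pi ik+s_k$ and iterating the logarithm produces, for $|k|\to\infty$,
\[
s_k=\bigl(\tfrac{q}{2}-2\bigr)\log(2\pi|k|)+\log(-c_q)+i\pi\bigl(\tfrac{q}{2}-2\bigr)\tfrac{\operatorname{sign}(k)}{2}+o(1).
\]
A Rouch\'e argument in small disks around each $2\pi ik+s_k$, combined with an argument-principle count on large circles, shows that all zeros of $\mathcal{M}_{B^{\infty}\times 0^q}$ of sufficiently large modulus arise in this way. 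Statement~2 then follows at once: when $q\geq 5$ the coefficient $\tfrac{q}{2}-2\geq\tfrac{1}{2}$ is positive, so $\mathrm{Re}(s_k)\to+\infty$ as $|k|\to\infty$, placing infinitely many zeros in the open right half plane.

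For Statement~1 the same asymptotic gives $\mathrm{Re}(s_k)\to-\infty$ when $q=1,2$ and $\mathrm{Re}(s_k)\to-\log 2<0$ when $q=4$ (using $\log(-c_4)=-\log 2+i\pi$ for the principal branch and $c_4=1/2$). Hence in each case only finitely many zeros could conceivably lie in the closed right half plane, and these I would rule out by the Hermite--Biehler theorem (in its entire-function version, as used in the proof of Theorem~\ref{JDM}) applied to the decomposition $\mathcal{M}_{B^{\infty}\times 0^q}(it)=A_q(t)+iB_q(t)$ with
\[
A_q(t)=\mathcal{W}^{q}_{\partial(B^{\infty}\times 0)}(t),\qquad
B_q(t)=q\int_0^1(1-\xi^2)^{q/2-1}\xi\sin(t\xi)\,d\xi.
\]
Theorem~\ref{str} supplies that $A_q$ has only real simple zeros for $q=1,2,4$; by parallel reasoning (with explicit closed-form evaluations of $B_q$ at $q=2,4$, and an integral argument at $q=1$ that handles the branched factor $(1-\xi^2)^{-1/2}$ in the same way as for $A_1$), one checks that $B_q$ has only real simple zeros, that these interlace with the zeros of $A_q$, and that the Wronskian condition $A_q(0)B_q'(0)-A_q'(0)B_q(0)>0$ holds, which reduces to strict positivity of the first two Taylor coefficients of $\mathcal{M}_{B^{\infty}\times 0^q}$. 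Hermite--Biehler then places all zeros of $\mathcal{M}_{B^{\infty}\times 0^q}$ in the open left half plane; the remaining Hurwitz-class conditions (exponential type and non-negative defect) are immediate from the indicator diagram $[0,1]$ of $\mathcal{M}_{B^{\infty}\times 0^q}$, visible directly from the integral representation. The principal obstacle is the interlacing verification for $B_q$ in the branched case $q=1$ and in the case $q=4$ where the asymptotic real part only tends to a finite negative limit; both verifications should follow the scheme developed in Section~\ref{LocRoot} for the Weyl generating functions $\mathcal{W}^{p}_{\partial(B^{\infty}\times 0)}$.
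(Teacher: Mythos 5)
Your asymptotic analysis for Statement~2 is essentially correct and matches the paper's route: the endpoint contributions at $\xi=1$ and $\xi=0$ in \eqref{IRMuF} give the approximating equation $e^{z}=-c_q\,z^{q/2-2}$, whose roots track the logarithmic parabola \eqref{LogPar}, which lies in the right half plane precisely when $q>4$. This is exactly what the paper does (Lemma~\ref{NonHq}, via the Fox--Wright asymptotic \eqref{Asymp1}--\eqref{Rema1}).

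The plan for Statement~1 is, however, circular. You invoke Theorem~\ref{str} (Statement~1) to conclude that $A_q=\mathcal{W}^{q}_{\partial(B^{\infty}\times 0)}$ has only real simple zeros for $q=1,2,4$. But in the paper, Theorem~\ref{str} is \emph{not} proved independently: its proof is deferred to Lemma~\ref{WPBLP}, and the proof of Lemma~\ref{WPBLP} consists precisely of ``Combining Lemma~\ref{CondL} with Lemmas~\ref{HurCl} and~\ref{QEqFour}'', and Lemmas~\ref{HurCl} and~\ref{QEqFour} \emph{are} the proof of Statement~1 of the present Lemma~\ref{HPEGM} (as the paper states explicitly after Lemma~\ref{QEqFour}). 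In other words, the paper's logical order is $\mathcal{M}_{B^{\infty}\times 0^q}\in\mathscr{H}\Rightarrow\mathcal{W}^{q}_{\partial(B^{\infty}\times 0)}\in\mathscr{L}\text{-}\mathscr{P}$, using the forward (necessity) direction of Hermite--Biehler (Lemma~\ref{CondL}); you are trying to run it backwards, assuming the conclusion of Theorem~\ref{str} as input. Beyond the circularity, even if one granted the reality of the zeros of $A_q$ and $B_q$, the interlacing and its verification for the branched case $q=1$ and the borderline case $q=4$ are left to ``parallel reasoning'' and are nontrivial; the asymptotics you compute control only the zeros of large modulus, and you have not produced a mechanism that closes out the bounded region without appealing to $\mathcal{M}\in\mathscr{H}$.

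The paper avoids all of this. For $0<q\leq 2$ it uses a monotonicity argument (Lemma~\ref{NZRHP}, a continuous Kakeya/P\'olya theorem): the weight $\varphi_q(\xi)=(1-\xi^2)^{q/2-1}\xi$ in \eqref{IRPs3}--\eqref{IRPs4} is increasing on $(0,1)$ precisely for $q\leq 2$, and a summation-by-parts/Leibniz-series argument then shows $\int_0^1\varphi_q(\xi)e^{\xi z}d\xi$ has no zeros in the closed right half plane; this is global, not asymptotic, and needs no Hermite--Biehler. For $q=4$ the weight is no longer monotone, so the paper instead computes $I_4(z)$ in closed form, $I_4(z)=\bigl((2z^2-6z+6)e^{z}+(z^2-6)\bigr)/z^{4}$, writes $f=g+h$ with $g=2z^2-6z+6$, $h=(z^2-6)e^{-z}$, establishes $|g|\geq|h|$ on the boundary of the right half plane, and applies Rouch\'e (together with a small-$\varepsilon$ perturbation to track the zeros of $g$ that sit at $\tfrac{3\pm i\sqrt{3}}{2}$). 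Neither of these two tools appears in your plan, and the asymptotic analysis alone does not substitute for them.
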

Proof of this Lemma is presented in Section \ref{LocRoot}.
Statement 2 is a consequence of the asymptotic calculation of the
function \(\mathcal{M}_{B^{\infty}\times{}0^q}\). (See Lemma
\ref{NonHq}.)

 For \(q=0\), we have that \(\mathcal{M}_{B^{\infty}\times{}0^0}=e^t\).
\(\mathcal{M}_{B^{\infty}\times{}0^0}=e^t\) is therefore
of type \textup{I} in the Laguerre-P\'olya class:
\(\mathcal{M}_{B^{\infty}\times{}0^0}\in
\mathscr{L}\text{-}\mathscr{P}\text{-}\textup{I}\). For \(q=2\)
and \(q=4\) the functions \(\mathcal{M}_{B^{\infty}\times{}0^q}\)
can be calculated explicitly. The case \(q=1\) is more involved. The case \(q=3\)
remains open.\\
\noindent%
 \begin{proof}[Proof of Statement 1 of Theorem \ref{NMR}] Let \(q\geq{5}\) be given.
 According to Statement 2 of Lemma \ref{HPEGM}, the function
\(\mathcal{M}_{B^{\infty}\times{}0^q}\) has infinitely many roots
in the open right half plane. In view of the approximation
property of the Jensen polynomials (Lemma \ref{CJPL}),  some roots
of the Jensen polynomial
\(\mathscr{J}_n(\mathcal{M}_{B^{\infty}\times{}0^q};t)\) for
\(n\geq{}n(q)\) are located in the open right half plane. Because
of \eqref{ExpMinAdq}  and \eqref{JPMPa}, some roots of the
Minkovski polynomial \(S_{B^n\times{}0^q}\) of the (non-solid)
convex set \(B^n\times{}0^q\),
\(B^n\times{}0^q\subset{}\mathbb{R}^{n+q}\), are located in the
open right half plane. For fixed \(n:\,n\geq{}n(q)\), consider the
ellipsoids \(E_{n,\,q,\,\varepsilon}\) defined in \eqref{Ell},
\(E_{n,\,q,\,\varepsilon}\subset\mathbb{R}^{n+q}\). For
\(\varepsilon>0\), the ellipsoid \(E_{n,\,q,\,\varepsilon}\) is a
solid convex set with respect to the ambient space
\(\mathbb{R}^{n+q}\). The family of convex sets
\(\{E_{n,\,q,\,\varepsilon}\}_{\varepsilon>0}\) is monotonic,
 (See Remark \ref{AprC1} and footnote \ref{monot}), and
 \begin{equation}
 \label{LimEll}
 \lim_{\varepsilon\to+0}E_{n,\,q,\,\varepsilon}=B^n\times{}0^q\,.
 \end{equation}
It is known that the Steiner polynomials \(S_V(t)\) depends on
the set \(V\) continuously: see Section \ref{ChPrMiPo} and
footnote \ref{topo}. Therefore,
\begin{equation}%
\label{LRLMP}%
\lim_{\varepsilon\to{}0}S_{E_{n,\,q,\,\varepsilon}}^{\mathbb{R}^{n+q}}(t)=
S_{B^n\times{}0^q}^{\mathbb{R}^{n+q}}(t)
\end{equation}%
locally uniformly in \(\mathbb{C}\). Hence, there exists an
\(\varepsilon(q,n), \ \varepsilon(q,n)>0\) such that the Steiner
polynomial \(S_{E_{n,\,q,\,\varepsilon}}^{\mathbb{R}^{n+q}}\) has
roots located in the open right half plane. %
\end{proof}

\paragraph{The Weyl polynomials for the boundary surfaces of the adjoint convex sets.}
Keeping Definition \ref{DeWPDC} in mind, we now define the adjoint
Weyl polynomials\ \(W_{\,V\times{}0^q}^{\,\,p}\).
\begin{defn}
\label{DaWP}
Given a convex compact set \(V\), \(V\subset{}\mathbb{R}^n\), %
and a number \(q\),
 \(q= 0,\,1,\,2,\,3,\,\dots\,,\,\,\), the \(q\)-th adjoint
  Weyl polynomial \(W_{\,\partial(V\times{}0^q)}^{\,\,1}\) of the index \(1\) for the
convex surface \(\partial(V\times{}0^q)\) is defined using
the odd part of the \(q\)-th adjoint Steiner polynomial
\(S_{\,V\times{}0^q}^{\mathbb{R}^{n+q}}\):
\begin{equation}%
\label{fdAwp}%
2tW_{\partial(V\times{}0^q)}^{\,\,1}(t)
\stackrel{\textup{\tiny{}def}}{=}S_{\,V\times{}0^q}^{\mathbb{R}^{n+q}}(t)-
S_{\,V\times{}0^q}^{\mathbb{R}^{n+q}}(-t)\,,
\end{equation}%
where \(S_{\,V^{(q)}}^{\mathbb{R}^{n+q}}\) is the \(q\)-th adjoint
Steiner polynomial which was introduced in \textup{Definition
\ref{DeAdM}}. More specifically\,%
\footnote{According to the convention \eqref{Agre},
\(s_{2l+1}^{\mathbb{R}^{n+q}}(V\times{}0^q)=0\) for \(2l+1<0\) or
\(2l+1>n+q\,\).}\,,%
\begin{equation}%
\label{fdAwp1}%
W_{\partial(V\times{}0^q)}^{\,\,1}(t)=
\sum\limits_{l\in\mathbb{Z}}s_{2l+1}^{\mathbb{R}^{n+q}}(V\times{}0^q)t^{2l}\,.
\end{equation}%
\end{defn}
Using \eqref{fdAwp1}, we can define the Weyl coefficients
\(w_{2l}(\partial{}(V\times{}0^q))\) according to Definition
\ref{DNWP}:
\begin{equation}%
\label{WCASu}%
w_{2l}(\partial(V\times{}0^q))=
s_{2l+1}^{\mathbb{R}^{n+q}}(V\times{}0^q)\frac{(2\pi)^l\omega_1}{\omega_{1+2l}}\,.
\end{equation}%
The Weyl polynomials \(W_{\partial{}(V\times{}0^q)}^p\) with higher \(p\)
are then defined
according to\,%
\footnote{We remark that
\(\displaystyle{}\frac{2^{-l}\,\Gamma(\frac{p}{2}+1)}{\Gamma(\frac{p}{2}+l+1)}=
(2\pi)^{-l}\frac{\omega_{2l+p}}{\omega_{p}}\,.\)}
 Definition \ref{DGWP}:
 \begin{defn}
\begin{equation}%
\label{WPACSpq}%
W_{\partial{}(V\times{}0^q)}^p(t)\stackrel{ \textup{\tiny def}}{=}
\sum\limits_{l\in\mathbb{Z}}w_{2l}(\partial{}(V\times{}0^q))
(2\pi)^{-l}\frac{\omega_{2l+p}}{\omega_{p}}t^{2l}\,.
\end{equation}%
\end{defn}%
\vspace{2.0ex}
\noindent
Thus,
\begin{equation}%
\label{WPACS}%
\omega_{p}t^pW_{\partial{}(V\times{}0^q)}^p(t)=
\sum\limits_{l\in\mathbb{Z}}\frac{s_{2l+1}^{\mathbb{R}^{n+q}}(V\times{}0^q)}{\omega_{2l+1}}\,
\omega_{1}\omega_{2l+p}\,t^{2l+p}\,.
\vspace{2.0ex}
\end{equation}%

\noindent
Let us clarify how the Weyl polynomials for the convex surfaces
\(\partial{}V\) and \(\partial{}(V\times{}0^q)\) are related. \  Here,
we also have to distinguish between even and odd cases for \(q\).
\begin{lem}%
\label{TINVQ}%
Let \(V,\,\,V\subset\mathbb{R}^n,\) be a solid compact convex set,
and let \(p>0,\,q>0\) be integers. Then
 \begin{enumerate}
 \item%
 For even \(q\)
 \begin{equation}
\label{CeQ}%
\omega_pt^p\cdot{}W_{\partial(V\times{}0^q)}^{\,p}(t)=
\omega_{p+q}t^{p+q}\cdot{}W_{\partial{}V}^{p+q}(t)\,;
\end{equation}
 \item%
 For odd \(q\)
\begin{equation}
\label{CoQ}
 \omega_pt^p\cdot{}W_{\partial(V\times{}0^q)}^{\,p}(t)=
\omega_{p+q-1}t^{p+q-1}W_{\hspace*{0.5ex}\partial{}(V\times{}0^1)}^{p+q-1}(t)\,.
\end{equation}
\end{enumerate}
\end{lem}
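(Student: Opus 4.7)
The plan is to verify both identities by unwinding the definition \eqref{WPACS} of the adjoint Weyl polynomial and then applying the key ``intrinsic'' identity \eqref{IntrCo}
\[
\frac{s^{\mathbb{R}^{n+q}}_{k+q}(V\times 0^q)}{\omega_{k+q}}=\frac{s^{\mathbb{R}^{n}}_{k}(V)}{\omega_{k}},
\]
which is a restatement of Theorem \ref{IRPp}. This identity is exactly the tool that ``shifts'' all the index bookkeeping; after applying it, both sides will be written as the same sum in disguise, and the conclusion reduces to a change of summation index.

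First I would start from the definition
\[
\omega_p t^p W^{\,p}_{\partial(V\times 0^q)}(t)=\sum_{l\in\mathbb{Z}}\frac{s^{\mathbb{R}^{n+q}}_{2l+1}(V\times 0^q)}{\omega_{2l+1}}\,\omega_1\omega_{2l+p}\,t^{2l+p},
\]
and apply \eqref{IntrCo} to replace the ratio by $s^{\mathbb{R}^n}_{2l+1-q}(V)/\omega_{2l+1-q}$, with the convention \eqref{Agre} ensuring that terms with negative or oversized index vanish. In the even case, $q=2r$, the substitution $j=l-r$ gives $2l+1-q=2j+1$ and $2l+p=2j+(p+q)$, so that the sum becomes
\[
\sum_j\frac{s^{\mathbb{R}^n}_{2j+1}(V)}{\omega_{2j+1}}\,\omega_1\omega_{2j+p+q}\,t^{2j+p+q},
\]
which is precisely the definition \eqref{WPACS} of $\omega_{p+q}t^{p+q}W^{\,p+q}_{\partial V}(t)$ (with $q=0$, i.e.\ no adjoining). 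This gives \eqref{CeQ}.

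For the odd case, the cleanest route is to recognize $V\times 0^q=(V\times 0^1)\times 0^{q-1}$ as subsets of $\mathbb{R}^{n+q}=\mathbb{R}^{n+1}\times\mathbb{R}^{q-1}$, and to apply the even case (already established, with $q-1$ in place of $q$) to the base $V\times 0^1\subset\mathbb{R}^{n+1}$. This directly yields \eqref{CoQ}. Alternatively one may compute both sides of \eqref{CoQ} by the same unwinding: for $q=2r+1$ the substitution $j=l-r$ converts the left-hand side into $\sum_j\frac{s^{\mathbb{R}^n}_{2j}(V)}{\omega_{2j}}\omega_1\omega_{2j+p+q-1}\,t^{2j+p+q-1}$, while expanding the right-hand side via \eqref{WPACS} and using \eqref{IntrCo} with $q=1$ to rewrite $s^{\mathbb{R}^{n+1}}_{2l+1}(V\times 0^1)/\omega_{2l+1}=s^{\mathbb{R}^n}_{2l}(V)/\omega_{2l}$ gives the identical series.

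No deep obstacle is anticipated: once one accepts the intrinsic-volume identity \eqref{IntrCo}, the proof is a matter of parity analysis and reindexing. The one point requiring care is the parity split, which is forced by the fact that the Weyl polynomial only records coefficients $s_{2l+1}$ of the Steiner polynomial, so adjoining by an even $q$ preserves the parity class while adjoining by an odd $q$ inverts it, necessitating the intermediate object $W^{p+q-1}_{\partial(V\times 0^1)}$ in the odd case.
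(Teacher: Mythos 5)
Your argument is correct. Route (b) for the odd case (direct unwinding of both sides down to the coefficients $s_k^{\mathbb{R}^n}(V)$ via \eqref{IntrCo} and reindexing) coincides with the paper's proof almost verbatim; the paper merely applies \eqref{IntrCo} once to descend from $\mathbb{R}^{n+q}$ to $\mathbb{R}^{n+1}$ and matches against the \eqref{WPACS}-form of $W^{p+q-1}_{\partial(V\times 0^1)}$, whereas you descend both sides to $\mathbb{R}^n$ and compare identical series, which is a cosmetic difference. The even-case computation is also essentially the paper's.

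Your primary route (a) for the odd case — write $V\times 0^q=(V\times 0^1)\times 0^{q-1}$ and invoke the already-proved even case with base $V\times 0^1\subset\mathbb{R}^{n+1}$ — is a cleaner way to organize the proof, since it avoids redoing the parity bookkeeping. The one point that needs a comment is that Lemma \ref{TINVQ} is stated for a \emph{solid} compact convex set $V$, while $V\times 0^1$ is not solid in $\mathbb{R}^{n+1}$, so the even case cannot literally be cited as stated. This is not a real obstruction, because the even-case argument (yours and the paper's) never uses solidity: it is a purely combinatorial identity about Steiner coefficients, and the objects $W^p_{\partial(V\times 0^q)}$ are defined via \eqref{WPACS} without any solidity requirement. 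If you present route (a), either observe explicitly that the even case holds for arbitrary compact convex $V$, or fall back on route (b), which avoids the issue entirely.
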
%
\begin{proof}[Proof of Lemma \ref{TINVQ}]\ \\[1.0ex]
\hspace*{3.0ex}1. Let \(q\) be even. The equality \eqref{IntrCo} with
\(k=2l+1-q\) takes the form
\[\frac{\,s_{\,{2l+1}}^{\mathbb{R}^{n+q}}(V\times{}0^q)}{\omega_{2l+1}}=
\frac{\,s_{\,{2l+1-q}}^{\mathbb{R}^{n}}(V)\,}{\omega_{2l+1-q}}.\]
From this and \eqref{WPACS} it follows
\begin{equation*}%
%\label{WPACSn1}%
\omega_{p}t^pW_{\partial{}(V\times{}0^q)}^p(t)=
\sum\limits_{l\in\mathbb{Z}}\frac{s_{2l+1-q}^{\mathbb{R}^{n}}(V)}{\omega_{2l+1-q}}\,
\omega_{1}\omega_{2l+p}\,t^{2l+p}\,.
\end{equation*}%
Changing the summation variable: \(l\to{}l+\frac{q}{2}\), we
obtain
\begin{equation*}%
%\label{WPACSn12}%
\omega_{p}t^pW_{\partial{}(V\times{}0^q)}^p(t)=
\sum\limits_{l\in\mathbb{Z}}\frac{s_{2l+1}^{\mathbb{R}^{n}}(V)}{\omega_{2l+1}}\,
\omega_{1}\omega_{2l+p+q}\,t^{2l+p+q}\,.
\end{equation*}%
The expression on the right hand side of the latter equality has the
same form as the expression on the right hand side of
\eqref{WPACS}. One need only replace \(V\times{}0\) with \(V\),
\(p\) with \(p+q\), and \(n+q\) with \(n\) in \eqref{WPACS} to see this.
It follows that \eqref{CeQ} holds.\\
\hspace*{3.0ex}2. Let \(q\) be odd. The equality
implies the equality
\[\frac{\,s_{\,{2l+1}}^{\mathbb{R}^{n+q}}(V\times{}0^q)}{\omega_{2l+1}}=
\frac{\,s_{\,{2l+1-(q-1)}}^{\mathbb{R}^{n+1}}(V\times{}0^1)\,}{\omega_{2l+1-(q-1)}}.\]
From this and \eqref{WPACS} it follows that
\begin{equation*}%
%\label{WPACSn1}%
\omega_{p}t^pW_{\partial{}(V\times{}0^q)}^p(t)=
\sum\limits_{l\in\mathbb{Z}}\frac{s_{2l+1-
(q-1)}^{\mathbb{R}^{n+1}}(V\times{}0^1)}{\omega_{2l+1-(q-1)}}\,
\omega_{1}\omega_{2l+p}\,t^{2l+p}\,.
\end{equation*}%
Changing the summation variable: \(l\to{}l+\frac{q-1}{2}\), we
obtain
\begin{equation*}%
%\label{WPACSn12}%
\omega_{p}t^pW_{\partial{}(V\times{}0^q)}^p(t)=
\sum\limits_{l\in\mathbb{Z}}\frac{s_{2l+1}^{\mathbb{R}^{n+1}}(V\times{}0^1)}{\omega_{2l+1}}\,
\omega_{1}\omega_{2l+p+q-1}\,t^{2l+p+q-1}\,.
\end{equation*}%
The expression on the right hand side of the latter equality has the
same structure that the expression on the right hand side of
\eqref{WPACS}. Again, one need only replace \(V\times{}0^q\)
with \(V\times{}0^1\),
\(p\) with \(p+q-1\), and \(n+q\) with \(n+1\). It follows that
\eqref{CoQ} holds. \end{proof}

Lemma \ref{TINVQ} tell us that when considering the
boundary surfaces
\(\partial{}(V\times{}0^q)\) of the \(q\)-th adjoint convex sets
\(V\times{}0^q\), it is enough to
restrict our considerations to the cases \(q=0\) and \(q=1\).
%%%%%%%%%%%%%%%%%%%%%%%%%%%%%%%%%%%%%%%%%%%%%%%%%
\noindent%
 \begin{proof}[Proof of Statement 2 of Theorem \ref{NMR}]
 By Statement 2 of Theorem \ref{str}, the entire function
 \(\mathcal{W}_{\partial{}(B^{\infty}\times{}0)}^{\,p+q-1}\) has
 infinitely many non-real roots. (We assume that \(p+q-1\geq{}5.\)) If \(n\) is large enough,
the Jensen polynomial
\(\mathcal{W}^{p+q-1}_{\partial{}(B^{n+1}\times{}0)}(t){=}
\mathscr{J}_{2[n/2]}(\mathcal{W}^{p+q-1}_{\partial{}(B^{\infty}\times{}0)};t)\)
also has non-real roots. According to \eqref{JReWP2} and
\eqref{WRemOr2}, the Weyl polynomial
\(W^{p+q-1}_{\partial{}(B^{n}\times{}0)}(t)\) has roots which do
not belong to the imaginary axis. By Statement 2 of Lemma
\ref{TINVQ},
\[W_{\partial{}(B^{n}\times{}0^q)}^{\,p}
={\textstyle\frac{\omega_{p+q-1}}{\omega_p}t^{q-1}}W_{\partial{}(B^{n}\times{}0)}^{\,p+q-1}\,.\]
Thus, the Weyl polynomial
\(W_{\partial{}(B^{n}\times{}0^q)}^{\,p}\) has roots which do not
belong to the imaginary axis. For fixed \(\,q,\,n\) and a positive
\(\varepsilon\), consider the ellipsoid
\(E_{n,\,q,\,\varepsilon}\) defined by \eqref{Ell}. Since
\(E_{n,\,q,\,\varepsilon}\to{}B^{n}\times{}0^q\) as
\(\varepsilon\to+0\), also \(W_{E_{n,\,q,\,\varepsilon}}^p
\to{}W_{\partial{}(B^{n}\times{}0^q)}^{\,p}\) as
\(\varepsilon\to+0\). Hence, if \(\varepsilon\) is small enough:
\(0<\varepsilon\leq{}\varepsilon(n,p,q)\), the polynomial
\(W_{E_{n,\,q,\,\varepsilon}}\) has roots which do not belong to
the imaginary axis.
 \end{proof}

\section{The Steiner polynomial associated with the Cartesian
 product\newline of convex sets.\label{MPCaPr}}
%%%%%%%%%%%%%%%%%%%%%%%%%%%%%%%%%%%%%%%%%%%%%%%%%%%%%%%%%%%%%%%%%%%%%%%%
Let \(V_1\) and \(V_2\) be compact convex sets,
\[V_1\subset{}\mathbb{R}^{n_1},\ \  V_2\subset{}\mathbb{R}^{n_2}.\]
 Then the Cartesian product \(V_1\times{}V_2\)
is a compact convex set embedded in
the Cartesian product \(\mathbb{R}^{n_1}\times{}\mathbb{R}^{n_2}\).
Since \(\mathbb{R}^{n_1}\times{}\mathbb{R}^{n_2}\)
can be naturally identified with \(\mathbb{R}^{n_1+n_2}\),
we can consider \(V_1\times{}V_2\) as being embedded in \(\mathbb{R}^{n_1+n_2}\):
 \[V_1\times{}V_2\subset{}\mathbb{R}^{n_1+n_2}.\]
The natural question arises:\\ %
 \textit{How the Steiner polynomial
\(S_{\,\,V_1\times{}V_2}^{\mathbb{R}^{n_1+n_2}}\) for the
Cartesian product \(V_1\times{}V_2\) can be expressed in terms of the Steiner
polynomials\footnote{The Steiner polynomials
\(S_{V_1},\,S_{V_2},\,S_{V_1\times{} V_2}\) are considered with
respect to the ambient spaces
\(\mathbb{R}^{n_1},\,\mathbb{R}^{n_2},\,\mathbb{R}^{n_1+n_2}\)
respectively.}
\(S_{\,\,V_1}^{\mathbb{R}^{n_1}}\) and \(S_{\,\,V_2}^{\mathbb{R}^{n_2}}\) for the Cartesian factors \(V_1\)  and \(V_2\) ?} \\
To answer this question, we introduce a special multiplication operation in the set
of polynomials, the so-called \textit{M-product}. %
\begin{defn} %
\label{DMMul}%
The \textsf{\textsf{M-product} \(t^k\Mm{}t^l\)}
of two monomials \(t^k\) and \(t^l\)
is defined as
\begin{equation}%
\label{Mmm}%
t^k\Mm{}t^l\stackrel{\textup{\tiny{}def}}{=}
{\frac{\Gamma\big(\frac{k}{2}+1\big)\Gamma\big(\frac{l}{2}+1\big)}%
{\Gamma\big(\frac{k+l}{2}+1\big)}}\,t^{k+l}\,,\ \  k\geq{}0,\,l\geq{}0.
\end{equation}%
\textup{\small It is clear that}
\begin{equation}%
\label{PrMM}
\textup{a). }t^0\Mm{}t^k=t^k,\ \ \textup{b). } t^k\Mm{}t^l=t^l\Mm{}t^k,\ \
\textup{c). }(t^k\Mm{}t^l)\Mm{}t^m=t^k\Mm{}(t^l\Mm{t^m}).
\end{equation}%
{\small{}The M-multiplication \eqref{Mmm} of monomials can
 be extended to the multiplication
of polynomials by linearity:}
\begin{subequations}
\label{MMult}
\begin{gather}%
\label{MMult1}
\textup{For\ \ } A(t)=\sum\limits_{0\leq k\leq n_{{}_{1}}}a_kt^k,\ \ \
B(t)=\sum\limits_{0\leq l\leq n_{{}_{2}}}b_lt^l,\\
(A \,\Mm{}B)(t)=\sum\limits_{\substack{0\leq k\leq n_1,\\
0\leq l\leq n_2}}\hspace*{-1.0ex}a_kb_l(t^k\Mm{}t^l)=
{\sum\limits_{\substack{0\leq k\leq n_1,
0\leq l\leq n_2}}a_kb_l{\frac{\Gamma\big(\frac{k}{2}+1\big)\Gamma\big(\frac{l}{2}+1\big)}%
{\Gamma\big(\frac{k+l}{2}+1\big)}}\,t^{k+l}}.
\notag
\end{gather}
Finally, the \textsf{M-product \(A\Mm{}B\)
of the polynomials} \(A\) and \(B\)
 is defined as
\begin{equation}%
\label{MMult2}
(A \,\Mm{}B)(t)=\sum\limits_{0\leq r\leq{}n_1+n_2}
\bigg(\sum\limits_{\substack{k\geq{}0,\,l\geq{}0,
k+l=r}}\hspace*{-1.5ex}
a_k\,b_l\,{\frac{\Gamma\big(\frac{k}{2}+1\big)\Gamma\big(\frac{l}{2}+1\big)}%
{\Gamma\big(\frac{k+l}{2}+1\big)}}\bigg)\,t^r\,.
\end{equation}%
\end{subequations}
\end{defn} %
\vspace{4.0ex}
\noindent
From (\ref{PrMM}.b) and (\ref{PrMM}.c) it follows that
\[A\Mm{}B=B\Mm{}A,\quad (A\Mm{}B)\Mm{}C=A\Mm(B\Mm{}C)
\]
for any polynomials \(A,\,B,\,C\). In particular, the  product
\(A\Mm{}B\Mm{}C\) of \(A,\,B,\,C\) is well defined. This product can
be explicitly expressed in terms of the coefficients of its factors: if
\begin{equation*}
A(t)=\sum\limits_{0\leq k\leq n_1}a_kt^k,\ \
B(t)=\sum\limits_{0\leq l\leq n_2}b_lt^l,\ \
C(t)=\sum\limits_{0\leq m\leq n_3}c_mt^m,\ \
\end{equation*}
then
\begin{equation*}
(A\Mm{}B\Mm{}C)(t)=\hspace*{-3.5ex}
\sum\limits_{0\leq r\leq{}n_1+n_2+n_3}\hspace*{-1.0ex}
\bigg(\sum\limits_{\substack{k\geq{}0,\,l\geq{}0,\,m\geq{}0\\
k+l+m=r}}\hspace*{-2.5ex}
a_k\,b_l\,c_m{\frac{\Gamma\big(\frac{k}{2}+1\big)\Gamma\big(\frac{l}{2}+1\big)%
\Gamma\big(\frac{m}{2}+1\big)}%
{\Gamma\big(\frac{k+l+m}{2}+1\big)}}\bigg)\,t^r\,.
\end{equation*}
It is clear that for every number \(\lambda\)
and for any polynomials \(A\) and \(B\),
\[(\lambda{}A)\Mm{}B=\lambda(A\Mm{}B).\]
Moreover, if
\[\mathbb{I}(t)\equiv 1,\ \ \mathbb{T}(t)\equiv{}t,\] then
\[\mathbb{I}\Mm{}A=A.\]
Thus, \textit{the polynomial \(\mathbb{I}\) is the identity element
for the \(M\)-product}\\
It is worth mentioning that
\begin{equation}%
\label{Pon}%
t^{(\Mm{}k)}\stackrel{\textup{\tiny{}def}}{=}
\underbrace{t\Mm{}t\Mm{}\,\cdots\,\Mm{}t}_{k}=
\frac{(\sqrt{\pi}/2)^k}{\Gamma(\frac{k}{2}+1)}t^k.
\end{equation}%
\begin{rem} %
M-multiplication
by the polynomial \(\mathbb{T}\)
 is related to the transformation \eqref{MTP}:
\begin{subequations}
\label{UInt}
\begin{align}
\textup{If }\hspace{22.0ex} A(t)&=\phantom{2^{-p}t^p}\sum\limits_{0\leq k\leq n} a_kt^k,\\
\textup{then }\hspace{5.5ex}%
(\underbrace{\mathbb{T}\Mm{}\,\cdots\,\Mm{}\mathbb{T}}_{p}\Mm{}A)(t)
&=2^{-p}t^p\sum\limits_{0\leq k\leq n} a_k\gamma_{\,\,k}^{(p)}t^k,
\end{align}
\end{subequations}
where the `multipliers' \(\gamma_{\,\,k}^{(p)}\) are defined by \eqref{MuSe}.\\
\end{rem}
\begin{lem} %
\label{InReMP}
The M-product \(A\Mm{}B\) of polynomials \(A\) and \(B\) admits
the integral\footnote{The integrals on the right hand sides of \eqref{IRTP}
are Stieltjes integrals.} representation\footnote{At least, for \(t>0\).}
\begin{subequations}
\label{IRTP}
\begin{equation}
\label{IRTP1}
(A\Mm{}B)(t)=A(0)B(t)+\int\limits_{0}^{t}A\big((t^2-\tau^2)^{1/2}\big)\,dB(\tau)\,,
\end{equation}
as well as
\begin{equation}
\label{IRTP2}
(A\Mm{}B)(t)=A(t)B(0)+\int\limits_{0}^{t}B\big((t^2-\tau^2)^{1/2}\big)\,dA(\tau)\,.
\end{equation}
\end{subequations}
\end{lem}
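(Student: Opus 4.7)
The plan is to reduce the identity to the monomial case by bilinearity and then compute the integral using a beta function substitution.

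First I would observe that both sides of \eqref{IRTP1} are bilinear in the pair $(A,B)$: the left-hand side is bilinear by the very definition \eqref{MMult}, and the right-hand side is bilinear because the Stieltjes integral is linear in the integrand and in the integrator, and because $A \mapsto A(0)$ and $A \mapsto A(\cdot)$ are linear. Hence it suffices to verify \eqref{IRTP1} when $A(t) = t^k$ and $B(t) = t^l$ are monomials ($k \geq 0$, $l \geq 0$).

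For monomials, the left-hand side is $t^k \Mm t^l = \dfrac{\Gamma(k/2+1)\,\Gamma(l/2+1)}{\Gamma((k+l)/2+1)}\,t^{k+l}$ by \eqref{Mmm}. For the right-hand side in the generic case $k \geq 1$, $l \geq 1$, the boundary term vanishes since $A(0) = 0$, and $dB(\tau) = l\tau^{l-1}d\tau$, so the integral becomes
\begin{equation*}
\int_{0}^{t}(t^2-\tau^2)^{k/2}\,l\tau^{l-1}\,d\tau.
\end{equation*}
The key calculation is to substitute $\tau = t\sqrt{1-\xi}$ (equivalently $\xi = 1 - \tau^2/t^2$), which converts the integral to the Euler beta integral
\begin{equation*}
\frac{l}{2}\,t^{k+l}\int_{0}^{1}\xi^{k/2}(1-\xi)^{l/2-1}\,d\xi = \frac{l}{2}\,t^{k+l}\,\text{B}\!\left(\tfrac{k}{2}+1,\tfrac{l}{2}\right) = t^{k+l}\,\frac{\Gamma(k/2+1)\,\Gamma(l/2+1)}{\Gamma((k+l)/2+1)},
\end{equation*}
using the identity $(l/2)\,\Gamma(l/2) = \Gamma(l/2+1)$. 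This agrees with the left-hand side. The degenerate cases where $A$ or $B$ is constant ($k=0$ or $l=0$) have to be checked separately; in these cases either the integral vanishes or collapses trivially, and the boundary term $A(0)B(t)$ (read correctly, taking into account whether a mass of $B$ at the origin is included in the Stieltjes integral) supplies the missing contribution. The interpretation of $dB(\tau)$ at $\tau=0$ for Stieltjes integration, flagged in the footnote, is precisely what makes these edge cases work.

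Finally, \eqref{IRTP2} is obtained immediately from \eqref{IRTP1} by swapping $A$ and $B$, using the commutativity (\ref{PrMM}.b) of M-multiplication, $A \Mm B = B \Mm A$. The main technical obstacle is simply the bookkeeping of the boundary term and the correct beta function manipulation for monomials; conceptually the proof is just one substitution.
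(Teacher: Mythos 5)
Your reduction to monomials by bilinearity is sound, and the beta-function substitution for $k\geq 1,\ l\geq 1$ is correct and is essentially the same integral the paper computes. The problem lies in the degenerate cases, which you waved through but which genuinely fail for the formula as printed. Take $A\equiv 1$: then $A(0)=1$ and $A((t^2-\tau^2)^{1/2})\equiv 1$, so the right-hand side of \eqref{IRTP1} is $B(t)+\int_0^t dB(\tau)=2B(t)-B(0)$, whereas $(1\Mm B)(t)=B(t)$ by \eqref{PrMM}. Take $B\equiv 1$: then $dB\equiv 0$ and the right-hand side collapses to $A(0)$, whereas $(A\Mm 1)(t)=A(t)$. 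Since $B$ is a polynomial, $\int_0^t f\,dB$ is unambiguously $\int_0^t f(\tau)B'(\tau)\,d\tau$, so there is no ``mass at the origin'' that your caveat could invoke. Both edge cases contradict the stated identity; no bookkeeping inside a proof can recover it.

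What your monomial computation actually exposes is a transposition in the statement: the boundary term in \eqref{IRTP1} should be $B(0)A(t)$, not $A(0)B(t)$, and correspondingly $A(0)B(t)$, not $A(t)B(0)$, in \eqref{IRTP2}. In the coefficient picture, the Stieltjes integral in \eqref{IRTP1} supplies exactly the terms with $l\geq 1$ and arbitrary $k\geq 0$, with the correct coefficient ${\Gamma(\frac{k}{2}+1)\Gamma(\frac{l}{2}+1)}/{\Gamma(\frac{k+l}{2}+1)}$ including the $k=0$ column; the boundary term must therefore contribute precisely the missing row $l=0$, which is $\sum_k a_k b_0 t^k=B(0)A(t)$. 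Writing $A(0)B(t)$ instead both omits that row and double-counts the $k=0$ column already inside the integral---exactly the discrepancy $A(0)B(t)-B(0)A(t)$ seen in the two edge cases above. With $B(0)A(t)$ in place, all your cases close: $l=0$ gives $t^k$ from the boundary, and $k=0,\ l\geq 1$ gives $\int_0^t l\tau^{l-1}\,d\tau=t^l$ from the integral with vanishing boundary. The corrected form is also the one the paper actually needs later: \eqref{IRepr} reads $S_{V_1}(0)S_{V_2}(t)+\int_0^t S_{V_2}((t^2-\tau^2)^{1/2})\,dS_{V_1}(\tau)$, which is the corrected \eqref{IRTP2} with $A=S_{V_1}$, $B=S_{V_2}$. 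The paper's printed proof of the lemma shares the same slip, so do not treat the source as a safe cross-check; the point is that ``the edge cases work'' must be earned by writing them out, especially when the statement itself flags a Stieltjes subtlety in a footnote.
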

\begin{proof}
 First of all, the expressions on the right hand sides of
\eqref{IRTP} are equal: Integrating by parts and replacing the
variable \(\tau\to(t^2-\tau^2)^{1/2}\), we obtain
\[A(0)B(t)+\int\limits_{0}^{t}A\big((t^2-\tau^2)^{1/2}\big)\,dB(\tau)=
A(t)B(0)+\int\limits_{0}^{t}B\big((t^2-\tau^2)^{1/2}\big)\,dA(\tau).\]
The expressions on the right hand sides
of \eqref{IRTP}, which, at the first glance, are asymmetric with respect
 to \(A\) and \(B\), are therefore actually symmetric. Let
 \begin{equation*}
 \label{ETC}
 A(t)=\sum\limits_{0\leq k\leq n_{{}_{1}}}a_kt^k,\ \ \
B(t)=\sum\limits_{0\leq l\leq n_{{}_{2}}}b_lt^l
\end{equation*}
be the expressions for the polynomials \(A\) and \(B\) in terms of their coefficients.
Let us substitute these polynomials into the right hand side
of \eqref{IRTP1}:
\begin{gather*}
A(0)B(t)+\int\limits_{0}^{t}A\big((t^2-\tau^2)^{1/2}\big)\,dB(\tau)=\\
a_0\sum\limits_{0\leq l\leq n_{{}_{2}}}b_lt^l+
\int\limits_0^t\bigg(\sum\limits_{0\leq k\leq n_{{}_{1}}}%
a_k(t^2-\tau^2)^{k/2}\bigg)\cdot %
\bigg(\sum\limits_{1\leq l\leq n_{{}_{2}}}l\,b_{l}\tau^{l-1}\bigg)\,d\tau=
\notag\\
\sum\limits_{0\leq l\leq n_{{}_{2}}}a_0b_lt^l+
\sum\limits_{\substack{0\leq k\leq{}n_1\\
1\leq{}l{}\leq{}n_2}}a_k{}b_l\cdot{}l\int\limits_0^t(t^2-\tau^2)^{k/2}\tau^{l-1}d\tau\,.
\notag
\end{gather*}
Changing variable \(\tau\to{}t\tau^{1/2}\), we get
\begin{multline*}%
l\int\limits_0^t(t^2-\tau^2)^{k/2}\tau^{l-1}d\tau
=t^{k+l}(l/2)\int\limits_0^1(1-\tau)^{k/2}\tau^{l/2-1}d\tau
=\\
t^{k+l}(l/2) B\Big(\frac{k}{2}+1;\frac{l}{2}\Big)\,.
\end{multline*}%
Now, according to Euler,
\[(l/2) B\Big(\frac{k}{2}+1;\frac{l}{2}\Big)=
\frac{\Gamma\big(\frac{k}{2}+1\big)\frac{l}{2}\Gamma\big(\frac{l}{2}\big)}%
{\Gamma\big(\frac{k+l}{2}+1\big)}=
\frac{\Gamma\big(\frac{k}{2}+1\big)\Gamma\big(\frac{l}{2}+1\big)}%
{\Gamma\big(\frac{k+l}{2}+1\big)}\,.\] Thus, the right hand side
of \eqref{IRTP1} can be transformed into the right hand side of
\eqref{MMult2}.
\end{proof}
\begin{thm}
\label{MPCarP}
Given the compact convex sets \(V_1\) and \(V_2\),
\(V_1\subset{}\mathbb{R}^{n_1},  V_2\subset{}\mathbb{R}^{n_2},\)
let \(S_{\,V_1}^{\mathbb{R}^{n_1}}(t),\,S_{\,V_2}^{\mathbb{R}^{n_2}}(t)\)
be the Steiner polynomials for the sets
\(V_1\) and \(V_2\). Then the Steiner polynomial
\(S_{\,\,V_1\times{}V_2}^{\mathbb{R}^{n_1+n_2}}\) of the Cartesian product \(V_1\times{}V_2\)
 is equal to the M-product
of  the polynomials \(S_{V_1}^{\mathbb{R}^{n_1}}\) and \(S_{V_2}^{\mathbb{R}^{n_12}}\):
\begin{equation}
\label{MPCP}%
S_{\,\,V_1\times{}V_2}^{\mathbb{R}^{n_1+n_2}}
=S_{\,V_1}^{\mathbb{R}^{n_1}}\Mm{}S_{\,V_2}^{\mathbb{R}^{n_2}}\,.
\end{equation}%
\vspace{1.0ex}
\end{thm}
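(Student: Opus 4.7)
The plan is to reduce the Cartesian product problem to the integral representation supplied by Lemma \ref{InReMP}. The key geometric observation is that the ambient Euclidean metric on \(\mathbb{R}^{n_1}\times\mathbb{R}^{n_2}\) is the Pythagorean combination of the factor metrics, so for any \((x,y)\in\mathbb{R}^{n_1}\times\mathbb{R}^{n_2}\),
\[
\mathrm{dist}_{\mathbb{R}^{n_1+n_2}}^{2}\bigl((x,y),\,V_1\times V_2\bigr)
=\mathrm{dist}_{\mathbb{R}^{n_1}}^{2}(x,V_1)+\mathrm{dist}_{\mathbb{R}^{n_2}}^{2}(y,V_2).
\]
Writing \(F(x)=\mathrm{dist}_{\mathbb{R}^{n_1}}(x,V_1)\) and \(G(y)=\mathrm{dist}_{\mathbb{R}^{n_2}}(y,V_2)\), the \(t\)-neighbourhood of \(V_1\times V_2\) is the set where \(F(x)^2+G(y)^2\le t^2\). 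Thus by Fubini,
\[
S_{V_1\times V_2}^{\mathbb{R}^{n_1+n_2}}(t)=\int_{\{F\le t\}}
\mathrm{Vol}_{n_2}\Bigl\{y:\,G(y)\le\sqrt{t^{2}-F(x)^{2}}\Bigr\}\,dx,
\]
and for each \(x\) with \(F(x)=s\le t\) the inner volume equals precisely \(S_{V_2}^{\mathbb{R}^{n_2}}(\sqrt{t^{2}-s^{2}})\).

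Next I would carry out the change of variable from \(x\) to the level parameter \(s=F(x)\). Because \(\{x:F(x)\le s\}=V_1+sB^{n_1}\) has volume \(S_{V_1}^{\mathbb{R}^{n_1}}(s)\), the push-forward measure on \([0,\infty)\) consists of a point mass \(S_{V_1}^{\mathbb{R}^{n_1}}(0)\) at \(s=0\) (arising from the positive \(n_1\)-volume of \(V_1\) itself) together with the absolutely continuous part having density \(\frac{d}{ds}S_{V_1}^{\mathbb{R}^{n_1}}(s)\) on \((0,\infty)\). Therefore
\[
S_{V_1\times V_2}^{\mathbb{R}^{n_1+n_2}}(t)=
S_{V_1}^{\mathbb{R}^{n_1}}(0)\,S_{V_2}^{\mathbb{R}^{n_2}}(t)+
\int_{0}^{t}S_{V_2}^{\mathbb{R}^{n_2}}\bigl(\sqrt{t^{2}-s^{2}}\bigr)\,dS_{V_1}^{\mathbb{R}^{n_1}}(s).
\]
By Lemma \ref{InReMP} applied with \(A=S_{V_1}^{\mathbb{R}^{n_1}}\) and \(B=S_{V_2}^{\mathbb{R}^{n_2}}\), the right-hand side is exactly \((S_{V_1}^{\mathbb{R}^{n_1}}\Mm S_{V_2}^{\mathbb{R}^{n_2}})(t)\), which proves \eqref{MPCP} for all \(t>0\) and hence, by polynomial identity, for all \(t\in\mathbb{C}\).

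The one delicate point—and the main obstacle—is the bookkeeping of the boundary contribution at \(s=0\): the distribution function \(s\mapsto\mathrm{Vol}_{n_1}\{F\le s\}\) jumps by \(S_{V_1}^{\mathbb{R}^{n_1}}(0)=\mathrm{Vol}_{n_1}(V_1)\) there whenever \(V_1\) is solid, and this jump is precisely what produces the endpoint term \(A(0)B(t)\) in the integral representation \eqref{IRTP1}. One must be careful to match the Stieltjes convention used in Lemma \ref{InReMP} (whether \(dA\) includes or excludes the mass at the origin) with the co-area form of Fubini; once the conventions are aligned, the identification with the M-product is immediate. For non-solid \(V_1\) or \(V_2\) the same equality follows by the continuity of the map \(V\mapsto S_V\) with respect to the Hausdorff metric, approximating the factors by solid sets and passing to the limit coefficient by coefficient.
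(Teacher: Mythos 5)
Your proposal is correct and follows essentially the same route as the paper: both proofs start from the Pythagorean decomposition of the distance to the product, reduce to the Stieltjes-integral identity
\[
S_{V_1\times V_2}^{\mathbb{R}^{n_1+n_2}}(t)
= S_{V_1}^{\mathbb{R}^{n_1}}(0)\,S_{V_2}^{\mathbb{R}^{n_2}}(t)
+\int_0^t S_{V_2}^{\mathbb{R}^{n_2}}\bigl(\sqrt{t^2-\tau^2}\bigr)\,dS_{V_1}^{\mathbb{R}^{n_1}}(\tau),
\]
and then invoke Lemma \ref{InReMP}. The only difference is how this identity is obtained: you use Fubini together with the push-forward of Lebesgue measure under the distance function \(F(x)=\dist(x,V_1)\), whereas the paper proves it more elementarily by sandwiching \((V_1\times V_2)+tB^{n_1+n_2}\) between two finite unions of products indexed by a partition \(0=\tau_0<\cdots<\tau_N=t\) and passing to the Riemann--Stieltjes limit; your version is terser but relies on knowing that the level-set distribution function of \(F\) is exactly \(S_{V_1}^{\mathbb{R}^{n_1}}\), which is the same content spelled out differently. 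Your identification of the atom at \(s=0\) as the source of the boundary term \(A(0)B(t)\) in Lemma \ref{InReMP} is exactly the right bookkeeping, and your closing remark about approximating non-solid factors by solid ones is a reasonable (though, since the Fubini argument already works verbatim when \(\mathrm{Vol}(V_i)=0\), not strictly necessary) safeguard.
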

\noindent
A sketch of the proof for this theorem can be found in \cite[Chapter VI, Section 6.1.9]{Had}.
A detailed proof is presented below.
\begin{rem} Let \(S\) be the subset of \(\mathbb{R}^1\) containing only
the origin point, i.e.,
 \(S=\{t\in\mathbb{R}^1:t=0\}\).
Then \(S_S(t)=2t\), that is,
\begin{equation}%
\label{MOPS}
S_S(t)=2\,\mathbb{T}(t).
\end{equation}%
Let \(V\) be a compact convex set embedded into \(\mathbb{R}^{n}\).
 The Cartesian product \(V\times{}\underbrace{S\,\times\,\cdots\,\times\,S}_p\)
 can be identified with the
 convex set
\(V\times{}0^p,\,\, V\times{}0^p\subset\mathbb{R}^{n+p}\).
Thus,
 \[S_{V\times{}\underbrace{\scriptstyle{}S\,\times\,\cdots\,\times\,S}_p}(t)
 =S_{\,\,V\times{}0^p}^{\mathbb{R}^{n+p}}(t)\,,\]
 or
 \begin{equation}
 \label{OInt}%
 2^p(\underbrace{\mathbb{T}\Mm\,\cdots\,\Mm{}\mathbb{T}}_p)\Mm{}S_{\,V}^{\mathbb{R}^{n}}
 =S_{\,\,V\times{}0^p}^{\mathbb{R}^{n+p}}.
 \end{equation}
In view of \eqref{UInt} and \eqref{MOPS}, the equality \eqref{OInt}
 is another form of the equality \eqref{MPn2}.\\
\end{rem}
\begin{proof}[Proof of Theorem \ref{MPCarP}] Let
\[V=V_1\times{}V_2\,.\]
 According to the
identification
\(\mathbb{R}^{n_1+n_2}=\mathbb{R}^{n_1}\times{}\mathbb{R}^{n_2}\),
we present a point \(x\in\mathbb{R}^{n_1+n_2}\) as a pair
\(x=(x_1,\,x_2)\), where \(x_1\in\mathbb{R}^{n_1},\
x_2\in\mathbb{R}^{n_2}\). It is clear that
\begin{equation}%
\label{PiThe}%
 \dist_{\mathbb{R}^{n_1+n_2}}^2(x,\,V)=
\dist_{\mathbb{R}^{n_1}}^2(x_1,\,V_1)+\dist_{\mathbb{R}^{n_2}}^2(x_2,\,V_2).
\end{equation}
For \(\tau>0,\,\tau^{\prime}>0,\,\tau^{\prime\prime}>0\), let
\(V(\tau), V_1(\tau^{\prime})\) and \(V_2(\tau^{\prime\prime})\)
be the \(\tau\)-neighborhood of \(V\) with respect to
\(\mathbb{R}^{n_1+n_2}\), the \(\tau^{\prime}\)-neighborhood of
\(V_1\) w.\,r.\,t. \(\mathbb{R}^{n_1}\) and
\(\tau^{\prime\prime}\)-neighborhood of \(V_2\) w.\,r.\,t.
\(\mathbb{R}^{n_2}\) respectively:
\begin{gather*}
V(\tau)=V+\tau{}B^{n_1+n_2},\ \
V_1(\tau^{\prime})=V_1+{\tau}^{\prime}B^{n_1},\ \ %
V_2(\tau^{\prime\prime})=V_2+\tau^{\prime\prime}B^{n_2}; \\
V,\,B^{n_1+n_2}\subset\mathbb{R}^{n_1+n_2};\quad{}
V_1,\,B^{n_1}\subset\mathbb{R}^{n_1};\quad{}
V_2,\,B^{n_2}\subset\mathbb{R}^{n_2}.
\end{gather*}
Here \(B^{n_1+n_2}\), \(B^{n_1}\), \(B^{n_2}\) denote
the Euclidean balls of the radius one in
\(\mathbb{R}^{n_1+n_2}\), \(\mathbb{R}^{n_1}\), \(\mathbb{R}^{n_2}\),
 respectively.

Given a number \(t,\,t>0\), consider the \(t\)-neighborhood
\(V(t)\) of \(V=V_{1}\times{}V_2\), and let
\[0=\tau_0<\tau_1<\,\dots\,<\tau_{N-1}<\tau_N=t\]
be a subdivision of the interval \([0,\,t]\). From \eqref{PiThe} it
follows that
\begin{align}%
\label{DoIn}%
\big(V_1(0)\times{}V_2(t)\big)%
\cup&\Big(\bigcup_{1\leq{}k\leq{}N}\big(V_1(\tau_k)%
\setminus{}V_1(\tau_{k-1})\big)\times{}V_2\big((t^2-\tau_{k}^2)^{1/2
}\big)\Big)
\notag\\
&\subseteq
V(t)\subseteq\\
\big(V_1(0)\times{}V_2(t)\big)%
\cup&\Big(\bigcup_{1\leq{}k\leq{}N}\big(V_1(\tau_k)
\setminus{}V_1(\tau_{k-1})\big)\times{}V_2\,%
\big((t^2-\tau_{k-1}^2)^{1/2}\big)\Big)\notag\,.
\end{align}
Since \(V_1(\tau_k)\supseteq{}V_1(\tau_{k-1})\),
\[\textup{Vol}_{n_1}\big(V_1(\tau_k)
\setminus{}V_1(\tau_{k-1}\big)=
{\textup{Vol}}_{n_1}\big(V_1(\tau_k)\big)-\textup{Vol}_{n_1}\big(V_1(\tau_{k-1})\big),\]
thus
\begin{multline*}
\textup{Vol}_{\,n_1+n_2}\Big(\big(V_1(\tau_k)
\setminus{}V_1(\tau_{k-1})\big)\times{}V_2\,%
\big((t^2-\tau_{\,l}^2)^{1/2}\big)\Big)=\\
\Big({\textup{Vol}}_{n_1}\big(V_1(\tau_k)\big)-
\textup{Vol}_{n_1}\big(V_1(\tau_{k-1})\big)\Big)\cdot
\textup{Vol}_{n_2}\Big(V_2\big((t^2-\tau_{\,l}^2)^{1/2}\big)\Big),\
\\
 l=k-1\textup{ or }l=k.
\end{multline*}
Moreover,
\[\textup{Vol}_{\,n_1+n_2}\Big(V_1(0)\times{}V_2(t)\Big)=
\textup{Vol}_{\,n_1}\big(V_1(0)\big)\cdot{}\textup{Vol}_{\,n_2}\big(V_2(t)\big)\,.\]
In the notation of Steiner polynomials, the latter equalities take the form
\begin{subequations}
\label{PrFo}
\begin{gather}
\textup{Vol}_{\,n_1+n_2}\Big(\big(V_1(\tau_k)
\setminus{}V_1(\tau_{k-1})\big)\times{}%
V_2\,\big((t^2-\tau_{\,l}^2)^{1/2}\big)\Big)=\notag{}\\
\Big(S_{V_1}(\tau_k)-S_{V_1}(\tau_{k-1})\Big)\cdot{}S_{V_2}\big((t^2-\tau_{\,l}^2)^{1/2}\big),
\quad{}l=k-1\textup{ or }l=k,\\
\textup{Vol}_{\,n_1+n_2}\Big(V_1(0)\times{}V_2(t)\Big)=S_{V_1}(0)\cdot{}S_{V_2}(t)\,,
\end{gather}
and
\begin{equation}
\textup{Vol}_{\,n_1+n_2}\big(V(t)\big)=S_{V}(t).
\end{equation}
\end{subequations}
Since the sets \(V_1(\tau_k)\setminus{}V_1(\tau_{k-1})\) do not intersect
for pairwise different \(k\), and none of these sets intersects with the set
\(V_1(0)\),
it follows from \eqref{DoIn} and \eqref{PrFo}
that
\begin{gather}
S_{V_1}(0)\cdot{}S_{V_2}(t)+
\sum\limits_{1\leq{}k\leq{}N}\Big(S_{V_1}(\tau_k)-S_{V_1}(\tau_{k-1})\Big)%
\cdot{}S_{V_2}\big((t^2-\tau_{k}^2)^{1/2}\big)\notag{}\\
\label{FBoS}%
\leq{}S_{V}(t)\leq{}\\
S_{V_1}(0)\cdot{}S_{V_2}(t)+
\sum\limits_{1\leq{}k\leq{}N}\Big(S_{V_1}(\tau_k)-S_{V_1}(\tau_{k-1})\Big)%
\cdot{}S_{V_2}\big((t^2-\tau_{k-1}^2)^{1/2}\big)\notag{}\,.
\end{gather}
Passing to the limit as \(\max{(\tau_{k}-\tau_{k-1})}\to{}0\) in
the latter inequality, we express the Steiner polynomial
\(S_V(t)\) as the Stieltjes integral
\begin{equation}
\label{IRepr}
S_V(t)=S_{V_1}(0)\cdot{}S_{V_2}(t)+%
\int\limits_{0}^tS_{V_2}\big((t^2-\tau^2)^{1/2}\big)\,dS_{V_1}(\tau)\,.
\end{equation}
According to Lemma \ref{InReMP}, the expression on the right hand
side of \eqref{IRepr} is equal to
\(\big(S_{V_1}\Mm{}S_{V_2}\big)(t)\).\end{proof}
%%%%%%%%%%%%%%%%%%%%%%%%%%%%%%%%%%%%%%%%%%%%%%%%%%%%%%%%%%%%%%%%%%%%%%%%

\section{
 Properties of entire functions generating
  the Steiner and Weyl
polynomials for the degenerate convex sets
\(\boldsymbol{B^{n+1}\times{}0^q}\). \label{LocRoot}}
%%%%%%%%%%%%%%%%%%%%%%%%%%%%%%%%%%%%%%%%%%%%%%%%%%%%%%%%%%%%%%%%%%%%%%%%
%%%%%%%%%%%%%%%%%%%%%%%%%%%%%%%%%%%%%%%%%%%%%%%%%%%%%%%%%%%%%%%
In this section we investigate the locations of roots for entire
functions generating the Steiner and Weyl polynomials related to
the `degenerate' convex sets \(B^{n+1}\times{}0^q\). Entire
functions of this kind are
\begin{itemize}
\item
The entire functions \(\mathcal{M}_{B^{\infty}\times{}0^q}\) which
appear in \eqref{JPMEb}, in particular for \(q=1\) in
\eqref{LiEFMPS2} .
\item
The entire function
\(\mathcal{W}_{\partial{}(B^{\infty}\times{}0)}^{\,p},\ \
1\leq{}p<\infty\), which appears in \eqref{ReWP3}\,.
\item
The entire function
\(\mathcal{W}_{\partial{}(B^{\infty}\times{}0)}^{\,\infty}\) which
appears in \eqref{ReWP4}\,.
\end{itemize}
The functions \(\mathcal{M}_{B^{\infty}\times{}0^q}\),
\(\mathcal{W}_{\partial{}(B^{\infty}\times{}0)}^{\,p},\ \
1\leq{}p<\infty\) can not be determined explicitly (except for
some special values of the parameters \(p\) or \(q\)), but they
can be calculated asymptotically.

The above-mentioned functions admit integrable representations:
\begin{equation}%
\label{MInt}%
\mathcal{M}_{B^{\infty}\times{}0^q}(t)
=q\int\limits_{0}^{1}(1-\xi^2)^{\frac{q}{2}-1}\xi\,{}e^{\xi{}t}\,d\xi\,;
\end{equation}%
\begin{equation}%
\label{WInt}%
\mathcal{W}_{\partial{}(B^{\infty}\times{}0)}^{\,p}(t)= %
p\int\limits_{0}^{1}(1-\xi^2)^{\frac{p}{2}-1}\xi\cos{}t\xi\,d\xi\,;
\end{equation}%
These integral representations can be used for the asymptotic
calculation of the functions
\(\mathcal{M}_{B^{\infty}\times{}0^q}\),
\(\mathcal{W}_{\partial{}(B^{\infty}\times{}0)}^{\,p}\).

Another way to calculate the functions
\eqref{MInt},\,\eqref{WInt} asymptotically is to use the structure
of their respective Taylor series. The Taylor coefficients of each of these
functions are ratios of factorials. These functions belong to the
so-called class of \textit{Fox-Wright functions}, \cite{CrCs4}.

\textsf{The Fox-Wright class of functions} is defined as a class
of functions of the form
\begin{equation}
\label{FoWr}%
 {\sideset{_p}{_q}\Psii}%
\left\{\genfrac{}{}{0pt}{1}%
{\alpha_1\,\,\alpha_2\,\,\,\,\,\alpha_p}%
{\beta_1\,\,\beta_2\,\,\,\,\,\beta_p};%
\genfrac{}{}{0pt}{1}%
{\rho_1\,\,\rho_2\,\,\,\,\,\rho_q}%
{\sigma_1\,\,\sigma_2\,\,\,\,\,\sigma_q};%
z\right\}%
 \stackrel{\textup{\tiny
def}}{=}\sum\limits_{0\leq{}k<\infty}%
\frac{\prod_{j=1}^{p}\Gamma{}(\alpha_{j}\,k+\beta_j)}%
{\prod_{j=1}^{q}\Gamma{}(\rho_{j}\,k+\sigma_j)}%
\cdot\frac{z^k}{k!}\,.
\end{equation}

Comparing \eqref{FoWr} with the Taylor expansions
\eqref{JPMEb},\,\eqref{ReWP3},\,\eqref{ReWP4},\,we see that
\begin{equation}%
\label{FRMq}%
 \mathcal{M}_{B^{\infty}\times{}0^q}(t)=\Gamma\left(\frac{q}{2}+1\right)\cdot%
 {\sideset{_1}{_1}\Psii}%
\left\{\genfrac{}{}{0pt}{1}%
{\frac{1}{2}}%
{1};%
\genfrac{}{}{0pt}{1}%
{\frac{1}{2}}%
{1+\frac{q}{2}};%
t\right\}\,,\ \ 1\leq{}q<\infty\,,%
\end{equation}%
\begin{equation}
\label{As1to}
\mathcal{W}^{\,p}_{\partial{}(B^{\infty}\times{}0^{1})}(t)=
\Gamma({\textstyle{}\frac{1}{2}})\cdot\Gamma({\textstyle{}\frac{p}{2}+1})\cdot%
{\sideset{_1}{_2}\Psii}%
\left\{\genfrac{}{}{0pt}{1}%
{1}%
{1};%
\genfrac{}{}{0pt}{1}%
{1\,\,\,\,\,\,\,\,1\,\,\,\,\,}%
{\frac{1}{2}\,\,\frac{p}{2}+1};%
-\frac{t^2}{4}\right\}\,,\ \ 1\leq{}p<\infty\,,%
\end{equation}
E.\,Barnes, \cite{Bar}, G.N.\,Watson, \cite{Wat}, G.\,Fox, cite{Fox},
as well as E.M.\,Wright, \cite{Wr1}, \cite{Wr2}, have
all studied the asymptotic behavior of the function \({\sideset{_p}{_q}\Psii}(z)\).\\

\paragraph{Analysis of the function
\(\mathcal{M}_{B^{\infty}\times{}0^q}(t)\):} \ We would like to
determine for which \(q\) this function belongs to the Hurwitz
class \(\mathscr{H}\). According to \eqref{FRMq}, we may reduce
the question to the proportional function \({\sideset{_1}{_1}\Psii}%
\left\{\genfrac{}{}{0pt}{1}%
{\frac{1}{2}}%
{1};%
\genfrac{}{}{0pt}{1}%
{\frac{1}{2}}%
{1+\frac{q}{2}};%
z\right\}\). From the Taylor expansion it is clear that this
function is an entire function of exponential type. Since the
Taylor coefficients of the function are positive, its defect\,%
\footnote{We recall that the defect of an entire function \(H\) of
exponential type is defined by \eqref{DefH}.} %
 is non-negative. \textit{The function \(\mathcal{M}_{B^{\infty}\times{}0^q}(t)\)
 is therefore in the Hurwitz class \(\mathscr{H}\)  only if all roots
 of the function %
 \({\sideset{_1}{_1}\Psii}%
\left\{\genfrac{}{}{0pt}{1}%
{\frac{1}{2}}%
{1};%
\genfrac{}{}{0pt}{1}%
{\frac{1}{2}}%
{1+\frac{q}{2}};%
z\right\}\) %
are situated in the open left half plane.} To investigate the
locations of roots for the latter function, we use the following
asymptotic approximation, which can be derived from  results
in \cite{Wr1}, \cite{Wr2}:\\
For any \(\varepsilon: \ 0<\varepsilon<\dfrac{\pi}{2}\),\\[-3.0ex]
\begin{multline}
\label{Asymp1}
{\sideset{_1}{_1}\Psii}%
\left\{\genfrac{}{}{0pt}{1}%
{\frac{1}{2}}%
{1};%
\genfrac{}{}{0pt}{1}%
{\frac{1}{2}}%
{1+\frac{q}{2}};%
z\right\}=\\
=\begin{cases}
2^{\frac{q}{2}}z^{-\frac{q}{2}}e^z\left(1+r_1(z)\right)\,,&|\arg{}z|\leq{}\frac{\pi}{2}-\varepsilon\,;\\[1.0ex]
\frac{2}{\Gamma(\frac{q}{2})}z^{-2}+r_2(z),\,&|\arg{}z-\pi|\leq{}\frac{\pi}{2}-\varepsilon\,;\\[1.0ex]
2^{\frac{q}{2}}z^{-\frac{q}{2}}e^z+
\frac{2}{\Gamma(\frac{q}{2})}z^{-2}+r_3(z)\,,&|\arg{}z\mp\frac{\pi}{2}|\leq{}\varepsilon\,.
\end{cases}
\end{multline}
The remainders can be estimated as follows:
\begin{multline}%
\label{Rema1}%
 |r_1(z)|\leq{}C_1(\varepsilon)|z|^{-1},
|\arg{}z|\leq\frac{\pi}{2}-\varepsilon\,;\\
 |r_2(z)|\leq{}C_2(\varepsilon)|z|^{-3},
|\arg{}z-\pi|\leq\frac{\pi}{2}-\varepsilon\,;\hspace*{15.0ex}\\
|r_3(z)|\leq{}C_3\left(|z|^{-3}+|e^z||z|^{-(\frac{q}{2}+1)}\right)\,,\
\ |\arg{}z\mp\frac{\pi}{2}|\leq{}\varepsilon\,,
\end{multline}
where the values \(C_1(\varepsilon)<+\infty, \
C_2(\varepsilon)<+\infty, \ C_3(\varepsilon)<+\infty\) do not
depend on \(z\).

From \eqref{Asymp1}, \eqref{Rema1} it follows
that for any \(\varepsilon>0\), not more than finitely many roots
of the function %
\({\sideset{_1}{_1}\Psii}%
\left\{\genfrac{}{}{0pt}{1}%
{\frac{1}{2}}%
{1};%
\genfrac{}{}{0pt}{1}%
{\frac{1}{2}}%
{1+\frac{q}{2}};%
z\right\}\) %
lie  outside of the
angular domain
\(\{z:\,|\arg{}z\mp{}\frac{\pi}{2}|\leq{}\varepsilon\).  These roots are
asymptotically  close to the roots of the approximating function \(f_q(z)\):
\[f_q(z)=2^{\frac{q}{2}}z^{-\frac{q}{2}}\left(e^z+
\frac{2^{1-\frac{q}{2}}}{\Gamma(\frac{q}{2})}z^{\frac{q}{2}-2}\right).\]
One should distinguish several cases when examining the roots of the
approximating function \(f_q(z)\). \\ %
\(\boldsymbol{q=4}\). In this case, the equation \(f_q(z)=0\) becomes
\(e^z+\frac{1}{2}=0\), so, the roots of the
approximating function can be found explicitly. These root form an
arithmetical progression located on the straight line
\(\{z=x+iy:\,x=-\ln{}2,\,-\infty<y<\infty\}\). From this and
\eqref{Asymp1}-\eqref{Rema1} it follows that the roots of the
the analyzed function %
\({\sideset{_1}{_1}\Psii}%
\left\{\genfrac{}{}{0pt}{1}%
{\frac{1}{2}}%
{1};%
\genfrac{}{}{0pt}{1}%
{\frac{1}{2}}%
{1+\frac{q}{2}};%
z\right\}\) %
are asymptotically close to the above-mentioned straight
line. Thus, \textit{for \(q=4\) all  but finitely many
 roots of the function
\({\sideset{_1}{_1}\Psii}%
\left\{\genfrac{}{}{0pt}{1}%
{\frac{1}{2}}%
{1};%
\genfrac{}{}{0pt}{1}%
{\frac{1}{2}}%
{1+\frac{q}{2}};%
z\right\}\) are located in the open left half
plane.} In fact, for \(q=4\) \textsf{\textit{all}} roots of this
function are located in the open left half plane. To establish
this, further considerations are needed. We follow up on this later.\\
\(\boldsymbol{q\not=4}\). In this case, the equation \(f_q(z)=0\)
becomes the equation %
\[e^z+
c_q\,z^{\frac{q}{2}-2}=0\,,\ \ \
c_q={\frac{2^{1-\frac{q}{2}}}{\Gamma(\frac{q}{2})}}\,,\] where the
exponent \(\frac{q}{2}-2\) is non-zero. The latter
equation has infinitely many roots which have no finite
accumulation points and which are asymptotically close to the
`logarithmic parabola'
\begin{equation}
\label{LogPar}%
 x=({\textstyle\frac{q}{2}}-1)\ln{}(|y|+1)+ln|c|,\ -\infty<y<\infty\,,\ \
 (z=x+iy).
\end{equation}
From this and from \eqref{Asymp1}-\eqref{Rema1} it follows that
the roots of the
function\\
 \({\sideset{_1}{_1}\Psii}%
\left\{\genfrac{}{}{0pt}{1}%
{\frac{1}{2}}%
{1};%
\genfrac{}{}{0pt}{1}%
{\frac{1}{2}}%
{1+\frac{q}{2}};%
z\right\}\)  are asymptotically close to the logarithmic parabola
\eqref{LogPar}. Now we should distinguish the cases \(q<4\) and
\(q>4\).\\
\(\boldsymbol{q<4}\). In this case, the part of the logarithmic
parabola \eqref{LogPar} lying outside some compact set is located
inside the left half plane. Since the roots of the analyzed
function are asymptotically close to this parabola, all but
finitely many roots are located in
the left half plane. \\
\(\boldsymbol{q>4}\). In this case,  the part of the logarithmic
parabola \eqref{LogPar} lying outside some compact set is located
inside the right half plane. Therefore all but finitely many roots
of the function
\({\sideset{_1}{_1}\Psii}%
\left\{\genfrac{}{}{0pt}{1}%
{\frac{1}{2}}%
{1};%
\genfrac{}{}{0pt}{1}%
{\frac{1}{2}}%
{1+\frac{q}{2}};%
z\right\}\) are located in the right half
plane.\\

We formulate this result as
\begin{lem}
\label{NonHq} If \(q>4\), then the entire function
\(\mathcal{M}_{B^{\infty}\times{}0^q}\) has infinitely many roots
within the right half plane. In particular, this function does not
belong to the Hurwitz class \(\mathscr{H}\).
\end{lem}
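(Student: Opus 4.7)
The plan is to derive Lemma \ref{NonHq} directly from the asymptotic expansion \eqref{Asymp1} together with the remainder estimates \eqref{Rema1}, which have already been introduced in the discussion preceding the lemma. First I would use the identity \eqref{FRMq} to reduce the statement about $\mathcal{M}_{B^{\infty}\times{}0^q}$ to the corresponding statement about the Fox--Wright function ${\sideset{_1}{_1}\Psii}\!\left\{\genfrac{}{}{0pt}{1}{\frac{1}{2}}{1};\genfrac{}{}{0pt}{1}{\frac{1}{2}}{1+\frac{q}{2}};z\right\}$, since they differ only by a non-vanishing constant factor $\Gamma(\tfrac{q}{2}+1)$.

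Next I would fix a small $\varepsilon>0$ and localize. In the sectors $|\arg z|\le \tfrac{\pi}{2}-\varepsilon$ and $|\arg z - \pi|\le \tfrac{\pi}{2}-\varepsilon$, the first two branches of \eqref{Asymp1}, combined with \eqref{Rema1}, show that the Fox--Wright function has at most finitely many zeros (in the right cone $e^{z}$ dominates, in the left cone the algebraic term $\tfrac{2}{\Gamma(q/2)}z^{-2}$ dominates). Hence all but finitely many zeros lie in the two narrow sectors $|\arg z\mp \tfrac{\pi}{2}|\le \varepsilon$, where the third branch of \eqref{Asymp1} applies. In these sectors the zeros must asymptotically coincide with the zeros of the approximating function
\begin{equation*}
f_q(z) \;=\; 2^{q/2}z^{-q/2}\!\left(e^{z} + c_q\,z^{q/2-2}\right),\qquad c_q=\frac{2^{1-q/2}}{\Gamma(q/2)},
\end{equation*}
and, since $q\ne 4$, of the transcendental equation $e^{z}+c_q z^{q/2-2}=0$.

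The core geometric step is then to locate the zeros of $e^{z}+c_q z^{q/2-2}=0$ asymptotically. Taking logarithms one finds that any solution $z=x+iy$ with $|y|$ large satisfies $x = (\tfrac{q}{2}-1)\ln(|y|+1)+\ln|c_q|+O(1)$, which is precisely the logarithmic parabola \eqref{LogPar}. For $q>4$ the coefficient $\tfrac{q}{2}-1$ is strictly greater than $1$, so outside a sufficiently large compact set this curve lies entirely in the open right half plane $\{\operatorname{Re}z>0\}$. A Rouch\'e/Hurwitz comparison on a sequence of shrinking disks centered at the zeros of $f_q$ (using that the error term in \eqref{Asymp1}--\eqref{Rema1} is smaller than $f_q$ itself along the parabola) then transfers the conclusion to the Fox--Wright function, yielding infinitely many zeros of $\mathcal{M}_{B^{\infty}\times{}0^q}$ in the open right half plane.

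The main technical obstacle will be the Rouch\'e comparison inside the two $\varepsilon$-sectors near $\pm i\infty$, because in this regime the two leading contributions $2^{q/2}z^{-q/2}e^z$ and $\tfrac{2}{\Gamma(q/2)}z^{-2}$ are of comparable size precisely near the prospective zeros; one must therefore choose the radii of the comparison disks carefully, using the spacing of the zeros of $f_q$ (which is essentially $2\pi$ in the imaginary direction, up to a small logarithmic correction) to ensure that on the boundaries of these disks the remainder $r_3(z)$ from \eqref{Rema1} is strictly dominated by $f_q(z)$. Once this is done the count of right-half-plane zeros is automatic: their density along the parabola \eqref{LogPar} is positive, so infinitely many of them occur, which in particular precludes $\mathcal{M}_{B^{\infty}\times{}0^q}\in\mathscr{H}$.
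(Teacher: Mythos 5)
Your argument follows the paper's proof step for step: reduce via \eqref{FRMq} to the Fox--Wright function, use \eqref{Asymp1}--\eqref{Rema1} to push all but finitely many zeros into the two narrow sectors near $\pm i\infty$, compare there with the approximating function $f_q$, and locate the zeros of $e^z + c_q z^{q/2-2}=0$ along a logarithmic parabola. One point to tighten: you justify the cutoff $q>4$ by saying the coefficient $\tfrac{q}{2}-1$ of $\ln(|y|+1)$ exceeds $1$, but the natural threshold is that this coefficient be \emph{positive}; with the exponent as written (which you copied from \eqref{LogPar}) the cutoff would already occur at $q>2$, contradicting Lemma \ref{QEqFour}. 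In fact \eqref{LogPar} carries a misprint: matching moduli in $e^z = -c_q z^{q/2-2}$ gives $e^x\approx |c_q|\,|y|^{q/2-2}$, so the coefficient of the logarithm is $\tfrac{q}{2}-2$, which is positive exactly for $q>4$ --- consistent with Lemmas \ref{HurCl} and \ref{QEqFour}. With that correction your reasoning is sound, and your explicit Rouch\'e/Hurwitz comparison in the $\varepsilon$-sectors (with disk radii calibrated against the $\approx 2\pi$ spacing of the zeros of $f_q$, so that the remainder $r_3$ is dominated on their boundaries) is a useful piece of bookkeeping that the paper leaves implicit.
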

Claim 2 of Lemma \ref{HPEGM} is a consequence of Lemma
\ref{NonHq}.
\begin{lem}%
\label{HurCl}%
For \(q:\,0\leq{}q\leq{}2\), the function
\(\mathcal{M}_{B^{\infty}\times{}0^q}\) belongs to the Hurwitz
class \(\mathscr{H}\).
\end{lem}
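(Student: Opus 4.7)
The proof naturally splits into the three cases $q\in\{0,1,2\}$. For $q=0$, $\mathcal{M}_{B^{\infty}\times 0^0}(t)=e^t$ has no zeros and lies trivially in $\mathscr{H}$. For $q=2$, two integrations by parts in \eqref{MInt} yield the closed form $\mathcal{M}_{B^{\infty}\times 0^2}(t)=2\bigl[(t-1)e^t+1\bigr]/t^2$; the double zero of the numerator at $t=0$ is cancelled by $t^2$, so the zeros of $\mathcal{M}_{B^{\infty}\times 0^2}$ coincide with the nontrivial solutions of $(1-t)e^t=1$. Setting $u=t-1$ this rearranges to $ue^u=-e^{-1}$, whose complete solution set is $u=W_k(-e^{-1})$, $k\in\mathbb{Z}$, in terms of the branches of the Lambert $W$ function. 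The two principal branches $W_0$ and $W_{-1}$ meet at the value $-1$ at the branch point $-e^{-1}$, producing the double root $t=0$. For every other $k$ the defining identity $W_k e^{W_k}=-e^{-1}$ yields $\textup{Re}\,t=1+\textup{Re}\,W_k=-\ln|W_k|$, and the branch structure of $W$ gives $|\textup{Im}\,W_k(-e^{-1})|\geq \pi$ whenever $k\notin\{0,-1\}$, hence $|W_k|>1$ and $\textup{Re}\,t<0$. Thus $\mathcal{M}_{B^{\infty}\times 0^2}\in\mathscr{H}$.

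For $q=1$ no comparable elementary form is available; the plan is to combine the asymptotic expansion \eqref{Asymp1}--\eqref{Rema1}, specialised to $q=1$, with the argument principle. The specialisation of \eqref{Asymp1} shows that any root $t$ of sufficiently large modulus lies close to the logarithmic curve $\textup{Re}\,t=-\tfrac{1}{2}\ln|\textup{Im}\,t|+O(1)$, which recedes into the open left half-plane, so only a bounded set of potentially exceptional roots could remain in $\{\textup{Re}\,t\geq 0\}$. Choose $R$ larger than the diameter of this exceptional set and consider the closed contour $C_R=[-iR,iR]\cup\{|t|=R,\ \textup{Re}\,t\geq 0\}$. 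Using the error bound \eqref{Rema1} the increment of $\arg\mathcal{M}_{B^{\infty}\times 0^1}$ along the arc matches that of the leading asymptote $\sqrt{2}\,t^{-1/2}e^t$ (computable directly), and combining with the increment along the imaginary axis---well-defined once one proves non-vanishing of $\mathcal{M}_{B^{\infty}\times 0^1}(iy)$ for $y\neq 0$---yields total winding number zero around the origin, ruling out zeros in the closed right half-plane.

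The principal technical difficulty is this last non-vanishing statement. After the substitution $\xi=\sin\theta$, it amounts to showing that $\int_0^{\pi/2}\sin\theta\cos(y\sin\theta)\,d\theta$ and $\int_0^{\pi/2}\sin\theta\sin(y\sin\theta)\,d\theta$ cannot vanish simultaneously at a nonzero real $y$. The model is the $q=2$ computation, where the analogous system collapses algebraically to $\tan y=y$ and $\cos y=1/(1+y^2)$, whose only real solution is $y=0$; for $q=1$ the resulting identity is transcendental and resolving it is the hardest step, calling either for a direct inequality argument or for invoking classical zero-distribution results on the Bessel--Struve combination $I_1(y)+L_0'(y)$, in terms of which $\mathcal{M}_{B^{\infty}\times 0^1}(iy)$ can be rewritten via the formula $\int_0^{\pi/2}e^{t\sin\theta}\,d\theta=\tfrac{\pi}{2}\bigl(I_0(t)+L_0(t)\bigr)$. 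Once this step is settled, the argument-principle count delivers the case $q=1$ and completes the proof.
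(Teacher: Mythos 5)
Your cases $q=0$ and $q=2$ are handled correctly: the exponential is trivial, and your closed form $\mathcal{M}_{B^{\infty}\times 0^2}(t)=2\bigl[(t-1)e^t+1\bigr]/t^2$ and the Lambert-$W$ reduction $u e^u=-e^{-1}$, with $\textup{Re}\,t=-\ln|W_k|$, do locate all the roots in the open left half-plane. But for $q=1$ you have only a plan, not a proof: you candidly flag the non-vanishing of $\mathcal{M}_{B^{\infty}\times 0^1}$ on the punctured imaginary axis as ``the hardest step'' and leave it unresolved, while the remainder of your argument-principle scheme depends entirely on it (and also tacitly assumes that the asymptotic curve $\textup{Re}\,t=-\tfrac12\ln|\textup{Im}\,t|+O(1)$ confines \emph{all} large roots with explicit uniform control, which itself requires the error bounds to be pushed further than you state). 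So as written, the lemma is not established for $q=1$, and the proposal has a genuine gap.

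The idea you are missing, and which the paper uses to dispose of all three values of $q$ at once, is a monotonicity argument: write $\mathcal{M}_{B^{\infty}\times 0^q}(z)=q\int_0^1\varphi_q(\xi)e^{\xi z}\,d\xi$ with weight $\varphi_q(\xi)=(1-\xi^2)^{q/2-1}\xi$, observe that $\varphi_q'(\xi)$ has the sign of $1+(1-q)\xi^2$, so $\varphi_q$ is non-negative and increasing on $(0,1)$ precisely when $0<q\le 2$, and then invoke the continuous Kakeya-type lemma of P\'olya (Lemma~\ref{NZRHP} in the paper, proved there via a Leibniz-series estimate on $\int_0^\infty\psi(\xi)\sin(\xi y)\,d\xi$ with $\psi$ decreasing): a Laplace transform of a non-negative increasing weight on $[0,1]$ has no zeros in the closed right half-plane. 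Since such a transform is automatically of exponential type with non-negative defect, Hurwitz-class membership follows immediately, for $q=1$ as well as $q=2$ and indeed for all real $q\in(0,2]$. This is both shorter and sharper than the case-by-case route; your explicit $q=2$ computation is a nice independent check, but it does not generalise, and for $q=1$ the decisive monotonicity observation is exactly what your sketch lacks.
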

\begin{proof}
 For \(q=0\), the assertion is clear: the function in
question is equal to \(e^z\). For \(q>0\), Lemma \ref{HurCl} is a
consequence of Lemma \ref{NZRHP}. \end{proof}

To investigate the case \(q>0\), we use the integral
representation
\begin{equation}
\label{IRPs1}
{\sideset{_1}{_1}\Psii}%
\left\{\genfrac{}{}{0pt}{1}%
{\frac{1}{2}}%
{1};%
\genfrac{}{}{0pt}{1}%
{\frac{1}{2}}%
{1+\frac{q}{2}};%
z\right\}=\frac{q}{\Gamma(\frac{q}{2}+1)}\,I_q(z)\,,
\end{equation}
where
\begin{equation}
\label{IRPs2}
I_q(z)=\int\limits_{0}^{1}(1-\xi^2)^{\frac{q}{2}-1}\xi\,{}e^{\xi{}t}\,d\xi\,.
\end{equation}
The defect of the entire function \({\sideset{_1}{_1}\Psii}%
\left\{\genfrac{}{}{0pt}{1}%
{\frac{1}{2}}%
{1};%
\genfrac{}{}{0pt}{1}%
{\frac{1}{2}}%
{1+\frac{q}{2}};%
z\right\}\) is non-negative, so it is enough to prove that this
function has no roots in the closed right half plane. The function
\(I_q(z)\) is of the form
\begin{equation}
\label{IRPs3}
I_q(z)=\int\limits_{0}^{1}\varphi_q(\xi)e^{\xi{}z}d\xi\,,
\end{equation}
where
\begin{equation}
\label{IRPs4} \varphi_q(\xi)=(1-\xi^2)^{\frac{q}{2}-1}\xi,\ \
0\leq{}\xi\leq{}1\,.
\end{equation}
The crucial case is: \\
\textit{For \(q:\ 0\leq{}q\leq{}2\), the function
\(\varphi_q(\xi)\) is positive and strictly increasing on the
interval \((0,1).\)}
\begin{lem}
\label{NZRHP}
\textsf{\textup{[P\'olya]}} %
\label{MoImHu}%
If \(\varphi(\xi)\) is a non-negative increasing function on the
interval \([0,1]\), then the entire function
\begin{equation}
\label{IRPs5}%
 I(z)=\int\limits_0^1\varphi(\xi)e^{\xi{}z}d\xi
\end{equation}
has no zeros in the closed right half plane.
\end{lem}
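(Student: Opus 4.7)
The plan is to rewrite $I(z)$ as a positive combination of Laplace-type differences whose real parts are manifestly non-negative on $\{\textup{Re}\,z\geq 0\}$, and then to extract strict positivity from the monotonicity of $\varphi$.

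First, I would represent $\varphi$ via its Stieltjes measure: since $\varphi$ is non-decreasing, $d\varphi$ is a positive Borel measure on $(0,1]$ and one has $\varphi(\xi)=\varphi(0^{+})+\int_{(0,1]}\chi_{[a,1]}(\xi)\,d\varphi(a)$. Substituting into the integral defining $I(z)$ and applying Fubini yields
\[
z\,I(z)=\varphi(0^{+})(e^{z}-1)+\int_{(0,1]}(e^{z}-e^{az})\,d\varphi(a).
\]
Multiplying by $e^{-z}$ and pushing the measure forward under $a\mapsto b=1-a$ produces the convenient form
\[
z\,e^{-z}I(z)=\varphi(0^{+})(1-e^{-z})+\int_{[0,1)}(1-e^{-bz})\,d\tilde\nu(b),
\]
where $\tilde\nu$ is a positive finite Borel measure on $[0,1)$.

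The sign of the real part can then be read off. For $\textup{Re}\,z>0$ and any $b>0$,
\[
\textup{Re}(1-e^{-bz})=1-e^{-b\,\textup{Re}\,z}\cos(b\,\textup{Im}\,z)\geq 1-e^{-b\,\textup{Re}\,z}>0,
\]
and the same estimate holds for the leading term with $b=1$. Hence $\textup{Re}(z\,e^{-z}I(z))$ is a non-negative sum, strictly positive unless both $\varphi(0^{+})=0$ and $\tilde\nu\bigl((0,1)\bigr)=0$, a situation that would force $\varphi$ to vanish almost everywhere on $[0,1]$ and hence is excluded by non-triviality. Consequently $I(z)\neq 0$ throughout the open right half plane, and the point $z=0$ is dispatched directly via $I(0)=\int_{0}^{1}\varphi(\xi)\,d\xi>0$.

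It remains to treat $z=iy$ with $y\neq 0$, which is where the main obstacle lies. The same computation gives
\[
\textup{Re}\bigl(iy\,e^{-iy}I(iy)\bigr)=\varphi(0^{+})(1-\cos y)+\int_{(0,1)}(1-\cos(by))\,d\tilde\nu(b)\geq 0,
\]
and the two non-negative summands can collapse simultaneously only if $y=2\pi k$ for some nonzero integer $k$ and $\tilde\nu$ is concentrated on the finite set $\{j/|k|:1\leq j\leq|k|-1\}$, meaning that $\varphi$ is a pure step function with jumps at specific rational points. Under the interpretation of ``increasing'' as strictly increasing (equivalently, $d\varphi$ has a non-atomic component), this pathology is ruled out; in particular the lemma applies to the weights $\varphi_{q}(\xi)=(1-\xi^{2})^{q/2-1}\xi$ on $(0,1)$ for $0<q\leq 2$, since $d\varphi_{q}$ is absolutely continuous with strictly positive density there, yielding $I(iy)\neq 0$ and completing the proof.
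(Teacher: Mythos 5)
Your argument is correct and reaches the conclusion by a genuinely different route than the paper. The paper multiplies by $e^{-z}$, substitutes $\xi\mapsto 1-\xi$ to obtain $-\textup{Im}\,(e^{-z}I(z))=\int_0^\infty\psi(\xi)\sin(\xi y)\,d\xi$ with $\psi(\xi)=\varphi(1-\xi)e^{-x\xi}$ extended by zero, and then extracts strict positivity of the \emph{imaginary} part via a Leibnitz-type alternating series over the half-periods of $\sin(\xi y)$; you instead multiply by $z e^{-z}$, decompose $\varphi$ through its Stieltjes measure and Fubini into a non-negative combination of the elementary kernels $1-e^{-bz}$, and read off that the \emph{real} part of each kernel is non-negative on the closed right half plane and strictly positive once $\textup{Re}\,z>0$ and $b>0$. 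Both arguments turn on the same tacit strictness of the monotonicity — the lemma as literally stated is false for constant or step $\varphi$ (e.g. $\varphi\equiv 1$ gives $I(z)=(e^z-1)/z$ with zeros at $z=2\pi i k$) — but your decomposition makes it explicit and transparent that the only way the boundary real part can vanish is for $\tilde\nu$ to sit on a finite arithmetic set of the form $\{2\pi j/y\}\cap(0,1)$, whereas the paper hides the same requirement in the phrase ``strictly decreasing on some non-empty open interval'' applied to $\psi$. Two small corrections to your write-up: strict monotonicity of $\varphi$ is \emph{not} equivalent to $d\varphi$ having a non-atomic component (a strictly increasing function may have jumps, and a non-atomic measure need not have full support), though either hypothesis rules out concentration on a finite set; and if $\varphi(0^+)=0$ the first summand vanishes for every $y$, so you cannot in general deduce $y=2\pi k$ — the decisive constraint in all cases is that $\tilde\nu$ must live on a finite set, which your strictness assumption excludes. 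Neither issue affects the validity of the argument.
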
%
 This lemma is a continuous generalization of a theorem by
S.\,Kakeya. Proof of this Lemma and the reference to the paper of
S.\,Kakeya could be found in \cite[\S \,1]{Po1}. We give another
proof, based on an approach suggested by \cite[Lemma 4]{OsPe}.
\begin{proof}[Proof of Lemma \ref{NZRHP}] Let \(z=x+iy\). Since \(I(x)>0\)
for \(x\geq{}0\), \(I(x)\) has no zeros for \(0\leq{}x<\infty\).
Let us show that \(I(z)\) has no zeros for
\(0\leq{}x<\infty,\,y\not=0\). It is enough to consider the case
\(y>0\). We prove that \(\textup{Im}\,(e^{-z}I(z))<0\) for
\(z=x+iy,\,\,x\geq{}0,\,y>0\), thus \(I(z)\not=0\) for
\(z=x+iy,\,\,x\geq{}0,\,y>0\). To prove this, we use the integral
representation
\begin{equation}
\label{NIPa}
e^{-z}I(z)=\int\limits_0^{\infty}\psi(\xi)e^{-i\xi{}y}d\xi\,,
\end{equation}
where
\begin{equation}
\label{IRPoIP} \psi(\xi)=
\begin{cases}
\varphi(1-\xi)e^{-x\xi}, & 0\leq{}\xi\leq{}1\,,\\
0,&1<\xi<\infty\,.
\end{cases}
\end{equation}
In particular,
\begin{equation}
\label{IRPoIP1}
-(\textup{Im}\,e^{-z}I(z))=\int\limits_0^{\infty}\psi(\xi)\sin{}\xi{}y{}\,d\xi\,,
\ \ z=x+iy,\ \ x\geq{ }0,\,y>0\,,
\end{equation}
where the function \(\psi(\xi)\) is non-negative and decreasing on
\([0,\infty)\),  strictly decreasing on some non-empty open
interval, and \(\psi(\infty)=0\). Further,
\begin{multline}
\label{Fur}
\int\limits_0^{\infty}\psi(\xi)\sin{}\xi{}y{}\,d\xi=\sum\limits_{k=0}^{\infty}\,
\int\limits_{\frac{k\pi}{y}}^{\frac{(k+1)\pi}{y}}\psi(\xi)\sin{}\xi{}y{}\,d\xi=\\
\int\limits_{0}^{\frac{\pi}{y}}\left(\sum\limits_{k=0}^{\infty}
(-1)^k\psi(\xi+{\textstyle\frac{k\pi}{y}})\right)\sin{}\xi{}y{}\,d\xi>0\,:
\end{multline}
The series in the latter integral is a Leibnitz type series. Thus
the sum of this series is non-negative on the interval of
integration and is strictly positive on some subinterval.
\end{proof}
\begin{lem}%
\label{QEqFour}%
For \(q=4\), the function \(\mathcal{M}_{B^{\infty}\times{}0^q}\)
 belongs to the Hurwitz class \(\mathscr{H}\).
\end{lem}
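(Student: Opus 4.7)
The plan is to obtain a closed-form expression for $\mathcal{M}_{B^{\infty}\times{}0^4}$ and then verify membership in $\mathscr{H}$ directly. Since the Taylor coefficients of $\mathcal{M}_{B^{\infty}\times{}0^4}$ at $t=0$ are strictly positive, the defect is automatically non-negative, so by \textup{Definition \ref{DeHuC}} it suffices to show that every zero satisfies $\textup{Re}\,t<0$.

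First, I will integrate the representation $\mathcal{M}_{B^{\infty}\times{}0^4}(t)=4\int_0^1(1-\xi^2)\xi\,e^{\xi t}\,d\xi$ from \eqref{NMPq1}---either by parts or by evaluating $\int_0^1\xi\,e^{\xi t}\,d\xi$ and $\int_0^1\xi^3\,e^{\xi t}\,d\xi$ separately---to produce the closed form
\[
\mathcal{M}_{B^{\infty}\times{}0^4}(t)=\frac{4\,h(t)}{t^4},\qquad h(t):=2(t^2-3t+3)\,e^t+(t^2-6).
\]
A direct Taylor expansion verifies $h(t)=\tfrac14 t^4+\tfrac{2}{15}t^5+O(t^6)$, so $h$ has an isolated zero of order exactly four at $t=0$ and the nonzero zeros of $\mathcal{M}_{B^{\infty}\times{}0^4}$ coincide with those of $h$. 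It is also worth noting the factorization $h'(t)=t^3\,\mathcal{M}_{B^{\infty}\times{}0^2}(t)$, which records that $h'$ has, by \textup{Lemma \ref{HurCl}}, a zero of order $3$ at the origin and all other zeros in the open left half plane.

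Next, I will rule out zeros of $h$ on the imaginary axis away from $0$ via the identity
\[
4\,|t^2-3t+3|^2-|t^2-6|^2=4\big((3-y^2)^2+9y^2\big)-(y^2+6)^2=3y^4\qquad(t=iy),
\]
which is strictly positive for $y\neq 0$; since $|e^{iy}|=1$ the two summands of $h(iy)$ cannot cancel. To rule out zeros in the open right half plane I will apply the argument principle to the contour $\Gamma_{R,\varepsilon}$ consisting of the imaginary segment from $-iR$ to $iR$, indented into the right half plane by a semicircle of radius $\varepsilon$ around $t=0$, and closed by the large semicircle $\{|t|=R,\,\textup{Re}\,t\geq 0\}$. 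On the large arc the asymptotic $h(t)\sim 2t^2 e^t$ contributes a change of $\arg h$ equal to $2\pi+2R+o(1)$; on the small indentation $h(t)\sim t^4/4$ contributes a clockwise change of $-4\pi$; on the imaginary-axis segments the change is computed from the explicit trigonometric-polynomial expressions $P(y):=\textup{Re}\,h(iy)=(6-2y^2)\cos y+6y\sin y-y^2-6$ and $Q(y):=\textup{Im}\,h(iy)=(6-2y^2)\sin y-6y\cos y$. Summing and dividing by $2\pi$ I expect total winding number zero, hence $h$ has no zeros in the open right half plane.

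The hardest step will be controlling $\arg h(iy)$ on the imaginary axis: although $P$ and $Q$ are elementary, proving that the plane curve $y\mapsto(P(y),Q(y))$ does not encircle the origin will require a Hermite-Biehler--Pontryagin verification that all zeros of $P$ and $Q$ are real, that they interlace globally, and that $P\,Q'-P'\,Q>0$ at some convenient point. The asymptotic location of the roots of $h$ near the line $\textup{Re}\,t=-\ln 2$ (from \eqref{Asymp1}--\eqref{Rema1}) handles zeros with large imaginary parts; the delicate task lies in verifying the interlacing for moderate $|y|$, likely by explicit sign-change counting of $P$ at integer multiples of $\pi$ (where $Q\sim-2y^2\sin y$ vanishes asymptotically) and of $Q$ at the roots of $2\cos y+1=0$ (where $P\sim-y^2(2\cos y+1)$ vanishes asymptotically).
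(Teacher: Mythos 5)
Your closed-form computation of
\[
\mathcal{M}_{B^{\infty}\times 0^4}(t)=\frac{4\,h(t)}{t^4},\qquad h(t)=(2t^2-6t+6)e^t+(t^2-6),
\]
is correct and matches the paper's \eqref{ExpCal4} (up to the harmless factor coming from \eqref{IRPs1}). Your imaginary-axis calculation is also correct and in fact reproduces the paper's \eqref{BDiCa}: with $t=iy$ one has $|2(t^2-3t+3)|^2-|t^2-6|^2=3y^4>0$ for $y\neq0$, so the two terms of $h(iy)$ cannot cancel. The observation $h'(t)=t^3\,\mathcal{M}_{B^{\infty}\times 0^2}(t)$ is a nice aside but is not used.

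However, the core of the lemma --- ruling out zeros in the open right half plane --- is only sketched, and you say so yourself: you describe the argument-principle contour and the large/small-arc contributions, then state that the real work is a Hermite--Biehler/Pontryagin interlacing analysis of $P=\textup{Re}\,h(iy)$ and $Q=\textup{Im}\,h(iy)$, which you identify as ``the hardest step'' and ``the delicate task'' without carrying it out. This is a genuine gap: verifying interlacing of zeros of the transcendental trigonometric-polynomial pair $(P,Q)$ for all moderate $|y|$ is precisely the kind of explicit estimation that needs to be done, not just announced, and it is not obviously shorter than the lemma itself.

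The paper avoids this entirely. Writing $h(z)e^{-z}=g(z)+(1-\varepsilon)\tilde h(z)+\varepsilon\tilde h(z)$ with $g(z)=2z^2-6z+6$ and $\tilde h(z)=(z^2-6)e^{-z}$, it shows $|g|\geq|\tilde h|$ on the boundary contour $\Gamma_R=I_R\cup C_R$ (on the imaginary axis this is exactly your $3y^4$ inequality; on the large semicircle it follows from $|e^{-z}|\le1$ for $\textup{Re}\,z\ge0$). Rouch\'e then gives that $g+(1-\varepsilon)\tilde h$ has exactly two zeros in the open right half plane for each $\varepsilon\in(0,1)$, a local asymptotic expansion near $z=0$ shows both of them are $O(\varepsilon^{1/4})$ and hence collapse to the origin, and Hurwitz's theorem finishes. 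This route converts the whole right-half-plane question into the single modulus inequality you have already established plus an elementary perturbation at $z=0$, with no winding-number bookkeeping along the imaginary axis. If you want to salvage your argument-principle plan you would need to actually carry out the interlacing count; otherwise the cleaner path is the paper's Rouch\'e--Hurwitz argument, which your calculations already support.
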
%
\begin{proof}%
 For \(q=4\), the integral in \eqref{IRPs2} can be calculated
explicitly:
\begin{equation}
\label{ExpCal4}%
 I_4(z)=\frac{(2z^2-6z+6)e^z+(z^2-6)}{z^4}\,\cdot
\end{equation}
Our goal is to prove that the function
\(\frac{(2z^2-6z+6)e^z+(z^2-6)}{z^4}\) has no roots in the closed
right half plane. Instead of considering of this function, we first
focus on the function
\begin{equation}
\label{AuxFunc1}%
 f(z)=(2z^2-6z+6)+(z^2-6)e^{-z}\,.
\end{equation}
We will now prove that \(f(z)\) has no roots in the
closed right half plane, other than the root at the point \(z=0\) of
multiplicity four. The function \(f\) is of the form
\begin{equation}
\label{AuxFunc2}%
 f(z)=g(z)+h(z)\,, \ \textup{where} \ \ g(z)=(2z^2-6z+6),\
 h(z)=(z^2-6)e^{-z}\,.
\end{equation}
In the right half plane the function \(h\) is subordinate to the
function \(g\) in the following sense. For \(R>0\), let us
consider the contour \(\Gamma_R\), which consists of the interval
\(I_R\) of the imaginary axis and  the semicircle \(C_R\)
located in the right half plane:
\begin{equation}
\label{AuxFunc3}%
 \Gamma_R=I_R\cup{}C_R\,, \textup{ where }I_R=[-iR,iR], \
 C_R=\{z:\,|z|=R,\,\textup{Re}\,z\geq{}0\}\,.
\end{equation}
It is clear that \(|g(z)|\geq{}1.75\,|z|^2,\ \
|h(z)|\leq{}1.25\,|z|^2 \)  if  \(\ z\in{}C_R\) and \(R\) is large
enough. In particular, \(|g(z)|>|h(z)|\)  if \(z\in{}C_R\) and
\(R\) is large enough. On the imaginary axis, we have that
\begin{equation}
\label{BDiCa}%
 |g(iy)|^2=36+12y^2+4y^4,\ \
|h(iy)|^2=36+12y^2+y^4\,,\ -\infty<y<\infty,
\end{equation}
 In particular,
\(|g(z)|\geq{}|h(z)|\) for \(z\in{}I_R\), and the inequality is
strict for \(z\not=0\). Thus,
\begin{equation}
\label{AuxFunc4}%
|g(z)|\geq|h(z)|\ \ \textup{ for } \ \ z\in\Gamma_R \ \ \textup{
and \(R\)  large enough.}
\end{equation}
For \(0<\varepsilon<1\), consider the function
\begin{equation}
\label{AuxFunc5}%
f_{\varepsilon}(z)=g(z)+(1-\varepsilon)h(z)\,.
\end{equation}
The polynomial \(g\) has two simple roots:
\(z_{1,2}=\frac{3\pm{}i\sqrt{3}}{2}\). They are located in the
open right half plane. In view of \eqref{AuxFunc5} and Rouche's
theorem, for \(\varepsilon>0\) the function \(f_{\varepsilon}(z)\)
has precisely two roots \(z_1(\varepsilon),\,z_2(\varepsilon)\) in
the open right half plane. For \(\varepsilon\) positive and very
small, the roots \(z_1(\varepsilon),\,z_2(\varepsilon)\) are
located very close to the boundary point \(z=0\). This can be
shown by the asymptotic calculation. Since \(f_{\varepsilon}(z)
=6\varepsilon+{\textstyle\frac{1-\varepsilon}{4}}z^4+o(|z|^4)\) as
\(z\to{}0\),  the equation \(f_{\varepsilon}(z)=0\) has the roots
\(z_1(\varepsilon),\,z_2(\varepsilon)\), for which
\[z_{1,2}(\varepsilon)=\varepsilon^{\frac{1}{4}}24^{\frac{1}{4}}e^{\pm\frac{\pi}{4}}
(1+o(\varepsilon))\ \ \textup{as} \ \varepsilon\to{}+0\,.\] Since
the function \(f_{\varepsilon}\) has only
two roots in the open right half plane for \({\varepsilon}>0\),
\(z_1(\varepsilon),\,z_2(\varepsilon)\) are the only roots located in this
half plane.
 Since
\(f(z)=\lim_{\varepsilon\to{}+0}f_{\varepsilon}(z)\), the function
\(f(z)\) has no roots in the right upper half plane. (We apply
Hurwitz's Theorem.) From \eqref{BDiCa} it follows that
 the function \(f\) does not
vanish on the imaginary axis except at \(z=0\). At this
point the function \(f\) has a root of multiplicity four. Thus,
for \(q=4\) the function \(I_q(z)\) is in the Hurwitz class.
\end{proof}

 Claim 1 of Lemma \ref{HPEGM} is a consequence of Lemma \ref{HurCl}
 and Lemma \ref{QEqFour}.

\begin{rem}
\label{Agr}%
 From \eqref{IRPs1} and \eqref{ExpCal4} we obtain:
 \begin{equation}
 \label{ExAg}
 {\sideset{_1}{_1}\Psii}%
\left\{\genfrac{}{}{0pt}{1}%
{\frac{1}{2}}%
{1};%
\genfrac{}{}{0pt}{1}%
{\frac{1}{2}}%
{1+\frac{q}{2}};%
z\right\}=2\frac{(2z^2-6z+6)e^z+(z^2-6)}{z^4}\ \ \textup{for} \ \
q=4\,.
 \end{equation} This expression agrees with the asymptotic relation
 \eqref{Asymp1}.
\end{rem}
\paragraph{Analysis of the function
\(\mathcal{W}_{\partial{}(B^{\infty}\times{}0)}^{\,p}\):}  \ \ \ \
We may
calculate the function\\
\(\mathcal{W}_{\partial{}(B^{\infty}\times{}0)}^{\,p}\) \ \
asymptotically expressing it in terms of the appropriate
Fox-Wright function, \eqref{As1to}, and then make use of the
asymptotic expansion of this Fox-Wright function. However, we
derive the asymptotic approximation of the function\\ %
\(\mathcal{W}_{\partial{}(B^{\infty}\times{}0)}^{\,p}\) \,from the
asymptotic approximation of the function
\(\mathcal{M}_{B^{\infty}\times{}0^p}(t)\). From \eqref{MInt} and
\eqref{WInt} it follows that
\begin{equation}%
\label{RelMW}%
 \mathcal{W}_{\partial{}(B^{\infty}\times{}0)}^{\,p}(t)=
\frac{1}{2}\big(\mathcal{M}_{B^{\infty}\times{}0^p}(it)
+\mathcal{M}_{B^{\infty}\times{}0^p}(-it)\big)\,.
\end{equation}%
Comparing \eqref{RelMW} with \eqref{Asymp1}, we see that
\begin{multline}%
\label{AsyW}%
 \mathcal{W}_{\partial{}(B^{\infty}\times{}0)}^{\,p}(t)=\\
=%
 \begin{cases}%
\hspace*{4.0ex}2^{\frac{p}{2}}\hspace*{0.8ex}
t^{-\frac{p}{2}}\hspace*{1.0ex} \cos{(t-\frac{\pi{}p}{4}})-
\frac{2}{\Gamma(\frac{p}{2})}t^{-2}+r_1(t), &|\arg{}t|\leq\varepsilon\,,\\
2^{\frac{q}{2}}
(te^{-i\pi})^{-\frac{p}{2}}\cos{(t+\frac{\pi{}p}{4}})-
\frac{2}{\Gamma(\frac{p}{2})}t^{-2}+r_2(t),\ \ &|\arg{}t-\pi|\leq\varepsilon\,,\\
2^{\frac{p}{2}}(t\,e^{\mp{}\frac{i\pi}{2}})^{-\frac{p}{2}}\,e^{\mp{}it}(1+r_3(t))\,,
&|\arg{}t\pm{}\frac{\pi}{2}|\leq{}\frac{\pi}{2}-\varepsilon\,,
\end{cases}%
\end{multline}%
where the remainders \(r_1(t),\,r_2(t),\,r_3(t)\) can be
estimated as follows
\begin{subequations}
\label{EstRema}
\begin{equation}%
\label{EstRema1}%
|r_1(t)|\leq{}C_1(\varepsilon)\left(|t|^{-(1+\frac{p}{2})}e^{|\textup{Im}\,t|}+|t|^{-3}\right)\,,
\ \ \ \ \ \ \ |\arg{}t|\leq\varepsilon\,,
\end{equation}%
\begin{equation}%
\label{EstRema2}%
|r_2(t)|\leq{}C_2(\varepsilon)\left(|t|^{-(1+\frac{p}{2})}e^{|\textup{Im}\,t|}+|t|^{-3}\right)\,,
\ \    |\arg{}t-\pi|\leq\varepsilon\,,
\end{equation}%
\begin{equation}%
\label{EstRema3}%
|r_3(t)|\leq{}C_3(\varepsilon)\,|t|^{-1}\,,\hspace*{9.0ex}
 |\arg{}t\mp{}\frac{\pi}{2}|\leq{}\frac{\pi}{2}-\varepsilon\,,
\end{equation}%
\end{subequations}%
 and
\(C_1(\varepsilon)<\infty,\,C_2(\varepsilon)<\infty,\,C_3(\varepsilon)<\infty\)
for every \(\varepsilon:\,0<\varepsilon<\frac{\pi}{2}.\) Moreover,
the function
\(\mathcal{W}_{\partial{}(B^{\infty}\times{}0)}^{\,p}(t)\) is an even
function of \(t\) and takes real values for real \(t\).

From \eqref{AsyW}, \eqref{EstRema} it follows that for every
\(\varepsilon>0\) the function
\(\mathcal{W}_{\partial{}(B^{\infty}\times{}0)}^{\,p}(t)\) can
not have more that finitely many roots within the angular domains
\(\{t:\,|\arg{}(t\mp{}\frac{\pi}{2}|\leq{}\frac{\pi}{2}-\varepsilon\}\,.\)
Within the angular domain \(\{t:\,|\arg{}t|\leq{}\varepsilon\}\), the
function
\(\mathcal{W}_{\partial{}(B^{\infty}\times{}0)}^{\,p}(t)\) has
infinitely many roots, and these roots are asymptotically close to
the roots of the approximating function
\[f_p(t)=2^{\frac{p}{2}} t^{-\frac{p}{2}}
\cos{(t-\textstyle{\frac{\pi{}p}{4}}})-
\frac{2}{\Gamma(\frac{p}{2})}t^{-2}\,,\ \ \
|\arg{}t|\leq{}\varepsilon\,.\]
 (Since the function
\(\mathcal{W}_{\partial{}(B^{\infty}\times{}0)}^{\,p}(t)\) is
even, there is no need to study its behavior within the angular domain
\(\{t:\,|\arg{}t-\pi|\leq{}\varepsilon\}\).) The behavior of thr roots
for the approximating equation \(f_p(t)=0\), that is, the equation
\begin{equation}
\label{ApprEqp}%
 \cos{(t-\textstyle{\frac{\pi{}p}{4}}})-
\frac{2^{1-\frac{p}{2}}}{\Gamma(\frac{p}{2})}\,t^{\frac{p}{2}-2}=0\,,
\ \ \ |\arg{}t|\leq{}\varepsilon\,,
\end{equation}
depends on \(p\).%
\begin{itemize}
\item[]%
 If \(0<p<4\), then all but finitely many roots of the
equation \eqref{ApprEqp} are real and simple, and these roots are
asymptotically close to the roots of the equation
\(\cos{}(t-\textstyle{\frac{\pi{}p}{4}})=0\).  %
\item[]%
If \(p=4\), then all but finitely many roots of the equation
\eqref{ApprEqp} are real and simple and these roots are
asymptotically close to the roots of the equation
\(\cos{}(t-\pi)-\frac{1}{2}=0\).  %
\item[]%
If \(p>4\), then all but finitely many roots of the equation
\eqref{ApprEqp} are \textsf{non}-real and simple,  located
symmetrically with respect to the real axis, and are
asymptotically close to the `logarithmic parabola'
\[|y|=({\textstyle\frac{p}{2}}-1)
\ln(|x|+1)+\ln{}c_p\,,\ \
c_p=\frac{2^{1-\frac{p}{2}}}{\Gamma(\frac{p}{2})}\,,\ \ \
0\leq{}x<\infty\,.\]
\end{itemize}
Thus we prove the following
\begin{lem}
\label{AnWp}%
 For each \(p:\,0\leq{}p<\infty\), the function
 \(\mathcal{W}_{\partial{}(B^{\infty}\times{}0)}^{\,p}(t)\) has
infinitely many roots.
 All but finitely many these roots are
 simple. They lie symmetric with respect to the point \(z=0\).
 \begin{enumerate}
 \item
 If \(0\leq{}p\leq{}4\), then all but finitely many these
 roots are real\,;
 \item
 If \ \(4<p\), then all but finitely many these
 roots are non-real.
 In particular, the function
\(\mathcal{W}_{\partial{}(B^{\infty}\times{}0)}^{\,p}(t)\) does
not belong to the Laguerre-P\'olya class
\(\mathscr{L}\text{-}\mathscr{P}\).
 \end{enumerate}
\end{lem}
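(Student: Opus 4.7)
The plan is to deduce the lemma directly from the asymptotic expansion \eqref{AsyW}--\eqref{EstRema}, which has already been derived from \eqref{RelMW} together with \eqref{Asymp1}--\eqref{Rema1}. Since $\mathcal{W}_{\partial(B^{\infty}\times 0)}^{\,p}$ is even in $t$ and real on the real axis, it suffices to localize its roots in the half plane $\operatorname{Re} t \geq 0$; the complex roots then come in conjugate pairs, and the reflection $t\mapsto -t$ gives the claimed symmetry about $0$. First I would dispose of the angular sectors $|\arg t \mp \tfrac{\pi}{2}| \leq \tfrac{\pi}{2}-\varepsilon$ around the imaginary axis: the third line of \eqref{AsyW} combined with \eqref{EstRema3} shows $|\mathcal{W}_{\partial(B^{\infty}\times 0)}^{\,p}(t)|$ is comparable to $2^{p/2}|t|^{-p/2}|e^{\mp it}|$ outside a large disk there, so only finitely many roots can lie in these sectors.

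The heart of the matter lies in the sector $|\arg t| \leq \varepsilon$ (the sector $|\arg t - \pi| \leq \varepsilon$ is handled by evenness). There the first line of \eqref{AsyW} presents $\mathcal{W}_{\partial(B^{\infty}\times 0)}^{\,p}(t)$ as the approximating function
\begin{equation*}
f_p(t) \;=\; 2^{p/2}\,t^{-p/2}\cos\!\bigl(t - \tfrac{\pi p}{4}\bigr) \;-\; \frac{2}{\Gamma(p/2)}\,t^{-2}
\end{equation*}
plus a remainder $r_1(t)$ whose size \eqref{EstRema1} is strictly smaller than the oscillatory main term along the real axis $\{t>0\}$ and along any contour lying at bounded distance from it, once $|t|$ is large. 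The plan is to study the zeros of $f_p$, or equivalently of the equation $\cos(t-\tfrac{\pi p}{4}) = c_p\, t^{p/2 - 2}$ with $c_p = 2^{1-p/2}/\Gamma(p/2)$, and then transfer the conclusion to $\mathcal{W}_{\partial(B^{\infty}\times 0)}^{\,p}$ by a Rouch\'e-style argument: around each large zero $t_k$ of $f_p$ one picks a small circle on which $|f_p|$ dominates the remainder estimate provided by \eqref{EstRema1}, which yields exactly one zero of $\mathcal{W}_{\partial(B^{\infty}\times 0)}^{\,p}$ nearby, and of the same multiplicity. Since $f_p$ has infinitely many simple zeros going to $\infty$, the same holds for $\mathcal{W}_{\partial(B^{\infty}\times 0)}^{\,p}$.

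For $0 \leq p < 4$ the correction $c_p t^{p/2-2}$ tends to $0$, so for large $|t|$ the equation $\cos(t - \tfrac{\pi p}{4}) = c_p\, t^{p/2-2}$ perturbs the real zeros of the cosine by $O(|t|^{p/2-2})$ and keeps them on the real axis; for $p=4$ the equation becomes $\cos(t-\pi)=\tfrac{1}{2}$, whose zeros are explicit and real; for $p>4$ the exponent $p/2-2$ is positive and the factor $\cos(t-\tfrac{\pi p}{4})$ must be forced large, which by the estimate $|\cos(x+iy)|^2 = \cos^2 x + \sinh^2 y$ pushes the roots off the real axis onto the logarithmic parabola $|y| = (\tfrac{p}{2}-1)\log(|x|+1) + \log c_p + o(1)$, where they are necessarily non-real. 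The transfer step then shows the same dichotomy for the original function, yielding statements (1) and (2). The principal technical obstacle will be the Rouch\'e step when $p>4$: there the sought-after zeros of $f_p$ leave the real axis and sit on a curve where the upper bound in \eqref{EstRema1} contains the growing factor $e^{|\operatorname{Im}t|}$, and one has to choose the radii of the Rouch\'e contours carefully so that the oscillatory main term still dominates the remainder uniformly along the parabola.
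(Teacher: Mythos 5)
Your proposal is essentially the paper's argument: both confine all but finitely many roots to the sector \(|\arg t|\leq\varepsilon\) using \eqref{AsyW}--\eqref{EstRema}, localize them near the zeros of \(f_p\) (equivalently solutions of \eqref{ApprEqp}), and read off the dichotomy at \(p=4\) from the sign of the exponent \(\tfrac{p}{2}-2\) in the correction term. The Rouch\'e concern you flag for \(p>4\) does resolve: on the logarithmic parabola \(e^{|\textup{Im}\,t|}\asymp|t|^{p/2-2}\) (note the exponent should be \(\tfrac{p}{2}-2\), not \(\tfrac{p}{2}-1\), which is consistent with the transition at \(p=4\); the \(\tfrac{p}{2}-1\) in the paper's parabola formula is a typo), so the first summand of the bound \eqref{EstRema1} for \(r_1\) collapses to \(O(|t|^{-3})\), while \(|f_p'|\asymp|t|^{-2}\) at the zeros of \(f_p\); hence circles of radius \(\asymp|t|^{-1}\) --- well within the \(\approx\pi\) spacing between consecutive zeros --- make the main term dominate.
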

\begin{lem}
\label{WPBLP}%
 For \(0<p\leq{}2\), as well as for \(p=4\), the
function \(\mathcal{W}_{\partial{}(B^{\infty}\times{}0)}^{\,p}\)
belongs to the Laguerre-P\'olya class.
\end{lem}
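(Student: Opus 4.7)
The plan is to reduce Lemma \ref{WPBLP} to the Hurwitz-class membership of $\mathcal{M}_{B^{\infty}\times{}0^p}$, which is already available from Lemma \ref{HurCl} for $0\leq p\leq 2$ and from Lemma \ref{QEqFour} for $p=4$. The bridge is the identity \eqref{RelMW}, which expresses $\mathcal{W}_{\partial(B^{\infty}\times{}0)}^{\,p}$ as an even combination of $\mathcal{M}_{B^{\infty}\times{}0^p}$ evaluated on the imaginary axis. Since $\mathcal{M}_{B^{\infty}\times{}0^p}$ has real, positive Taylor coefficients, setting $f(z):=\mathcal{M}_{B^{\infty}\times{}0^p}(iz)$ and writing $f=A+iB$ with real entire $A$ (even) and $B$ (odd), one checks that $\mathcal{W}^{\,p}_{\partial(B^{\infty}\times{}0)}(t)=A(t)$ on the real axis, hence as entire functions. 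So it suffices to show that $A$ lies in the Laguerre--P\'olya class.

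For $p\in(0,2]\cup\{4\}$, the hypothesis on $\mathcal{M}_{B^{\infty}\times{}0^p}$ supplied by the two lemmas cited above yields that the roots of $\mathcal{M}_{B^{\infty}\times{}0^p}$ all lie in the open left half plane and that its defect is non-negative. After the substitution $z\mapsto iz$, this translates into the statement that $f$ is an entire function of exponential type, all of whose roots lie in the open upper half plane, while the inequality $A(0)B'(0)-B(0)A'(0)>0$ is automatic since $A(0)=\mathcal{M}_{B^{\infty}\times{}0^p}(0)>0$ and $B'(0)=\mathcal{M}'_{B^{\infty}\times{}0^p}(0)>0$ (both values are positive because $\mathcal{M}_{B^{\infty}\times{}0^p}$ has strictly positive Taylor coefficients). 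Thus $f$ belongs to the Levin class $P$, as used in the proof of Theorem \ref{JDM}.

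At this point I invoke the same Hermite--Biehler machinery employed there: combining the cited results from \cite[Chapter VII]{Lev1}, the pair $(A,B)$ is a \emph{real pair}, meaning that for every $\theta\in\mathbb{R}$ the real entire function $\cos\theta\cdot A+\sin\theta\cdot B$ has only simple real zeros. Taking $\theta=0$ shows that $A=\mathcal{W}^{\,p}_{\partial(B^{\infty}\times{}0)}$ has only real simple zeros, and by Hadamard's factorization theorem this places $A$ in $\mathscr{L}\text{-}\mathscr{P}$, which is the desired conclusion.

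The only genuine work lies outside this argument, in establishing Hurwitz-class membership for the generating function $\mathcal{M}_{B^{\infty}\times{}0^p}$ in the relevant range of $p$; for $0<p\leq 2$ this is P\'olya's lemma applied to the monotone weight $\varphi_p(\xi)=(1-\xi^2)^{p/2-1}\xi$, and for $p=4$ it is the explicit Rouch\'e argument already given in the proof of Lemma \ref{QEqFour}. No separate treatment of non-integer $p\in(0,2]$ is required, and the method yields no information about $p=3$, which is precisely why that case must remain open.
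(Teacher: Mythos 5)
Your proposal is correct and follows essentially the same route as the paper: you use the identity \eqref{RelMW} to express \(\mathcal{W}_{\partial(B^{\infty}\times 0)}^{\,p}\) as \({}^{\mathscr{R}}\Omega\) for \(\Omega(t)=\mathcal{M}_{B^{\infty}\times 0^p}(it)\), note that Hurwitz-class membership of \(\mathcal{M}_{B^{\infty}\times 0^p}\) puts \(\Omega\) in Levin's class \(\mathscr{P}\), and invoke the Levin/Hermite--Biehler theorem to conclude that \({}^{\mathscr{R}}\Omega\) has only real simple zeros, hence lies in \(\mathscr{L}\text{-}\mathscr{P}\). The paper merely packages the implication ``\(\mathcal{M}_{B^{\infty}\times 0^p}\in\mathscr{H}\Rightarrow\mathcal{W}_{\partial(B^{\infty}\times 0)}^{\,p}\in\mathscr{L}\text{-}\mathscr{P}\)'' as a separate Lemma (\ref{CondL}) and cites Levin's theorem directly rather than via the ``real pair'' language, but the mathematical content is identical, as is the reliance on Lemmas \ref{HurCl} and \ref{QEqFour}.
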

\begin{proof} The equality \eqref{RelMW} will serve as a starting point for
our considerations. If the function
\(\mathcal{M}_{B^{\infty}\times{}0^p}(t)\) is in the Hurwitz class
\(\mathscr{H}\), then the function
\begin{equation}
\label{RHC} %
\Omega(t)=\mathcal{M}_{B^{\infty}\times{}0^p}(it)
\end{equation}
is in the class \(\mathscr{P}\) (in the sense of \cite{Lev1}).
\begin{defn}\textup{\cite[Chapter VII,
Section 4]{Lev1}. } \label{ClP} An entire function \(\Omega(t)\) of
exponential type belongs to the class
\(\mathscr{P}\) if:%
 \begin{enumerate}
 \item
\(\Omega(t)\) has no roots in the closed lower half plane
\(\{t:\,\textup{Im}\,t\leq{}0\}.\)
 \item
 The defect \(d_{\Omega}\) of the function \(\Omega\) is
 non-negative, where
 \[2d_{\Omega}=\varlimsup\limits_{r\to+\infty}\frac{\ln|\Omega(-ir)|}{r}-
\varlimsup\limits_{r\to+\infty}\frac{\ln|\Omega(ir)|}{r}\,. \]
 \end{enumerate} \ \\
\end{defn}

 In the book \cite{Lev1} of B.Ya.Levin, the following version of the Hermite-Biehler
Theorem is proved:
\begin{nthm}\textup{\,\cite[Chapter VII,
Section 4, Theorem 7]{Lev1}} If an entire function \(\Omega(t)\) is in
class \(\mathscr{P}\), then its real and imaginary parts,
\({}^{\mathscr{R}}\Omega(t)\) and \({}^{\mathscr{I}}\Omega(t)\) respectively:
\[{}^{\mathscr{R}}\Omega(t)=\frac{\Omega(t)+\overline{\Omega(\overline{t})}}{2},
\quad{}{}^{\mathscr{I}}\Omega(t)=\frac{\Omega(t)-\overline{\Omega(\overline{t})}}{2i},\]
possess the following properties:
\begin{enumerate}
\item
The roots of each of the functions \({}^{\mathscr{R}}\Omega(t)\)
and \({}^{\mathscr{I}}\Omega(t)\) are real and simple;
\item
The zero loci of the functions \({}^{\mathscr{R}}\Omega(t)\) and
\({}^{\mathscr{I}}\Omega(t)\) interlace.
\end{enumerate}
\end{nthm}
Let us apply this theorem to the function \(\Omega(t)\) defined by
\eqref{RHC}:
\[\Omega(t)=\mathcal{M}_{B^{\infty}\times{}0^p}(it)\,.\]%
Taking into account that the function
\(\mathcal{M}_{B^{\infty}\times{}0^p}\) is
real:\\
\(\mathcal{M}_{B^n\times{}0^p}(t)\equiv%
\overline{\mathcal{M}_{B^{\infty}\times{}0^p}(\overline{t})}\), or
equivalently
\(\mathcal{M}_{B^n\times{}0^p}(-it)
\equiv\overline{\mathcal{M}_{B^{\infty}\times{}0^p}(i\overline{t})}\).
Hence,
\[{}^{\mathscr{R}}\Omega(t)=\frac{1}{2}
\big(\mathcal{M}_{B^n\times{}0^p}(it)+\mathcal{M}_{B^{\infty}\times{}0^p}(-it)\big)\,,\]
or because of  \eqref{RelMW}:
\[{}^{\mathscr{R}}\Omega(t)= \mathcal{W}_{\partial{}(B^{\infty}\times{}0)}^{\,p}(t)\,.\]
Thus, the following result holds:
\begin{lem}
\label{CondL}%
 If the function \(\mathcal{M}_{B^n\times{}0^p}(t)\) belongs to
 the Hurwitz class \(\mathscr{H}\), then the function
 \(\mathcal{W}_{\partial{}(B^{\infty}\times{}0)}^{\,p}(t)\)
 belongs to the Laguerre-P\'olya class \(\mathscr{L}\text{-}\mathscr{P}\).
\end{lem}
Combining Lemma \ref{CondL} with Lemmas \ref{HurCl} and
\ref{QEqFour}, we obtain Lemma \ref{WPBLP}.%
\end{proof}%
\begin{rem}%
\label{Never}%
The real part \({}^{\mathscr{R}}\Omega(t)\) for  \(\Omega(t)\)
from \eqref{RHC},
has infinitely many non-real roots if \(p>4\). Nevertheless,  all
roots of the imaginary part \({}^{\mathscr{I}}\Omega(t)\) are real
for every \(p\geq{}0\).
\end{rem}%
 Indeed, according to
\eqref{MInt} and \eqref{RHC},
\begin{equation}
\label{ImPaO}%
 {}^{\mathscr{I}}\Omega(t)=p\int\limits_{0}^{1}
(1-\xi^2)^{\frac{p}{2}-1}\xi\,{}\sin{\xi{}t}\,d\xi\,.
\end{equation}
According to A.Hurwitz (see \textup{\cite[section 15.27]{Wat}}),
 all roots of the entire function
\[\Big(\frac{t}{2}\Big)^{-\nu}J_{\nu}(t)=
\sum\limits_{l=0}^{\infty}\frac{(-1)^l}{l!\,\Gamma(\nu+l+1)}\Big(\frac{t^2}{4}\Big)^l\]
 are real for every \(\nu\geq{}-1\). (\(J_{\nu}(t)\) is the
Bessel function of index \(\nu\).) For \(\nu>-\frac{1}{2}\), the
function \((\frac{t}{2})^{-\nu}J_{\nu}(t)\) admits the integral
representation
\begin{equation}
\label{InReBe} \frac{1}{2}\Big(\frac{t}{2}\Big)^{-\nu}J_{\nu}(t)=
\frac{1}{\Gamma(\nu+\frac{1}{2})\,\Gamma(\frac{1}{2})}
\int\limits_0^1(1-\xi^2)^{\nu-\frac{1}{2}}\cos{t\xi}\,d\xi\,.
\end{equation}
Thus, for \(\nu>-\frac{1}{2}\) all roots of the entire function
\(\int\limits_0^1(1-\xi^2)^{\nu-\frac{1}{2}}\cos{t\xi}\,d\xi\) are
real.  If all roots of a real entire function of exponential type
are real, then all roots of its derivative are real as well. Thus,
for \(\nu>-\frac{1}{2}\) all roots of the function
\(\int\limits_0^1(1-\xi^2)^{\nu-\frac{1}{2}}\xi\sin{t\xi}\,d\xi\)
are real. However, for \(\nu=\frac{p-1}{2}\), the latter function
coincides with the function
\(\frac{1}{p}\,\,{}^{\mathscr{I}}\Omega(t)\).

 For \(p=3\), we do not know whether the function
 \(\mathcal{W}_{\partial{}(B^{\infty}\times{}0)}^{\,p}(t)\)
 belongs to the Laguerre-P\'olya class or not. Our conjecture is
 that YES.
 Let us formulate our conjectures in terms of the Fox-Wright
 functions.\\

 \noindent
 \textbf{Conjecture 1.} \textit{For \(0\leq{}\lambda\leq{}2\), all roots of
 the Fox-Wright function
 \begin{equation}
 \label{FRHur}
 {\sideset{_1}{_1}\Psii}%
\left\{\genfrac{}{}{0pt}{1}%
{\frac{1}{2}}%
{1};%
\genfrac{}{}{0pt}{1}%
{\frac{1}{2}}%
{1+\lambda};%
t\right\}=\sum\limits_{0\leq{}k<\infty}
\frac{\Gamma(\frac{k}{2}+1)}{\Gamma(\frac{k}{2}+1+\lambda)}\frac{t^k}{k!}
 \end{equation}
 lie in the open left half plane}.\\

 We proved that Conjecture 1 holds for
 \(0\leq{}\lambda\leq{}1\) and for \(\lambda=2\).\\

 \noindent
 \textbf{Conjecture 2.}
 \textit{For \(0\leq{}\lambda\leq{}2\), all roots of
 the Fox-Wright function
 \begin{equation}
 \label{FRLP}
 {\sideset{_1}{_2}\Psii}%
\left\{\genfrac{}{}{0pt}{1}%
{1}%
{1};%
\genfrac{}{}{0pt}{1}%
{1}%
{\frac{1}{2}\,}\,%
\genfrac{}{}{0pt}{1}%
{1}%
{1+\lambda};%
t\right\}=\sum\limits_{0\leq{}l<\infty}
\frac{\Gamma(l+1)}{\Gamma(l+\frac{1}{2})\Gamma(l+1+\lambda)}\,\frac{t^l}{l!}
\end{equation}
 are negative and simple.}\\

 We proved that Conjecture 2 holds for
 \(0\leq{}\lambda\leq{}1\), and for \(\lambda=2\).\\

From Hermite-Biehler theorem it follows that if  Conjecture 1 holds
for some \(\lambda\), then also  Conjecture 2 holds for this \(\lambda\).

The conjectures 1 and 2 are related to  deep questions related
to  `meromorphic multiplier sequences'. (See \cite[Problem
1.1]{CrCs4}.)

%%%%%%%%%%%%%%%%%%%%%%%%%%%%%%%%%%%%%%%%%%%%%%%%%%%%%%%%%%%%%%%%%%%%%
\section{Concluding remarks. \label{ConRem}}
\textbf{1.} In the present paper, little use was made of
geometric methods. The only general geometric tools used were
the Alexandrov-Fenchel inequalities. We did not use the
monotonicity properties of the coefficients belonging to the Steiner
polynomials. If \(V_0,\,V_1,\,V_2\) are convex sets, such that
\[V_1\subseteq{}V_0\subseteq{}V_2\]
 and
 \(S_{V_0}(t),\,S_{V_1}(t),\,S_{V_2}(t)\) are their Steiner
 polynomials,
 \[S_{V_j}(t)=\sum\limits_{0\leq{}k\leq{}n}s_k(V_j)t^k,\quad{}j=0,\,1,\,2,\]
then for the coefficients of these polynomials the inequalities
\[s_k(V_1)\leq{}s_k(V_0)\leq{}s_k(V_2)\,,\quad{}0\leq{}k\leq{}n.\]
hold. In this context, the use of the \textit{Kharitonov
Criterion for Stability} may be helpful. (Concerning the Kharitonov
Criterion see Chapters 5 and 7 of the book \cite{BCK} and the
literature quoted there.) The Kharitonov Criterion deals with the
`interval stability' of polynomials. In its simplest form, this
criterion allows one to determine whether the polynomial
\begin{equation}%
\label{IntPO}
 f(t)=\sum\limits_{0\leq{}k\leq{}n}a_kt^k
\end{equation}
with real coefficients \(a_k\) is stable, given
that these coefficients belong to some intervals:
\begin{equation}%
\label{CoefInt}%
 a_k^{-}\leq{}a_k\leq{}a_k^{+}\,,\quad{}0\leq{}k\leq{}n\,.
\end{equation}
Applying this criterion, one needs to construct certain polynomials
 from the given numbers \(a_k^{-},\,a_k^{+},\
0\leq{}k\leq{}n\,.\) (There are finitely many such polynomials.)
If all these polynomials are stable, then the arbitrary polynomial
\(f(t)\), \eqref{IntPO}, whose coefficients satisfy the
inequalities \eqref{CoefInt}, is stable.

\noindent%
\textbf{2.} In the example of a convex set \(V\), whose Steiner
polynomial is not dissipative, the set \(V\) is very `flat'
in some direction. (See Theorem \ref{NMR}.)

\begin{defn}
\label{DefIso} The solid convex set \(V\),
\(V\subset{}\mathbb{R}^n\), is said to be isotropic (with respect
to the point \(0\)), if the integral
\[\int\limits_{V}|\langle{}x,e\rangle|^2dv_n(x)\]
is constant with respect to \(e\), for every vector
\(e\in\mathbb{R}^n\) such that \(\langle{}e,e\rangle=1\,.\) Here
\(\langle{}\,.\,,\,.\,\rangle\) is the standard scalar product in
\(\mathbb{R}^n\), and \(dv_n(x)\) is the standard
\(n\)-dimensional differential volume element.
\end{defn}

\noindent%
\textbf{Question.} \textit{What can one say about Steiner
polynomials associated with a convex set \(V\) which is isotropic?
}
%%%%%%%%%%%%%%%%%%%%%%%%%%%%%%%%%%%%%%%%%%%%%%%%%%%%%%%%%%%%%%%%%%%%
%%%%%
%%%%%%%%%%%%%%%%%%%%%%%%%%%%%%%%%%%%%%%%%%%%%%%%%%%%%%%%%%%%%%%%%%%%%

\end{document}